\documentclass[11pt,reqno,a4paper]{amsart}
\usepackage[centertags]{amsmath}
\usepackage{amsfonts}
\usepackage{amssymb}
\usepackage{amsthm}
\usepackage{amsmath}
\usepackage{times}
\usepackage[top=2.5cm, bottom=2.5cm, left=2.5cm, right=2.5cm]{geometry}
\usepackage{color}
\usepackage{graphicx}
%\usepackage[pdftex]{graphicx}
%\usepackage[english,russian]{babel}
%\usepackage{refcheck}
% \frenchspacing
%\usepackage{hyperref} %[hidelinks]
\usepackage{hyperref}
\hypersetup{
    colorlinks=true,
    linkcolor=blue,
    filecolor=magenta,
    urlcolor=cyan,
}

\setlength{\mathsurround}{2pt}

\numberwithin{equation}{section}

%\frenchspacing \tolerance=7000

\newcommand{\cE}{{\mathcal E}}

\newcommand{\cC}{{\mathcal C}_{\beta}}
\newcommand{\cB}{{\mathcal B}_{\beta}}
\newcommand{\cA}{\mathcal{A}_{\beta}}
\newcommand{\cAo}{\mathcal{A}_{\beta_1}}
\newcommand{\cAt}{\mathcal{A}_{\beta_2}}
\newcommand{\cG}{\mathcal{G}_{\beta}}

\newcommand{\tC}{\mathtt{C}_{\beta}}%{\mathcal{J}_{\beta}}

\newcommand{\cV}{\mathcal{V}}
\newcommand{\cH}{\mathcal{H}}
\newcommand{\cU}{\mathcal{U}}
\newcommand{\cL}{\mathcal{L}}
\newcommand{\cR}{\mathcal{R}}

\newcommand{\dOm}{{\partial \Omega}}

\newcommand{\N}{\mathbb{N}}
\newcommand{\Z}{\mathbb{Z}}
\newcommand{\C}{\mathbb{C}}
\newcommand{\R}{\mathbb{R}}
\newcommand{\Q}{\mathbb{Q}}
\renewcommand{\S}{\mathbb{S}}

\renewcommand{\epsilon}{\varepsilon}

\newcommand{\conormal}{\mathrm{n}}

\newcommand{\sdist}{\tilde d}
\newcommand{\dist}{d}
\newcommand{\distance}{\mathrm{dist}}

\newcommand{\cyl}[2]{\ensuremath{C_{#1}^{#2}}}
\newcommand{\height}{K}

\newcommand{\Class}{X}

\newcommand{\loc}{\mathrm{loc}}
\newcommand{\Rn}{\mathbb{R}^n}
\newcommand{\Trace}{\mathrm{Tr}}

\newcommand{\strictlyincluded}{\subset\subset}

\newcommand{\tr}{\mathrm{tr}}

\newcommand{\res}{\mathop{\hbox{\vrule height 7pt width 0.5pt depth 0pt
\vrule height 0.5pt width 6pt depth 0pt}}\nolimits}

\newcommand{\supp}{{\ensuremath{\mathrm{supp\,}}}}

\newcommand{\p}{\partial}
\newcommand{\esssup}{\mathop{\mathrm{ess\, sup}}}
\renewcommand{\det}{\mathop{\mathrm{det}}}
\newcommand{\diam}{\mathop{\mathrm{diam}}}
\newcommand{\diag}{\mathop{\mathrm{diag}}}
\renewcommand{\div}{\mathop{\mathrm{div}}}

\newcommand{\Lip}{\mathop{\mathrm{Lip}}}

\newcommand{\Id}{\mathop{\mathrm{Id}}}

\theoremstyle{plain}
\newtheorem{theorem}{Theorem}[section]
\newtheorem{lemma}[theorem]{Lemma}
\newtheorem{definition}[theorem]{Definition}
\newtheorem{proposition}[theorem]{Proposition}
\newtheorem{corollary}[theorem]{Corollary}

\theoremstyle{definition}
\newtheorem{example}[theorem]{Example}
\newtheorem{hypothesis}[theorem]{Hypothesis}
\newtheorem{remark}[theorem]{Remark}

\pagestyle{plain}

\date{\today}

\begin{document}

\title{Minimizing movements for
mean curvature flow of droplets with prescribed contact angle}

\author{G. Bellettini$\!^{1,2}$, Sh.Yu. Kholmatov$\!^{2,3,4}$}  
%\author{G. Bellettini$\!^{1,2}$}

\address{$^1$Universit\'a degli studi di Siena\\
Dipartimento di Ingegneria
dell'Informazione e Scienze Matematiche\\
via Roma 56, 53100 Siena, Italy,\\
}
\email{bellettini@diism.unisi.it}

%\author{Sh.Yu. Kholmatov$\!^{2,3,4}$}%\fnref{label2}}
\address{$^2$International Centre for Theoretical Physics (ICTP)\\
Strada Costiera 11, 34151 Trieste, Italy,\\
$^3$Scuola Internazionale Superiore di
Studi Avanzati (SISSA)\\
Via Bonomea 265, 34136 Trieste, Italy\\
$^4$University of Vienna Department of Mathematics\\
Oskar-Morgenstern-Platz 1, 1090, Vienna, Austria}
\email{sholmat@sissa.it}

\jot=12pt

\begin{abstract}
We study the mean curvature motion of a  droplet  flowing
by mean curvature  on a horizontal
hyperplane with a possibly nonconstant prescribed contact angle.
Using the  minimizing movements method we show the existence of
a weak evolution, and its compatibility with a  distributional
solution. We also  prove various comparison results.
\end{abstract}

\keywords{Mean curvature flow with prescribed contact angle,
sets of finite perimeter,
capillary functional,  minimizing movements}
\maketitle

%\tableofcontents

\jot=12pt

\section{Introduction}

Historically, capillarity problems attracted attention 
because of their applications in physics, for instance in
the study of wetting phenomena \cite{CQ:05,GBQ:05}, 
energy minimizing drops and their adhesion
properties \cite{Qu:08, AS:05, DGO:07, LCAM07},
as well as because of their connections  with minimal surfaces, see e.g. 
\cite{Finn:, BB:book} and references therein.

In this paper we are interested in the study of the evolution of a droplet
flowing on a horizontal  hyperplane
under curvature driven forces
with a prescribed (possibly nonconstant) contact angle. Although there are
results in the literature describing the  static and dynamic behaviours
of droplets \cite{AS:11, SB:2001, BF:97},
not too much seems to be known concerning  their mean curvature motion.
Various results have been obtained
for mean curvature flow
of hypersurfaces with  Dirichlet boundary conditions \cite{Hui:89,Stone:90, OU:1991,OU:895}
and zero-Neumann  boundary condition \cite{AW:94,Guan:94, St:96, KKR:95}.
It is also worthwhile to recall that,
when the contact angle is constant, the evolution is related
to the so-called mean curvature flow of surface clusters, also called space partitions
(networks,  in the plane):
in two dimensions local well-posedness %of such an evolution problem
has been shown in \cite{BR:1993}, and
authors of \cite{KL:2001} derived global existence of the 
motion of grain boundaries
close to an equilibrium configuration. See also
\cite{MNT:04} for related results.
In higher space dimensions   short time existence
for symmetric partitions of space into three phases
with graph-type interfaces
 has been derived in \cite{Fr:20-2,Fr:2010}.
Very recently, authors of \cite{DGK:14} have shown 
short time existence
of the mean curvature flow of three surface clusters.

If we describe the evolving droplet by a set $E(t)\subset \Omega,$ $t\ge0$
the time,
where $\Omega = \R^n\times(0,+\infty)$ is the upper half-space in $\R^{n+1},$
the evolution problem we are interested in reads as
\begin{equation}\label{mcf}
V=H_{E(t)}\qquad \text{on $\Omega\cap \p E(t)$}
\end{equation}
where $V$ is the normal velocity and $H_{E(t)}$ is the mean curvature of  $\p E(t),$
supplied with the contact angle condition
on the contact set (the boundary of the wetted area):
\begin{equation}\label{contact_angle}
\nu_{E(t)}\cdot e_{n+1} = \beta \qquad \text{on 
$\overline{\Omega\cap\p E(t)}\cap\p\Omega,$}
\end{equation}
where $\nu_{E(t)}$ is the outer unit normal to
$\overline{\Omega\cap \p E(t)}$ at $\p\Omega,$
and $\beta:\p\Omega\to [-1,1]$
is the cosine of the prescribed contact angle.
We do not allow $\p E(t)$ to be tangent to $\p\Omega,$
i.e. we suppose $|\beta|\le 1-2\kappa$   on $\p\Omega$ for some
$\kappa\in(0,\frac12].$
Following \cite{KKR:95}, in  Appendix \ref{sec:short_time_existence}
we show local well-posedness
of
\eqref{mcf}-\eqref{contact_angle}.

Short time existence describes  the motion only up to the
first singularity time.
In order to continue the flow through singularities
one needs a notion of weak solution. Concerning the case without
boundary,
there are various notions of generalized solutions, such as
Brakke's varifold-solution \cite{Br:1978},
the viscosity solution (see \cite{Giga:06} and references therein),
the Almgren-Taylor-Wang \cite{ATW93} and Luckhaus-Sturzenhecker
\cite{LS:95}  solution,
the minimal  barrier solution
(see
\cite{Bell:2013} and references therein);
see also \cite{Il:94, ESS:1992} for other different approaches.

In the present paper we want to adapt the scheme proposed in \cite{ATW93,
LS:95}, and later extended to the notions of
{\it minimizing movement} and
{\it generalized minimizing movement} (shortly GMM)
by De Giorgi \cite{DG:93} (see also \cite{LAln,AGS:05}) to solve \eqref{mcf}-\eqref{contact_angle}.
Let us recall the definition.

\begin{definition}\label{def:general_GMM}
Let $S$ be a topological space, $F: S\times S\times [1,+\infty)\times \Z
\to[-\infty,+\infty]$ be a functional and $u:[0,+\infty)\to S.$ We say that $u$
is a generalized minimizing movement associated to $F,S$ (shortly GMM)
starting from
$a\in S$ and we write $u\in GMM(F,S,\Z,a),$ if there exist
$w:[1,+\infty)\times \Z \to S$ and a diverging sequence $\{\lambda_j\}$
such that
$$
\lim\limits_{j\to+\infty} w(\lambda_j,[\lambda_jt]) = u(t)\quad \text{for any $t\ge0,$}
$$
and the functions $w(\lambda,k),$ $\lambda\ge1,$ $k\in\Z,$ 
are defined  inductively as $w(\lambda,k)=a$ for $k\le0$ and
$$
F(w(\lambda,k+1),w(\lambda,k),\lambda,k) = 
\min\limits_{s\in S} F(s,w(\lambda,k),\lambda,k)
\qquad \forall k \geq 0.
$$
\end{definition}

If $GMM(F,S,\Z,a)$ consists of a unique element it is called
a minimizing movement starting from $a.$
%and denoted by $MM(F,S,\Z,a).$ 

In the sequel, we take $S=BV(\Omega,\{0,1\}),$  
$F=\cA:BV(\Omega,\{0,1\})\times BV(\Omega,\{0,1\})
\times [1,+\infty)\times\Z\to(-\infty,+\infty]$ defined by
$$
\cA(E,E_0,\lambda)=\cC(E,\Omega) +\lambda\int_{E\Delta E_0} \dist_{E_0}\,dx,
$$
where $E_0\in BV(\Omega,\{0,1\})$ is the initial set,
$\dist_{E_0}$ is the distance to $\Omega\cap\p E_0$ and
$$
\cC(E,\Omega) = P(E,\Omega) - \int_{\p\Omega} \beta \chi_E\,d\cH^n
$$
is the capillary functional. If
$\Omega= \Rn$ (hence when the term $\int_{\p\Omega} \beta \chi_E\,d\cH^n$
is not present),
the weak evolution (GMM) has been studied
in
\cite{ATW93}
and
\cite{LS:95},
 see also \cite{MT99} for the Dirichlet case. Further when no ambiguity appears
we use $GMM(E_0)$ to denote the GMM starting from $E_0\in BV(\Omega,\{0,1\}).$

After setting in Section
\ref{sec:preliminaries} the notation, and % throughout the paper
some properties of finite
perimeter sets, 
in Section \ref{sec:capillarity_functionals}
we study  the functional $\cC(\cdot,\Omega)$ and
its level-set counterpart   $\tC(\cdot,\Omega),$
including  lower semicontinuity and coercivity,
which will be useful in Section \ref{sec:comparison_principles}.
In particular,  
the map $E\mapsto \cA(E,E_0,\lambda)$
is $L^1(\Omega)$-lower semicontinuous
if and only if $\|\beta\|_\infty\le1$ (Lemma \ref{lem: lsc_of_F0}).
 Although we can also establish
the coercivity of $\cA(\cdot,E_0,\lambda)$ (Proposition
\ref{prop:coercivity_of_the_capillarity_functional}),
compactness theorems in $BV$ cannot be applied
because of the unboundedness of $\Omega.$
However, in
Theorem \ref{teo: unconstrained minimizer}
we prove  that if $E_0\in BV(\Omega,\{0,1\})$ is bounded
and $\|\beta\|_{\infty}<1,$ then
there is a minimizer in $BV(\Omega,\{0,1\})$
of $\cA(\cdot,E_0,\lambda)$, and any minimizer is bounded.
In Lemma \ref{lem:behav_big_lambda} we study the behaviour of minimizers
as $\lambda\to+\infty.$
In Proposition
\ref{prop:minimizer_of_F_0} we show
existence of constrained minimizers of $\cC(\cdot,\Omega)$,
which will be used in the proof of existence of GMMs
and in comparison principles.
In Appendix \ref{sec:error_est} we need to generalize such 
existence and uniform boundedness 
results to minimizers of functionals of type 
$\cC(\cdot,\Omega)+\cV$  under suitable hypotheses 
on $\cV. $

In Section \ref{sec:density_estimates} we study the regularity of minimizers
$\cA(\cdot,E_0,\lambda)$ (Theorem \ref{teo: regularity}). We point out
the uniform density estimates for minimizers
of $\cA(\cdot,E_0,\lambda)$ and constrained minimizers of $\cC(\cdot,\Omega)$
(Theorem \ref{teo:lower_density_est} and Proposition \ref{prop:dens_est_for_cap}),
which are the main ingredients in the existence proof of GMMs (Section
\ref{sec:existence_of_GMM}), and in the proof of  coincidence with
distributional solutions
(Section \ref{sec:weak_curvature}).

In Section \ref{sec:comparison_principles} we prove the following comparison
principle for minimizers of $\cA(\cdot,E_0,\lambda)$ (Theorem \ref{teo:E_0_and_F_0}):
{\it if  $E_0, F_0$ are bounded,
$E_0\subseteq F_0$,
$\|\beta_1\|_\infty, \|\beta_2\|_\infty<1$
and $\beta_1 \leq \beta_2,$ then
\begin{itemize}
\item[a)] there exists  a minimizer $F^*_\lambda$
of  $\cAt(\cdot,F_0,\lambda)$  containing any minimizer of
$\cAo(\cdot,E_0,\lambda);$
\item[b)] there exists  a minimizer ${E_\lambda}_*$
of  $\cAo(\cdot,E_0,\lambda)$ contained in
any minimizer of $\cAt(\cdot,F_0,\lambda);$
\end{itemize}
if in addition
$
\distance(\Omega \cap \p E_0, \Omega \cap \p F_0)>0,
$
then any minimizer $E_\lambda$ and $F_\lambda$ of
$\cAo(\cdot,E_0,\lambda)$ and
 $\cAt(\cdot,F_0,\lambda)$ respectively,
 satisfy
$
E_\lambda\subseteq F_\lambda.
$
}
As a corollary, we show that
if $E^+$ is a bounded minimizer of $\cC(\cdot,\Omega)$
in the collection $\cE(E^+)$  of all finite perimeter sets containing $E^+,$
and if
 $\|\beta\|_\infty<1$, then
for any $E_0\subseteq E^+$,
a minimizer $E_\lambda$ of $\cA(\cdot,E_0,\lambda)$
satisfies $E_\lambda\subseteq E^+$ (Proposition \ref{prop:boundedness_of_minimizers}).

In Section \ref{sec:existence_of_GMM}
we apply
the scheme in Definition \ref{def:general_GMM}
to the functional
$\cA(\cdot, E_0, \lambda)$:
as in \cite{LS:95,MSS:2016}
we build a
locally $\frac{1}{2}$-H\"older continuous
generalized minimizing movement
$t\in [0,+\infty)\mapsto E(t)\in BV(\Omega,\{0,1\})$
starting from a bounded set $E_0\in BV(\Omega,\{0,1\})$
(Theorem \ref{teo:existence_of_GMM}).
Moreover, using the results of Section \ref{sec:comparison_principles},
we prove that any GMM starting from a bounded set stays    bounded.
In general, for two GMMs one cannot expect a comparison principle
(for example in the presence of fattening).
However, the notions of {\it maximal} and {\it minimal}
GMMs (Definition \ref{def:max_min_GMM})
are always comparable if the initial
sets are comparable  (Theorem \ref{teo:comp_princ_for_GMM}).
This  requires regularity of  minimizers of $\cA(\cdot, E_0, \lambda)$
and $\cC(\cdot,\Omega),$ see Sections \ref{sec:Almgren_Taylor_Wang_functional} and
\ref{sec:density_estimates}.
Finally, in Section \ref{sec:weak_curvature} we prove that,
under a suitable conditional convergence assumption and if $1\le n\le 6,$ our
GMM solution is, in fact, a {\it distributional solution} to
\eqref{mcf}-\eqref{contact_angle}.

%%%%%%%%%%%%%%%%%%%%%%%%%%%%%%%%%%%%%
\section{Some preliminaries}\label{sec:preliminaries}
%%%%%%%%%%%%%%%%%%%%%%%%%%%%%%%%%%%%

\subsection{Notation}

$\chi_F$ stands for the characteristic function of the
Lebesgue measurable set $F\subseteq\R^{n+1}$ and $|F|$ denotes its Lebesgue measure.
The set of $L^1(\Omega)$-functions having
bounded total variation in
an open set $\Omega\subseteq\R^{n+1}$ is denoted by $BV(\Omega),$ and
$$
BV(\Omega,\{0,1\}):= \{E\subseteq \Omega:\,\, \chi_E \in BV(\Omega)\}.
$$
Given $E\subseteq BV(\Omega,\{0,1\})$ we denote by $P(E,\Omega)$
the {\it perimeter} of $E$ in $\Omega,$ i.e.
$P(E,\Omega):=\int_\Omega|D\chi_E|,$ by $\p^*E$ the essential boundary of
$E,$ and by $\nu_E(x)$ the measure-theoretical exterior normal to $E$
at $x\in\p^*E.$
Since Lebesgue equivalent sets  in $\Omega$  have
the same perimeter in $\Omega,$  we  assume that any set $E\subset\Omega$
we consider coincides with the set
$$\left\{x\in\R^{n+1}:\,\,\lim\limits_{r\to0+}\frac{|B_r(x)\cap E|}{|B_r(x)|}=1\right\}$$
of points of density one, where $B_r(x)$ is the ball of radius $r>0$
centered at $x.$  Recall that  $\overline{\p^*E}=\p E.$ 
For simplicity, set $P(E,\R^{n+1}) = P(E).$
We say that $E\subset\R^{n+1}$ has  locally finite
perimeter in $\R^{n+1},$ if $P(E,\Omega')<+\infty$ for every
bounded open  set $\Omega'\subset\R^{n+1}.$ The collection of all
sets of locally finite perimeter is denoted by $BV_\loc(\Omega,\{0,1\}).$
We refer to \cite{Gius84, AFP:00} for a  complete information
about $BV$-functions and sets  of finite perimeter.

For a fixed nonempty $E_0\in BV(\Omega,\{0,1\})$ set
\begin{equation}\label{coveringset}
\cE(E_0):=\{E\in BV(\Omega,\{0,1\}):\,\,E_0\subseteq E\},
\end{equation}
which is $L^1(\Omega)$-closed. 

Given $\rho> 0$ and $l>0$ let $\cyl{\rho}{l}=\hat B_{\rho}\times(0,l)$  
stand for the truncated cylinder in $\R^{n+1}$ of height $l,$ 
whose basis is an open ball 
$\hat B_{\rho}\subset\R^n$ centered at the origin of radius $\rho>0;$
also set $\Omega_l: = \R^n\times(0,l).$

%$\mathbb{HD}$ denotes the Hausdorff distance between sets.

\subsection{Some properties of sets of finite perimeter}

By \cite[Theorem II]{DG:54-1}, for every $E\in BV_\loc(\Omega,\{0,1\})$ 
the additive set function
$O\mapsto \int_O|D\chi_E|$ defined on the open sets $O\subseteq \Omega$
extends to a measure $B\mapsto \int_B|D\chi_E|$
defined on the Borel $\sigma$-algebra of $\Omega.$
Moreover, $P(\cdot,\Omega)$ is strongly subadditive, i.e. 
\begin{equation}\label{famfor}
 P(E\cap F,\Omega)+P(E\cup F,\Omega)\le P(E,\Omega)+P(F,\Omega)
\quad \text{for any $E,F\in BV(\Omega,\{0,1\}).$}
\end{equation}

Let $\Omega$ be an open set  with Lipschitz boundary and
$E\in BV_\loc(\R^{n+1},\{0,1\}).$
We denote the  interior and exterior traces
of the set $E$ on $\p \Omega$ respectively by $\chi_E^+$ and $\chi_E^-$
and  we recall that $\chi_E^\pm\in L_\loc^1(\p \Omega).$
Moreover, the integration by parts formula holds \cite{DG:54-1}:
\begin{equation}\label{integ_by_parts}
\begin{aligned}
\int_\Omega \chi_E\div g\, dx = & -\int_\Omega g\cdot D\chi_E +
\int_{\p \Omega} (\chi_E^+- \chi_E^-) g\cdot \nu_\Omega \,d\cH^n \qquad \forall g\in
C_c^1(\R^{n+1},\R^{n+1}),
\end{aligned}
\end{equation}
where $\nu_\Omega$ is the outer unit normal to $\p \Omega.$

If $V\subseteq \Omega$ is an  open set with Lipschitz boundary, then
$$P(E,\Omega) = P(E,V) + P(E,\Omega\setminus \overline{V}) +
\int_{\Omega\cap \p V} |\chi_E^+ - \chi_E^-|\, d\cH^n.$$

The trace set of $E\subseteq \Omega$ on $\p\Omega$ is denoted by
$\Trace(E).$ With a slight abuse of  notation we set
$\chi_{\Trace(E)}=\chi_E.$
Note that
\begin{equation*} 
 P(E,\overline{\Omega}):= P(E,\Omega) + \int_{\p \Omega} \chi_E\,d\cH^n=P(E).
\end{equation*}

In general, even if $E\in BV(\Omega,\{0,1\}),$ the traces
$\chi_E^\pm$ are in  $L_\loc^1(\p\Omega),$ but not in $L^1(\p\Omega).$
For instance, if $\Omega = \Big(\R\times (0,+\infty)\Big)
\cup A \subset \R^2$  and $A=\bigcup\limits_{m=2}^{+\infty}
(m-\frac{1}{m^2},m+\frac{1}{m^2})\times(-1,0],$
then $E=A\in BV(\Omega,\{0,1\}),$ whereas $\cH^1(\Trace(E)) = +\infty.$
%here ${\rm Int }\,A$ is the interior of $A.$
In Lemma \ref{lem:betacond}  we show that  
$\chi_E\in L^1(\p\Omega)$ for any   $E\in BV(\Omega,\{0,1\}),$ 
provided that $\Omega$ is a half-space.

From now on we fix $\Omega:=\R^n\times(0,+\infty);$
we often identify $\p\Omega = \R^n\times\{0\}$ with $\R^n,$ so that
$E\subset \p\Omega$ means $E\subset \R^n,$ and
$\pi:\Omega\to\p\Omega$ denotes the projection
$$\pi(\hat x,x_{n+1}):=\hat x,\quad x=(\hat x,x_{n+1})\in \Omega.$$

\subsection{Controlling the trace of a set by its perimeter}

The following lemma shows that the $L^1(\p\Omega)$-norm
of the trace of $E\in BV(\Omega,\{0,1\})$ is controlled
by $P(E,\Omega).$

\begin{lemma}\label{lem:betacond}
For any $E\in BV(\Omega,\{0,1\})$ and for
any $\beta\in L^\infty(\p\Omega)$
the relations
\begin{equation}\label{eq:proj_onto_boundary}
\left|\int_{\p\Omega} \beta\,  \chi_E\,d\cH^n \right|
\le \int_{\Omega} |\beta\circ \pi|\, |D\chi_E|\le
 \|\beta\|_\infty\, P(E,\Omega).
\end{equation}
hold. In particular, $P(E)<+\infty.$
\end{lemma}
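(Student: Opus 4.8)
The plan is to obtain the middle inequality in \eqref{eq:proj_onto_boundary} by testing the integration-by-parts formula \eqref{integ_by_parts} with a vector field that points ``downward'' near $\p\Omega$ and carries the weight $|\beta\circ\pi|$, then let the support of the test field exhaust $\Omega$ in the vertical direction. The outer inequality is immediate since $|\beta\circ\pi|\le\|\beta\|_\infty$ pointwise, and the leftmost inequality is just $\left|\int_{\p\Omega}\beta\chi_E\,d\cH^n\right|\le\int_{\p\Omega}|\beta|\chi_E\,d\cH^n$ combined with the trace bound we are about to prove. The assertion $P(E)<+\infty$ then follows from $P(E)=P(E,\Omega)+\int_{\p\Omega}\chi_E\,d\cH^n$ (the identity recorded just before the lemma) applied with $\beta\equiv1$, which gives $\int_{\p\Omega}\chi_E\,d\cH^n\le P(E,\Omega)<+\infty$.

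More concretely: since $|\beta|$ need not be continuous, first mollify it on $\p\Omega$ to get $\beta_\epsilon\in C^\infty\cap L^\infty$ with $\|\beta_\epsilon\|_\infty\le\|\beta\|_\infty$ and $\beta_\epsilon\to|\beta|$ in $L^1_{\loc}$ and a.e.; extend $\beta_\epsilon$ to $\Omega$ by composing with $\pi$. Fix a cutoff $\eta\in C_c^\infty(\R)$ with $\eta(0)=1$, $0\le\eta\le1$, $\eta$ nonincreasing on $[0,\infty)$, supported in $[0,1]$, and set $\eta_R(x_{n+1})=\eta(x_{n+1}/R)$. Also fix $\varphi\in C_c^\infty(\R^n)$, $0\le\varphi\le1$. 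Apply \eqref{integ_by_parts} to $g(\hat x,x_{n+1}):=-\varphi(\hat x)\,\eta_R(x_{n+1})\,\beta_\epsilon(\hat x)\,e_{n+1}$, whose normal component on $\p\Omega$ (where $\nu_\Omega=-e_{n+1}$) equals $\varphi\,\beta_\epsilon$, and whose exterior trace $\chi_E^-$ vanishes since $E\subset\Omega$. The boundary term becomes $\int_{\p\Omega}\varphi\,\beta_\epsilon\,\chi_E\,d\cH^n$; the interior term is $\int_\Omega g\cdot D\chi_E$, which is bounded in absolute value by $\int_\Omega\varphi\,|\beta_\epsilon|\,\eta_R\,|D\chi_E|\le\int_\Omega|\beta_\epsilon\circ\pi|\,|D\chi_E|$; and the left-hand side $\int_\Omega\chi_E\,\div g\,dx$ is finite because $\chi_E\in L^1(\Omega)$. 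Rearranging gives $\int_{\p\Omega}\varphi\,\beta_\epsilon\,\chi_E\,d\cH^n\le\int_\Omega|\beta_\epsilon\circ\pi|\,|D\chi_E|+\left|\int_\Omega\chi_E\,\div g\,dx\right|$, and the plan is to kill the last term by a judicious choice of the cutoffs.

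The delicate point — and the main obstacle — is that the term $\int_\Omega\chi_E\,\div g\,dx$ does not obviously vanish: $\div g=-\varphi\,\beta_\epsilon\,\eta_R'(x_{n+1})$, so its size is controlled by $R^{-1}\|\beta_\epsilon\|_\infty\int_{\{0<x_{n+1}<R\}}\chi_E\,dx$, which need not be small as $R\to\infty$ unless $E$ has bounded vertical extent. The fix is to first extract, from $\chi_E\in L^1(\Omega)$, a sequence $R_k\to\infty$ along which $R_k^{-1}\int_{\{0<x_{n+1}<R_k\}}\chi_E\,dx\to0$ (such a sequence exists since otherwise $|E|=\int_0^\infty\cH^n(E\cap\{x_{n+1}=s\})\,ds$ would force $\int_0^R\cH^n(E\cap\{x_{n+1}=s\})\,ds\gtrsim R$, contradicting $|E|<\infty$; alternatively invoke that $t\mapsto\int_{\{x_{n+1}<t\}}\chi_E\,dx$ is nondecreasing and bounded, so its difference quotients over $[0,R_k]$ tend to $0$ along a suitable sequence). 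Passing to the limit $k\to\infty$ in the rearranged inequality, then $\epsilon\to0$ by Fatou on the right and dominated convergence on the left (using $\beta_\epsilon\to|\beta|$ and $0\le\varphi\chi_E\le1$ together with $\cH^n$-finiteness once we know it on compacta — or simply monotone convergence after passing to $|\beta_\epsilon|\to|\beta|$), and finally letting $\varphi\uparrow1$ by monotone convergence on $\p\Omega$, yields $\int_{\p\Omega}|\beta|\,\chi_E\,d\cH^n\le\int_\Omega|\beta\circ\pi|\,|D\chi_E|$. Taking $\beta\equiv1$ already shows $\chi_E\in L^1(\p\Omega)$, and the general chain \eqref{eq:proj_onto_boundary} follows. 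One should double-check the harmless measurability issue that $\beta\circ\pi$ is Borel on $\Omega$, which is clear since $\pi$ is continuous.
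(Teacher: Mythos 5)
Your strategy — test the integration-by-parts formula with a field of the form $(\text{scalar})\cdot e_{n+1}$ and then approximate — is the same broad idea as the paper's, but your execution of the approximation step has a genuine gap, and there is a lesser inefficiency worth noting.

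The inefficiency first: your cutoff $\eta_R(x_{n+1})$ is in the vertical direction, which makes $\div g$ nonzero and forces you to control the error $\int_\Omega\chi_E\,\div g\,dx$. This error is in fact $O(R^{-1}|E|)$ with no subtlety at all, since $\int_{\{0<x_{n+1}<R\}}\chi_E\,dx\le|E|<+\infty$; the detour to extract a special sequence $R_k$ is unnecessary. The paper sidesteps the issue entirely by truncating $\beta$ itself with a radial cutoff $\eta_k(|x|)\beta(x)$; the resulting field is still of the form $(\text{function of }\hat x)\cdot e_{n+1}$, hence divergence-free, and no error term arises.

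The real gap is the passage $\epsilon\to0$ in the mollification. After sending $R\to\infty$ you have, for each $\epsilon$, the estimate $\int_{\p\Omega}\varphi\,\beta_\epsilon\,\chi_E\,d\cH^n \le \int_\Omega (\beta_\epsilon\circ\pi)\,|D\chi_E|$, and you need to carry the \emph{upper} bound through the limit, i.e.\ you need $\liminf_\epsilon\int_\Omega(\beta_\epsilon\circ\pi)\,|D\chi_E|\le\int_\Omega(|\beta|\circ\pi)\,|D\chi_E|$. Fatou gives the opposite inequality, and dominated convergence would require $\beta_\epsilon\circ\pi\to|\beta|\circ\pi$ $|D\chi_E|$-a.e.\ on $\Omega$ --- but mollification gives convergence only $\cH^n$-a.e.\ on $\p\Omega$, and $|D\chi_E|$ can charge $\pi^{-1}(N)$ for $\cH^n$-null $N\subset\p\Omega$ (vertical walls of $\p^*E$). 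A concrete failure: with $n=1$, $E=(0,1)\times(0,1)$ and $\beta=\chi_{(0,1)}$, the measure $|D\chi_E|$ charges $\{0\}\times(0,1)$ and $\{1\}\times(0,1)$, on which $\beta_\epsilon\circ\pi\to\tfrac12\neq0=|\beta|\circ\pi$; hence $\int_\Omega(\beta_\epsilon\circ\pi)\,|D\chi_E|\to2$ while $\int_\Omega(|\beta|\circ\pi)\,|D\chi_E|=1$, so your limiting inequality produces only $1\le 2$, strictly weaker than the claimed $1\le1$. The alternative you float (``monotone convergence after passing to $|\beta_\epsilon|\to|\beta|$'') does not help either, since mollification is not monotone.

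The paper avoids this by exploiting one-sidedness: for $\beta=\chi_{\hat O}$ with $\hat O$ open, it takes Lipschitz approximants $\beta_k$ with $\beta_k\le\chi_{\hat O}$ \emph{pointwise everywhere} (possible because $\chi_{\hat O}$ is lower semicontinuous), so that $\int_\Omega(\beta_k\circ\pi)|D\chi_E|\le\int_\Omega(\chi_{\hat O}\circ\pi)|D\chi_E|$ is immediate, and Fatou is used only on the boundary integral. General nonnegative $\beta\in L^\infty$ is then recovered from the open-set case (e.g.\ by outer regularity of the two finite Borel measures $\chi_{\Trace(E)}\cH^n$ and $\pi_\#\bigl(|D\chi_E|\res\Omega\bigr)$ on $\p\Omega$). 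If you replace your symmetric mollification by such a one-sided approximation from below, your argument closes.
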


\begin{proof}
The last inequality of \eqref{eq:proj_onto_boundary} is immediate.
The first inequality is enough to be shown for $\beta\ge0.$ 

If  $\beta$ is locally Lipschitz, then \eqref{eq:proj_onto_boundary}
follows from the divergence theorem. Indeed, suppose that $\supp(\beta)$ is compact. Since
  $\div ((\beta\circ \pi) e_{n+1})=0,$ we have
\begin{align*}
0 = \int_E \div ((\beta\circ\pi)e_{n+1})\, dx =
\int_{\Omega\cap \p^* E} (\beta\circ\pi)\, \nu_E \cdot e_{n+1}\,d\cH^n
-\int_{\p\Omega} \beta \chi_E\, d\cH^n.
\end{align*}
Hence nonnegativity of $\beta$ implies that
\begin{equation}\label{beta_lipchitz}
 \int_{\p\Omega} \beta\,\chi_E\,d\cH^n \le
\int_{\Omega\cap\p^* E} \beta\circ\pi\,d\cH^n= \int_{\Omega} \beta\circ\pi\,|D\chi_E|.
\end{equation}
If $\supp(\beta)$ is not compact,  we use $\eta_k(|x|)\beta(x)$ in \eqref{beta_lipchitz}
instead of $\beta(x),$ where $\eta_k:[0,+\infty)\to[0,+\infty)$
is Lipschitz, linear in $[k,k+1],$ $\eta_k=1$ in $[0,k]$
and $\eta_k=0$ in $[k+1,+\infty).$
Now \eqref{eq:proj_onto_boundary} follows from the monotone convergence theorem.
In particular, when $\beta\equiv1$ we have
\begin{equation*} 
P(E) = P(E,\Omega) +\int_{\p\Omega}\chi_Ed\cH^n \le 2P(E,\Omega).
\end{equation*}

Assume that $\beta=\chi_{\hat O}$ for some  open
set $\hat O\subseteq \p\Omega.$
Consider a sequence $\{\beta_k\}$  of nonnegative locally Lipschitz functions
converging $\cH^n$-almost everywhere to $\beta$ on $\p\Omega$ such that
$\beta_k\le \beta$ and
$\supp \beta_k\subseteq \overline{\hat O}.$
By Fatou's lemma we get
\begin{equation*} 
\begin{aligned}
\int_{\p\Omega} \beta\chi_Ed\cH^n \le \liminf\limits_{k\to+\infty}
  \int_{\p\Omega} \beta_k\chi_Ed\cH^n \le \liminf\limits_{k\to+\infty}
\int_{\Omega} \beta_k\circ\pi\,|D\chi_E| \le  \int_{\Omega} \beta\circ\pi\,|D\chi_E|.
\end{aligned}
\end{equation*}

Finally, if $\beta\in L^\infty(\p\Omega)$ is any nonnegative function, 
then the statement 
of the lemma follows by an approximation argument.
\end{proof}

From Lemma \ref{lem:betacond}
it follows that $E\in BV(\Omega,\{0,1\})$
if and only if $E\in BV(\R^{n+1},\{0,1\}).$
%However,
%in general, \eqref{eq:proj_onto_boundary} fails if we require only
%$P(E,\Omega)<+\infty$ or $P(E)<+\infty$ (not $E\in BV(\Omega,\{0,1\})$).
%For example, if $n=1$ and $E=\Omega\setminus [1,2]^2,$
%then $P(E,\Omega)=P(E) = 4,$ whereas $\int_{\p\Omega} \chi_E\,d\cH^n=+\infty.$

\begin{remark}
If $u\in BV(\Omega),$ then its trace belongs to $L^1(\p\Omega).$
Indeed,  it is well-known that
\begin{equation}\label{coarea_volume}
\int_{\Omega} |u|dx =  \int_{-\infty}^0 \int_{\Omega} \chi_{ \{u<t\} }(x)\,dxdt
+\int_0^{+\infty} \int_\Omega \chi_{\{u>t\}}(x)\,dxdt,
\end{equation}
\begin{equation}\label{coarea_area}
\int_\Omega |Du| =
\int_{-\infty}^0 P(\{u<t\},\Omega)\,dt
+\int_0^{+\infty} P(\{u>t\},\Omega)\,dt,
\end{equation}
in particular, $\{u>t\},\{u<s\} \in BV(\Omega)$ for a.e. $t>0$ and $s<0.$
Using \eqref{eq:proj_onto_boundary} with $\beta\equiv1,$ for a.e. $t>0$ and $s<0$
we get
$$
\int_{\p\Omega} \chi_{\{u>t\}} d\cH^n\le P(\{u>t\},\Omega),\qquad
\int_{\p\Omega} \chi_{\{u<s\}} d\cH^n\le P(\{u<s\},\Omega)
$$
and we obtain
$$
\int_{\p\Omega} |u|\,d\cH^n \le \int_\Omega|Du|.
$$
Notice that for every $\beta\in L^\infty(\p\Omega)$ one has also
\begin{equation}\label{coarea_trace}
\int_{\p \Omega} \beta u \,d\cH^n = - \int_{-\infty}^0 \int_{\p\Omega}
\beta \chi_{ \{u<t\} }\,d\cH^ndt
+\int_0^{+\infty} \int_{\p\Omega} \beta \chi_{\{u>t\} }\,d\cH^ndt.
\end{equation}
\end{remark}

The following lemma is the analog to comparison theorem 
in \cite[page 216]{LAln}\footnote{For
any $E \in BV(\R^{n+1},\{0,1\})$ and any
closed convex set $C\subseteq \R^{n+1}$
the inequality $P(E \cap C) \leq P(E)$ holds;
equality occurs if and only if $\vert E \setminus C
\vert=0$.}.

\begin{lemma}\label{muhim_corollary}
Let $E_0$ be a closed convex set such that $\nu_{E_0} \cdot e_{n+1}\ge0$ 
$\cH^n$-a.e. on $\Omega\cap \p E_0.$
 Then $P(E_0,\Omega)\le P(E,\Omega)$
for every $E\in \cE(E_0).$
\end{lemma}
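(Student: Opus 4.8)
The plan is to compare $E_0$ with $E \in \cE(E_0)$ by localizing on a large truncated cylinder and exploiting the convexity of $E_0$ together with the sign condition on its normal. Fix $E \in \cE(E_0)$, so $E_0 \subseteq E$. Since $E_0$ is closed and convex, for $\cH^n$-a.e.\ point of $\Omega \cap \p E_0$ the set $E_0$ lies (locally) on one side of its tangent hyperplane; the hypothesis $\nu_{E_0}\cdot e_{n+1}\ge 0$ says that this outer normal never points strictly downward, i.e.\ $\p E_0$ is, in a measure-theoretic sense, a graph over a subset of $\p\Omega$ (possibly with vertical parts). The key geometric fact I want to use is the nearest-point projection $\mathrm{pr}_{E_0}:\R^{n+1}\to E_0$ onto the closed convex set $E_0$, which is $1$-Lipschitz.

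First I would reduce to a bounded situation: for $\rho, l > 0$ consider $\cyl{\rho}{l}$ and show $P(E_0,\cyl{\rho}{l}) \le P(E,\cyl{\rho}{l}) + (\text{boundary terms on }\p\cyl{\rho}{l})$, then let $\rho,l\to+\infty$; the boundary terms vanish in the limit because $P(E,\Omega)<+\infty$ (finiteness on all of $\Omega$ forces the contribution of $\{|D\chi_E|\}$ near $\p\cyl{\rho}{l}$ to go to zero along a suitable sequence, and similarly for $E_0$). On the bounded cylinder, the map $T=\mathrm{pr}_{E_0}$ sends $E$ onto $E_0$ (since $E_0\subseteq E$, $T(E)=E_0$) and, being $1$-Lipschitz and retracting onto $E_0$, it does not increase perimeter: $P(T(E))\le P(E)$ on the relevant region. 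This is the standard mechanism behind the footnoted comparison result $P(E\cap C)\le P(E)$ — indeed, writing $E_0 = \bigcap_j H_j$ as an intersection of half-spaces and using \eqref{famfor} (strong subadditivity of $P(\cdot,\Omega)$) iteratively on $E\cap H_1\cap\cdots\cap H_k$ gives $P(E\cap E_0,\Omega)\le P(E,\Omega)$; but $E\cap E_0 = E_0$, so $P(E_0,\Omega)\le P(E,\Omega)$ directly, and this already proves the lemma \emph{without} the half-space structure.

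So the cleanest route is: since $E_0$ is closed and convex, it is a countable intersection of closed half-spaces $H_j$; apply \eqref{famfor} with $F = H_j$ (a half-space has $P(H_j,\Omega)$ possibly infinite, so one must instead run the argument with the footnoted inequality $P(E\cap C)\le P(E)$ for convex $C$, applied in $\R^{n+1}$ via the identification $BV(\Omega,\{0,1\})=BV(\R^{n+1},\{0,1\})$ from Lemma~\ref{lem:betacond}); taking $C=E_0$ gives $P(E\cap E_0)\le P(E)$, hence $P(E_0)\le P(E)$, and then subtracting the trace terms $\int_{\p\Omega}\chi_{E_0}\,d\cH^n = \int_{\p\Omega}\chi_E\,d\cH^n$ — equal because $E_0\subseteq E$ forces $\chi_{E_0}\le\chi_E$ on $\p\Omega$, while $P(E\cap E_0)=P(E_0)$ forces equality of the ambient perimeters and a short argument (the difference $E\setminus E_0$ must have zero-perimeter interface with $E_0$) pins down the traces — yields $P(E_0,\Omega)=P(E_0)-\int_{\p\Omega}\chi_{E_0}\le P(E)-\int_{\p\Omega}\chi_E = P(E,\Omega)$.

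The role of the hypothesis $\nu_{E_0}\cdot e_{n+1}\ge 0$ is precisely to guarantee the trace identity $\int_{\p\Omega}\chi_{E_0}\,d\cH^n = \int_{\p\Omega}\chi_{E}\,d\cH^n$: without a sign condition, enlarging $E_0$ could \emph{decrease} the wetted area on $\p\Omega$ (if $E_0$ bulges away from $\p\Omega$), breaking the inequality for the capillary-type quantity $P(\cdot,\Omega)$; the condition $\nu_{E_0}\cdot e_{n+1}\ge 0$ says $\p E_0$ never "overhangs" $\p\Omega$, so any $E\supseteq E_0$ has at least as large a trace, and combined with $P(E\cap E_0)\le P(E)$ this gives the claim. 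The main obstacle I expect is making the trace comparison rigorous — controlling $\chi_{E_0}^{\pm}$ versus $\chi_E^{\pm}$ on $\p\Omega$ and showing that the convexity-plus-normal-sign hypothesis really does force $\int_{\p\Omega}\chi_E\,d\cH^n \ge \int_{\p\Omega}\chi_{E_0}\,d\cH^n$; this likely uses the slicing/graph structure of $\p E_0$ implied by the normal condition, or alternatively a direct argument that $E_0$ contains the "vertical shadow" downward of each of its points, so that $\mathrm{Tr}(E_0) = \pi(E_0)$ and $\pi(E_0)\subseteq\pi(E)$ with the trace of $E$ controlling $\cH^n(\pi(E))$ from below.
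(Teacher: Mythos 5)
Your final step — passing from $P(E_0)\le P(E)$ in $\R^{n+1}$ to $P(E_0,\Omega)\le P(E,\Omega)$ by asserting the trace identity $\int_{\p\Omega}\chi_{E_0}\,d\cH^n=\int_{\p\Omega}\chi_E\,d\cH^n$ — is wrong, and this is a genuine gap, not a missing detail. The trace integrals are \emph{not} equal for general $E\supseteq E_0$: take $E_0=\hat B_1\times(0,1)$ and $E=\hat B_2\times(0,1)$, both convex, both satisfying the normal condition, and the wetted areas are $\omega_n$ and $2^n\omega_n$ respectively. Your supporting reasoning is also off: ``$P(E\cap E_0)=P(E_0)$'' is a tautology (since $E\cap E_0=E_0$) and says nothing about $P(E)$ versus $P(E_0)$, and the claim that $E\setminus E_0$ has zero-perimeter interface with $E_0$ is false in general. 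Because the traces genuinely differ, knowing $P(E_0)\le P(E)$ does not yield $P(E_0,\Omega)\le P(E,\Omega)$; what is actually needed is the \emph{quantified} inequality $P(E)-P(E_0)\ge\int_{\p\Omega}\chi_{E\setminus E_0}\,d\cH^n$, which the Ambrosio comparison alone does not give. Your earlier iterated-subadditivity route via \eqref{famfor} with half-spaces also breaks down, as you yourself note: $P(H_j,\Omega)=+\infty$ whenever $\p H_j$ meets $\Omega$, so the inequality is vacuous.

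The hypothesis $\nu_{E_0}\cdot e_{n+1}\ge 0$ does not control traces directly; it controls convexity after reflection. A proof that works: let $\sigma(x',x_{n+1})=(x',-x_{n+1})$ and set $E'=E\cup\sigma(E)$, $E_0'=\overline{E_0}\cup\sigma(\overline{E_0})$. Since the interior and exterior traces of the symmetric set agree on $\p\Omega$, the jump term there vanishes and one gets $P(E')=2P(E,\Omega)$, $P(E_0')=2P(E_0,\Omega)$. The normal condition $\nu_{E_0}\cdot e_{n+1}\ge 0$ on $\Omega\cap\p E_0$ forces the ``downward monotonicity'' of the convex set $E_0$ (the outer normal cone at any interior boundary point cannot contain a vector with negative $(n+1)$-component, so $(x',t)\in E_0$, $0<s<t$ implies $(x',s)\in\overline{E_0}$), and this is exactly what makes $E_0'$ convex and closed in $\R^{n+1}$. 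Since $E_0\subseteq E$ gives $E_0'\subseteq E'$, the comparison theorem from [LAln] applied to the closed convex set $E_0'$ yields $P(E_0')=P(E'\cap E_0')\le P(E')$, i.e.\ $2P(E_0,\Omega)\le 2P(E,\Omega)$. The reflection is what replaces the false trace equality; without the normal condition, $E_0'$ need not be convex (think of a ball floating above $\p\Omega$), and the lemma can fail.
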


%%%%%%%%%%%%%%%%%%%%%%%%%%%%%%%%%%%%%%%%%%%%%%%%%%%%%%%%%%%%%%%%%%%%%%%%%%%%%%%%%
\section{Capillary functionals}\label{sec:capillarity_functionals}
%%%%%%%%%%%%%%%%%%%%%%%%%%%%%%%%%%%%%%%%%%%%%%%%%%%%%%%%%%%%%%%%%%%%%%%%%%%%%%%%%

Let $\beta\in L^\infty(\p\Omega).$
The capillary functional $\cC(\cdot,\Omega):BV(\Omega,\{0,1\})\to\R$
and its ``level set'' version $\tC(\cdot,\Omega):BV(\Omega)\to\R$ are defined as
\begin{equation}\label{define:capil}
\cC(E,\Omega) :=
 P(E,\Omega) - \int_{\p \Omega} \beta\, \chi_E\,d\cH^n,
\end{equation}
and
\begin{equation*}%\label{define:level_capil}
\tC(u,\Omega):=\int_\Omega |Du| - \int_{\p\Omega} \beta ud\cH^n,
\end{equation*}
respectively.
Note that $\tC(\cdot,\Omega)$ is convex,
$\tC(u,\Omega) = \mathtt{C}_{-\beta}(-u,\Omega)$ for any $u\in BV(\Omega)$, and
$\cC(E,\Omega) = \tC(\chi_E,\Omega)$ for any $E\in BV(\Omega,\{0,1\}).$
Moreover, when $\|\beta\|_\infty\le1,$ by \eqref{eq:proj_onto_boundary} the functional
$\cC(\cdot,\Omega)$ is nonnegative, and
the same holds for $\tC(\cdot,\Omega)$ as by \eqref{coarea_volume}-\eqref{coarea_trace}
one has
\begin{equation}\label{eq:coarea_J}
\tC(u,\Omega) = \int_{-\infty}^0  \mathcal{C}_{-\beta}( \{u<t\}, \Omega)\,dt
+ \int_0^{+\infty}   \cC(\{u>t\},\Omega)\,dt.
\end{equation}

The functional $\tC(\cdot,\Omega)$ will be  useful for the comparison
principles (Section \ref{sec:comparison_principles}).

\subsection{Coercivity and lower semicontinuity}

The next lemma is a localized version of \cite[Lemma 4]{LCAM07}, which is
needed to prove   coercivity of $\cC(\cdot,\Omega)$ and $\tC(\cdot,\Omega)$ and
will be frequently used in the proofs (see for example the proofs
of Theorem \ref{teo: unconstrained minimizer1} and Theorem \ref{teo:lower_density_est}).

\begin{lemma}\label{lem:Lower_bound_of_F0}
Assume that $\|\beta\|_\infty\le1$ and $E\in BV(\Omega,\{0,1\}).$
Then for any open set $A\subseteq\Omega$ with $A\in BV_\loc(\R^{n+1},\{0,1\})$ and
\begin{equation}\label{zero_perimeter_outside}
\cH^n\Big([\pi^{-1}( \pi(A) ) \setminus A]\cap \Omega \cap \p^*E\Big)=0
\end{equation}
the inequality
\begin{equation}\label{eq:lower_bound}
P(E,A) - \int_{\p\Omega}  \beta\, \chi_{E\cap A} \,d\cH^n \ge
\dfrac{1- \esssup \beta}{2}\,\left[P(E,A)+\int_{\p\Omega}
\chi_{E\cap A} \,d\cH^n \right]
\end{equation}
holds.
\end{lemma}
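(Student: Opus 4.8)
The plan is to reduce the inequality to an application of Lemma~\ref{lem:betacond} on the set $E\cap A$. First I would observe that, since $E\in BV(\Omega,\{0,1\})$ and $A$ is an open set with $A\in BV_\loc(\R^{n+1},\{0,1\})$, the intersection $E\cap A$ lies in $BV(\Omega,\{0,1\})$, so Lemma~\ref{lem:betacond} applies to it with the bounded function $\beta$. Hence
\begin{equation*}
\left|\int_{\p\Omega}\beta\,\chi_{E\cap A}\,d\cH^n\right|\le \esssup_{\p\Omega}|\beta|\cdot P(E\cap A,\Omega)\le P(E\cap A,\Omega),
\end{equation*}
and, more precisely, since we may reduce to $\beta\ge 0$ (the general case follows, replacing $\beta$ by its positive part in the estimate of the boundary integral and noting $\esssup\beta$ only increases), $\int_{\p\Omega}\beta\,\chi_{E\cap A}\,d\cH^n\le (\esssup\beta)\,P(E\cap A,\Omega)$.

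The crucial point is to relate $P(E\cap A,\Omega)$ and the boundary trace $\int_{\p\Omega}\chi_{E\cap A}\,d\cH^n$ to the localized quantities $P(E,A)$ and $\int_{\p\Omega}\chi_{E\cap A}\,d\cH^n$ appearing in \eqref{eq:lower_bound}; this is exactly where hypothesis \eqref{zero_perimeter_outside} is used. Decomposing the reduced boundary of $E\cap A$ into the part inside $A$, the part on $\Omega\cap\p A$, and the part lying over $\pi(A)$ but outside $A$, the hypothesis \eqref{zero_perimeter_outside} kills the contribution of $\Omega\cap\p^*E$ in the third region, so that
\begin{equation*}
P(E\cap A,\Omega)\le P(E,A)+\cH^n\big(\Omega\cap\p^*A\cap E\big)+(\text{terms that do not see }\Omega\cap\p^*E\text{ over }\pi(A)\setminus A).
\end{equation*}
I expect that the careful bookkeeping here — writing $\p^*(E\cap A)$ via the measure-theoretic identities for intersections and checking that each piece is controlled by $P(E,A)$ together with a term that matches the boundary trace — is the main obstacle, since one must be sure that no perimeter "escapes" through the lateral boundary of $A$ in a way not accounted for on the right-hand side of \eqref{eq:lower_bound}.

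Once the estimate $P(E\cap A,\Omega)\le P(E,A)$ together with the trace control $\int_{\p\Omega}\chi_{E\cap A}\,d\cH^n\le P(E\cap A,\Omega)$ (from Lemma~\ref{lem:betacond} with $\beta\equiv 1$, again localized via \eqref{zero_perimeter_outside}) are in hand, the conclusion is algebra: from
\begin{equation*}
\int_{\p\Omega}\beta\,\chi_{E\cap A}\,d\cH^n\le(\esssup\beta)\,P(E,A),\qquad \int_{\p\Omega}\chi_{E\cap A}\,d\cH^n\le P(E,A),
\end{equation*}
one gets $P(E,A)-\int_{\p\Omega}\beta\,\chi_{E\cap A}\,d\cH^n\ge(1-\esssup\beta)\,P(E,A)$, and then, writing $(1-\esssup\beta)\,P(E,A)=\tfrac{1-\esssup\beta}{2}P(E,A)+\tfrac{1-\esssup\beta}{2}P(E,A)$ and bounding the second copy of $P(E,A)$ below by $\int_{\p\Omega}\chi_{E\cap A}\,d\cH^n$, we arrive at \eqref{eq:lower_bound}. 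I would also spell out the approximation by locally Lipschitz $\beta$ as in the proof of Lemma~\ref{lem:betacond} to justify the divergence-theorem computation that makes the localized trace bound rigorous, since $\supp\beta$ need not be compact and $A$ need not be bounded.
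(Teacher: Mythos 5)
Your plan hinges on the inequality $P(E\cap A,\Omega)\le P(E,A)$, but this is false in general, and hypothesis \eqref{zero_perimeter_outside} does not repair it. Indeed, $\p^*(E\cap A)$ contains, besides $\p^*E\cap A$ (the piece measured by $P(E,A)$), the lateral piece $\Omega\cap\p^*A\cap E^{(1)}$, whose $\cH^n$-measure is in general strictly positive. The hypothesis \eqref{zero_perimeter_outside} controls $\p^*E$ over $\pi(A)\setminus A$; it says nothing about $\p^*A$, so the lateral contribution survives. Concretely, take $E=\hat B_2\times(0,2)$ and $A=\hat B_1\times(0,3)$. Then $\pi^{-1}(\pi(A))\setminus A=\hat B_1\times[3,\infty)$ misses $\Omega\cap\p^*E\subset\{x_{n+1}\le 2\}$, so \eqref{zero_perimeter_outside} holds; yet $E\cap A=\hat B_1\times(0,2)$ gives $P(E\cap A,\Omega)=2n\omega_n+\omega_n$, while $P(E,A)=\cH^n(\hat B_1\times\{2\})=\omega_n$. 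You explicitly wrote the extra term $\cH^n(\Omega\cap\p^*A\cap E)$ in your decomposition, then silently dropped it when passing to the ``once the estimate $P(E\cap A,\Omega)\le P(E,A)$ is in hand, the conclusion is algebra'' step; that is the gap.

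The correct route never touches $P(E\cap A,\Omega)$. The paper works with the measure $|D\chi_E|$ restricted to $A$ directly, using a projection/trace comparison: one first proves $\chi_F\le\chi_{\pi(F)}$ $\cH^n$-a.e. on $\p\Omega$ for any $F\subset\Omega$ of locally finite perimeter, applies it with $F=A$ to get $\int_{\p\Omega}\tfrac{1+\beta}{2}\chi_{E\cap A}\,d\cH^n\le\int_{\p\Omega}\chi_{\pi(A)}\tfrac{1+\beta}{2}\chi_E\,d\cH^n$, then invokes Lemma~\ref{lem:betacond} with the weight $(1+\beta)\chi_{\pi(A)}/2$ together with the identity $\int_{\pi^{-1}(\pi(A))}\tfrac{1+\beta\circ\pi}{2}|D\chi_E|=\int_A\tfrac{1+\beta\circ\pi}{2}|D\chi_E|$. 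It is precisely here that \eqref{zero_perimeter_outside} is used: $|D\chi_E|$ is carried by $\Omega\cap\p^*E$, so the hypothesis lets one localize the integral to $A$. This yields $\int_{\p\Omega}\tfrac{1+\beta}{2}\chi_{E\cap A}\,d\cH^n\le\int_A\tfrac{1+\beta\circ\pi}{2}|D\chi_E|$, and splitting $1=\tfrac{1-\beta}{2}+\tfrac{1+\beta}{2}$ inside $P(E,A)$ and $-\beta=\tfrac{1-\beta}{2}-\tfrac{1+\beta}{2}$ in the trace integral gives \eqref{eq:lower_bound} after bounding $1-\beta\ge 1-\esssup\beta$ pointwise. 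As a secondary remark, your ``reduce to $\beta\ge0$'' step does not yield $\int_{\p\Omega}\beta\chi_{E\cap A}\,d\cH^n\le(\esssup\beta)P(E\cap A,\Omega)$ when $\esssup\beta<0$, since replacing $\beta$ by $\beta^+$ controls the integral only by $\esssup\beta^+=\max(\esssup\beta,0)$; the paper's pointwise splitting avoids any case distinction.
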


\begin{proof}
Let us first show that if $F\subset \Omega$ has locally finite perimeter
in $\R^{n+1},$ then
\begin{equation}\label{eq:trace_vs_projection}
\chi_F \le \chi_{\pi(F)}\quad \text{$\cH^n$-a.e. on $\p\Omega.$}
\end{equation}
Set $\hat G:= \{\hat x\in\Trace(F): \,\chi_{\pi(F)}(\hat x) =0\}.$
For any $\epsilon>0$ take an open set $\hat O\subseteq \p\Omega$ such that
$\hat G\subseteq \hat O$ and $\cH^n(\hat O\setminus \hat G)<\epsilon.$
Since $\cH^n(\pi(F)\cap \hat G) =0,$ one has
\begin{align*}
|F\cap\pi^{-1}(\hat G)|= & \int_{\pi^{-1}(\hat G)} \chi_F\,dx =
\int_0^{+\infty} dx_{n+1} \int_{\hat G} \chi_F(\hat x,x_{n+1})d\cH^n(\hat x) \\
=& \int_0^{+\infty} \cH^n(\hat G \cap \{(\hat x,0):\,\, (\hat x,x_{n+1})\in F\}) dx_{n+1}=
\int_0^{+\infty} \cH^n(\hat G \cap \pi(F)) dx_{n+1}=0.
\end{align*}
Let $\hat B_\rho\subset\R^n$ denote the ball of
radius $\rho>0$ centered at the origin.
Recall that for any $\gamma>0$ the following estimate \cite[page 35]{Gius84} holds:
$$
\int_{\hat O\cap \hat B_\rho} \chi_F d\cH^n \le
P(F,(\hat O\cap \hat B_\rho)\times(0,\gamma)) +
\frac1\gamma\int_{(\hat O\cap\hat B_\rho)\times(0,\gamma)} \chi_F\,dx.
$$
Then using $\hat G\subseteq \Trace(F),$ we establish
\begin{align*}
\cH^n(\hat G\cap \hat B_\rho) \le &
\int_{\hat O\cap \hat B_\rho} \chi_F d\cH^n
\le  P(F,(\hat O\cap \hat B_\rho)\times(0,\gamma)) \\
&+
\frac1\gamma\int_{(\hat G\cap \hat B_\rho)\times(0,\gamma)} \chi_F\,dx +
\frac1\gamma\int_{((\hat O\setminus\hat G)\cap \hat B_\rho)\times(0,\gamma)} \chi_F\,dx\\
\le& P(F,\hat O\times(0,\gamma)) + \frac1\gamma\, |F\cap \pi^{-1}(\hat G)|  +
\cH^n(\hat O\setminus \hat G)
 < P(F,\hat O\times(0,\gamma)) + \epsilon.
\end{align*}
Now letting $\epsilon,\gamma\to0^+$ we get $\cH^n(\hat G\cap\hat B_\rho) = 0$ and
\eqref{eq:trace_vs_projection} follows from letting $\rho\to+\infty.$

We have
\begin{equation}\label{periii}
\begin{aligned}
\int_\Omega \chi_{\pi(A)}\circ \pi \,\dfrac{1+\beta\circ \pi  }{2}\,|D\chi_E| = &
\int_{\pi^{-1}(\pi(A))} \dfrac{1+\beta\circ \pi }{2}\,|D\chi_E|
=  \int_A \dfrac{1+\beta\circ \pi}{2}\,|D\chi_E|,
\end{aligned}
\end{equation}
where in the second equality we used \eqref{zero_perimeter_outside}.
Moreover, from \eqref{eq:trace_vs_projection} with $F=A$
 we get
\begin{equation}\label{tracee}
\begin{aligned}
\int_{\p\Omega} \dfrac{1+\beta}{2} \chi_{E\cap A} \,d\cH^n =&
\int_{\p\Omega}  \chi_A  \,\dfrac{1+\beta}{2} \, \chi_E \,d\cH^n
\le
\int_{\p\Omega} \chi_{\pi(A)} \,\dfrac{1+\beta}{2} \,\chi_E \,d\cH^n.
\end{aligned}
\end{equation}
Now, using Lemma \ref{lem:betacond} with $\beta$ replaced with
$(1+\beta)\chi_{\pi(A)}/2,$   from \eqref{periii} and \eqref{tracee}
we obtain
\begin{equation}\label{halfilla}
 \int_{\p\Omega} \dfrac{1+\beta}{2}
 \chi_{E\cap A} \,d\cH^n\le \int_A \dfrac{1+\beta\circ \pi }{2}\,|D\chi_E|.
\end{equation}
Finally, adding the identities
\begin{align*}
P(E,A)=&\int_A|D\chi_E| = \int_A\frac{1-\beta\circ \pi }{2}\,|D\chi_E|+
\int_A \frac{1+\beta\circ\pi}{2}\,|D\chi_E|,\\
-\int_{\p \Omega} \beta\,\chi_{E\cap A} \,d\cH^n =&
\int_{\p \Omega}\frac{1-\beta}{2}\,\chi_{E\cap A} \,d\cH^n-
\int_{\p \Omega}\frac{1+\beta}{2}\,\chi_{E\cap A} \,d\cH^n,
\end{align*}
and using \eqref{halfilla} we deduce
$$
P(E,A) - \int_{\p\Omega}  \beta\,  \chi_{E\cap A} \,d\cH^n\ge
 \int_A \frac{1-\beta\circ\pi}{2}\,|D\chi_E| + \int_{\p\Omega} \frac{1-\beta}{2} \chi_{E\cap A} \,d\cH^n.
$$
This relation yields \eqref{eq:lower_bound}.
\end{proof}

\begin{proposition}[\textbf{Coercivity of the capillary functionals}]
\label{prop:coercivity_of_the_capillarity_functional}
If $-1\le \beta\le 1-2\kappa$   $\cH^n$-a.e. on $\p\Omega$ for some $\kappa\in[0,\frac12],$
then
\begin{equation}\label{eq:bound_F-0}
\kappa P(E) \le \cC(E,\Omega) \leq P(E) \qquad \forall
E\in BV(\Omega,\{0,1\}).
\end{equation}
Moreover, if $\|\beta\|_\infty\le1-2\kappa$ for some $\kappa\in [0,\frac12],$
then
\begin{equation}\label{eq:bound_J-0}
\kappa \int_{\overline\Omega} |Du| \le \tC(u,\Omega)
\le \int_{\overline\Omega}|Du|  \qquad \forall u\in BV(\Omega).
\end{equation}

\end{proposition}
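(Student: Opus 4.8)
The plan is to prove the four inequalities one by one, deducing the bounds for $\tC(\cdot,\Omega)$ from those for $\cC(\cdot,\Omega)$ by slicing through the coarea identity \eqref{eq:coarea_J}.

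First I would establish \eqref{eq:bound_F-0}. The upper bound is immediate: since $\beta\ge-1$ $\cH^n$-a.e.\ on $\p\Omega$ we have $-\beta\,\chi_E\le\chi_E$ there, and therefore
\[
\cC(E,\Omega)=P(E,\Omega)-\int_{\p\Omega}\beta\,\chi_E\,d\cH^n\le P(E,\Omega)+\int_{\p\Omega}\chi_E\,d\cH^n=P(E),
\]
the last quantity being finite by Lemma \ref{lem:betacond}. For the lower bound I would apply Lemma \ref{lem:Lower_bound_of_F0} with $A=\Omega$: this is admissible since $\|\beta\|_\infty\le1$ (because $-1\le\beta\le1-2\kappa\le1$), $\Omega$ is open with $\Omega\in BV_\loc(\R^{n+1},\{0,1\})$, and condition \eqref{zero_perimeter_outside} is void as $\pi^{-1}(\pi(\Omega))\cap\Omega=\Omega$. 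Since $E\cap\Omega=E$, inequality \eqref{eq:lower_bound} then yields
\[
\cC(E,\Omega)\ge\frac{1-\esssup\beta}{2}\Big(P(E,\Omega)+\int_{\p\Omega}\chi_E\,d\cH^n\Big)=\frac{1-\esssup\beta}{2}\,P(E)\ge\kappa\,P(E),
\]
where the last step uses $\esssup\beta\le1-2\kappa$ and $P(E)\ge0$.

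Next I would prove \eqref{eq:bound_J-0}, assuming $\|\beta\|_\infty\le1-2\kappa$. For the upper bound, $|\beta|\le1$ gives $-\int_{\p\Omega}\beta u\,d\cH^n\le\int_{\p\Omega}|u|\,d\cH^n$, whence $\tC(u,\Omega)\le\int_\Omega|Du|+\int_{\p\Omega}|u|\,d\cH^n=\int_{\overline{\Omega}}|Du|$. For the lower bound, I note that both $\beta$ and $-\beta$ satisfy $-1\le\,\cdot\,\le1-2\kappa$ $\cH^n$-a.e.\ on $\p\Omega$, so the bound \eqref{eq:bound_F-0} just proved applies both to $\cC$ and, with $-\beta$ in place of $\beta$, to $\mathcal{C}_{-\beta}$: for a.e.\ $t$ one has $\cC(\{u>t\},\Omega)\ge\kappa\,P(\{u>t\})$ and $\mathcal{C}_{-\beta}(\{u<t\},\Omega)\ge\kappa\,P(\{u<t\})$. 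Substituting these into \eqref{eq:coarea_J} gives
\[
\tC(u,\Omega)\ge\kappa\int_{-\infty}^0 P(\{u<t\})\,dt+\kappa\int_0^{+\infty}P(\{u>t\})\,dt,
\]
and, using $P(F)=P(F,\Omega)+\int_{\p\Omega}\chi_F\,d\cH^n$ together with the coarea formula \eqref{coarea_area} for $\int_\Omega|Du|$ and its boundary-trace analogue (contained in \eqref{coarea_trace} with $\beta\equiv1$), one identifies the right-hand side with $\kappa\big(\int_\Omega|Du|+\int_{\p\Omega}|u|\,d\cH^n\big)=\kappa\int_{\overline{\Omega}}|Du|$, which is \eqref{eq:bound_J-0}.

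The only point that needs care is the bookkeeping in this last step: one must check that, after slicing, the boundary-trace contributions reassemble so that $P(\cdot,\Omega)$ turns into $P(\cdot)=P(\cdot,\overline{\Omega})$ on both sides simultaneously, i.e.\ that $\int_{\overline{\Omega}}|Du|=\int_{-\infty}^0 P(\{u<t\})\,dt+\int_0^{+\infty}P(\{u>t\})\,dt$. Everything else follows directly from the sign constraints on $\beta$ and from Lemma \ref{lem:Lower_bound_of_F0}.
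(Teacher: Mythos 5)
Your proof is correct and follows essentially the same route as the paper: the lower bound in \eqref{eq:bound_F-0} comes from Lemma \ref{lem:Lower_bound_of_F0} with $A=\Omega$, the upper bound from the pointwise estimate $-\beta\chi_E\le\chi_E$ on $\p\Omega$, and \eqref{eq:bound_J-0} is then obtained by slicing through the coarea identity \eqref{eq:coarea_J} and applying the set-level bounds to the superlevel sets (with $\cC$) and sublevel sets (with $\mathcal C_{-\beta}$). The bookkeeping you flag at the end, namely that $\int_{\overline\Omega}|Du|=\int_{-\infty}^0 P(\{u<t\})\,dt+\int_0^{+\infty}P(\{u>t\})\,dt$ once the trace terms are absorbed, is exactly what the paper delegates to \cite[Remark 2.14]{Gius84} after extending $u$ by zero; your reassembly of $P(\cdot,\Omega)$ and the boundary traces into $P(\cdot)$ does the same job explicitly.
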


\begin{proof}
The inequality $\kappa P(E) \le \cC(E,\Omega)$ follows
from Lemma \ref{lem:Lower_bound_of_F0} with $A=\Omega.$
Moreover, by virtue of Lemma \ref{lem:betacond},
\begin{equation}\label{eq:bound_F-0_zero}
\|\beta\|_\infty\le 1 \quad \Longrightarrow \quad
\cC(E,\Omega) \le   P(E)
\qquad \forall
E\in BV(\Omega,\{0,1\}).
\end{equation}
Now \eqref{eq:bound_J-0} follows from the inequalities
$$
\kappa P(\{u<t\},\Omega) +\kappa \int_{\p\Omega} \chi_{\{u<t\}}\,d\cH^n \le
\mathcal{C}_{-\beta}(\{u<t\},\Omega)
\le P(\{u<t\},\Omega)+\int_{\p\Omega} \chi_{\{u<t\}}\,d\cH^n
$$
for a.e. $t<0$ and
$$
\kappa P(\{u>t\},\Omega)
+\kappa \int_{\p\Omega}  \chi_{\{u>t\}}\,d\cH^n \le
\cC(\{u>t\},\Omega)
\le  P(\{u>t\},\Omega)+
\int_{\p\Omega} \chi_{\{u>t\}}\,d\cH^n
$$
for a.e. $t>0,$ from \eqref{coarea_volume}-\eqref{coarea_trace}, \eqref{eq:coarea_J} and
by \cite[Remark  2.14]{Gius84}, possibly after extending $u$ to $0$ outside $\Omega.$
\end{proof}

\begin{remark}\label{rem:posit_negat_part}
From the proof of Proposition \ref{prop:coercivity_of_the_capillarity_functional}
it follows that if $u\ge0,$ then \eqref{eq:bound_J-0} holds for any
$\beta\in L^\infty(\p\Omega)$  with $-1\le \beta\le 1-2\kappa;$
if $u\le 0,$ \eqref{eq:bound_J-0} is valid whenever $-1+2\kappa\le \beta\le 1.$
\end{remark}

\begin{remark}\label{emtyset_minimizer}
If $\beta>1$ on a set of infinite $\cH^n$-measure,
then $\cC(\cdot,\Omega)$ is unbounded from below.
Note also that if $\|\beta\|_\infty\le1,$ then
$\emptyset$ is the unique minimizer  of $\cC(\cdot,\Omega)$ in
$BV(\Omega,\{0,1\}).$ Indeed, clearly, 
$$
0=\cC(\emptyset,\Omega) = \min\limits_{E\in BV(\Omega,\{0,1\})} \cC(E,\Omega).
$$
If there were  a minimizer $E\ne\emptyset$ of $\cC(\cdot,\Omega),$ 
there would exist  $l>0$ such that $|E\setminus \Omega_l|>0.$ Now
since  $\Trace(E) = \Trace(E\cap\overline{\Omega_l}),$ by 
\cite[page 216]{LAln} we have
$$
0 =\cC(E,\Omega) >\cC(E\cap \overline{\Omega_l},\Omega) \ge0,
$$
a contradiction.
\end{remark}

\begin{lemma}[\textbf{Lower semicontinuity}]\label{lem: lsc_of_F0}
Assume that  $\beta\in L^\infty(\partial \Omega).$
Then the functionals
$\cC(\cdot,\Omega)$
and
$\tC(\cdot,\Omega)$
are $L^1(\Omega)$-lower semicontinuous if and only if $\|\beta\|_\infty\le1.$
\end{lemma}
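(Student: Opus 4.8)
The plan is to prove both implications of the equivalence; the sufficiency of $\|\beta\|_\infty\le1$ is the substantial part, and I would obtain it by exhibiting $\cC(\cdot,\Omega)$ as a supremum of $L^1(\Omega)$-continuous functionals.

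\emph{Sufficiency.} Assume $\|\beta\|_\infty\le1$. I would test $\chi_E$ against vector fields $g=g_0+(\beta\circ\pi)\,\eta(x_{n+1})\,e_{n+1}$, where $g_0\in C^1_c(\Omega;\R^{n+1})$ is supported in some $\{x_{n+1}>\tau\}$ with $|g_0|\le1$, and $\eta\in C^1_c([0,+\infty))$ with $\eta(0)=1$, $|\eta|\le1$ and $\supp\eta$ disjoint (in the $x_{n+1}$-variable) from $\supp g_0$. Because $\|\beta\|_\infty\le1$ and the two summands have disjoint supports, $|g|\le1$ a.e.; and $\div g=\div g_0+(\beta\circ\pi)\,\eta'(x_{n+1})\in L^\infty(\Omega)$, so $E\mapsto\int_\Omega\chi_E\,\div g\,dx$ is $L^1(\Omega)$-continuous (boundedness of $\div g$ suffices against $L^1$-convergence, even though it is not compactly supported). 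The crucial point is that one cannot in general use fields that are $C^1$ up to $\dOm$ with normal trace $\beta$, since $\beta$ may be discontinuous; with the above ansatz, instead, the Gauss--Green identity $\int_\Omega\chi_E\,\div g\,dx=-\int_\Omega g\cdot D\chi_E-\int_{\dOm}\beta\,\chi_E\,d\cH^n$ still holds for every $E\in BV(\Omega,\{0,1\})$: for $g_0$ this is the definition of $D\chi_E$, while for $(\beta\circ\pi)\eta\,e_{n+1}$ one argues by Fubini in $x_{n+1}$, using that the slices $E^{\hat x}=\{s>0:(\hat x,s)\in E\}$ are $BV\cap L^1$ subsets of $(0,+\infty)$ for a.e.\ $\hat x$, integrating by parts in one dimension on each slice (the boundary term at $0$ producing $-\chi_{E^{\hat x}}(0^+)=-\chi_{\Trace E}(\hat x)$), and recombining through the slicing formula for the $e_{n+1}$-component of $D\chi_E$.

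Since $|g|\le1$, this identity gives $\int_\Omega\chi_E\,\div g\,dx\le P(E,\Omega)-\int_{\dOm}\beta\chi_E\,d\cH^n=\cC(E,\Omega)$ for all admissible $g$; conversely, choosing $g_0$ nearly parallel to $D\chi_E/|D\chi_E|$ on $\{x_{n+1}>\tau\}$ — possible because $|D\chi_E|$ puts no mass on $\dOm$, hence mass $<\epsilon$ on $\{0<x_{n+1}<\tau\}$ for $\tau$ small, so that the middle term is negligible — shows $\cC(E,\Omega)=\sup_g\int_\Omega\chi_E\,\div g\,dx$. Being a supremum of $L^1(\Omega)$-continuous functionals, $\cC(\cdot,\Omega)$ is $L^1(\Omega)$-lower semicontinuous. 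Lower semicontinuity of $\tC(\cdot,\Omega)$ then follows from the coarea identity \eqref{eq:coarea_J}, the (just proved) lower semicontinuity and nonnegativity of $\cC(\cdot,\Omega)$ and $\mathcal{C}_{-\beta}(\cdot,\Omega)$ (note $\|{-}\beta\|_\infty\le1$ as well), and Fatou's lemma, after passing to a subsequence along which $\{u_k>t\}\to\{u>t\}$ and $\{u_k<t\}\to\{u<t\}$ in $L^1(\Omega)$ for a.e.\ $t$.

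\emph{Necessity.} If $\|\beta\|_\infty>1$, then by Lebesgue differentiation there is a ball $\hat B\subset\dOm$ with $\int_{\hat B}\beta\,d\cH^n>\cH^n(\hat B)$ or with $\int_{\hat B}\beta\,d\cH^n<-\cH^n(\hat B)$. In the first case take $E_k=\hat B\times(0,1/k)$, so $E_k\to\emptyset$ in $L^1(\Omega)$, $\Trace E_k=\hat B$, and $\cC(E_k,\Omega)=\cH^n(\hat B)+o(1)-\int_{\hat B}\beta\,d\cH^n\to\int_{\hat B}(1-\beta)\,d\cH^n<0=\cC(\emptyset,\Omega)$. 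In the second case take $E=\hat B\times(0,1)$ and $E_k=\hat B\times(1/k,1)$, so $E_k\to E$ in $L^1(\Omega)$, $\Trace E_k=\emptyset$, $\cC(E_k,\Omega)\to P(E,\Omega)+\cH^n(\hat B)$, whence $\cC(E,\Omega)-\lim_k\cC(E_k,\Omega)=-\int_{\hat B}(1+\beta)\,d\cH^n>0$. In either case $\cC(\cdot,\Omega)$ fails to be lower semicontinuous; since $\tC(\chi_F,\Omega)=\cC(F,\Omega)$, the functions $u_k=\chi_{E_k}$ show the same for $\tC(\cdot,\Omega)$.

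\emph{Expected main obstacle.} The delicate step is producing, in the sufficiency direction, sufficiently many test fields $g$ whose normal trace on $\dOm$ equals the merely $L^\infty$ (possibly discontinuous) datum $\beta$, while retaining $|g|\le1$ and $\div g\in L^\infty$ so that the associated functional is $L^1$-continuous; a naive smoothing $\beta_j\to\beta$ would only furnish functionals dominated by $\cC_{\beta_j}$, and $\sup_j\cC_{\beta_j}=\cC_\beta$ would require $\beta_j\ge\beta$, which cannot be achieved for a general discontinuous $\beta$. This is precisely why the product ansatz $g=g_0+(\beta\circ\pi)\eta\,e_{n+1}$ and the slice-by-slice verification of the Gauss--Green formula are needed.
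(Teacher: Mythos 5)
Your argument is correct, and it takes a genuinely different route from the paper's on both implications. For sufficiency, the paper simply invokes \cite[Lemma 2]{LCAM07} for the lower semicontinuity of $\cC(\cdot,\Omega)$, whereas you give a self-contained duality proof, exhibiting $\cC(\cdot,\Omega)$ as a supremum of $L^1(\Omega)$-continuous linear functionals $E\mapsto\int_\Omega\chi_E\div g\,dx$ over a tailored class of test fields $g=g_0+(\beta\circ\pi)\eta\,e_{n+1}$ with $|g|\le 1$ and $\div g\in L^\infty$; your observation that one cannot use $C^1$ fields whose normal trace equals the (merely $L^\infty$) datum $\beta$, and that the disjoint-support ansatz plus a slice-by-slice Gauss--Green verification circumvents this, is exactly the right point, and the $L^\infty$-bound on $\div g$ is indeed enough for $L^1$-continuity. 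For necessity, the paper uses Lusin's theorem to extract a continuous approximant of $\beta$ and hence an open set on which $\beta$ is essentially above $1+\epsilon$, then builds thin boxes $\hat O\times(0,1/m)\to\emptyset$; you instead use a Lebesgue point of $\{\beta>1\}$ to find a small ball $\hat B$ with $\int_{\hat B}\beta>\cH^n(\hat B)$, which is cleaner, and — unlike the paper, which relegates the case $\beta<-1-\epsilon$ to "can be treated in a similar way" — you make explicit that this case requires a \emph{different} comparison ($E_k=\hat B\times(1/k,1)\to E=\hat B\times(0,1)$, so the trace appears in the limit rather than disappearing), which is a worthwhile clarification since the naive analogue $E_m\to\emptyset$ gives nothing there. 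The deduction of lower semicontinuity of $\tC(\cdot,\Omega)$ from that of $\cC_{\pm\beta}(\cdot,\Omega)$ via the coarea formula \eqref{eq:coarea_J} and Fatou is identical to the paper's. One cosmetic slip: to nearly attain the perimeter you want $g_0\approx -D\chi_E/|D\chi_E|$ (equivalently $g_0\approx\nu_E$) on $\{x_{n+1}>\tau\}$, not $+D\chi_E/|D\chi_E|$; this is a sign convention issue and does not affect the argument.
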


\begin{proof}
%$\Longrightarrow$
Assume that $\|\beta\|_\infty\le1.$
In this case the
lower semicontinuity of $\cC(\cdot,\Omega)$ is proven in \cite[Lemma 2]{LCAM07}.
Let us prove the lower semicontinuity of $\tC(\cdot,\Omega).$ Take $u_k,u\in BV(\Omega)$ such that
$u_k\to u$ in $L^1(\Omega).$ By \eqref{coarea_volume} we may assume that
$\int_{\Omega} |\{u_k<t\}\Delta \{u<t\}|\,dx\to 0$ as  $k\to+\infty$
for a.e. $t\in\R.$ Then using the nonnegativity of summands,
the lower semicontinuity of $\cC(\cdot,\Omega)$ and
Fatou's Lemma in \eqref{eq:coarea_J} we establish
\begin{align*}
\liminf\limits_{k\to+\infty} \tC(u_k,\Omega) \ge &  \liminf\limits_{k\to+\infty}
\int_{-\infty}^0  \mathcal{C}_{-\beta}(\{u_k<t\},\Omega)\,dt
+ \liminf\limits_{k\to+\infty}\int_0^{+\infty}
 \cC( \{u_k>t\},\Omega)\,dt\\
\ge &  \int_{-\infty}^0 \liminf\limits_{k\to+\infty}
\mathcal{C}_{-\beta}(\{u_k<t\},\Omega)\,dt
+ \int_0^{+\infty}  \liminf\limits_{k\to+\infty}
 \cC(\{u_k>t\},\Omega)\,dt\\
\ge&\int_{-\infty}^0  \mathcal{C}_{-\beta}(\{u<t\},\Omega)\,dt
+ \int_0^{+\infty}   \cC(\{u>t\},\Omega)\,dt=\tC(u,\Omega).
\end{align*}

Now assume that $\|\beta\|_\infty>1,$ i.e. the set
$
\{\hat x\in \p\Omega:\,\,|\beta(\hat x)|>1\}
$
has positive $\cH^n$-measure.
Let for some $\epsilon,\delta_0>0$ the set
$\hat A:  = \{\beta>1+\epsilon\}$ satisfy $|\hat A|\ge \delta_0.$
By Lusin's theorem, for any $k>\frac{4\|\beta\|_\infty}{\epsilon\delta_0}$
there exists $\beta_k\in C(\p\Omega)$ such that
$
\cH^n(\{\beta \ne \beta_k\})<\frac1k$ and $\|\beta_k\|_\infty \le\|\beta\|_\infty.
$
Let $k$ be so large that
 $\cH^n(\{\beta_k>1+\epsilon\}) \ge \delta_0/2$  and choose an open
set
$\hat O\subset \{\beta_k>1+\epsilon\}$ of finite perimeter
such that $\delta_0/4\le \cH^n(\hat O)<+\infty.$
Define the sequence of sets $E_m: = \hat O\times (0,\frac1m)\subset \Omega.$
Clearly, $E_m\to\emptyset$ in $L^1(\Omega)$ as $m\to+\infty.$
Then, indicating by $P(\hat O)$ the perimeter of $\hat O$ in $\R^n,$ from the relations
\begin{align*}
\cC(E_m,\Omega) = & \frac1m\,P(\hat O) +\cH^n(\hat O) -\int_{\hat O} \beta d\cH^n\\
 \le &
\frac1m\,P(\hat O) +\cH^n(\hat O) - \int_{\hat O} \beta_k d\cH^n
+\int_{\hat O} |\beta - \beta_k|d\cH^n\\
\le & \frac1m\,P(\hat O) -\epsilon \cH^n(\hat O) + 2\|\beta\|_\infty
\cH^n(\hat O\cap\{\beta\ne \beta_k\}) \le  \frac1m\,P(\hat O) -\frac{\epsilon\delta_0}{4},
\end{align*}
we establish
$$
\liminf\limits_{m\to+\infty} \cC(E_m,\Omega) \le  -\frac{\epsilon\delta_0}{4} < 0 =
\cC(\emptyset,\Omega).
$$
Since $\tC(\chi_E,\Omega) = \cC(E,\Omega),$ one has also
$
\liminf\limits_{m\to+\infty} \tC(\chi_{E_m},\Omega) < 0 =
\tC(0,\Omega).
$
Hence  $\cC(\cdot,\Omega)$ and $\tC(\cdot,\Omega)$ are not $L^1(\Omega)$-lower
semicontinuous.

Finally, the case $\hat B:  = \{\beta<-1-\epsilon\}$ is of a positive 
measure can be treated in a similar way.
\end{proof}

\begin{remark}\label{rem:lsc_cap_f_omega_bounded}
If $\Omega$ is an arbitrary bounded open set with Lipschitz boundary and $\|\beta\|_\infty\le1$, 
then the
lower semicontinuity of $\cC(\cdot,\Omega)$ is a consequence of \cite[Theorem 3.4]{ADT:2015}.
In this case $\cC(\cdot,\Omega)$ is bounded from below by $-\cH^n(\p\Omega).$
Hence again Fatou's lemma and \eqref{eq:coarea_J} yield
lower semicontinuity of $\tC(\cdot,\Omega).$

\end{remark}

%%%%%%%%%%%%%%%%%%%%%%%%%%%%%%%%%%%%%%%%%%%%%%%%%%%%%%%%%%%%%%%%%%%%%%%%%%%%%
\section{Capillary Almgren-Taylor-Wang-type functional}\label{sec:Almgren_Taylor_Wang_functional}
%%%%%%%%%%%%%%%%%%%%%%%%%%%%%%%%%%%%%%%%%%%%%%%%%%%%%%%%%%%%%%%%%%%%%%%%%%%%%

In the sequel, for a given nonempty set $F\subseteq \Omega,$
$\dist_F$ stands for the distance function from the boundary of $\p F$ in $\Omega:$
$$\dist_F(x):=\distance(x,\Omega\cap \p F).$$
The function
$$\sdist_F(x):=
\begin{cases}
- \dist_F(x) &\text{if}\,\,x\in F,\\
\dist_F(x) &\text{if}\,\, x\in \Omega\setminus F,\\
%\dist(x,\p F), &  x\in \R^{n+1}\setminus \Omega
\end{cases}
$$
is called the {\it signed distance function} from  $\p F$ in $\Omega$
negative inside $F.$
The distance from the empty set is assumed to be equal to $+\infty.$

Notice that for $E,F\subseteq\Omega,$ $F\ne\emptyset,$
\begin{equation*}%\label{eq:prop_dist}
\begin{aligned}
\int_{E\Delta F} \dist_F\,dx=
\int_{E\setminus F} \sdist_F\,dx -
\int_{F\setminus E} \sdist_F\,dx
= \int_{E} \sdist_F\,dx -
\int_{F} \sdist_F\,dx,
\end{aligned}
\end{equation*}
provided $\displaystyle\int_{E\cap F}\dist_Fdx < +\infty.$
Moreover, we assume  $\displaystyle\int_{E\Delta F}\dist_Fdx :=0$
whenever $|E\Delta F|=0.$

Given
$\beta\in L^\infty(\p\Omega),$
$E_0\in BV(\Omega,\{0,1\})$
and
$\lambda\ge1,$ recalling the definition of
$\cC(\cdot,\Omega)$ in \eqref{define:capil},
we define the {\it capillary Almgren-Taylor-Wang-type}
functional
$\cA(\cdot,E_0,\lambda):
BV(\Omega,\{0,1\})\to [-\infty,+\infty]$
with contact angle $\beta$, as
\begin{equation}\label{eq:ATW}
\cA(E,E_0,\lambda):= \cC(E,\Omega) + \lambda\int_{E\Delta E_0} \dist_{E_0}\,dx,
\end{equation}
so that
\begin{equation}\label{representing_ATW}
\begin{aligned}
\cA(E,E_0,\lambda) = & P(E,\Omega)+\lambda\int_{E} \sdist_{E_0} \,dx -
\int_{\p \Omega} \beta\, \chi_E \,d\cH^n -
\lambda\int_{E_0} \sdist_{E_0}\,dx
%\\
%= &P(E)+\lambda\int_{E} \sdist_{E_0} \,dx  -
%\int_{\p \Omega} (1+\beta)\, \chi_E \,d\cH^n-
%\lambda\int_{E_0} \sdist_{E_0}\,dx,
\end{aligned}
\end{equation}
whenever $\displaystyle \int_{E\cap E_0}\dist_{E_0}dx < +\infty.$

\subsection{Existence of minimizers of the functional
$\cA(\cdot,E_0,\lambda)$} \label{subsec:exist_min_ATW}

We always suppose that $\lambda\ge1$ and
in this section we assume that
\begin{equation}\label{hyp:main1}
\begin{cases}
E_0\in BV(\Omega,\{0,1\}) {\rm ~is ~nonempty ~and~ bounded~},
\\
\beta\in L^\infty(\dOm) {\rm ~and}~
\exists
\kappa \in (0,\frac{1}{2}] :
-1\le \beta \le 1-2\kappa ~\text{$\cH^n$-a.e on}~\p\Omega.
\end{cases}
\end{equation}
Hence, there exists a cylinder $\cyl{D}{H}=\hat B_D\times(0,H)$ containing $E_0$
whose basis is an open ball $\hat B_D\subset\R^n$  of radius $D>0$ and height
\begin{equation*} 
H=1+\max\{x_{n+1}:\,\,x=(x',x_{n+1})\in \overline{E_0}\}.
\end{equation*}

Define
\begin{equation}\label{eq:universal constant}
R_0:=R_0(n,\kappa,E_0) = D+1+\max\Big\{8^{n^2+n+1}\,
\left(\frac{P(E_0)}{\kappa}\right)^{\frac{n+1}{n}},
\,4\mu(\kappa,n)\Big\},
\end{equation}
 where
$
\mu(\kappa,n) = \left(1/\kappa +2\right)^{\frac{n+1}{n}}.
$
The proof of the next result is essentially postponed 
to   \ref{sec:error_est}, since the main 
idea does not differ too much from \cite{LCAM07}.

\begin{theorem}[\textbf{Existence of minimizers and uniform bound}]
\label{teo: unconstrained minimizer}
Suppose that  \eqref{hyp:main1} holds. Then the minimum
problem
\begin{equation}\label{min_problem}
\inf_{E \in BV(\Omega,\{0,1\})} \cA(E,E_0,\lambda)
\end{equation}
has a solution $E_\lambda$. Moreover,
 any minimizer is contained in
$\cyl{R_0}{H}$.
\end{theorem}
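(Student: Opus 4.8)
The plan is to combine the direct method with a truncation argument using the comparison Lemma~\ref{muhim_corollary} to get the uniform bound. First I would address existence. By \eqref{hyp:main1} and Proposition~\ref{prop:coercivity_of_the_capillarity_functional} the functional $\cC(\cdot,\Omega)$ is bounded below by $\kappa P(E)\ge0$, and since $E_0$ is bounded, testing the minimum problem \eqref{min_problem} with $E=E_0$ gives a finite value, so the infimum is finite and nonnegative; in particular it is not $-\infty$. Take a minimizing sequence $\{E_k\}$. The difficulty here is exactly the one flagged in the introduction: $\Omega$ is unbounded, so $BV$-compactness does not apply directly. The remedy is to first replace each $E_k$ by $E_k\cap \cyl{R}{H}$ for a suitable large fixed $R$ (independent of $k$): I would show that this truncation does not increase $\cA(\cdot,E_0,\lambda)$, so the truncated sets still form a minimizing sequence, now all contained in a fixed bounded cylinder, and then apply the usual compactness of $BV$ on bounded Lipschitz domains together with the $L^1$-lower semicontinuity of $\cA(\cdot,E_0,\lambda)$ (which holds by Lemma~\ref{lem: lsc_of_F0}, since $\|\beta\|_\infty\le 1-2\kappa<1$, and by continuity of the volume term under $L^1$-convergence when the sets stay in a fixed bounded region where $\dist_{E_0}$ is bounded). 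This yields a minimizer $E_\lambda$.

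The heart of the matter, and also the uniform bound assertion, is therefore the truncation estimate: for any competitor $E$ with $\cA(E,E_0,\lambda)<+\infty$ one has $\cA(E\cap \cyl{R_0}{H},E_0,\lambda)\le \cA(E,E_0,\lambda)$, with strict inequality unless $|E\setminus \cyl{R_0}{H}|=0$. I would split the truncation into the horizontal and vertical directions. For the vertical truncation at height $H$: since $E_0\subseteq \cyl{D}{H}$ and $H$ exceeds the top of $E_0$, the half-space $\{x_{n+1}<H\}$ is convex with inward-pointing normal, and intersecting with it can only decrease the perimeter in $\Omega$ by the convex comparison (Lemma~\ref{muhim_corollary} applied in the form of the footnoted statement, or directly \eqref{integ_by_parts}); it does not change the trace on $\p\Omega$, hence leaves $\int_{\p\Omega}\beta\chi_E\,d\cH^n$ untouched; and it strictly decreases the volume term $\lambda\int_{E\Delta E_0}\dist_{E_0}\,dx$ whenever there is mass above height $H$, because there $\dist_{E_0}\ge \distance(x,\cyl{D}{H})>0$ is strictly positive. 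So we may assume $E\subseteq \Omega_H=\R^n\times(0,H)$ from the start.

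For the horizontal truncation I would argue by a slicing/contradiction scheme in the spirit of \cite{LCAM07}: suppose a minimizer (or minimizing-sequence element) $E\subseteq\Omega_H$ has points far from the cylinder $\cyl{D}{H}$ that contains $E_0$. Writing $m(r)=|E\setminus \cyl{r}{H}|$ for $r\ge D$, comparing $E$ with $E\cap \cyl{r}{H}$ and using that for $x\notin \cyl{r}{H}$ with $r\ge D$ one has $\dist_{E_0}(x)=\distance(x,\Omega\cap\p E_0)\ge r-D$, the minimality (or near-minimality) gives, via \eqref{eq:lower_bound} of Lemma~\ref{lem:Lower_bound_of_F0} applied with $A=\Omega\setminus\overline{\cyl{r}{H}}$ to control the capillary term by $\kappa$ times the perimeter created, plus the relative isoperimetric inequality inside $\Omega_H$, a differential inequality of the form $\lambda(r-D)\,(-m'(r))\gtrsim$ (a constant times $m(r)^{n/(n+1)}$) for a.e.\ $r$, together with $-m'(r)\ge c\,m(r)^{n/(n+1)}$ being the isoperimetric input and the ``cut'' term $\cH^n(\p^*\!E\cap\{|\hat x|=r\})=-m'(r)$ being exactly what is added to the perimeter. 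This ODE inequality forces $m(r)=0$ for $r$ beyond an explicit threshold; tracking constants yields precisely the radius $R_0$ in \eqref{eq:universal constant} (the exponents $\tfrac{n+1}{n}$ there are the signature of integrating $m^{-n/(n+1)}m'\le -\mathrm{const}$). The main obstacle is organizing this horizontal De Giorgi-type iteration cleanly while keeping the dependence of the constants only on $n,\kappa$ and $P(E_0),D,H$ — i.e.\ uniform over all minimizers and over $\lambda\ge1$; this is why the excerpt defers the full computation to the appendix, and I would do the same, carrying out the ODE argument there and here only reducing existence to the bounded-domain case via the truncation lemma.
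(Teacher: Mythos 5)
Your high-level plan — truncate to reduce to a bounded domain, apply $BV$-compactness and lower semicontinuity, and use a slicing/ODE comparison to get the explicit radius — is the right shape, and the vertical cut at a height slightly above $\overline{E_0}$ via the convex comparison of Lemma~\ref{muhim_corollary} is exactly how the paper handles that direction (Step~1 of Lemma~\ref{lem: minmizers in cylindr}). The deferral of the hard horizontal estimate to the appendix also matches the paper's organization. But there is a genuine logical gap in the way you set up the horizontal cut.

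You state the truncation inequality $\cA(E\cap \cyl{R_0}{H},E_0,\lambda)\le\cA(E,E_0,\lambda)$ \emph{for any competitor $E$ with $\cA(E,E_0,\lambda)<+\infty$}, and you use it to bring a minimizing sequence into a fixed bounded cylinder so that compactness applies. But the argument you then sketch for this inequality is a differential inequality for $m(r)=|E\setminus\cyl{r}{H}|$ obtained by \emph{comparing $E$ with $E\cap\cyl{r}{H}$ using (near-)minimality}. That is circular: you need the truncation to hold for arbitrary near-minimizers in order to get compactness and hence a minimizer, yet your justification of the truncation presupposes you are already looking at a minimizer. Cutting an arbitrary finite-energy $E$ at the fixed cylinder $\p\cyl{R_0}{H}$ adds the new perimeter $\cH^n(E\cap\p\cyl{R_0}{H}\cap\Omega)$, and without minimality there is no a priori control of this term against what is removed. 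The paper sidesteps this precisely by \emph{not} truncating a minimizing sequence: for each fixed $R$ it first obtains a minimizer $E^R$ in $BV(\cyl{R}{\height},\{0,1\})$ (standard compactness, no truncation needed), then applies the ``good radius'' Lemma~\ref{lem:key_lemma1} — which does use the minimality of $E^R$ — to replace $E^R$ by $E^R\cap\cyl{t_R}{\height}$ with $t_R\le\cR_0$ uniformly in $R$, and finally lets $R\to+\infty$ using Lemma~\ref{lem: minmizers in cylindr}, whose Step~2 handles near-minimizers by choosing a large, $\epsilon$-dependent (and a priori unbounded) radius $R_\epsilon$ where the slice $\cH^n(F_\epsilon\cap\p\cyl{R_\epsilon}{\height})$ is small.

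A second, smaller inaccuracy: your claim that ``tracking constants yields precisely the radius $R_0$ in \eqref{eq:universal constant}'' from the ODE $m^{-n/(n+1)}m'\le -\mathrm{const}$ does not match. Integrating that inequality gives a threshold of order $|E|^{1/(n+1)}\sim\mathfrak{a}^{1/n}$, not $\mathfrak{a}^{(n+1)/n}$; the exponent $\tfrac{n+1}{n}$ and the $8^{n^2+n+1}$ factor in \eqref{eq:universal constant} actually come from the discrete De Giorgi-type dyadic iteration in Lemma~\ref{lem:key_lemma1} (the bound $8^{n(n+1)}v_0/h_0\le\tfrac12$ on the first-generation volume $v_0\le\mathfrak{a}^{(n+1)/n}$), not from a single integration of the ODE. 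So the ODE argument alone proves boundedness of a minimizer, but does not reproduce the constant stated in the theorem; the paper uses the ODE only to show a minimizer is bounded by \emph{some} radius, then invokes Lemma~\ref{lem:key_lemma1} again to improve this to $\cR_0$. To repair your proposal you would need either to actually prove the unconditional truncation inequality at a fixed radius (which is not at all obvious and which the paper never claims), or to restructure along the paper's lines by working first in bounded cylinders.
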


\begin{proof}
Let $f=\lambda \sdist_{E_0}$ and 
$$
\cV:BV(\Omega,\{0,1\})\to(-\infty,+\infty],\quad  \cV(E):=\int_E fdx.
$$
Then $\cV$ satisfies Hypothesis \ref{hyp:2} and by Remark \ref{rem:muhim_coef}
$\cR_0\le R_0.$
Now the proof %of Theorem \ref{teo: unconstrained minimizer}
directly follows from Theorem  \ref{teo: unconstrained minimizer1}.
\end{proof}

\begin{remark}
If $E_0=\emptyset,$ then \eqref{min_problem} has a unique
solution $E_\lambda=\emptyset.$
Moreover, for some choices of
$\lambda\ge1$ and $\emptyset\ne E_0\in BV(\Omega,\{0,1\}),$
the empty set solves  \eqref{min_problem}. For example, let
$B_\rho$ be the ball centered at $x$ such that $x_{n+1}\ge 4\rho+4.$
If $\lambda\rho\le n,$ then  as in \cite{Cham:04,BChN:05}, one can show that
$E_\lambda=\emptyset$ is the unique minimizer of $\cA(\cdot,B_\rho,\lambda).$
\end{remark}

\begin{remark}\label{rem:unconst_min}
Let $F$ minimize  $\cA(\cdot,E_0,\lambda)$
in $BV(\cyl{R_0}{H},\{0,1\}).$ Then $F$ is an
unconstrained minimizer, i.e.
\begin{equation}\label{min_in_cyilindrin}
\cA(F,E_0,\lambda) = \min\limits_{E\in BV(\Omega,\{0,1\})} \cA(E,E_0,\lambda).
\end{equation}
Indeed, let $E_\lambda$ be any minimizer of $\cA(\cdot,E_0,\lambda).$ Clearly,
$\cA(F,E_0,\lambda) \ge \cA(E_\lambda,E_0,\lambda).$ On the other hand, by Theorem
\ref{teo: unconstrained minimizer} $E_\lambda\subseteq \cyl{R_0}{H}$ and by minimality of
$F$ in $\cyl{R_0}{H}$ we have $\cA(F,E_0,\lambda) \le \cA(E_\lambda,E_0,\lambda),$
which implies \eqref{min_in_cyilindrin}.
\end{remark}

Recalling Remark \ref{emtyset_minimizer} and definition 
\eqref{coveringset} of $\cE(E_0)$
we have also the following result.

\begin{proposition}[\bf Existence of 
constrained minimizers of $\cC$]\label{prop:minimizer_of_F_0}
Under assumptions  \eqref{hyp:main1} the constrained minimum problem
\begin{equation}\label{const.min.capil}
\inf\limits_{E\in BV(\Omega,\{0,1\}),\,E\in \cE(E_0)} \cC(E,\Omega)
\end{equation}
has a solution. In addition,  any minimizer $E^+$ 
satisfies $E^+\subseteq \cyl{R_0}{H},$
where $R_0$ is given by \eqref{eq:universal constant}, and
$E^+$ is  also a solution of
$$
\inf\limits_{E\in BV(\Omega,\{0,1\}),\,E\in \cE(E^+)} \cC(E,\Omega).
$$
\end{proposition}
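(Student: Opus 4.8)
The plan is to reduce the constrained minimization problem \eqref{const.min.capil} to the unconstrained theory already developed. Observe first that for $E\in\cE(E_0)$ one has $E\supseteq E_0$, so $E\Delta E_0 = E\setminus E_0$ and $\mathrm{dist}_{E_0}=\sdist_{E_0}$ on $E\setminus E_0$; moreover $\sdist_{E_0}\ge 0$ there. Hence for such $E$,
\begin{equation*}
\cA(E,E_0,\lambda) = \cC(E,\Omega) + \lambda\int_{E\setminus E_0}\sdist_{E_0}\,dx \ge \cC(E,\Omega),
\end{equation*}
with equality precisely when $|E\setminus E_0|=0$, i.e.\ $E=E_0$. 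So $\cA(\cdot,E_0,\lambda)$ and $\cC(\cdot,\Omega)$ do \emph{not} have the same minimizers over $\cE(E_0)$, and a direct substitution is not quite enough. Instead I would argue by the direct method, using the a priori bound from Theorem~\ref{teo: unconstrained minimizer} as the source of compactness.

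The key steps, in order: (i) Take a minimizing sequence $\{E_k\}\subset\cE(E_0)$ for \eqref{const.min.capil}; since $\emptyset$ is admissible only trivially, note $\cC(E_k,\Omega)$ is bounded, hence by coercivity \eqref{eq:bound_F-0} (using $\kappa>0$ from \eqref{hyp:main1}) the perimeters $P(E_k)$ are bounded. (ii) To get $L^1_{\loc}$-compactness I need a uniform bound on the ``spread'' of $E_k$; here I invoke the truncation argument underlying Theorem~\ref{teo: unconstrained minimizer}: replacing $E_k$ by $E_k\cap\cyl{R_0}{H}$ does not increase $\cC(\cdot,\Omega)$ — this is the content of Remark~\ref{emtyset_minimizer} / the comparison with convex sets from \cite[page 216]{LAln} applied slicewise — and the modified set still contains $E_0\subseteq\cyl{D}{H}\subseteq\cyl{R_0}{H}$, hence still lies in $\cE(E_0)$. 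So WLOG $E_k\subseteq\cyl{R_0}{H}$ for all $k$. (iii) Now $\{E_k\}$ has uniformly bounded perimeter and support in a fixed bounded set, so by $BV$-compactness a subsequence converges in $L^1(\Omega)$ to some $E^+$; since $\cE(E_0)$ is $L^1$-closed, $E^+\in\cE(E_0)$, and since $\|\beta\|_\infty\le 1-2\kappa<1$, Lemma~\ref{lem: lsc_of_F0} gives $\cC(E^+,\Omega)\le\liminf_k\cC(E_k,\Omega)$, so $E^+$ is a minimizer. (iv) The uniform bound $E^+\subseteq\cyl{R_0}{H}$ for an \emph{arbitrary} minimizer follows the same way: if $E^+$ minimizes, so does $E^+\cap\cyl{R_0}{H}$, and if $|E^+\setminus\cyl{R_0}{H}|>0$ the comparison inequality is strict, contradicting minimality. (v) For the last assertion, let $E^+$ be such a minimizer and take any $G\in\cE(E^+)$; then $G\in\cE(E_0)$ too, so $\cC(G,\Omega)\ge\cC(E^+,\Omega)$ by the first part, which is exactly minimality of $E^+$ over $\cE(E^+)$.

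The main obstacle is step (ii): making precise that truncation to a fixed cylinder does not increase $\cC(\cdot,\Omega)$ while preserving membership in $\cE(E_0)$. The perimeter part is handled by the slicewise convex-comparison lemma (Lemma~\ref{muhim_corollary} in spirit, or \cite[page 216]{LAln}), but one must also check the boundary term $\int_{\p\Omega}\beta\chi_E\,d\cH^n$ behaves correctly — here it helps that truncating in the \emph{vertical} variable does not change the trace on $\p\Omega=\R^n\times\{0\}$ at all (since $H$ exceeds the height of $E_0$ only, one truncates in the horizontal ball $\hat B_{R_0}$, and this does change the trace, so one should truncate in $\hat B_{R_0}$ using that $\beta\ge -1$ so the boundary term can only help). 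In fact this is precisely the computation already carried out in the proof of Theorem~\ref{teo: unconstrained minimizer1} in \ref{sec:error_est} with $\cV\equiv 0$; so rather than redo it I would simply note that the case $\lambda$-term $\equiv 0$ of that theorem yields both existence and the bound $R_0$ for \eqref{const.min.capil} directly, and devote the written proof mainly to steps (i), (iv)-style strictness, and (v).
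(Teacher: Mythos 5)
Your step (v) is correct and coincides with the paper's last paragraph, and the overall plan of reducing to the unconstrained framework of Appendix~\ref{sec:error_est} is the right one, but the way you carry it out has two real gaps.

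First, steps (ii) and (iv) hinge on the assertion that replacing a competitor $E$ by $E\cap\cyl{R_0}{H}$ cannot increase $\cC(\cdot,\Omega)$; this is false once $\beta$ is allowed to be positive. Cutting along the lateral boundary $\Omega\cap\p\cyl{R_0}{H}$ adds the cut $\cH^n(E\cap\Omega\cap\p\cyl{R_0}{H})$ to the relative perimeter, and for $\beta>0$ dropping the part of the wetted trace outside $\hat B_{R_0}$ \emph{increases} $-\int_{\p\Omega}\beta\chi_E$, so the two effects need not be offset by the perimeter lost outside. A concrete counterexample with $n=1$, $\beta\equiv\beta_0\in(0,1)$: take $E$ to be the trapezoid $\{(x,y):\,|x|<R_0+\epsilon(1-y/h),\,0<y<h\}$ with $h:=H-1$. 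Then
$$
\cC(E\cap\cyl{R_0}{H},\Omega)-\cC(E,\Omega)=2\bigl(h-\sqrt{h^2+\epsilon^2}\bigr)+2\beta_0\epsilon>0
$$
for small $\epsilon>0$, and one can still arrange $E_0\subset E\cap\cyl{R_0}{H}$ so $E\in\cE(E_0)$. So a fixed-radius cut is not a valid comparison, and the strictness argument of your step (iv) breaks for the same reason. The paper avoids this by never cutting at a fixed radius: Lemma~\ref{lem:key_lemma1} produces a \emph{good} radius $t_R\in[r+1,\cR_0]$ at which the cut has $\cH^n$-measure zero, and that lemma uses the minimality of $E^R$ together with a dyadic slicing estimate, not just the convex comparison of \cite[page~216]{LAln}.

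Second, the proposed shortcut of applying Theorem~\ref{teo: unconstrained minimizer1} ``with $\cV\equiv 0$'' is the wrong reduction: with $\cV\equiv 0$ that theorem governs the unconstrained problem $\inf_E\cC(E,\Omega)$, whose unique minimizer is $\emptyset$ by Remark~\ref{emtyset_minimizer}, not the constrained problem~\eqref{const.min.capil}. The paper's actual choice is $\cV$ equal to the indicator of the constraint set, i.e.\ $\cV(E)=0$ if $E\in\cE(E_0)$ and $\cV(E)=+\infty$ otherwise, as in~\eqref{example_for_V}. This $\cV$ satisfies Hypothesis~\ref{hyp:2}: (a) holds since $E_0\subset\cyl{D}{H}$; (b) and (c) hold because each truncation set in those items still contains $E_0$ (the removed regions are disjoint from $\cyl{D}{H}$), so membership in $\cE(E_0)$ is preserved; (d) holds because $\cE(E_0)$ is $L^1(\Omega)$-closed. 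With this $\cV$ one has $\inf\cV=0$ and $\kappa\mathfrak{a}\le\cG(E_0)=\cC(E_0,\Omega)\le P(E_0)$, so $\cR_0\le R_0$; Theorem~\ref{teo: unconstrained minimizer1} then yields both existence and $E^+\subseteq\cyl{R_0}{H}$ at once, which is exactly the paper's argument.
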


\begin{proof}
Set
\begin{equation}\label{example_for_V}
\cV:BV(\Omega,\{0,1\})\to[0,+\infty],\quad 
\cV(E):=
\begin{cases}
0 & \text{if}\,\, E\in \cE(E_0),\\
+\infty & \text{if}\,\,   E\in BV(\Omega,\{0,1\})\setminus \cE(E_0).
\end{cases} 
\end{equation}
Then $\cV$ satisfies Hypothesis \ref{hyp:2} and
$\cR_0\le R_0.$ Now existence of a minimizer $E^+$
of $\cC(\cdot,\Omega)$ in $\cE(E_0)$ and the inclusion
$E^+\subseteq \cyl{R_0}{H}$ follow from Theorem 
\ref{teo: unconstrained minimizer1}.
To show  the last statement we observe that the inclusion 
$E_0\subseteq E^+$ implies $\cE(E^+)\subseteq \cE(E_0).$ 
Hence the minimality
of $E^+$ yields the inequality 
$\cC(E^+,\Omega)\le \cC(E,\Omega)$ for any $E\in \cE(E^+).$
\end{proof}

Solutions of \eqref{const.min.capil} will be called constrained
minimizers of $\cC(\cdot,\Omega)$ in $\cE(E_0).$

\begin{example}\label{exa:decreas}
Suppose that $E_0\subset\Omega$  is a closed convex set so that
$\nu_{E_0} \cdot e_{n+1}\ge 0$ $\cH^n$-a.e. on $\Omega\cap \p E_0.$
Then for every $\beta\in L^\infty(\p\Omega,[-1,0])$ the set $E_0$ is a 
constrained  minimizer of $\cC(\cdot,\Omega)$ in $\cE(E_0).$
Indeed, by Lemma \ref{muhim_corollary} 
$P(E_0,\Omega)\le P(E,\Omega)$ for all $E\in\cE(E_0),$ therefore 
$$
\cC(E,\Omega)-\cC(E_0,\Omega) = P(E,\Omega) - P(E_0,\Omega) +
\int_{\p\Omega} (-\beta) \chi_{E\setminus E_0}d\cH^n\ge0.
$$
\end{example}
The following lemma shows the behaviour of $E_\lambda$ as $\lambda\to+\infty.$
\begin{lemma}[\bf Asymptotics of $E_\lambda$ as time goes to $0^+$]\label{lem:behav_big_lambda}
Assume    \eqref{hyp:main1}  and $|\overline{E_0}\setminus E_0|=0.$
Then  any minimizer
$E_\lambda$
satisfies:
\begin{itemize}
\item[a)] $\lim\limits_{\lambda\to+\infty} |E_\lambda\Delta E_0| =0,$
\item[b)] $\lim\limits_{\lambda\to+\infty} \cC(E_\lambda,\Omega) = \cC(E_0,\Omega),$
\item[c)] $\lim\limits_{\lambda\to+\infty} \lambda \int_{E_\lambda\Delta E_0} \dist_{E_0}\,dx =0.$
\end{itemize}
\end{lemma}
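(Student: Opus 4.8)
The plan is to exploit that $E_\lambda$ is a minimizer competing against $E_0$ itself. First I would write down the basic minimality inequality: since $E_\lambda$ minimizes $\cA(\cdot,E_0,\lambda)$ and $E_0\in BV(\Omega,\{0,1\})$ is an admissible competitor with $\int_{E_0\Delta E_0}\dist_{E_0}\,dx=0$, we get
\begin{equation*}
\cC(E_\lambda,\Omega)+\lambda\int_{E_\lambda\Delta E_0}\dist_{E_0}\,dx=\cA(E_\lambda,E_0,\lambda)\le \cA(E_0,E_0,\lambda)=\cC(E_0,\Omega).
\end{equation*}
Since $\|\beta\|_\infty\le 1-2\kappa<1$, Proposition \ref{prop:coercivity_of_the_capillarity_functional} gives $\cC(E_\lambda,\Omega)\ge\kappa P(E_\lambda)\ge 0$, so both terms on the left are nonnegative and each is bounded by $\cC(E_0,\Omega)$. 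In particular $\lambda\int_{E_\lambda\Delta E_0}\dist_{E_0}\,dx\le\cC(E_0,\Omega)$ for all $\lambda\ge 1$, and $P(E_\lambda)\le\cC(E_0,\Omega)/\kappa$ uniformly in $\lambda$. By Theorem \ref{teo: unconstrained minimizer} all the $E_\lambda$ lie in the fixed cylinder $\cyl{R_0}{H}$, so $\{\chi_{E_\lambda}\}_{\lambda\ge 1}$ has uniformly bounded $BV(\cyl{R_0}{H})$-norm and we may extract, along any sequence $\lambda_j\to+\infty$, an $L^1$-convergent subsequence $\chi_{E_{\lambda_j}}\to\chi_{E_\infty}$ with $E_\infty\in BV(\Omega,\{0,1\})$, $E_\infty\subseteq\cyl{R_0}{H}$.

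Next I would identify $E_\infty$ with $E_0$, which immediately yields (a). From $\lambda\int_{E_\lambda\Delta E_0}\dist_{E_0}\,dx\le\cC(E_0,\Omega)$ we get $\int_{E_\lambda\Delta E_0}\dist_{E_0}\,dx\to 0$, so $\dist_{E_0}=0$ a.e.\ on $E_\infty\Delta E_0$; but $\dist_{E_0}>0$ a.e.\ on $\Omega\setminus(\Omega\cap\p E_0)$, i.e.\ $\dist_{E_0}>0$ a.e.\ on $\Omega$ (the set $\Omega\cap\p E_0$ is $\cH^{n+1}$-null, and here the hypothesis $|\overline{E_0}\setminus E_0|=0$ ensures $E_0$ has negligible topological boundary relative to itself so that $\dist_{E_0}>0$ Lebesgue-a.e.). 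Hence $|E_\infty\Delta E_0|=0$, i.e.\ $E_\infty=E_0$. Since the limit is the same for every subsequence, the whole family converges: $\lim_{\lambda\to+\infty}|E_\lambda\Delta E_0|=0$, proving (a); and then $\lambda\int_{E_\lambda\Delta E_0}\dist_{E_0}\,dx\to 0$ is the statement that the left-hand side (already known to be bounded by $\cC(E_0,\Omega)$) actually tends to $0$ — this needs a little more, see below, but first (b).

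For (b): by $L^1$-lower semicontinuity of $\cC(\cdot,\Omega)$ (Lemma \ref{lem: lsc_of_F0}, valid since $\|\beta\|_\infty\le 1$) and (a),
\begin{equation*}
\cC(E_0,\Omega)\le\liminf_{\lambda\to+\infty}\cC(E_\lambda,\Omega);
\end{equation*}
on the other hand the minimality inequality gives $\cC(E_\lambda,\Omega)\le\cC(E_0,\Omega)$ for every $\lambda$, so $\limsup_{\lambda\to+\infty}\cC(E_\lambda,\Omega)\le\cC(E_0,\Omega)$, and (b) follows. Finally (c) is obtained by combining (b) with the minimality identity rearranged as
\begin{equation*}
\lambda\int_{E_\lambda\Delta E_0}\dist_{E_0}\,dx=\cA(E_\lambda,E_0,\lambda)-\cC(E_\lambda,\Omega)\le\cC(E_0,\Omega)-\cC(E_\lambda,\Omega),
\end{equation*}
whose right-hand side tends to $0$ by (b), while the left-hand side is $\ge 0$; hence it tends to $0$.

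**Main obstacle.** The only genuinely delicate point is the argument that $\int_{E_\lambda\Delta E_0}\dist_{E_0}\,dx\to 0$ forces $|E_\infty\Delta E_0|=0$: one must be sure that $\dist_{E_0}>0$ Lebesgue-almost everywhere on $\Omega$, equivalently that $\Omega\cap\p E_0$ is $\cH^{n+1}$-negligible. This is exactly where the extra hypothesis $|\overline{E_0}\setminus E_0|=0$ enters; combined with the normalization that $E_0$ consists of its points of density one, it guarantees $|\Omega\cap\p E_0|=0$ (otherwise $\p E_0$ could carry positive volume and $\dist_{E_0}$ would vanish on a non-null set disjoint from $E_0$, breaking the implication). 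Everything else is soft: a compactness extraction, lower semicontinuity, and rearranging one inequality.
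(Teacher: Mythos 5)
Your proof follows essentially the same route as the paper's: use the minimality inequality $\cA(E_\lambda,E_0,\lambda)\le\cA(E_0,E_0,\lambda)=\cC(E_0,\Omega)$ together with coercivity to get uniform bounds, extract an $L^1$-convergent subsequence $E_{\lambda_j}\to E_\infty$ inside the fixed cylinder $\cyl{R_0}{H}$, identify $E_\infty=E_0$ via $\int_{E_\infty\Delta E_0}\dist_{E_0}\,dx=0$ and the hypothesis $|\overline{E_0}\setminus E_0|=0$, and then obtain (b) by lower semicontinuity and (c) by rearranging the minimality inequality and applying (b). Your justification of the identification step (arguing $\dist_{E_0}>0$ Lebesgue-a.e. in $\Omega$) is just a more explicit phrasing of what the paper does tersely with the same hypothesis, so the two arguments are the same.
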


\begin{proof}
a) We have
$$
\kappa P(E_\lambda) \le\cA(E_\lambda,E_0,\lambda)  \le 
\cA(E_0,E_0,\lambda) = \cC(E_0,\Omega) \le P(E_0).
$$
Moreover, from $\cA(E_\lambda,E_0,\lambda)\le P(E_0) $
and \eqref{eq:proj_onto_boundary} we get
$
\lambda\int_{E_\lambda\Delta E_0} \dist_{E_0}\,dx \le P(E_0),
$
hence
\begin{equation}\label{muha}
\lim\limits_{\lambda\to+\infty} \int_{E_\lambda\Delta E_0} \dist_{E_0}\,dx=0.
\end{equation}

Recall from Theorem \ref{teo: unconstrained minimizer} that $E_\lambda\subseteq \cyl{R_0}{H}$ for all $\lambda\ge1.$
Hence, by compactness,  from every diverging
sequence $\{\lambda_i\}$
 we can select a subsequence $\{\lambda_{i_k}\}$ such that
$$E_{\lambda_{i_k}}\to E_\infty\quad \text{in}\,\, L^1(\Omega)$$
for some $E_\infty\in BV(\cyl{R_0}{H},\{0,1\}).$
{}From \eqref{muha}
we deduce that  $\int_{E_\infty\Delta E_0} \dist_{E_0}
\,dx = 0,$ and thus, since
$\dist_{E_0}\ge0$
and 	
by assumption  $|\overline{E_0}\setminus E_0|=0,$ we get
$|E_\infty\Delta E_0|=0.$ Now arbitrariness of $\{\lambda_j\}$ implies a).

b) Clearly, $\cC(E_\lambda,\Omega)\le \cA(E_\lambda,E_0,\lambda)
\le \cC(E_0,\Omega)$ for all $\lambda\ge1.$
Then by a) and by the $L^1(\Omega)$-lower semicontinuity of $\cC(\cdot,\Omega)$
(Lemma \ref{lem: lsc_of_F0}) we establish
$$\cC(E_0,\Omega) \le \liminf\limits_{\lambda\to+\infty} \cC(E_\lambda,\Omega) \le
 \limsup\limits_{\lambda\to+\infty} \cC(E_\lambda,\Omega) \le
\cC(E_0,\Omega),$$
and b) follows.

c) follows from b) and
nonnegativity of  $\lambda \int_{E_\lambda\Delta E_0} \dist_{E_0}\,dx,$
since
$$
\limsup\limits_{\lambda\to+\infty} \lambda \int_{E_\lambda\Delta E_0} \dist_{E_0}\,dx \le
\lim\limits_{\lambda\to+\infty} [\cC(E_0,\Omega) - \cC(E_\lambda,\Omega)] = 0.
$$
\end{proof}

%%%%%%%%%%%%%%%%%%%%%%%%%%%%%%%%%%%%%%%%%%%%%%%%%%%%%%%%%%%%%%%%%%%%%%%%%%%%%%%%%
\section{Density estimates and regularity of minimizers}\label{sec:density_estimates}
%%%%%%%%%%%%%%%%%%%%%%%%%%%%%%%%%%%%%%%%%%%%%%%%%%%%%%%%%%%%%%%%%%%%%%%%%%%%%%%%%

In this section we assume that
\begin{equation}\label{hyp:main}
\begin{cases}
E_0\in BV(\Omega,\{0,1\}) {\rm ~is ~nonempty ~and~ bounded~},
\\
\beta\in L^\infty(\dOm) {\rm ~and}~
\exists
\kappa \in (0,\frac{1}{2}] :~
\|\beta\|_\infty \le 1-2\kappa.
\end{cases}
\end{equation}

Define
\begin{equation}\label{const:uniform_density_est}
R(n,\kappa):=
\left(2^{n+3}\,
\frac{\omega_n+(n+1)\omega_{n+1}}{\omega_{n+1}\kappa^{n+1} } \right)^{\frac12},\,\,
\gamma(n,\kappa):=
\frac{\kappa(n+1)}{\sqrt{R(n,\kappa)^2+4\kappa(n+1)}+R(n,\kappa) },
\end{equation}
and
\begin{equation}\label{coef_density}
 C(n,\kappa):= (n+1)\omega_{n+1}+2\omega_n + \frac{\kappa(n+1)}{2}\,\omega_{n+1},\quad
c(n,\kappa):=c_{n+1}\left(\frac{\kappa}{4}
\right)^n,
\end{equation}
where $c_{n+1}$ is the relative isoperimetric constant for the ball, i.e. 
$$
c_{n+1} \,\min\{|B_r\cap F|,|B_r\setminus F|\}^{ \frac{n}{n+1} }\le 
P(F, B_r),\qquad r>0,\,\, F\in BV(B_r,\{0,1\}).
$$
The aim of this section is to prove the following uniform density estimates for minimizers
of $\cA(\cdot,E_0,\lambda),$ needed to prove regularity
of minimizers (Theorem \ref{teo: regularity}) and Proposition \ref{prop:uniform_estimate}.

\begin{theorem}\label{teo:lower_density_est}
Assume that $E_0$ and
$\beta$ are as in  \eqref{hyp:main} and
$E_\lambda\in BV(\Omega,\{0,1\})$ is a minimizer of $\cA(\cdot, E_0,\lambda).$
Then either $E_\lambda=\emptyset$ or
\begin{equation}\label{eq:volume_est}
\left(\frac\kappa4\right)^{n+1} \le  \dfrac{|E_\lambda \cap B_r(x)|}
{\omega_{n+1}r^{n+1}} \le
1- \left(\frac\kappa4\right)^{n+1},
\end{equation}
%%%%
\begin{equation}\label{eq:perimeter_est}
c(n,\kappa) \le \dfrac{P(E_\lambda, B_r(x))}{r^n}\le C(n,\kappa) 
\end{equation}
for every $x\in \p E_\lambda$ and
$r\in (0,  \frac{\gamma(n,\kappa)}{\lambda^{1/2}} ).$
In particular,
\begin{equation}\label{eq:ess_boundary}
\cH^n(\p E_\lambda\setminus\p^*E_\lambda)=0.
\end{equation}
\end{theorem}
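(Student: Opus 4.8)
The plan is to prove the density estimates by the standard comparison technique for almost-minimizers, adapted to the capillary setting: one competes $E_\lambda$ against $E_\lambda \setminus B_r(x)$ (for the lower bounds) and against $E_\lambda \cup B_r(x)$ (for the upper bounds), and exploits the fact that the volume term $\lambda \int_{E \Delta E_0} \dist_{E_0}\,dx$ is controlled on small balls by something like $\lambda r^{n+1} \cdot \mathrm{diam}(\cyl{R_0}{H})$ after using Theorem~\ref{teo: unconstrained minimizer} to confine $E_\lambda$ to a fixed bounded cylinder. The one genuinely new ingredient compared to the boundaryless case is that when $B_r(x)$ touches $\p\Omega$ one must keep track of the trace term $\int_{\p\Omega}\beta\chi_E\,d\cH^n$; this is exactly where Lemma~\ref{lem:Lower_bound_of_F0} enters, letting us replace the capillary functional $\cC(\cdot,\Omega)$ on the piece inside $B_r(x)$ by a coercive quantity with the factor $\frac{1-\esssup\beta}{2}\ge \kappa$.

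First I would set up the comparison for the \emph{lower volume bound}. Fix $x \in \p E_\lambda$ and set $m(r) := |E_\lambda \cap B_r(x)|$. Minimality of $E_\lambda$ against the competitor $E_\lambda \setminus B_r(x)$ gives
$$
\cC(E_\lambda,\Omega) \le \cC(E_\lambda \setminus B_r(x),\Omega) + \lambda \int_{(E_\lambda \setminus B_r(x)) \Delta E_0}\dist_{E_0}\,dx - \lambda\int_{E_\lambda \Delta E_0}\dist_{E_0}\,dx,
$$
and the difference of the two $\cC$-terms is estimated by $P(E_\lambda, B_r(x))$ minus a contribution on $\p B_r(x)$, while the difference of the volume terms is bounded by $C_0\, m(r)$ with $C_0 = \lambda\,\mathrm{diam}(\cyl{R_0}{H})$ (or a similar fixed constant times $\lambda$). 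Using Lemma~\ref{lem:Lower_bound_of_F0} on the ball $A = B_r(x)\cap\Omega$ to absorb the trace term (note that \eqref{zero_perimeter_outside} holds for a ball) one arrives at an inequality of the form
$$
\kappa\, P(E_\lambda, B_r(x)) \le (1+C_0 r)\, \cH^n(\p B_r(x) \cap E_\lambda) \;\text{(roughly)},
$$
which combined with the relative isoperimetric inequality on $B_r(x)$ and $m'(r) = \cH^n(\p B_r(x)\cap E_\lambda)$ for a.e.\ $r$ yields the differential inequality $m(r)^{n/(n+1)} \le C\, m'(r)$; integrating gives $m(r) \ge c\, r^{n+1}$ with the explicit constant in \eqref{eq:volume_est}, valid for $r$ below the threshold $\gamma(n,\kappa)\lambda^{-1/2}$ (the $\lambda^{-1/2}$ scaling is precisely what makes $C_0 r = \lambda^{1/2} \cdot (\lambda^{1/2} r) \lesssim \lambda^{1/2}r$ small — more precisely one needs $\lambda r^2$ bounded, hence $r \lesssim \lambda^{-1/2}$). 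The \emph{upper volume bound} is symmetric: compare against $E_\lambda \cup B_r(x)$ and apply the argument to the complement; the trace term now has a favourable sign or is again absorbed via Lemma~\ref{lem:Lower_bound_of_F0}. The \emph{perimeter bounds} \eqref{eq:perimeter_est} then follow: the upper bound by comparison (the competitor $E_\lambda\setminus B_r$ has perimeter controlled, giving $P(E_\lambda, B_{r/2}) \le C r^n$), and the lower bound by combining the relative isoperimetric inequality with the already-established two-sided volume density. Finally \eqref{eq:ess_boundary} is immediate: \eqref{eq:volume_est} says every point of $\p E_\lambda$ has density bounded away from $0$ and $1$, so it cannot be a point where $E_\lambda$ has density $0$ or $1$, whence by Federer's theorem $\p E_\lambda \subseteq \p^* E_\lambda$ up to an $\cH^n$-null set — in fact $\cH^n(\p E_\lambda\setminus\p^*E_\lambda)=0$ follows since the set of non-density points is already covered by $\p^*E_\lambda$.

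The main obstacle I anticipate is the careful bookkeeping of the boundary trace terms when $B_r(x) \cap \p\Omega \ne \emptyset$: one must verify that the slicing of $\p E_\lambda$ by spheres $\p B_r(x)$ is compatible with the trace identity $P(E,\Omega) = P(E,V) + P(E,\Omega\setminus\overline V) + \int_{\Omega\cap\p V}|\chi_E^+-\chi_E^-|\,d\cH^n$ from Section~\ref{sec:preliminaries} (applied with $V = B_r(x)\cap\Omega$), and that the hypothesis \eqref{zero_perimeter_outside} of Lemma~\ref{lem:Lower_bound_of_F0} is genuinely satisfied for $A$ a half-ball — this needs a null-set argument for a.e.\ $r$, exactly as in the coarea slicing of BV functions. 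A secondary technical point is making the constants $R(n,\kappa)$, $\gamma(n,\kappa)$, $c(n,\kappa)$, $C(n,\kappa)$ in \eqref{const:uniform_density_est}–\eqref{coef_density} come out with the claimed explicit dependence; this is bookkeeping but must be done honestly, tracking the factor $\frac{\kappa}{4}$ through the isoperimetric inequality (one loses a $\kappa$ from Lemma~\ref{lem:Lower_bound_of_F0} and the power $n+1$ from integrating the ODE, which explains the $(\kappa/4)^{n+1}$ and $(\kappa/4)^n$ exponents).
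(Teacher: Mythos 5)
Your outline correctly identifies the comparison machinery (competing $E_\lambda$ against $E_\lambda\setminus B_r$ and $E_\lambda\cup B_r$, using Lemma~\ref{lem:Lower_bound_of_F0} to absorb the trace term on half-balls, the isoperimetric inequality, the differential inequality for $m(r)$) and your treatments of the upper/lower bounds, the perimeter bounds, and \eqref{eq:ess_boundary} are all essentially right. But there is a genuine gap at the central point of the theorem: the range of validity $r \in (0, \gamma(n,\kappa)\lambda^{-1/2})$.

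With the constant you propose, $C_0 = \lambda\,\diam(\cyl{R_0}{H})\sim\lambda$, the volume term in the comparison inequality contributes $C_0\, m(r)$, and since $m(r)^{1/(n+1)}\lesssim r$ the differential inequality acquires an extra summand of size $C_0\, r\, m(r)^{n/(n+1)} = \lambda\, r\, m(r)^{n/(n+1)}$. To absorb this into the good term $\kappa(n+1)\omega_{n+1}^{1/(n+1)} m(r)^{n/(n+1)}$ you need $\lambda r \lesssim 1$, i.e.\ $r\lesssim 1/\lambda$ — \emph{not} $r\lesssim\lambda^{-1/2}$. Your parenthetical ``$C_0 r = \lambda^{1/2}\cdot(\lambda^{1/2}r)\lesssim\lambda^{1/2}r$ small'' does not hold: for $r\sim\lambda^{-1/2}$ one gets $C_0 r \sim \lambda^{1/2}$, which diverges. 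Your follow-up remark ``one needs $\lambda r^2$ bounded'' likewise does not match the inequality you actually derived. So as written, your argument proves only the weaker statement — precisely Proposition~\ref{prop:lower_density_est_weak} in the paper — which is valid for $r \le O(1/\lambda)$ with a constant depending on $E_0$.

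The missing ingredient is an a priori pointwise bound on the distance function on the symmetric difference, Proposition~\ref{prop:uniform_L_infty_est}:
\[
\sqrt{\lambda}\,\|\dist_{E_0}\|_{L^\infty(E_\lambda\Delta E_0)}\le R(n,\kappa),
\]
which itself is proved using the weaker (one-sided, any-radius) density estimates of Remark~\ref{rem:uncons_dens_est} on balls that avoid $E_0$ (resp.\ $\Omega\setminus E_0$). Once this $L^\infty$ bound is available, one returns to the comparison inequality and replaces the crude bound $\lambda|\sdist_{E_0}|\lesssim\lambda$ on $B_r(x)\cap(E_\lambda\Delta E_0)$ by the improved bound
\[
\lambda\,\sup_{B_r(x)\cap(E_\lambda\Delta E_0)}|\sdist_{E_0}|\;\le\;(R(n,\kappa)+2\gamma(n,\kappa))\,\lambda^{1/2}
\]
(the extra $2r\le 2\gamma\lambda^{-1/2}$ accounts for points of $B_r(x)$ outside the symmetric difference). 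This turns the coefficient of the volume term from $\lambda$ into $O(\lambda^{1/2})$, so the smallness condition becomes $\lambda^{1/2}r\lesssim 1$, i.e.\ $r\le\gamma(n,\kappa)\lambda^{-1/2}$ — and moreover removes the dependence on $E_0$ from the constants. The theorem is therefore genuinely a two-pass argument (weak density estimates $\Rightarrow$ $L^\infty$ distance bound $\Rightarrow$ strong density estimates), and your single-pass version stops at the first step. This distinction is not cosmetic: the $\lambda^{-1/2}$ scaling is exactly what feeds into Proposition~\ref{prop:uniform_estimate} and produces the $1/2$-H\"older continuity of the GMM in Theorem~\ref{teo:existence_of_GMM}.
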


We postpone the proof after several auxiliary results.
First we show a weaker version of Theorem \ref{teo:lower_density_est};
the difference stands in that Proposition \ref{prop:lower_density_est_weak} holds
for $r\le O(\frac{1}{\lambda})$ and $O(\frac{1}{\lambda})$
depends on $E_0,$ whereas Theorem \ref{teo:lower_density_est} is valid
for $r\le O(\frac{1}{\lambda^{1/2}})$ and $O(\frac{1}{\lambda^{1/2}})$
is independent of $E_0.$

\begin{proposition}\label{prop:lower_density_est_weak}
Under the assumptions of Theorem \ref{teo:lower_density_est},
setting
$$\Lambda: = \Lambda(\lambda,n,\kappa,P(E_0)) =
\lambda \diam(\hat B_{D+R_0+1}\times(-1,H+1)),$$
for any nonempty $E_\lambda,$ $x\in \p E_\lambda $ and
$r\in (0, \min \{1,\frac{\kappa(n+1)}{2\Lambda}\}),$
the density estimates \eqref{eq:volume_est}-\eqref{eq:perimeter_est} hold.
\end{proposition}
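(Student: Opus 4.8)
The plan is to run the comparison argument familiar from the theory of almost-minimizers of the perimeter, with the capillary term absorbed through the localized lower bound of Lemma~\ref{lem:Lower_bound_of_F0}, and then to deduce \eqref{eq:perimeter_est} and \eqref{eq:ess_boundary} from \eqref{eq:volume_est}. Fix a nonempty minimizer $E_\lambda$. By Theorem~\ref{teo: unconstrained minimizer}, $E_\lambda\subseteq\cyl{R_0}{H}$, and since $E_0\subseteq\cyl{D}{H}$ we get $\dist_{E_0}\le\diam(\hat B_{D+R_0+1}\times(-1,H+1))$ on $\cyl{R_0}{H}$, so for every competitor $E$ with $E\Delta E_\lambda\subseteq\cyl{R_0}{H}$
\[
\lambda\Big|\int_{E\Delta E_0}\dist_{E_0}\,dx-\int_{E_\lambda\Delta E_0}\dist_{E_0}\,dx\Big|\ \le\ \Lambda\,|E\Delta E_\lambda|;
\]
this is the only role of $E_0$ and it fixes the admissible radius. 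Given \eqref{eq:volume_est}, the lower bound in \eqref{eq:perimeter_est} follows from the relative isoperimetric inequality in $B_r(x)$ since then both $|E_\lambda\cap B_r(x)|$ and $|B_r(x)\setminus E_\lambda|$ are $\ge(\kappa/4)^{n+1}\omega_{n+1}r^{n+1}$; and \eqref{eq:ess_boundary} follows because at $\cH^n$-a.e.\ point of $\p E_\lambda\setminus\p^*E_\lambda$ the upper $n$-density of the measure $P(E_\lambda,\cdot)=\cH^n\res\p^*E_\lambda$ vanishes, which contradicts the lower bound in \eqref{eq:perimeter_est} unless $\cH^n(\p E_\lambda\setminus\p^*E_\lambda)=0$.

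For the lower bound in \eqref{eq:volume_est}, fix $x\in\p E_\lambda$ and set $m(r):=|E_\lambda\cap B_r(x)|$; by the coarea formula $m$ is absolutely continuous, $m(0^+)=0$, $m(r)>0$ for $r>0$, and $m'(r)=\cH^n(E_\lambda^{(1)}\cap\p B_r(x))$ for a.e.\ $r$. If $B_r(x)\subseteq\Omega$, take the competitor $E_\lambda\setminus B_r(x)$; minimality, combined with the decomposition $P(E_\lambda,\Omega)=P(E_\lambda,B_r(x))+P(E_\lambda,\Omega\setminus\overline{B_r(x)})$ (a.e.\ $r$), the distance estimate above, and the bounds $P(E_\lambda\cap B_r(x))\le P(E_\lambda,B_r(x))+m'(r)$ and $P(E_\lambda\cap B_r(x))\ge(n+1)\omega_{n+1}^{1/(n+1)}m(r)^{n/(n+1)}$ (the isoperimetric inequality in $\R^{n+1}$), gives for a.e.\ $r$
\[
(n+1)\omega_{n+1}^{1/(n+1)}m(r)^{n/(n+1)}\ \le\ 2\,m'(r)+\Lambda\,m(r).
\]
When instead $B_r(x)$ meets $\p\Omega$ one cannot use the half-ball competitor: Lemma~\ref{lem:Lower_bound_of_F0} — the localized substitute for $\cC(\cdot,\Omega)\ge\kappa P$ — requires $A$ to satisfy \eqref{zero_perimeter_outside}, which the half-ball $\Omega\cap B_r(x)$ fails for a.e.\ $r$, whereas the vertical column $A=\hat B_r(\hat x)\times(0,+\infty)$, $\hat x=\pi(x)$, satisfies it trivially; running the same argument with the competitor $E_\lambda\setminus A$ and invoking Lemma~\ref{lem:Lower_bound_of_F0} produces the analogous inequality $(n+1)\omega_{n+1}^{1/(n+1)}m(r)^{n/(n+1)}\le\frac2\kappa m'(r)+\frac\Lambda\kappa m(r)$ — now with the capillary factor $\kappa$ — once the boundary trace $\int_{\hat B_r(\hat x)}\chi_{E_\lambda}\,d\cH^n$ has been controlled by the local perimeter of $E_\lambda$ via the trace estimate used in the proof of Lemma~\ref{lem:Lower_bound_of_F0} (\cite[page~35]{Gius84}). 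Since $\kappa\le1$ the first inequality implies the second in every case, so in all cases, using $m(r)\le\omega_{n+1}r^{n+1}$, the hypothesis $r<\kappa(n+1)/(2\Lambda)$ lets one absorb $\frac\Lambda\kappa m(r)$ into half the left-hand side; dividing by $m(r)^{n/(n+1)}$ and integrating $m'(r)m(r)^{-n/(n+1)}\ge\frac\kappa4(n+1)\omega_{n+1}^{1/(n+1)}$ from $0$ to $r$ yields $m(r)\ge(\kappa/4)^{n+1}\omega_{n+1}r^{n+1}$.

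The upper bound in \eqref{eq:volume_est} is equivalent to $|B_r(x)\setminus E_\lambda|\ge(\kappa/4)^{n+1}\omega_{n+1}r^{n+1}$, which is automatic whenever $|B_r(x)\setminus\Omega|$ already exceeds the right-hand side (in particular for $x\in\p\Omega$, since $(\kappa/4)^{n+1}\le\tfrac12$), and otherwise is obtained exactly as above from the competitor $E_\lambda\cup(B_r(x)\cap\Omega)$ (resp.\ $E_\lambda\cup A$), now using $-\beta\le1-2\kappa$ and Lemma~\ref{lem:Lower_bound_of_F0} applied to $(B_r(x)\cap\Omega)\setminus E_\lambda$. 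Finally, the upper perimeter bound in \eqref{eq:perimeter_est} comes from the minimality inequality itself, estimating $m'(r)\le\cH^n(\p B_r(x))$ and the $\beta$-trace term by the same trace estimate, which produces the explicit constant $C(n,\kappa)$.

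The genuinely delicate point is precisely the boundary trace control: because Lemma~\ref{lem:Lower_bound_of_F0} is unavailable on half-balls, a half-ball competitor generates an uncontrolled defect $P(E_\lambda,(\hat B_r(\hat x)\times(0,+\infty))\setminus\overline{B_r(x)})$, while the column competitor a priori only sees the column mass rather than $|E_\lambda\cap B_r(x)|$; closing the estimate means trading between these via \cite[page~35]{Gius84} while keeping all constants explicit and, crucially, uniform in $\lambda$ — which is why the admissible radius here is forced to scale like $1/\Lambda\sim1/\lambda$, a scale that Theorem~\ref{teo:lower_density_est} will later improve to $1/\lambda^{1/2}$ by a more careful choice of competitor.
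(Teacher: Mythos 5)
Your overall structure (competitor comparison, isoperimetric inequality, differential inequality for $m(r)$) is the right one, but you misidentify which coercivity bound is needed, and this leads you into a detour that does not close.

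You assume that the capillary term must be absorbed through the \emph{localized} Lemma~\ref{lem:Lower_bound_of_F0}, which forces condition~\eqref{zero_perimeter_outside} on the localizing set $A$; this is what makes you switch to a vertical-column competitor when $B_r(x)$ meets $\p\Omega$. The paper's proof never localizes the coercivity. It compares $\cA(E_\lambda,E_0,\lambda)$ with $\cA(E_\lambda\setminus B_r,E_0,\lambda)$ — using the full ball $B_r$ in every case, whether or not it meets $\p\Omega$ — to obtain \eqref{qwqwww}, and then applies the \emph{global} coercivity bound \eqref{eq:bound_F-0}, $\kappa P(F)\le\cC(F,\Omega)$, to the set $F=E_\lambda\cap B_r$, which is an ordinary finite-perimeter subset of $\Omega$. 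Together with $P(E_\lambda\cap B_r,\Omega)\le P(E_\lambda,B_r\cap\Omega)+\cH^n(E_\lambda\cap\p B_r)$ and the trace identity, this gives at once
$$
\kappa\,P(E_\lambda\cap B_r)\ \le\ 2\,\cH^n(E_\lambda\cap\p B_r)+\Lambda\,|E_\lambda\cap B_r|,
$$
which is \eqref{tttyyy}; the isoperimetric inequality in $\R^{n+1}$ applied to $E_\lambda\cap B_r$ and the absorption under $r<\kappa(n+1)/(2\Lambda)$ then proceed as you describe. No distinction between $B_r\subseteq\Omega$ and $B_r\cap\p\Omega\neq\emptyset$ is needed at any point.

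Your column competitor $E_\lambda\setminus A$, $A=\hat B_r(\hat x)\times(0,+\infty)$, by contrast, yields a differential inequality for the \emph{column} mass $|E_\lambda\cap A|$, not for $m(r)=|E_\lambda\cap B_r(x)|$, and the trace estimate of \cite[page 35]{Gius84} that you invoke controls boundary traces by local perimeter — it does not let you convert a lower bound on the column mass into one on the ball mass, which is what \eqref{eq:volume_est} requires. The paragraph in which you flag this as "the genuinely delicate point" is therefore naming a gap you have not closed, not a difficulty you have overcome. Once the global coercivity on $E_\lambda\cap B_r$ replaces the localization, that whole issue disappears and the rest of your argument (upper volume bound via $(E_\lambda\cup B_r)\cap\Omega$ with $\mathcal{C}_{-\beta}$, the perimeter bounds, the relative isoperimetric inequality, the covering argument for \eqref{eq:ess_boundary}) matches the paper's proof.
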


\begin{proof}[\bf Proof]
For completeness we give the full proof of the proposition
using the  methods of \cite{LS:95, MSS:2016}.
We recall that one could also employ the density estimates for
almost minimizers of the capillary functional (see for instance
\cite[Lemma 2.8]{PhM2014}).

Set $r_0: =\min \{1,\frac{\kappa(n+1)}{2\Lambda}\},$ and fix  $x\in \p^* E_\lambda.$
Let $B_r:=B_r(x)$ be the ball of radius $r\in (0,r_0)$ centered at $x,$ we can choose $r$
such that
$$
\cH^n(\p B_r\cap \p E_\lambda) = 0.
$$
First we show that $E_\lambda$ satisfies
\begin{equation}\label{tttyyy}
\kappa P(E_\lambda\cap B_r)
\le  2\cH^n(E_\lambda\cap \p B_r ) + \Lambda |E_\lambda\cap B_r|.
\end{equation}
%%%

Comparing $\cA(E_\lambda,E_0,\lambda)$ with
$\cA(E_\lambda\setminus B_r,E_0,\lambda),$
for a.e. $s\in(r,r_0)$  we establish
\begin{equation*} 
\begin{aligned}
P(E_\lambda, B_s\cap\Omega) -& \int_{B_r\cap \p\Omega } \beta\chi_{E_\lambda\cap B_r}\,d\cH^n
+\lambda \int_{E_\lambda\cap B_r} \sdist_{E_0}dy
\\
\le & P(E_\lambda,(B_s\setminus \overline B_r)\cap\Omega) +\cH^n(E_\lambda\cap \p B_r).
\end{aligned}
\end{equation*}
Sending $s\to r^+ $ we get
\begin{align}\label{qwqwww}
P(E_\lambda, B_r\cap\Omega) -\int_{B_r\cap \p\Omega } \beta\chi_{E_\lambda}\,d\cH^n
 + \lambda \int_{E_\lambda\cap B_r}\sdist_{E_0}dy \le & \cH^n(E_\lambda\cap \p B_r).
\end{align}
By Theorem \ref{teo: unconstrained minimizer} $E_\lambda\subseteq \cyl{R_0}{H}$
and thus, since $r_0\le 1,$ for any $y\in  B_r$
\begin{equation}\label{diam_bound}
\lambda|\sdist_{E_0}(y)|\le \lambda \diam(\hat B_{D+R_0+1}\times(-1,H+1)) =\Lambda.
\end{equation}
Moreover, using \eqref{eq:bound_F-0} for $E_\lambda\cap B_r$ we get
\eqref{tttyyy}:
\begin{align*}
\kappa P(E_\lambda\cap B_r) \le  & P(E_\lambda, B_r\cap\Omega) +\cH^n(E_\lambda\cap \p B_r)
-\int_{B_r\cap \p\Omega } \beta\chi_{E_\lambda}\,d\cH^n  \\
\le & 2\cH^n(E_\lambda\cap \p B_r ) + \Lambda |E_\lambda\cap B_r|.
\end{align*}
Now by the isoperimetric inequality,
\begin{equation}\label{isop_ineq}
P(E_\lambda\cap B_r) \ge (n+1)\omega_{n+1}^{\frac{1}{n+1}}|E_\lambda\cap B_r|^{\frac{n}{n+1}}.
\end{equation}
Set $m(r):= |E_\lambda\cap B_r|.$ Then $m $ is absolutely continuous,
$m(0)=0,$ $m(r)>0$ for all $r>0$
 and
$m'(r) = \cH^n(E_\lambda\cap\p B_r) $ for a.e. $r\in (0,r_0).$ Consequently,
 \eqref{tttyyy} and \eqref{isop_ineq} give
\begin{equation}\label{eq:diff_eq_for_de}
\kappa (n+1)\omega_{n+1}^{\frac{1}{n+1}} m(r)^{\frac{n}{n+1}} \le
2m'(r) + \Lambda m(r) = 2m'(r) + \Lambda m(r)^{\frac{n}{n+1}}m(r)^{\frac{1}{n+1}}.
\end{equation}
Since $m(r)\le \omega_{n+1}r^{n+1}$ and $r\le \frac{\kappa(n+1)}{2\Lambda},$
from the last inequality we obtain
$$
\frac\kappa4\, (n+1)\omega_{n+1}^{\frac{1}{n+1}} m(r)^{\frac{n}{n+1}} \le
m'(r).
$$
Integrating we get the lower volume density estimate
$$
m(r)\ge \left(\frac\kappa4\right)^{n+1} \omega_{n+1}r^{n+1},\quad \forall r\in (0,r_0).
$$

Let us  prove the upper volume density estimate  in  \eqref{eq:volume_est}.
Since $E_\lambda\subseteq\Omega$ if $x\in\p\Omega\cap\p^*E_\lambda,$  
the inequality
\begin{equation}\label{trivial_ineq}
\frac{|B_r\setminus E_\lambda|}{\omega_{n+1}r^{n+1}} 
\ge \frac12 > \left(\frac\kappa4\right)^{n+1}\qquad \forall r>0 
\end{equation}
is trivial.
So assume that $x\in \Omega\cap \p^*E_\lambda.$ Since
$\cA(E_\lambda,E_0,\lambda)\le\cA((E_\lambda\cup B_r)\cap\Omega,E_0,\lambda),$
arguing as in the proof of \eqref{qwqwww} we get
\begin{align}\label{ungtomon}
P(E_\lambda, B_r\cap\Omega) + \int_{\p\Omega } \beta\chi_{(B_r\cap \Omega)\setminus E_\lambda}\,d\cH^n
\le & \cH^n((\Omega\setminus E_\lambda)\cap \p B_r) + \lambda
\int_{(B_r\cap \Omega)\setminus E_\lambda}\sdist_{E_0}dy.
\end{align}
From the  isoperimetric inequality, \eqref{eq:bound_F-0},
\eqref{ungtomon} and also  \eqref{diam_bound},
it follows that
\begin{equation}\label{qaydantopay}
\begin{aligned}
\kappa(n+1)\omega_{n+1}^{\frac{1}{n+1}}&|(B_r\setminus E_\lambda)\cap\Omega|^\frac{n}{n+1} \le
\kappa P((B_r\setminus E_\lambda)\cap\Omega)\le
\mathcal{C}_{-\beta}((B_r\setminus E_\lambda)\cap\Omega,\Omega)\\
\le &P(E_\lambda, B_r\cap\Omega) + \int_{\p\Omega } \beta\chi_{(B_r\cap \Omega)\setminus E_\lambda}\,d\cH^n
+ \cH^n((\Omega\setminus E_\lambda)\cap \p B_r)\\
\le & 2\cH^n((\Omega\setminus E_\lambda)\cap \p B_r) + \Lambda |(B_r\setminus E_\lambda)\cap \Omega|.
\end{aligned}
\end{equation}
Repeating the same arguments as before we establish
$$
\frac{|B_r\setminus E_\lambda|}{\omega_{n+1}r^{n+1}} \ge
\frac{|(B_r\setminus E_\lambda)\cap\Omega|}{\omega_{n+1}r^{n+1}} \ge \left(\frac\kappa4\right)^{n+1}
\qquad \forall r\in (0,r_0).
$$

Let us now show \eqref{eq:perimeter_est}. From \eqref{qwqwww} we get
\begin{align*}
P(E_\lambda,B_r) = & P(E_\lambda, B_r\cap\Omega)  +\int_{B_r\cap\p\Omega}\chi_{E_\lambda}\,d\cH^n\\
\le & \cH^n(E_\lambda\cap \p B_r) +\int_{B_r\cap \p\Omega} (1+\beta) \chi_{E_\lambda}\,d\cH^n
 + \Lambda |E_\lambda\cap B_r|\\
\le &(n+1)\omega_{n+1}r^n +2\omega_nr^n +\omega_{n+1}r^n (\Lambda r)\\
\le & \left[(n+1)\omega_{n+1}+2\omega_n + \omega_{n+1}\frac{\kappa(n+1)}{2}\right] r^{n}
\end{align*}
for a.e $r\in (0,r_0).$ Since $P(E_\lambda,\cdot)$ is a nonnegative measure,
this inequality holds for all $r\in (0,r_0).$ This proves the upper
perimeter estimate in \eqref{eq:perimeter_est}.

The lower perimeter density estimate in
\eqref{eq:perimeter_est} follows from \eqref{eq:volume_est} and the
relative isoperimetric
inequality (see for example \cite[page 152]{AFP:00}).
\end{proof}

\begin{theorem}[\bf Regularity of minimizers up to the
boundary]\label{teo: regularity}
Assume that $E_0$ and
$\beta$ satisfy \eqref{hyp:main}. Then any nonempty minimizer
$E_\lambda$
is   open  in $\R^{n+1}$ and
$\Omega\cap \p^*E_\lambda$ is
an $n$-dimensional manifold of class   $C^{2,\alpha}$ for a suitable
$\alpha\in(0,1)$,
and
$\cH^s((\p E_\lambda\setminus \p^*E_\lambda)\cap \Omega)=0$ for all $s>n-7.$
Moreover, if $\beta\in \Lip(\p\Omega),$ then
\begin{itemize}
\item[a)] $\cH^n((\p E_\lambda\cap\p\Omega)\Delta(\Trace(E_\lambda)))=0;$
\item[b)] $\p E_\lambda\cap \p\Omega$ is a set of  finite perimeter in $\p\Omega$
and
$$
\cH^{n-1}(\p (\p E_\lambda\cap\p\Omega)\setminus \p^*(\p E_\lambda\cap\p\Omega))=0,
$$
where $\p (\p E_\lambda\cap\p\Omega)$ denotes  the
 boundary of $\p E_\lambda\cap\p\Omega$ in $\p\Omega.$
Moreover,  if $M_\lambda=\overline{\Omega\cap \p E_\lambda},$ then
\begin{equation*}  
\p (\p E_\lambda\cap\p\Omega) = M_\lambda \cap \p\Omega.
\end{equation*}
\item[c)] There exists a relatively closed set $\Sigma\subset M_\lambda$ with
 $\cH^{n-1}(\Sigma\cap\p\Omega)=0$ such that
in a neighborhood of any $x\in (M_\lambda\cap \p\Omega)
\setminus \Sigma$ the set $M_\lambda$ is a $C^{1,1/2}$-manifold
with boundary, and
$$\nu_{E_\lambda} \cdot e_{n+1} =  \beta\quad \text{on}\,\,
 (M_\lambda\cap \p \Omega)\setminus\Sigma.$$
\end{itemize}

\end{theorem}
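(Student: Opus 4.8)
The plan is to treat first the interior regularity, which does not involve the contact angle, and then the boundary regularity under the extra assumption $\beta\in\Lip(\p\Omega)$. For the interior part: for a competitor $F$ with $E_\lambda\Delta F\strictlyincluded B_r(x)\strictlyincluded\Omega$ the term $\int_{\p\Omega}\beta\chi_E\,d\cH^n$ is unchanged, while $\lambda\int_{E_\lambda\Delta F}\dist_{E_0}\,dx\le\Lambda_0\,|B_r(x)|$ with $\Lambda_0<\infty$ depending only on $\lambda,n,\kappa,P(E_0)$, since $E_\lambda\subseteq\cyl{R_0}{H}$ by Theorem \ref{teo: unconstrained minimizer} and $\sdist_{E_0}$ is $1$-Lipschitz, hence bounded on bounded sets. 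Thus $E_\lambda$ is an almost minimizer of the perimeter in $\Omega$ with deviation controlled by $\Lambda_0|B_r|$, so by the regularity theory for almost minimizers of perimeter (see \cite{AFP:00,Gius84}) $\Omega\cap\p^*E_\lambda$ is a $C^{1,\alpha}$-hypersurface and $\cH^s((\p E_\lambda\setminus\p^*E_\lambda)\cap\Omega)=0$ for every $s>n-7$; its first variation gives the prescribed mean curvature equation $H_{E_\lambda}=-\lambda\,\sdist_{E_0}$ on $\Omega\cap\p^*E_\lambda$, and since a $1$-Lipschitz function is of class $C^{0,\alpha}$ on bounded sets for every $\alpha\in(0,1)$, Schauder estimates upgrade the regularity to $C^{2,\alpha}$. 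Openness of $E_\lambda$ in $\R^{n+1}$ follows from the density estimates of Theorem \ref{teo:lower_density_est}: at every $x\in\p E_\lambda$ both $E_\lambda$ and $\Omega\setminus E_\lambda$ have positive Lebesgue density, so no point of density one lies on $\p E_\lambda$, and since we take $E_\lambda$ to be the set of its density-one points it is open.

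\emph{Set-up for the boundary and part a).} Assume now $\beta\in\Lip(\p\Omega)$. Near $\p\Omega$ the set $E_\lambda$ is an almost minimizer of the capillary functional $\cC(\cdot,\Omega)$ perturbed by the bounded, H\"older-continuous bulk density $-\lambda\sdist_{E_0}$, and the crucial structural feature is uniform non-tangency to $\p\Omega$, guaranteed by $\|\beta\|_\infty\le1-2\kappa<1$ under \eqref{hyp:main}. To prove a) I would combine the density estimates of Theorem \ref{teo:lower_density_est} (valid at every point of $\p E_\lambda$, including boundary points) with openness: if $\hat x$ is a point of $\cH^n$-density one of $\Trace(E_\lambda)$ in $\p\Omega$, then no ball around $(\hat x,0)$ can miss $E_\lambda$, so $(\hat x,0)\in\p E_\lambda$ because $E_\lambda$ is open and $E_\lambda\subseteq\Omega$; this gives $\Trace(E_\lambda)\subseteq\p E_\lambda\cap\p\Omega$ up to $\cH^n$-null sets. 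Conversely, a point $(\hat x,0)\in\p E_\lambda\cap\p\Omega$ at which the trace of $E_\lambda$ vanishes must lie in $M_\lambda=\overline{\Omega\cap\p E_\lambda}$ — the lower volume density estimate produces points of $\Omega\cap\p E_\lambda$ in every neighbourhood of $(\hat x,0)$ — and $M_\lambda\cap\p\Omega$ is $\cH^n$-negligible by part c) below; this yields the reverse inclusion and hence a).

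\emph{Parts c) and b).} For c), testing minimality of $\cA(\cdot,E_0,\lambda)$ against the flows generated by $X\in C_c^1(\R^{n+1},\R^{n+1})$ with $X\cdot e_{n+1}=0$ on $\p\Omega$ (so that the flow preserves $\Omega$) yields the distributional Euler--Lagrange conditions $H_{E_\lambda}=-\lambda\sdist_{E_0}$ on $\Omega\cap\p^*E_\lambda$ together with the weak contact-angle relation $\nu_{E_\lambda}\cdot e_{n+1}=\beta$ along $M_\lambda\cap\p\Omega$. I would then run a blow-up analysis at $x_0\in M_\lambda\cap\p\Omega$: by the density and perimeter estimates the rescalings $(E_\lambda-x_0)/r$ subconverge, as $r\to0^+$, to a minimizer of the capillary functional in $\{x_{n+1}>0\}$ with constant coefficient $\beta(x_0)$, $|\beta(x_0)|\le1-2\kappa$, whose flat blow-ups are half-hyperplanes meeting $\p\Omega$ at angle $\arccos\beta(x_0)$. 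An $\epsilon$-regularity theorem then gives $C^{1,\alpha}$-regularity of $M_\lambda$ near any $x_0$ admitting a flat blow-up, and the excess-decay iteration, with $\sdist_{E_0}$ H\"older and $\beta\in\Lip$, improves the exponent to $1/2$; letting $\Sigma$ be the relatively closed set of points of $M_\lambda$ at which no blow-up is flat, Federer's dimension-reduction argument (again using non-tangency to exclude lower-dimensional capillary cones) gives $\cH^{n-1}(\Sigma\cap\p\Omega)=0$, and on $(M_\lambda\cap\p\Omega)\setminus\Sigma$ the weak condition becomes the pointwise identity $\nu_{E_\lambda}\cdot e_{n+1}=\beta$. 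Part b) follows: by a), $\p E_\lambda\cap\p\Omega$ coincides $\cH^n$-a.e. with $\Trace(E_\lambda)$, while by c) its topological boundary $M_\lambda\cap\p\Omega$ is, off the $\cH^{n-1}$-negligible set $\Sigma\cap\p\Omega$, an $(n-1)$-dimensional $C^{1,1/2}$-manifold; hence $\p E_\lambda\cap\p\Omega$ has locally finite perimeter in $\p\Omega$, $\p^*(\p E_\lambda\cap\p\Omega)$ coincides with $M_\lambda\cap\p\Omega$ up to $\cH^{n-1}$-null sets, and $\p(\p E_\lambda\cap\p\Omega)=M_\lambda\cap\p\Omega$.

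The essential difficulty is part c): it requires the full boundary regularity apparatus for capillary-type functionals — the classification of blow-up cones in the half-space (where $\|\beta\|_\infty<1$ enters decisively), an $\epsilon$-regularity and excess-decay theorem adapted to a bounded H\"older forcing and a Lipschitz coefficient $\beta$, and a dimension-reduction argument up to $\p\Omega$. Setting up and adapting this machinery (in the spirit of \cite{PhM2014} and the classical capillary regularity literature) is where essentially all the work lies; the interior statements and parts a), b) are comparatively routine once c) and the density estimates are available.
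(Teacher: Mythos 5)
Your proposal reaches the same conclusions and ultimately rests on the same external regularity results, but the route for the interior statement is slightly different from the paper's, and for the boundary statements you essentially re-derive (at the level of an outline) the content of the results the paper simply cites. The paper's proof is two lines: it observes that $E_\lambda$ minimizes $\cA(\cdot,E_0,\lambda)$ locally in every ball $B\subset\Omega$, so Massari's theorem on prescribed-mean-curvature hypersurfaces \cite[Theorem 5.2]{Mas74} gives openness, $C^{2,\alpha}$ interior regularity and the singular-set estimate at once; and for the boundary, by \eqref{diam_bound} the minimizer is an almost minimizer of the capillary functional with bounded forcing, so \cite[Lemma 2.16, Theorem 1.10]{PhM2014} directly yields a), b), c). Your interior argument instead passes through the generic almost-minimizer-of-perimeter regularity theory (giving $C^{1,\alpha}$) and then uses the Euler--Lagrange equation $H_{E_\lambda}=-\lambda\sdist_{E_0}$ together with Schauder estimates to reach $C^{2,\alpha}$ — this is a valid alternative chain, essentially unwinding what is inside Massari's theorem, but it is not the shortest path given that \cite{Mas74} delivers $C^{2,\alpha}$ in one step from a $C^{0,\alpha}$ prescribed curvature. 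For the boundary, your long sketch of blow-ups, $\epsilon$-regularity and Federer dimension reduction is an accurate description of what sits behind \cite{PhM2014}, and you correctly identify that this is where the entire difficulty lies; but the paper's key observation — that estimate \eqref{diam_bound} places $E_\lambda$ squarely inside the hypotheses of the De Philippis--Maggi almost-minimality framework, so the cited lemma and theorem apply verbatim — is the efficient way to conclude, and your proposal would gain from making exactly that observation explicit rather than gesturing at "adapting the machinery." Your derivation of part a) from openness, the density estimates and $\cH^n(M_\lambda\cap\p\Omega)=0$ (via part c) is plausible but is not how the paper obtains it (the paper reads a) off of \cite[Lemma 2.16]{PhM2014} directly), and as written it relies circularly on c) in a way that would need to be made precise.
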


\begin{proof}
Since $E_\lambda$ is a minimizer of $\cA(\cdot,E_0,\lambda)$
in every ball $B\subset \Omega,$  we can apply \cite[Theorem 5.2]{Mas74}
to prove  that $E_\lambda$ is  open and
$\Omega\cap \p^*E_\lambda$ is $C^{2,\alpha}$  with
$\cH^s((\p E_\lambda\setminus \p^*E_\lambda)\cap \Omega)=0$ for all $s>n-7.$
Moreover, if $\beta\in \Lip(\p\Omega),$ by \eqref{diam_bound}
the remaning assertions  follow from \cite[Lemma 2.16,  Theorem 1.10]{PhM2014}.
\end{proof}

\begin{remark}\label{rem:uncons_dens_est}
(Compare with \cite[Remark 1.4]{LS:95} and \cite{MSS:2016}.)

a) Assume that $x\in \overline{E_\lambda}$ and $r>0$ are such that $B_r(x)\cap E_0 = \emptyset.$
Then $\dist_{E_0}\ge 0$ in $E_\lambda\cap B_r(x)$ and from \eqref{qwqwww}
we get
\begin{align}\label{eq:d_est}
P(E_\lambda, B_r\cap\Omega) -\int_{B_r\cap \p\Omega} \beta\chi_{E_\lambda}\,d\cH^n
\le & \cH^n(E_\lambda\cap \p B_r).
\end{align}
Then proceeding as in the proof of Proposition  \ref{prop:lower_density_est_weak} we get
$
|E_\lambda\cap B_r| \ge\left(\kappa/2\right)^{n+1}\omega_{n+1}r^{n+1}.
$
Moreover, from \eqref{eq:d_est} it follows that
$$
P(E_\lambda,B_r\cap\Omega) \le \cH^n(E_\lambda\cap \p B_r)+
\int_{B_r\cap \p\Omega}\chi_{E_\lambda}\,d\cH^n\le
\big[(n+1)\omega_{n+1} + \omega_n\big]r^n.
$$

b) Similarly, if $x\in \overline{ E_\lambda}$ and $B_r(x)\cap (\Omega\setminus E_0) = \emptyset,$
then
$
|B_r\setminus E_\lambda| \ge\left(\kappa/2\right)^{n+1}\omega_{n+1}r^{n+1}.
$

Observe that in both cases $r$ {\it need not  be} in
 $(0,\min \{1,\frac{\kappa(n+1)}{2\Lambda}\})$
and the assumption $x\in \p E_\lambda$ is not necessary.
\end{remark}

The following proposition is the analog  of \cite[Lemma 2.1]{LS:95} and \cite[Proposition 3.2.1]{MSS:2016}.

\begin{proposition}[\bf $L^\infty$-bound for
the distance function]\label{prop:uniform_L_infty_est}
Assume that $E_0$ and
$\beta$ are as in  \eqref{hyp:main} and
$E_\lambda\in BV(\Omega,\{0,1\})$ is a minimizer of $\cA(\cdot, E_0,\lambda).$ Then
\begin{equation}\label{eq:uniform_bound_dist}
\sqrt{\lambda} \|\dist_{E_0}\|_{L^\infty(E_\lambda\Delta E_0)} \le R(n,\kappa).
\end{equation}
\end{proposition}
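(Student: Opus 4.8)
The plan is to bound the supremum of $\dist_{E_0}$ on $E_\lambda \setminus E_0$ and on $E_0 \setminus E_\lambda$ separately, using the minimality of $E_\lambda$ against competitors obtained by truncating far from $\p E_0$. For the first bound, suppose $x \in E_\lambda$ with $\dist_{E_0}(x) = \text{dist}(x, \Omega \cap \p E_0)$ large; then on the ball $B_r(x)$ with $r < \dist_{E_0}(x)$ we have $B_r(x) \cap E_0 = \emptyset$ (more precisely $B_r(x)$ does not meet $\Omega \cap \p E_0$, and since $x \notin E_0$ it stays outside $E_0$), so $\sdist_{E_0} = \dist_{E_0} > 0$ throughout $E_\lambda \cap B_r(x)$. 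Comparing $\cA(E_\lambda, E_0, \lambda)$ with $\cA(E_\lambda \setminus B_r(x), E_0, \lambda)$ and arguing exactly as in the derivation of \eqref{qwqwww} in the proof of Proposition \ref{prop:lower_density_est_weak}, but now keeping the full distance term $\lambda \int_{E_\lambda \cap B_r(x)} \dist_{E_0}\,dy$ rather than estimating it crudely by $\Lambda$, we obtain
\begin{equation*}
\kappa P(E_\lambda \cap B_r(x)) + \lambda \int_{E_\lambda \cap B_r(x)} \dist_{E_0}\,dy \le 2\,\cH^n(E_\lambda \cap \p B_r(x)),
\end{equation*}
where the $\kappa P$ term comes from \eqref{eq:bound_F-0} applied to $E_\lambda \cap B_r(x)$.

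The next step is to turn this into a differential inequality for $g(r) := |E_\lambda \cap B_r(x)|$. On the left, drop the perimeter term (it is nonnegative) but first use it, together with the isoperimetric inequality and the lower density estimate from Remark \ref{rem:uncons_dens_est}(a) — which guarantees $g(r) \ge (\kappa/2)^{n+1}\omega_{n+1} r^{n+1} > 0$ for the relevant radii — to control things; on the distance term, since $\dist_{E_0}(y) \ge \dist_{E_0}(x) - r$ for $y \in B_r(x)$, bound $\int_{E_\lambda \cap B_r(x)} \dist_{E_0}\,dy \ge (\dist_{E_0}(x) - r)\, g(r)$. Writing $d_0 := \dist_{E_0}(x)$ and $g'(r) = \cH^n(E_\lambda \cap \p B_r(x))$ for a.e. $r$, and combining with the isoperimetric inequality $P(E_\lambda \cap B_r) \ge (n+1)\omega_{n+1}^{1/(n+1)} g(r)^{n/(n+1)}$, one gets an ODE differential inequality of the form
\begin{equation*}
\kappa (n+1)\omega_{n+1}^{1/(n+1)}\, g(r)^{n/(n+1)} + \lambda (d_0 - r)\, g(r) \le 2 g'(r),
\end{equation*}
valid for a.e. $r$ in a suitable interval $(0, r_*)$ with $r_* \le d_0/2$, say. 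Integrating this (or rather, arguing by contradiction: if $d_0$ were too large, the growth of $g$ forced by the inequality would exceed the trivial bound $g(r) \le \omega_{n+1} r^{n+1}$ before $r$ reaches $r_*$) yields $\sqrt{\lambda}\, d_0 \le R(n,\kappa)$ with $R(n,\kappa)$ as in \eqref{const:uniform_density_est}; the specific algebraic form of $R$ is exactly what one reads off from balancing the two left-hand terms against $g(r) \le \omega_{n+1} r^{n+1}$ at the critical scale $r \sim \lambda^{-1/2}$.

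For the reverse inclusion, i.e. $x \in E_0 \setminus E_\lambda$ with $\dist_{E_0}(x)$ large, the argument is symmetric: now $B_r(x) \subset E_0$ (it avoids $\Omega \cap \p E_0$ and $x \in E_0$), so $\sdist_{E_0} = -\dist_{E_0} < 0$ on $B_r(x)$, one compares $\cA(E_\lambda, E_0, \lambda)$ with $\cA(E_\lambda \cup (B_r(x)\cap\Omega), E_0, \lambda)$ as in \eqref{ungtomon}, uses \eqref{eq:bound_F-0} on $(B_r(x)\setminus E_\lambda)\cap\Omega$ and the lower density estimate from Remark \ref{rem:uncons_dens_est}(b), and derives the same differential inequality for $g(r) := |(B_r(x)\setminus E_\lambda)\cap\Omega|$. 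The boundary term $\int_{\p\Omega}\beta\,\chi\,d\cH^n$ is absorbed into the coercivity constant exactly as in \eqref{qaydantopay}. The main obstacle is purely technical bookkeeping: making sure the radius range in which the differential inequality holds is large enough (of order $\lambda^{-1/2}$, not $\lambda^{-1}$) for the integration/contradiction to bite, and checking that one is never forced outside $\Omega$ or past $\p E_0$ when placing the test balls — both of which are handled by the observation in Remark \ref{rem:uncons_dens_est} that $r$ need not lie in $(0, \min\{1, \kappa(n+1)/(2\Lambda)\})$ and that the endpoint $x \in \p E_\lambda$ is not required, only $x \in \overline{E_\lambda}$ respectively $x\in\overline{E_\lambda}$ on the complement side.
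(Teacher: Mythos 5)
Your proposal is correct and in substance it is the paper's argument: compare $\cA(E_\lambda,E_0,\lambda)$ with $\cA(E_\lambda\setminus B_\rho(x),E_0,\lambda)$ (resp.\ $\cA(E_\lambda\cup(B_\rho(x)\cap\Omega),E_0,\lambda)$), keep the distance term $\lambda\int\sdist_{E_0}$, bound it from below via $\dist_{E_0}\ge d_0-\rho$ together with the lower density estimate of Remark \ref{rem:uncons_dens_est}, bound the right-hand side trivially by $[(n+1)\omega_{n+1}+\omega_n]\rho^n$, and contradict. The only cosmetic difference is that the paper does not set up a differential inequality in $r$ (which, as you hint, would be awkward to integrate from $g(0^+)=0$) but simply plugs in the single radius $\rho=(R+\epsilon)\lambda^{-1/2}/2$, which amounts exactly to your ``arguing by contradiction'' fallback.
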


\begin{proof}
Let  $R:=  R(n,\kappa).$ Suppose by contradiction that
there exist $\epsilon>0,$  $\lambda\ge1$ and
$x\in E_\lambda\Delta E_0$
such that $\dist_{E_0}(x) > (R+\epsilon)\lambda^{-1/2}.$
Consider first the case $x\in  E_\lambda\setminus E_0.$
By regularity of $E_\lambda$
(Theorem \ref{teo: regularity}) we may assume that
$x\in \p E_\lambda\setminus E_0.$ Note that $B_\rho \cap E_0 =\emptyset,$
where  $B_\rho:=B_\rho(x),$ $\rho=(R+\epsilon)\lambda^{-1/2}/2.$
Since $\cA(E_\lambda,E_0,\lambda) \le
\cA(E_\lambda\setminus B_\rho,E_0,\lambda),$
and $\sdist_{E_0}(y)=\dist_{E_0}(y)\ge \rho $ for any $y\in B_\rho\cap E_\lambda,$
from \eqref{qwqwww} we establish
$$
\frac{(R+\epsilon)\lambda^{1/2}}{2}|E_\lambda\cap B_\rho| \le
\lambda\int_{E_\lambda\cap B_\rho} \sdist_{E_0}dy \le \cH^n(E_\lambda\cap \p B_\rho)
+\int_{B_\rho\cap \p\Omega} \beta\chi_{E_\lambda}\,d\cH^n\le
 [\omega_{n+1}(n+1)+\omega_n]\rho^n.
$$
This and Remark \ref{rem:uncons_dens_est} (a) yield\footnote{
Since the upper bound for the radii in Proposition
\ref{prop:lower_density_est_weak} is of order $O(\frac{1}{\lambda}),$
in general, we cannot apply it with $\rho.$}
$$
\omega_{n+1}\,\frac{(R+\epsilon)\kappa^{n+1}}{2^{n+2}} \lambda^{1/2} \rho^{n+1} \le [\omega_{n+1}(n+1)+\omega_n]\rho^n,
$$
or equivalently, recalling the definition of $\rho$
$$
(R+\epsilon)^2 \le 2^{n+3}\,
\frac{\omega_n+(n+1)\omega_{n+1}}{\omega_{n+1}\kappa^{n+1} } = R^2,
$$
which is a contradiction.
A similar contradiction is obtained
when $x\in E_0\setminus E_\lambda.$
\end{proof}

\begin{corollary}\label{cor:behav_big_lambda}
Assume    \eqref{hyp:main1}  and $|\overline{E_0}\setminus E_0|=0.$
If $\|\beta\|_\infty<1,$ then any minimizer
$E_\lambda$
satisfies $\overline{\Omega\cap \p E_\lambda}
\overset{K}{\to} \overline{\Omega\cap \p E_0}$ as $\lambda\to+\infty,$ 
where $\overset{K}{\to}$ denotes
 Kuratowski convergence \cite{Kurat:66}. 
\end{corollary}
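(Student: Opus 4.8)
The plan is to verify directly the two conditions defining Kuratowski convergence of the closed sets $M_\lambda:=\overline{\Omega\cap\p E_\lambda}$ towards $M_0:=\overline{\Omega\cap\p E_0}$: (i) every $x\in M_0$ is a limit of points of $M_\lambda$; and (ii) if $\lambda_k\to+\infty$, $x_k\in M_{\lambda_k}$ and $x_k\to x$, then $x\in M_0$. First I would fix $\kappa:=(1-\|\beta\|_\infty)/2\in(0,\tfrac12]$, so that \eqref{hyp:main} holds and I may use: $|E_\lambda\Delta E_0|\to0$ (Lemma \ref{lem:behav_big_lambda}); the openness of $E_\lambda$ (Theorem \ref{teo: regularity}); the bound $\sqrt\lambda\,\|\dist_{E_0}\|_{L^\infty(E_\lambda\Delta E_0)}\le R(n,\kappa)$ (Proposition \ref{prop:uniform_L_infty_est}); and the density estimates of Theorem \ref{teo:lower_density_est}, together with the sharper inequality $|(B_r(x)\setminus E_\lambda)\cap\Omega|\ge(\kappa/4)^{n+1}\omega_{n+1}r^{n+1}$ for $x\in\Omega\cap\p^*E_\lambda$ and $r\in(0,\gamma(n,\kappa)\lambda^{-1/2})$ that is actually established inside that proof. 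I would also record that $P(E_0,\Omega)>0$ (since $E_0$ is bounded with $|E_0|>0$), hence $\Omega\cap\p^*E_0$ is nonempty and, by $\overline{\p^*E_0}=\p E_0$, dense in $\Omega\cap\p E_0$; and that $E_\lambda\neq\emptyset$ for $\lambda$ large by the $L^1$-convergence.

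For (i), by density it suffices to take $x\in\Omega\cap\p^*E_0$, for which $0<|B_r(x)\cap E_0|<\omega_{n+1}r^{n+1}$ whenever $B_r(x)\subset\Omega$, and then use a diagonal argument. If $\distance(x,\Omega\cap\p E_{\lambda_k})\ge\delta$ along some $\lambda_k\to+\infty$, I would shrink $\delta$ so that $B_\delta(x)\subset\Omega$; then $B_\delta(x)\cap\p E_{\lambda_k}=\emptyset$, and since $E_{\lambda_k}$ is open this forces $P(E_{\lambda_k},B_\delta(x))=0$, so $\chi_{E_{\lambda_k}}$ is a.e.\ constant on the connected set $B_\delta(x)$ and $|B_\delta(x)\cap E_{\lambda_k}|\in\{0,\omega_{n+1}\delta^{n+1}\}$, contradicting $|B_\delta(x)\cap E_{\lambda_k}|\to|B_\delta(x)\cap E_0|$ by Lemma \ref{lem:behav_big_lambda}. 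Hence $\distance(x,\Omega\cap\p E_\lambda)\to0$, which supplies points of $M_\lambda$ converging to $x$.

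For (ii) I would reduce (again by density) to $x_k\in\Omega\cap\p^*E_{\lambda_k}$, set $\rho:=\distance(x,\Omega\cap\p E_0)$, and assume $\rho>0$. Since $B_\rho(x)\cap\Omega$ is connected with $P(E_0,B_\rho(x)\cap\Omega)=0$, up to null sets either $B_\rho(x)\cap\Omega\subseteq E_0$ or $B_\rho(x)\cap\Omega\subseteq\Omega\setminus E_0$. Taking $k$ large with $x_k\in B_{\rho/4}(x)$ and $r:=\min\{\rho/4,\gamma(n,\kappa)\lambda_k^{-1/2}/2\}$, so that $B_r(x_k)\subset B_\rho(x)$: in the first case the density estimate gives $|(B_r(x_k)\setminus E_{\lambda_k})\cap\Omega|>0$, and a.e.\ point $w$ of this set lies in $E_0\setminus E_{\lambda_k}\subseteq E_0\Delta E_{\lambda_k}$ with $\dist_{E_0}(w)\ge\rho-|w-x|\ge\rho/2$, whence $\|\dist_{E_0}\|_{L^\infty(E_{\lambda_k}\Delta E_0)}\ge\rho/2$, contradicting Proposition \ref{prop:uniform_L_infty_est} once $\lambda_k>(2R(n,\kappa)/\rho)^2$; the second case is symmetric, using $|E_{\lambda_k}\cap B_r(x_k)|>0$ and $E_{\lambda_k}\cap B_r(x_k)\subseteq E_{\lambda_k}\setminus E_0$. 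Thus $\rho=0$, i.e.\ $x\in M_0$.

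The hardest point is (ii) when the limit point $x$ lies on $\p\Omega$ (near the contact line): there the full ball $B_r(x_k)$ need not be contained in $\Omega$, so the ambient bound $|B_r(x_k)\setminus E_{\lambda_k}|\ge c\,r^{n+1}$ carries no usable information — the deficit could sit entirely outside $\Omega$. This is precisely why I must invoke the refined inequality $|(B_r(x_k)\setminus E_{\lambda_k})\cap\Omega|\ge c\,r^{n+1}$ from the proof of Theorem \ref{teo:lower_density_est}, and why the radii have to be taken at the sharp scale $\lambda_k^{-1/2}$: only then does the $O(\lambda^{-1/2})$ distance control of Proposition \ref{prop:uniform_L_infty_est} yield a contradiction, whereas the $O(\lambda^{-1})$ admissible radii of Proposition \ref{prop:lower_density_est_weak} would be too small to exploit it.
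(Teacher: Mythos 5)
Your proof is correct, but it is organized differently from the paper's. The paper extracts a subsequential Kuratowski limit $C$ of $\overline{\Omega\cap\p E_{\lambda_j}}$ by compactness of closed sets under Kuratowski convergence, then identifies $C$ with $\overline{\Omega\cap\p E_0}$: the inclusion $\overline{\Omega\cap\p E_0}\subseteq C$ is obtained from $P(E_{\lambda_j},B_\rho(x)\cap\Omega)=0$ on balls missing $C$ together with the $L^1$-convergence of Lemma \ref{lem:behav_big_lambda} and lower semicontinuity of perimeter, while the reverse inclusion uses Proposition \ref{prop:uniform_L_infty_est} applied (somewhat informally) to the approaching boundary points $x_j$. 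You instead verify the two defining conditions of Kuratowski convergence directly, avoiding the compactness step: your argument for (i) uses the $L^1$-convergence of volumes on small balls (essentially equivalent to the paper's perimeter argument, but more elementary), and your argument for (ii) is a more careful version of the paper's second inclusion, since you explicitly pass from a boundary point $x_k$ to a positive-measure set of points of $E_{\lambda_k}\Delta E_0$ via the density estimates before invoking the $L^\infty$ bound -- this fills in a step the paper glosses over. Your observation that near the contact line one needs the $\Omega$-localized form of the upper density estimate (proved, but not displayed, inside Theorem \ref{teo:lower_density_est}) at scale $\lambda^{-1/2}$ is accurate and is a genuine subtlety. Both proofs rest on the same three ingredients (Lemma \ref{lem:behav_big_lambda}, Theorem \ref{teo:lower_density_est}/Theorem \ref{teo: regularity}, Proposition \ref{prop:uniform_L_infty_est}); the paper's version is shorter thanks to the compactness shortcut, yours is longer but more self-contained and explicit at the contact set.
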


\begin{proof}
It suffices to show that every diverging sequence $\{\lambda_j\}$
has a subsequence $\{\lambda_j'\}$ such that  
$$K-\lim\limits_{j\to+\infty} \overline{\Omega\cap \p E_{\lambda_j'}} =
\overline{\Omega\cap \p E_0}.$$
Choose any sequence $\lambda_j\to+\infty.$ By compactness of closed
sets in Kuratowski convergence \cite[page 340]{Kurat:66}, 
there exists a closed
set $C \subset \overline \Omega$
such that up to a not relabelled subsequence
$\overline{\Omega\cap \p E_{\lambda_j}}
\overset{K}{\to} C$
as $j\to+\infty.$
Let us show first that $\overline{\Omega\cap\p E_0}\subseteq C.$
Take any $x\in \R^{n+1}\setminus C$; we may suppose that $x\in\Omega.$
Since $C$ is closed, there exists a ball $B_\rho(x)$ such that
$B_\rho(x)\cap C=\emptyset.$ Since
$\overline{\Omega\cap \p E_{\lambda_j}}\overset{K}{\to} C$ as $j\to+\infty,$
we have $B_\rho(x)\cap \overline{\Omega\cap\p E_{\lambda_j}}=\emptyset$
for $j\ge1$ large enough. Therefore, $P(E_{\lambda_j}, B_\rho(x)\cap\Omega)=0,$
and by a) and lower semicontinuity,  $P(E_0, B_\rho(x)\cap\Omega)=0.$
This yields $B_{\rho/2}(x)\cap \overline{\Omega\cap \p E_0}=\emptyset$
and thus $\R^{n+1}\setminus C\subseteq
\R^{n+1}\setminus \overline{\Omega\cap \p E_0}.$

Now suppose that there exists $x\in C\setminus\overline{\Omega\cap \p E_0}.$
Then there exists $\rho>0$ such that
$B_\rho(x)\cap \overline{\Omega\cap \p E_0}=\emptyset.$ Since $x\in C,$
there exists $x_j\in \overline{\Omega\cap \p E_{\lambda_j}}$ such that
$x_j\to x.$ Choose $j\in \mathbb N$ so large that $x_j\in B_{\rho/4}(x)$ and
$R(n,\kappa)\lambda_j^{-1/2} <\rho/4,$ where $R(n,\kappa)$
is defined in \eqref{const:uniform_density_est}.
By Proposition \ref{prop:uniform_L_infty_est}, we have
$$
\dist_{E_0}(x_j)\le
R(n,\kappa)\lambda_j^{-1/2} <\frac{\rho}{4}.
$$
On the other hand, by construction, $\dist_{E_0}(x) \ge \frac{3\rho}{4},$ 
which leads to a contradiction. This yields 
$C\subseteq\overline{\Omega\cap \p E_0}$.
\end{proof}

\begin{proof}[\bf Proof of Theorem \ref{teo:lower_density_est}]
We repeat the same procedures of the proof of
Proposition \ref{prop:lower_density_est_weak}
with improved estimates for the volume term of
$\cA(\cdot,E_0,\lambda).$
Let $R:=R(n,\kappa),$ $\gamma:=\gamma(n,\kappa).$
Fix  $x\in \p^* E_\lambda,$ and choose
$r\in (0,\gamma\lambda^{-1/2})$  such that
$
\cH^n(\p B_r\cap \p E_\lambda) = 0.
$
From \eqref{eq:uniform_bound_dist} it follows
$$
\sup\limits_{(E_\lambda\setminus E_0)\cap B_r} \dist_{E_0} \le R\lambda^{-1/2}.
$$
Therefore, using the obvious inequality
\begin{align*}
\sup\limits_{(E_\lambda\cap E_0)\cap B_r} \dist_{E_0}
\le 2r+\sup\limits_{(E_0\setminus E_\lambda)\cap B_r}
\dist_{E_0} \le (2\gamma+R) \lambda^{-1/2},
\end{align*}
from \eqref{qwqwww} we  establish  that
\begin{align}\label{23w}
P(E_\lambda, B_r\cap\Omega) -\int_{B_r\cap \p\Omega }
\beta\chi_{E_\lambda}\,d\cH^n
 \le & \cH^n(E_\lambda\cap \p B_r) + (R + 2\gamma)
 \lambda^{1/2}|E_\lambda\cap B_r|.
\end{align}
Since $m(r):=|E_\lambda\cap B_r|\le \omega_{n+1}r^{n+1}$ and
$r\le \frac{\gamma}{\lambda^{1/2}},$
similarly to \eqref{eq:diff_eq_for_de} from \eqref{23w} we deduce
$$
\kappa(n+1)\omega_{n+1}^{\frac{1}{n+1}} m(r)^{\frac{n}{n+1}} \le 2m'(r) +
(R+2\gamma)\lambda^{1/2}r \omega_{n+1}^{\frac{1}{n+1}}
m(r)^{\frac{n}{n+1}},\,\, \text{for a.e. $r\in (0,\gamma\lambda^{1/2}).$}
$$
By the definition of $\gamma$ one has
$$
(R+2\gamma)\lambda^{1/2}r\le  (R + 2\gamma)\gamma=
\frac12\,\kappa(n+1).
$$
Thus,
$$
\frac{\kappa}{4}\,(n+1)\omega_{n+1}^{\frac{1}{n+1}} m(r)^{\frac{n}{n+1}}
\le m'(r)\quad
\text{for a.e. $r\in (0,\gamma\lambda^{-1/2}).$}
$$
Integrating this differential inequality we get the lower volume density estimate
in \eqref{eq:volume_est}.

Let us prove  the upper volume density estimate in \eqref{eq:volume_est}.
Due to  \eqref{trivial_ineq}
we may suppose that $x\in \Omega\cap\p^*E_\lambda.$
As above  one can estimate $\dist_{E_0}$ in
$(B_r\setminus E_\lambda)\cap \Omega$
as follows:
\begin{align}\label{kuchli_baho}
\sup\limits_{\Omega\cap ((B_r\setminus E_\lambda)\setminus E_0)}
\dist_{E_0} \le 2r + \sup\limits_{E_\lambda\Delta E_0} \dist_{E_0} \le (2\gamma+R) \lambda^{-1/2}.
\end{align}
Since $\sdist_{E_0}\le 0$ in $\Omega\cap ((B_r\setminus E_\lambda)\cap E_0),$
plugging \eqref{kuchli_baho}  in \eqref{ungtomon} and proceeding as above
we establish
$$
\frac{\kappa}{4}\,(n+1)\omega_{n+1}^{\frac{1}{n+1}}|(B_r\setminus
E_\lambda)\cap\Omega|^{\frac{n}{n+1}} \le \cH^n((\Omega\setminus E_\lambda)\cap B_r),
$$
from which the upper volume density estimates in \eqref{eq:volume_est} follows.

The proof of \eqref{eq:perimeter_est} is exactly the same as
the proof of perimeter density estimates in Proposition
\ref{prop:lower_density_est_weak}.
Finally, \eqref{eq:ess_boundary} is a standard consequence of a
covering argument.
\end{proof}

Let us prove the following $L^1$-estimate for the minimizers of
$\cA(\cdot,E_0,\lambda),$
the analog of \cite[Lemma 1.5]{LS:95} and
\cite[Proposition 3.2.3]{MSS:2016}. Notice carefully the exponent $-1/2$ of $\lambda$
in \eqref{eq:uniform_estimate}.

\begin{proposition}[\bf $L^1$-estimate]\label{prop:uniform_estimate}
Assume that $E_0$  and $\beta$ satisfy  \eqref{hyp:main} and the uniform
volume density estimates \eqref{eq:volume_est} holds for $E_0.$
Then for any minimizer $E_\lambda$ of
$\cA(\cdot, E_0,\lambda)$   the estimate
\begin{equation}\label{eq:uniform_estimate}
|E_\lambda\Delta E_0| \le C_{n,\kappa} P(E_0)\, \ell +
\frac{1}{\ell}\int_{E_\lambda\Delta E_0}\dist_{E_0}dx,
\quad \ell \in \left(0,
\frac{\gamma(n,\kappa)}{\lambda^{1/2}}\right)
\end{equation}
holds, where
\begin{equation}\label{3452}
C_{n,\kappa}:=\left(\frac{8}{\kappa}\right)^{n+1}  \omega_{n+1}^{\frac{1}{n+1}}\,\mathfrak{b}(n)\,c_{n+1}
\end{equation}
and
$\mathfrak{b}(n)$ is the constant in Besicovitch covering theorem.
\end{proposition}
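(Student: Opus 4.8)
The plan is to split $E_\lambda\Delta E_0$ according to the size of $\dist_{E_0}$, estimating separately the part far from $\Omega\cap\p E_0$ by a Chebyshev-type inequality and the thin part near $\Omega\cap\p E_0$ by a Besicovitch covering, following \cite[Lemma 1.5]{LS:95} and \cite[Proposition 3.2.3]{MSS:2016}. Fix $\ell\in\big(0,\gamma(n,\kappa)\lambda^{-1/2}\big)$. Writing
$$
|E_\lambda\Delta E_0|=\big|(E_\lambda\Delta E_0)\cap\{\dist_{E_0}\ge\ell\}\big|+\big|(E_\lambda\Delta E_0)\cap\{\dist_{E_0}<\ell\}\big|,
$$
the first summand is at most $\ell^{-1}\int_{E_\lambda\Delta E_0}\dist_{E_0}\,dx$ since $\dist_{E_0}\ge\ell$ on it, which is the second term on the right-hand side of \eqref{eq:uniform_estimate}. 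Hence it suffices to prove
$$
\big|\{x\in\Omega:\ \dist_{E_0}(x)<\ell\}\big|\le C_{n,\kappa}\,P(E_0)\,\ell .
$$

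For this inequality I would cover $\Omega\cap\p E_0$ by the balls $\{B_\ell(y):y\in\Omega\cap\p E_0\}$ and apply the Besicovitch covering theorem: this produces a countable set $\{y_i\}\subseteq\Omega\cap\p E_0$ with $\Omega\cap\p E_0\subseteq\bigcup_iB_\ell(y_i)$ and $\sum_i\chi_{B_\ell(y_i)}\le\mathfrak b(n)$. If $\dist_{E_0}(x)<\ell$ there is $y\in\Omega\cap\p E_0$ with $|x-y|<\ell$, and $y\in B_\ell(y_i)$ for some $i$, so $|x-y_i|<2\ell$; thus $\{\dist_{E_0}<\ell\}\cap\Omega\subseteq\bigcup_iB_{2\ell}(y_i)$. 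Since $\ell<\gamma(n,\kappa)\lambda^{-1/2}$ and each $y_i\in\p E_0$, the assumed volume density estimates \eqref{eq:volume_est} for $E_0$ together with the relative isoperimetric inequality give the lower perimeter bound $P(E_0,B_\ell(y_i))\ge c(n,\kappa)\,\ell^n$ (exactly the last step in the proof of Proposition \ref{prop:lower_density_est_weak}, now applied to $E_0$), recalling that $E_0$ has finite perimeter in $\R^{n+1}$ by Lemma \ref{lem:betacond}. Summing over $i$ and using the bounded overlap, $\sum_i\ell^n\le c(n,\kappa)^{-1}\sum_iP(E_0,B_\ell(y_i))\le c(n,\kappa)^{-1}\mathfrak b(n)\,P(E_0)$, whence
$$
\big|\{\dist_{E_0}<\ell\}\cap\Omega\big|\le\sum_i|B_{2\ell}(y_i)|=\omega_{n+1}\,2^{n+1}\,\ell\sum_i\ell^n\le\frac{2^{n+1}\,\omega_{n+1}\,\mathfrak b(n)}{c(n,\kappa)}\,P(E_0)\,\ell .
$$
Recalling $c(n,\kappa)=c_{n+1}(\kappa/4)^n$ and keeping track of the numerical constants (using $\kappa\le\tfrac12$), this is bounded by $C_{n,\kappa}P(E_0)\ell$ with $C_{n,\kappa}$ as in \eqref{3452}. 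Combining the two estimates yields \eqref{eq:uniform_estimate}.

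The only genuinely delicate point is making sure the volume density bounds for $E_0$ are admissible on balls of radius $\ell$ — which is precisely why the constraint $\ell<\gamma(n,\kappa)\lambda^{-1/2}$ must appear in the statement — and the ensuing lower bound $P(E_0,B_\ell(y_i))\ge c(n,\kappa)\ell^n$ extracted from them through the relative isoperimetric inequality; the rest is a routine Besicovitch computation. I note, incidentally, that minimality of $E_\lambda$ enters only through Theorem \ref{teo: unconstrained minimizer} and Proposition \ref{prop:uniform_L_infty_est}, which guarantee $|E_\lambda\Delta E_0|<+\infty$ and $\int_{E_\lambda\Delta E_0}\dist_{E_0}\,dx<+\infty$, so that both sides of \eqref{eq:uniform_estimate} are finite.
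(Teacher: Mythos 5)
Your proof is correct and takes essentially the same route as the paper: a Chebyshev bound for $\{\dist_{E_0}\ge\ell\}\cap(E_\lambda\Delta E_0)$, and a Besicovitch covering of a neighbourhood of $\Omega\cap\p E_0$ combined with the density estimates for $E_0$ and the relative isoperimetric inequality for the thin part. The only cosmetic differences are that you centre the balls on $\Omega\cap\p E_0$ rather than $\p E_0$ (which is if anything more directly aligned with the definition of $\dist_{E_0}$), and that you invoke the lower perimeter density bound $P(E_0,B_\ell(y_i))\ge c(n,\kappa)\ell^n$ from \eqref{eq:perimeter_est} as a packaged consequence of \eqref{eq:volume_est}, while the paper unfolds that same inequality inline for each $B_{2\ell}(x_i)$. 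One small stylistic remark: the sentence ``keeping track of the numerical constants (using $\kappa\le\tfrac12$), this is bounded by $C_{n,\kappa}P(E_0)\ell$'' is asserted rather than computed — you should actually write out the comparison $\tfrac{2^{n+1}\omega_{n+1}}{c(n,\kappa)}\le C_{n,\kappa}/\mathfrak b(n)$, which is elementary but not entirely vacuous (it uses $\kappa\le\tfrac12$ and that $\omega_{n+1}^{n/(n+1)}$ is bounded). Your closing observation — that minimality of $E_\lambda$ plays no role beyond guaranteeing that both sides of \eqref{eq:uniform_estimate} are finite, so the estimate is really a property of the pair $(E_\lambda,E_0)$ with $E_0$ enjoying density estimates — is accurate and matches what the paper's argument actually uses.
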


\begin{proof}
Set
$$A:=\{x\in E_\lambda\Delta E_0:\,\dist_{E_0}(x) \ge \ell\},\quad
B:=\{x\in E_\lambda\Delta E_0:\,\dist_{E_0}(x) < \ell\}.$$
By Chebyshev inequality
$$
|A| \le \frac{1}{\ell}
 \int_{E_\lambda\Delta E_0}\dist_{E_0}dx.
$$
Let us estimate $|B|.$ Since $E_0$ is bounded, by Besicovitch's covering theorem
there exist at most countably many balls
$\{B_{\ell}(x_i)\},$ $x_i\in \p E_0$  such that
any point of  $\p E_0$ belongs to at most $\mathfrak{b}(n)$ balls,
$\p E_0\subset \bigcup\limits_i B_{\ell}(x_i)$  and  $B\subset \bigcup\limits_i B_{2\ell}(x_i).$
Since the balls $\{B_{2\ell}(x_i)\}$ cover $B,$
by the density estimates \eqref{eq:volume_est} and the
relative isoperimetric inequality   we get
\begin{align*}
|B_{2\ell}(x_i)| = &2^{n+1} \omega_{n+1} \ell^{n+1} \le 2^{n+1}\left(\frac{4}{\kappa}\right)^{n+1}
\min\{|B_\ell(x_i)\cap E_0|, |B_\ell(x_i)\setminus E_0|\} \\
\le & \left(\frac{8}{\kappa}\right)^{n+1} \omega_{n+1}^{\frac{1}{n+1}}\,\ell\,
\min\{|B_\ell(x_i)\cap E_0|, |B_\ell(x_i)\setminus E_0|\}^{\frac{n}{n+1}}\\
\le &  \left(\frac{8}{\kappa}\right)^{n+1}  \omega_{n+1}^{\frac{1}{n+1}}\,\ell\,c_{n+1} \,
P(E_0,B_\ell(x_i)).
\end{align*}
Therefore
\begin{align*}
|B| \le &  \left(\frac{8}{\kappa}\right)^{n+1}  \omega_{n+1}^{\frac{1}{n+1}}\,c_{n+1}\,
\ell \sum\limits_{i} P(E_0,B_\ell(x_i)) \le
 \left(\frac{8}{\kappa}\right)^{n+1}  \omega_{n+1}^{\frac{1}{n+1}}\,\mathfrak{b}(n)\,c_{n+1}\,P(E_0)\,\ell.
\end{align*}
Now   \eqref{eq:uniform_estimate} follows from the  estimates
for $|A|,|B|$ and from
$
|E_\lambda\Delta E_0| \le |A|+|B|.
$
\end{proof}

A specific choice of $\ell$ will be made in the proof of Theorem \ref{teo:existence_of_GMM}.
We conclude this section with a proposition about
the regularity of minimizers of $\cC(\cdot,\Omega).$

\begin{proposition}[\bf Density estimates for constrained
minimizers of $\cC$]\label{prop:dens_est_for_cap}
Assume that $E_0$ and $\beta$ satisfy \eqref{hyp:main} and
there exist  $c_1,c_2,\epsilon\in (0,1)$ such that for
every $x\in \p E_0$ and $r\in (0,\epsilon)$
the inequalities
\begin{align*}
c_1\le \frac{|B_r(x)\cap E_0|}{|B_r(x)|} \le c_2
\end{align*}
hold.
Let  $E^+$ be a constrained minimizer of $\cC(\cdot,\Omega)$ in $\cE(E_0).$
Then for every $x\in \p E^+$ and $r\in (0,\epsilon) $
\begin{equation}\label{eq:dens_est_cap_f}
\begin{gathered}
c_1 \left(\frac{\kappa}{8}\right)^{n+1}\le \frac{|B_r(x)\cap E^+|}{|B_r(x)|}\le
1 - \left(\frac{\kappa}{4}\right)^{n+1},\\
c_{n+1} c_1^{\frac{n}{n+1}} (\kappa/8)^n  \le
\frac{P(E^+,B(x,r))}{r^n}\le (n+1)\omega_{n+1} +\omega_n.
\end{gathered}
\end{equation}
In particular,
$\cH^n(\p E^+\setminus \p^*E^+)=0.$
\end{proposition}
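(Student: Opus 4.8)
The plan is to adapt the ball--comparison technique of Proposition~\ref{prop:lower_density_est_weak} and Remark~\ref{rem:uncons_dens_est}; the only genuinely new difficulty is that the natural competitor $E^+\setminus B_\rho(x)$ need not lie in $\cE(E_0)$ once $B_\rho(x)$ meets $E_0$. So I would split the proof of the lower volume estimate according to the position of $x\in\p E^+$ relative to $E_0$, after recording the elementary facts that (as usual we identify sets with their density--one representatives) $\p E^+=\overline{\p^*E^+}$, hence $|B_r(x)\cap E^+|>0$ and $|B_r(x)\setminus E^+|>0$ for every $x\in\p E^+$ and $r>0$. The first point is a dichotomy: \emph{every $x\in\p E^+$ satisfies either $x\in\p E_0$ or $\distance(x,\overline{E_0})>0$.} Indeed, if $\distance(x,\overline{E_0})=0$ but $x\notin\p E_0=\overline{\p^*E_0}$, there is $\rho>0$ with $\cH^n(\p^*E_0\cap B_\rho(x))=0$, so $P(E_0,B_\rho(x))=0$; since $x\in\overline{E_0}$ this forces $|B_\rho(x)\setminus E_0|=0$, i.e. $B_\rho(x)\subseteq E_0\subseteq E^+$, and then $B_\rho(x)\cap\p^*E^+=\emptyset$ contradicts $x\in\p E^+$.

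For the lower volume bound, if $x\in\p E_0$ the hypothesis gives $|B_r(x)\cap E_0|\ge c_1|B_r(x)|$ for $r\in(0,\epsilon)$, and since $E_0\subseteq E^+$ and $c_1(\kappa/8)^{n+1}\le c_1$ the bound in \eqref{eq:dens_est_cap_f} is immediate. If instead $d:=\distance(x,\overline{E_0})>0$, then for $\rho<\min\{r,d\}$ the ball $B_\rho(x)$ avoids $E_0$, so $E^+\setminus B_\rho(x)\in\cE(E_0)$; comparing $\cC(E^+,\Omega)$ with $\cC(E^+\setminus B_\rho(x),\Omega)$ and arguing exactly as in Remark~\ref{rem:uncons_dens_est}(a) (via the coercivity inequality $\kappa P(\,\cdot\,)\le\cC(\,\cdot\,,\Omega)$ from \eqref{eq:bound_F-0} and the isoperimetric inequality) I get $|B_\rho(x)\cap E^+|\ge(\kappa/2)^{n+1}\omega_{n+1}\rho^{n+1}$, and letting $\rho\uparrow\min\{r,d\}$ gives the same with $\rho$ replaced by $\min\{r,d\}$. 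If $d\ge r$ this is the claim; if $d<r$, I pick $z\in\p E_0$ with $|x-z|=d$, note $B_{r-d}(z)\subseteq B_r(x)$ and $r-d\in(0,\epsilon)$, and use the density hypothesis at $z$ to get $|B_r(x)\cap E^+|\ge|B_{r-d}(z)\cap E_0|\ge c_1\omega_{n+1}(r-d)^{n+1}$. Taking the larger of the two lower bounds and using $\max\{d^{n+1},(r-d)^{n+1}\}\ge(r/2)^{n+1}$ together with $\kappa\le\tfrac12$ and $c_1<1$, I obtain $|B_r(x)\cap E^+|\ge c_1(\kappa/8)^{n+1}\omega_{n+1}r^{n+1}$.

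For the remaining estimates I would compare $E^+$ with $E^+\cup B_\rho(x)$, which always lies in $\cE(E_0)$. Since $\cC$ carries no distance term, the computation behind the upper volume density estimate in the proof of Theorem~\ref{teo:lower_density_est} (and the trivial bound \eqref{trivial_ineq} when $x\in\p\Omega$) gives $|B_\rho(x)\setminus E^+|\ge(\kappa/4)^{n+1}\omega_{n+1}\rho^{n+1}$, hence the upper bound in \eqref{eq:dens_est_cap_f} after $\rho\uparrow r$; and the same competitor, now tracking the perimeter created on $\p B_\rho(x)$ and the traces on $\p\Omega$, yields $P(E^+,B_\rho(x))\le[(n+1)\omega_{n+1}+\omega_n]\rho^n$. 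The lower perimeter bound then follows from the relative isoperimetric inequality and the two volume bounds, since $\min\{|B_r(x)\cap E^+|,|B_r(x)\setminus E^+|\}\ge c_1(\kappa/8)^{n+1}\omega_{n+1}r^{n+1}$. Finally, the volume density of $E^+$ is bounded away from $0$ and $1$ at every point of $\p E^+$ for all small radii, so no such point can have density $0$ or $1$; hence $\p E^+=\p^*E^+$ and in particular $\cH^n(\p E^+\setminus\p^*E^+)=0$.

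The main obstacle will be the lower volume estimate near $\p E_0$: ordinary ball removal destroys admissibility, and replacing $E^+\cap B_\rho$ by $E_0\cap B_\rho$ is useless because $P(E_0,B_\rho)$ is uncontrolled. The device above circumvents this by only removing balls disjoint from $E_0$ and transferring the estimate near $\p E_0$ onto the assumed density of $E_0$ itself; the mild bookkeeping needed so that the single constant $c_1(\kappa/8)^{n+1}$ works in every case is where the hypotheses $\kappa\le\tfrac12$ and $c_1<1$ enter.
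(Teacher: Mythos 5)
Your proof is correct and follows essentially the same route as the paper's: lower volume bound by a case analysis on the position of $x$ relative to $E_0$, ball removal with competitor $E^+\setminus B_\rho$ where this is admissible, the density of $E_0$ near $\p E_0$, and the competitor $(E^+\cup B_\rho)\cap\Omega$ for the upper volume and perimeter estimates. The one meaningful departure is in the lower volume estimate: the paper sets $d=\dist_{E_0}(x)=\distance(x,\Omega\cap\p E_0)$ and splits into $d=0$, $0<r<d$, $d<r<2d$, and $2d\le r<\epsilon$, whereas you set $d=\distance(x,\overline{E_0})$, establish the dichotomy ($x\in\p E_0$ or $d>0$), and in the regime $0<d<r$ simply take the larger of the two elementary lower bounds $(\kappa/2)^{n+1}\omega_{n+1}d^{n+1}$ and $c_1\omega_{n+1}(r-d)^{n+1}$. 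Your dichotomy has the virtue of making explicit why $E^+\setminus B_\rho\in\cE(E_0)$ when $\rho<d$ (the ball genuinely misses $E_0$, not just $\Omega\cap\p E_0$), a point the paper leaves implicit; and your max-of-two-bounds collapses the paper's three subcases into one line while yielding the same constant $c_1(\kappa/8)^{n+1}$. The upper estimates and the final covering-argument conclusion are identical in spirit to the paper's. One small cosmetic mismatch: you quote $(\kappa/2)^{n+1}$ from Remark~\ref{rem:uncons_dens_est}(a) where the paper's displayed statement uses $(\kappa/4)^{n+1}$, but either constant dominates the target $(\kappa/8)^{n+1}$, so nothing is affected.
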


\begin{proof}
Let $x\in \p E^+,$ and $r\in (0,\epsilon)$ be such that
$\cH^n(\p B_r\cap \p ^*E^+)=0,$ where $B_r:=B_r(x).$

We start with the upper volume density estimate  in \eqref{eq:dens_est_cap_f}.
We may suppose $x\in \Omega\cap \p^*E^+,$
since the case $x\in \p\Omega\cap\p^* E^+$ is trivial.
Using  $\cC(E^+,\Omega)\le \cC((E^+\cup B_r)\cap\Omega,\Omega),$
as in \eqref{ungtomon} we establish
\begin{equation}\label{taqqoslash_nat}
P(E^+,B_r) + \int_{\p\Omega} \beta\chi_{(B_r\setminus E^+)\cap \Omega}\,d\cH^n
\le \cH^n((\Omega\setminus E^+)\cap \p B_r).
\end{equation}
Adding $\cH^n(\p B_r\cap (\Omega\setminus E^+))$  to
both sides and proceeding as in \eqref{qaydantopay}
we get
$$
\kappa(n+1)\omega_{n+1}^{\frac{1}{n+1}}|(B_r\setminus E^+)\cap\Omega|^{\frac{n}{n+1}} \le
2\cH^n((\Omega\setminus E^+)\cap \p B_r)
$$
and hence as in the proof of Theorem \ref{teo:lower_density_est}
$$
|B_r\setminus E^+|\ge \left(\frac{\kappa}{4}\right)^{n+1}\,\omega_{n+1}r^{n+1}.
$$
This implies the upper volume density estimate in \eqref{eq:dens_est_cap_f}.

The lower volume density estimate is a little delicate, since in general we cannot
use the set $E=E^+\setminus B_r$ as a competitor since it need not belong to $\cE(E_0).$
If $d:=\dist_{E_0}(x)=0,$ then $x\in \p E_0$ and, hence, using $E_0\cap B_r\subset E^+\cap B_r$
and the lower volume density estimate for $E_0$ we establish
$$
\frac{|E^+\cap B_r|}{|B_r|} \ge \frac{|E_0\cap B_r|}{|B_r|} \ge c_1\ge
c_1 \left(\frac{\kappa}{8}\right)^{n+1}.
$$
If $d>0$ and $r\in (0,\min\{\epsilon,d\}),$ then
we may use comparison set $E^+\setminus B_r$ and
as in the proof of \eqref{eq:volume_est}
we obtain
\begin{equation}\label{nimadirf}
\frac{|E^+\cap B_r|}{|B_r|} \ge \left(\frac{\kappa}{4}\right)^{n+1} \ge
c_1 \left(\frac{\kappa}{8}\right)^{n+1}.
\end{equation}
Suppose $d<\epsilon.$
Since one can extend \eqref{nimadirf} to $(0,d]$ by continuity,
if $r\in (d,\min\{2d,\epsilon\}),$  then
$$
\frac{|E^+\cap B_r|}{|B_r|} \ge \frac{|E^+\cap B_d|}{|B_d|}\cdot \left(\frac{d}{r}\right)^{n+1} \ge
\left(\frac{\kappa}{8}\right)^{n+1}\ge c_1\left(\frac{\kappa}{8}\right)^{n+1}.
$$
Let $r\in [2d,\epsilon)$ and
 $x_0\in \overline{\Omega\cap \p E_0}$
be such that $d = |x-x_0|.$ Then using $B(x,r)\supset B(x_0,r-d),$
the lower density estimate for $E_0$ and $r-d\ge r/2,$ we obtain
$$
\frac{|E^+\cap B_r|}{|B_r|} \ge \frac{|E_0\cap B_{r-d}(x_0)|}{|B_{r-d}(x_0)|}
\cdot\left(\frac{r-d}{r}\right)^{n+1}
\ge c_1 \left(\frac{1}{2}\right)^{n+1} \ge c_1 \left(\frac{\kappa}{8}\right)^{n+1}.
$$

Now the lower perimeter estimate follows from the
volume density estimates and the relative isoperimetric inequality.
The upper perimeter estimate is obtained from \eqref{taqqoslash_nat}:
$$
P(E^+,B_r) \le \cH^n((\Omega\setminus E^+)\cap \p B_r) - \int_{\p\Omega}
\beta\chi_{(B_r\setminus E^+)\cap \Omega}\,d\cH^n \le ((n+1)\omega_{n+1} +\omega_n)r^n.
$$

Finally, the relation $\cH^n(\p E^+\setminus \p^*E^+)=0$ is a consequence of the density estimates together
with a covering argument.
\end{proof}

%%%%%%%%%%%%%%%%%%%%%%%%%%%%%%%%%%%%%%%%%%%%%%%%%%%%%%%%%%%%%%%%%%%%%%%%
\section{Comparison principles}\label{sec:comparison_principles}
%%%%%%%%%%%%%%%%%%%%%%%%%%%%%%%%%%%%%%%%%%%%%%%%%%%%%%%%%%%%%%%%%%%%%%%%

The main result of this section is the following comparison
between  minimizers of $\cA(\cdot,E_0,\lambda).$

\newcommand{\cBo}{\mathcal{B}_{\beta_1}}
\newcommand{\cBt}{\mathcal{B}_{\beta_2}}

\begin{theorem}[\textbf{Comparison  for minimizers
of $\cA$}]\label{teo:E_0_and_F_0}
Assume that   $E_0, F_0,$ $\beta_1, \beta_2$ satisfy
\eqref{hyp:main1}. Suppose that
$E_0\subseteq F_0$ and $\beta_1 \leq \beta_2.$
Then
\begin{itemize}
\item[a)] there exists  a minimizer ${F}^*_\lambda$
of  $\cAt(\cdot,F_0,\lambda)$  containing any minimizer of
$\cAo(\cdot,E_0,\lambda);$
\item[b)] there exists  a minimizer ${E_{\lambda}}_*$
of  $\cAo(\cdot,E_0,\lambda)$ contained in
any minimizer of $\cAt(\cdot,F_0,\lambda).$
\end{itemize}
If in addition
\begin{equation}\label{eq:strict_inclusion}
\distance(\Omega \cap \p E_0, \Omega \cap \p F_0)>0,
\end{equation}
then all minimizers $E_\lambda$ and $F_\lambda$ of
$\cAo(\cdot,E_0,\lambda)$ and
 $\cAt(\cdot,F_0,\lambda)$ respectively
 satisfy %
\begin{equation*} 
E_\lambda\subseteq F_\lambda.
\end{equation*}
\end{theorem}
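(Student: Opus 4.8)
The plan is to derive all three assertions from one ``crossing submodularity'' inequality for the functionals $\cAo(\cdot,E_0,\lambda)$ and $\cAt(\cdot,F_0,\lambda)$. Fix $\lambda\ge1$, let $E$ be any minimizer of $\cAo(\cdot,E_0,\lambda)$ and $F$ any minimizer of $\cAt(\cdot,F_0,\lambda)$; by Theorem~\ref{teo: unconstrained minimizer} both are bounded, so all integrals below are finite. The first step is to prove
\[
\cAo(E\cap F,E_0,\lambda)+\cAt(E\cup F,F_0,\lambda)\le \cAo(E,E_0,\lambda)+\cAt(F,F_0,\lambda).
\]
Using \eqref{representing_ATW} I would split the difference of the two sides into three pieces. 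The perimeter piece is $\le 0$ by the strong subadditivity \eqref{famfor}. The boundary piece equals $\int_{\p\Omega}(\beta_1-\beta_2)\,(\chi_E-\chi_{E\cap F})\,d\cH^n$, after using $\chi_E+\chi_F=\chi_{E\cap F}+\chi_{E\cup F}$ on $\p\Omega$; since $\chi_E-\chi_{E\cap F}\ge 0$ and $\beta_1\le\beta_2$ this is $\le 0$. The volume piece, using $\int_{G\Delta A}\dist_A\,dx=\int_G\sdist_A\,dx-\int_A\sdist_A\,dx$, equals $\lambda\int_{E\setminus F}(\sdist_{F_0}-\sdist_{E_0})\,dx$, which is $\le 0$ because $E_0\subseteq F_0$ forces $\sdist_{F_0}\le\sdist_{E_0}$ pointwise in $\Omega$ (an elementary consequence of the definition of the signed distance and of the convexity of $\Omega$). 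Combining this inequality with the minimality of $E$ and of $F$, which yields the reverse inequality term by term, forces equality everywhere; in particular $E\cap F$ is again a minimizer of $\cAo(\cdot,E_0,\lambda)$, $E\cup F$ a minimizer of $\cAt(\cdot,F_0,\lambda)$, and each of the three pieces above vanishes.

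Specializing to $\beta_1=\beta_2$ and $E_0=F_0$ shows that the set of minimizers of a fixed functional $\cAt(\cdot,F_0,\lambda)$ is closed under finite unions and intersections. Since by Theorem~\ref{teo: unconstrained minimizer} all such minimizers are contained in one fixed bounded cylinder and are uniformly bounded in perimeter by Proposition~\ref{prop:coercivity_of_the_capillarity_functional}, I would take a sequence of minimizers whose Lebesgue measures approach $\sup\{|F|:F\text{ a minimizer}\}$, replace it by the increasing sequence of successive unions, and pass to the monotone $L^1(\Omega)$-limit; the $L^1$-lower semicontinuity of $\mathcal{C}_{\beta_2}(\cdot,\Omega)$ (Lemma~\ref{lem: lsc_of_F0}, applicable since $\|\beta_2\|_\infty\le1$) together with dominated convergence for the distance term identifies the limit as a minimizer $F^*_\lambda$ of maximal measure. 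For a) one then notes that, for every minimizer $E$ of $\cAo(\cdot,E_0,\lambda)$, the crossing inequality makes $E\cup F^*_\lambda$ a minimizer of $\cAt(\cdot,F_0,\lambda)$, whence $|E\setminus F^*_\lambda|=0$, i.e.\ $E\subseteq F^*_\lambda$. Part b) is symmetric: let ${E_\lambda}_*$ be a minimizer of $\cAo(\cdot,E_0,\lambda)$ of minimal measure (a monotone decreasing limit of intersections); for every minimizer $F$ of $\cAt(\cdot,F_0,\lambda)$ the set ${E_\lambda}_*\cap F$ minimizes $\cAo(\cdot,E_0,\lambda)$, so ${E_\lambda}_*\subseteq F$.

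For the last assertion assume $\distance(\Omega\cap\p E_0,\Omega\cap\p F_0)=\delta>0$ and let $E_\lambda$, $F_\lambda$ be arbitrary minimizers of $\cAo(\cdot,E_0,\lambda)$, $\cAt(\cdot,F_0,\lambda)$. Vanishing of the volume piece gives $\sdist_{E_0}=\sdist_{F_0}$ a.e.\ on $E_\lambda\setminus F_\lambda$, so it suffices to prove that the coincidence set $Z:=\{x\in\Omega:\sdist_{E_0}(x)=\sdist_{F_0}(x)\}$ is Lebesgue-negligible. The zero sets of $\sdist_{E_0}$ and $\sdist_{F_0}$ are the closures of $\Omega\cap\p E_0$ and of $\Omega\cap\p F_0$, which are $\delta$-apart; hence on $Z$ both functions are nonzero, so at a.e.\ point $x\in Z$ both $\sdist_{E_0}$ and $\sdist_{F_0}$ are differentiable with $|\nabla\sdist_{E_0}(x)|=|\nabla\sdist_{F_0}(x)|=1$, while $\nabla(\sdist_{E_0}-\sdist_{F_0})(x)=0$ because $x$ lies in the zero set of a Lipschitz function. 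Thus $\nabla\sdist_{E_0}(x)=\nabla\sdist_{F_0}(x)$, and the unique nearest points of $\Omega\cap\p E_0$ and of $\Omega\cap\p F_0$ to $x$, given in both cases by $x-\sdist_{E_0}(x)\nabla\sdist_{E_0}(x)$, coincide — contradicting $\distance(\Omega\cap\p E_0,\Omega\cap\p F_0)=\delta>0$. Hence $|Z|=0$, so $|E_\lambda\setminus F_\lambda|=0$ and $E_\lambda\subseteq F_\lambda$.

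The heart of the argument, and the main obstacle, is the crossing submodularity inequality: one must verify carefully that the traces on $\p\Omega$ behave additively under union and intersection (so that the term $\int_{\p\Omega}\beta\,\chi_E\,d\cH^n$ splits as claimed) and that $E_0\subseteq F_0$ really yields $\sdist_{F_0}\le\sdist_{E_0}$ throughout $\Omega$, with the correct handling near $\p\Omega$; once these are in place, the lattice argument for a)--b) and the nearest-point argument for the strict inclusion are comparatively routine.
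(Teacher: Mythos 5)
Your proof is correct but follows a genuinely different path from the paper's. The paper reformulates the problem in terms of a convex ``level-set'' functional $\cB$ on $BV(\Omega,[0,1])$ (Lemma \ref{lem:rel_min_of ATW_and_F}), proves a strong comparison principle when $v_1>v_2$ strictly (Proposition \ref{prop:strong_max_pr}) via the lattice inequality for total variation, and then handles $v_1\ge v_2$ by an $\epsilon$-perturbation of $v_1$ together with compactness and lower semicontinuity (Proposition \ref{prop:weak_comp_pr}); the last assertion is deduced from the strong comparison by observing that \eqref{eq:strict_inclusion} forces $\sdist_{E_0}>\sdist_{F_0}$ everywhere. You instead stay entirely in the set-valued picture: your ``crossing submodularity'' inequality
\[
\cAo(E\cap F,E_0,\lambda)+\cAt(E\cup F,F_0,\lambda)\le \cAo(E,E_0,\lambda)+\cAt(F,F_0,\lambda),
\]
proved by splitting along \eqref{famfor}, the trace identity $\chi_E+\chi_F=\chi_{E\cap F}+\chi_{E\cup F}$, and $\sdist_{F_0}\le\sdist_{E_0}$, is exactly the set-valued avatar of the paper's lattice inequality for $\cB$, and from it you extract a) and b) not by an $\epsilon$-perturbation but by constructing extremal minimizers directly as monotone limits of unions (resp.\ intersections), using that minimizers form a lattice, live in a fixed cylinder, and that $\cC(\cdot,\Omega)$ is $L^1$-l.s.c.\ while the distance term is continuous. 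For the strict case you also differ: rather than arguing $\sdist_{E_0}>\sdist_{F_0}$ pointwise, you show the coincidence set $Z=\{\sdist_{E_0}=\sdist_{F_0}\}$ is Lebesgue-null via Rademacher's theorem and the uniqueness of nearest points at differentiability points, which combined with the equality case of the crossing inequality gives $|E_\lambda\setminus F_\lambda|=0$. Both suffice; your argument is slightly longer but more self-contained on this point. What the paper's level-set route buys is a uniform framework ($\cB$ with general $v$) reused elsewhere in Section \ref{sec:comparison_principles}; your route is more elementary and avoids the detour through $BV(\Omega,[0,1])$, at the cost of having to build the extremal minimizer by hand. One small caveat you correctly flag yourself: the inequality $\sdist_{F_0}\le\sdist_{E_0}$ for $E_0\subseteq F_0$ requires a short justification (the paper also uses it without proof), since $\dist_{E_0}$ is the distance to $\Omega\cap\p E_0$ rather than to $\p E_0$ in $\R^{n+1}$, and the nearest point may a priori be sought in $\overline{\Omega\cap\p E_0}\cap\p\Omega$; this is harmless but should be spelled out.
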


\begin{remark}
We do not exclude the case that either $E_\lambda$ or $F_\lambda$ is empty.

\end{remark}

\begin{remark}\label{rem:exis_max_min}
For any $E_0,$ $\beta$ satisfying   \eqref{hyp:main1}, using Theorem \ref{teo:E_0_and_F_0}
with $\beta_1=\beta_2=\beta$ and $F_0=E_0,$
we establish the existence of unique
minimizers ${E_\lambda}_*$ and $E_\lambda^*$  of $\cA(\cdot,E_0,\lambda),$
such that any other minimizer $E_\lambda$ satisfies
${E_\lambda}_*\subseteq E_\lambda\subseteq E_\lambda^*.$
\end{remark}

\begin{definition}[\bf Maximal and minimal minimizers]\label{def:exis_max_min}
We call  $E_\lambda^*$ and ${E_\lambda}_*$ the maximal and minimal minimizer
of $\cA(\cdot,E_0,\lambda)$ respectively.
\end{definition}

Before proving Theorem \ref{teo:E_0_and_F_0}  we need the following observations. 
Given $\beta$ satisfying   \eqref{hyp:main1}, $C=\cyl{r}{h},$ $h,r>0$ and 
$v\in L_\loc^\infty(\Omega),$ $v\ge0$  a.e. in 
$\Omega\setminus C,$ 
define the convex functional $\cB(\cdot,v,C):BV(\Omega,[0,1])\to (-\infty,+\infty],$ 
a sort of level-set capillary
Almgren-Taylor-Wang-type functional, as
$$
\cB(u,v, C) = \tC(u,\Omega)+ \int_{\Omega} uv\,dx.
$$
Set 
\begin{equation}\label{eq:universal constant2}
\cR_1(C,v):= r+1+\max\Big\{8^{n^2+n+1}\,
\left(\frac{\cC(C,\Omega)+\|v\|_{L^\infty(C)}|C|}{\kappa}\right)^{\frac{n+1}{n}},
\,4\mu(\kappa,n)\Big\},
\end{equation}
where 
$
\mu(\kappa,n) = \left(1/\kappa +2\right)^{\frac{n+1}{n}}.
$
By Example \ref{exam:1} the functional 
$$
\cV:BV(\Omega,\{0,1\})\to (-\infty,+\infty],\qquad\cV(E):=\int_Evdx
$$
satisfies Hypothesis \ref{hyp:2}. Thus, by Theorem 
\ref{teo: unconstrained minimizer1} the functional 
$E\in BV(\Omega,\{0,1\})\mapsto \cB(\chi_E,v, C)\in\R$ has a minimizer,
and  every minimizer $E_v$ satisfies 
\begin{equation}\label{uni.bnom}
E_v\subseteq \cyl{\cR_1(C,v)}{h}. 
\end{equation}
Notice that by \eqref{coarea_trace} and \eqref{eq:coarea_J},
\begin{equation}\label{eq:coarea_for_F}
\cB(u,v,C) = \int_0^1 \cB(\chi_{\{u>t\}},v,C)\,dt 
\quad \forall u\in BV(\Omega,[0,1]),
\end{equation}
which yields that $\chi_{E_v}$ is a minimizer of $\cB(\cdot,v,C)$ in 
$BV(\Omega,[0,1]).$ 

The following remark is in the spirit of \cite[Section 1]{BPV:91}.

\begin{remark}[\bf Minimality of level sets]\label{rem:level_sets_of_minimizers}
From \eqref{eq:coarea_for_F}  it follows that
$u\in BV(\Omega,[0,1])$ is a minimizer of  $\cB(\cdot,v,C)$ in 
$BV(\Omega,[0,1])$ if and only if
$\chi_{\{u>t\}}$ is a minimizer of $\cB(\cdot,v,C)$ for a.e. 
$t\in [0,1].$ Indeed, let for some
$u\in BV(\Omega,[0,1])$ the function $\chi_{\{u>t\}}$ be a minimizer 
of $\cB(\cdot,v,C)$ for a.e. $t\in [0,1].$
Then for any $w\in BV(\Omega,[0,1])$ and for a.e. 
$t\in [0,1]$  one has $\cB(w,v,C)\ge \cB(\chi_{\{u>t\}},v,C),$
therefore,
$$
\cB(u,v,C) = \int_{0}^1 \cB(\chi_{\{u>t\}},v,C)dt \le \cB(w,v,C).
$$
Conversely, if $u\in BV(\Omega,[0,1])$ is a minimizer of 
$\cB(\cdot,v,C),$ then for a.e. $t\in [0,1]$ one has
$\cB(u,v,C)\le \cB(\chi_{\{u>t\}},v,C).$ Hence, 
from \eqref{eq:coarea_for_F} it follows that
$\cB(u,v,C) =\cB(\chi_{\{u>t\}},v,C)$ for a.e. $t\in [0,1].$
In particular, if $u\in BV(\Omega,[0,1])$ is a minimizer of $\cB(\cdot,v,C),$
then by \eqref{uni.bnom} 
$\{u>t\}\subseteq \cyl{\cR_1(C,v)}{h}$ for a.e. $t\in[0,1],$
i.e. $u=0$ a.e. in $\Omega\setminus \cyl{\cR_1(C,v)}{h}.$ Hence,
\begin{equation}\label{dfdfd}
\min\limits_{u\in BV(\Omega,[0,1])} \cB(u,v,C)=
\min\limits_{\begin{substack}
              u\in BV(\Omega,[0,1]),\\
              \text{$u=0$ a.e. in $\Omega\setminus \cyl{\cR_1(C,v)}{h}$}
             \end{substack}
} \cB(u,v,C). 
\end{equation}
\end{remark}

\begin{lemma}\label{lem:rel_min_of ATW_and_F}
Let $E_0,$ $\beta$ satisfy   \eqref{hyp:main1},  and $R_0$
be defined as in \eqref{eq:universal constant}. Then $E_\lambda$ is a minimizer of
$\cA(\cdot,E_0,\lambda)$ if and only if $\chi_{E_\lambda}$ is a minimizer of
$\cB(\cdot,v_{E_0}^\lambda,\cyl{R_0}{H}),$ where
 $v_{E_0}^\lambda = \lambda\chi_{\cyl{R_0}{H}}\sdist_{E_0}.$
\end{lemma}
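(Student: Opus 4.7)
The plan is to compute explicitly the difference $\cA(\cdot,E_0,\lambda)-\cB(\chi_\cdot,v_{E_0}^\lambda,\cyl{R_0}{H})$ and to exhibit it as an additive constant on sets contained in the cylinder $\cyl{R_0}{H}$; the equivalence then follows by combining this identity with the a priori containment of $\cA$-minimizers in $\cyl{R_0}{H}$ given by Theorem \ref{teo: unconstrained minimizer}. Using the representation \eqref{representing_ATW} (valid since $E_0$ is bounded), the definition $v_{E_0}^\lambda=\lambda\chi_{\cyl{R_0}{H}}\sdist_{E_0}$, and the equality $\tC(\chi_E,\Omega)=\cC(E,\Omega)$, I would derive, for every $E\in BV(\Omega,\{0,1\})$,
$$\cA(E,E_0,\lambda)-\cB(\chi_E,v_{E_0}^\lambda,\cyl{R_0}{H})=\lambda\int_{E\setminus\cyl{R_0}{H}}\sdist_{E_0}\,dx-\lambda\int_{E_0}\sdist_{E_0}\,dx.$$
Since $E_0\subseteq\cyl{D}{H}\subseteq\cyl{R_0}{H}$ gives $\sdist_{E_0}\ge0$ on $\Omega\setminus\cyl{R_0}{H}$, the first summand on the right is non-negative, vanishing exactly when $|E\setminus\cyl{R_0}{H}|=0$; the second is a fixed constant $K:=-\lambda\int_{E_0}\sdist_{E_0}\,dx$. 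In particular, for $E\subseteq\cyl{R_0}{H}$ the identity reads $\cA(E)=\cB(\chi_E)+K$.

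For the implication $(\Rightarrow)$, let $E_\lambda$ minimize $\cA$. Theorem \ref{teo: unconstrained minimizer} gives $E_\lambda\subseteq\cyl{R_0}{H}$, hence $\cB(\chi_{E_\lambda})=\cA(E_\lambda)-K$. To check $\cB$-minimality against an arbitrary competitor $F\in BV(\Omega,\{0,1\})$, I would pass to $F':=F\cap\cyl{R_0}{H}$: because $\int_{F\cap\cyl{R_0}{H}}\sdist_{E_0}\,dx=\int_{F'}\sdist_{E_0}\,dx$, the difference $\cB(\chi_F)-\cB(\chi_{F'})$ reduces to $\cC(F,\Omega)-\cC(F',\Omega)$. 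Decomposing the perimeter of $F$ across $\Omega\cap\partial\cyl{R_0}{H}$ and applying Lemma \ref{lem:Lower_bound_of_F0} to the outer piece $F\setminus\cyl{R_0}{H}$ (crucially using $\|\beta\|_\infty\le1-2\kappa<1$), one obtains the truncation inequality $\cC(F,\Omega)\ge\cC(F',\Omega)$, strict whenever $|F\setminus\cyl{R_0}{H}|>0$. Combining $\cB(\chi_F)\ge\cB(\chi_{F'})=\cA(F')-K\ge\cA(E_\lambda)-K=\cB(\chi_{E_\lambda})$ then proves $\cB$-minimality of $\chi_{E_\lambda}$ on characteristic functions, and Remark \ref{rem:level_sets_of_minimizers} together with \eqref{eq:coarea_for_F} extends it to competitors in $BV(\Omega,[0,1])$.

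For the converse $(\Leftarrow)$, suppose $\chi_{E_\lambda}$ minimizes $\cB$. The truncation inequality applied to $E_\lambda$ gives $\cB(\chi_{E_\lambda\cap\cyl{R_0}{H}})\le\cB(\chi_{E_\lambda})$, so by $\cB$-minimality equality holds and the strict form of the truncation forces $|E_\lambda\setminus\cyl{R_0}{H}|=0$. Therefore $E_\lambda\subseteq\cyl{R_0}{H}$ and the identity yields $\cA(E_\lambda)=\cB(\chi_{E_\lambda})+K$. For any $F\in BV(\Omega,\{0,1\})$, combining the general identity with $\lambda\int_{F\setminus\cyl{R_0}{H}}\sdist_{E_0}\,dx\ge0$ and $\cB(\chi_F)\ge\cB(\chi_{E_\lambda})$ gives $\cA(F)\ge\cB(\chi_F)+K\ge\cB(\chi_{E_\lambda})+K=\cA(E_\lambda)$, proving that $E_\lambda$ minimizes $\cA$.

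The main obstacle will be the truncation inequality $\cC(F,\Omega)\ge\cC(F\cap\cyl{R_0}{H},\Omega)$ and its strict form when the outer piece has positive measure. This is not a consequence of the mere non-negativity of the capillary functional but requires a careful splitting of both the perimeter and the $\beta$-trace across $\Omega\cap\partial\cyl{R_0}{H}$, together with the quantitative capillary lower bound in Lemma \ref{lem:Lower_bound_of_F0}; the strict inequality $\|\beta\|_\infty<1$ is essential, since without it the gain in $\beta$-trace from extending $F$ outside the cylinder could offset the associated perimeter cost.
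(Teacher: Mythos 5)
Your starting point matches the paper's: you derive the same layer-cake identity
$$\cA(E,E_0,\lambda)-\cB(\chi_E,v_{E_0}^\lambda,\cyl{R_0}{H})
=\lambda\int_{E\setminus\cyl{R_0}{H}}\sdist_{E_0}\,dx
-\lambda\int_{E_0}\sdist_{E_0}\,dx,$$
observe it is the constant $K$ for $E\subseteq\cyl{R_0}{H}$, use Theorem \ref{teo: unconstrained minimizer} for the containment of $\cA$-minimizers, and pass from $\{0,1\}$- to $[0,1]$-competitors via Remark \ref{rem:level_sets_of_minimizers} and \eqref{eq:coarea_for_F}. Where you diverge from the paper is the mechanism for reducing a general $\cB$-competitor to one supported in $\cyl{R_0}{H}$: you propose a \emph{truncation inequality} $\cC(F,\Omega)\ge\cC(F\cap\cyl{R_0}{H},\Omega)$, whereas the paper instead invokes the uniform boundedness of $\cB$-minimizers coming from Theorem \ref{teo: unconstrained minimizer1} (packaged in \eqref{dfdfd}) and never truncates an arbitrary competitor at the fixed radius $R_0$.

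This is a genuine gap, because the truncation inequality is false under \eqref{hyp:main1}. Writing $C=\cyl{R_0}{H}$, $G=F\setminus\overline{C}$, and letting $\theta^{+},\theta^{-}$ denote the one-sided traces of $F$ on $\Omega\cap\p C$ from inside and outside $C$, the decomposition of $P(F,\Omega)$ across $\Omega\cap\p C$ gives
$$\cC(F,\Omega)-\cC(F\cap C,\Omega)
=\cC(G,\Omega)-2\int_{\Omega\cap\p C}\theta^{+}\theta^{-}\,d\cH^n.$$
Lemma \ref{lem:Lower_bound_of_F0} applied to $G$ only gives $\cC(G,\Omega)\ge\kappa\bigl[P(G,\Omega)+\int_{\p\Omega}\chi_G\,d\cH^n\bigr]$, which cannot absorb the factor $2$ in the correction term (note $\kappa\le\tfrac12$). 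And the sign can indeed flip: take $n=1$, $\beta$ constant with $\sqrt2-1<\beta\le1-2\kappa$ (e.g.\ $\beta=\tfrac12$, $\kappa=\tfrac14$), and let $F$ be the open isoceles triangle with vertices $(R_0-a,0)$, $(R_0+a,0)$, $(R_0,a)$, $0<a<\min\{R_0,H\}$. Then $\cC(F,\Omega)=a(2\sqrt2-2\beta)$, $\cC(F\cap C,\Omega)=a(\sqrt2+1-\beta)$, hence
$$\cC(F,\Omega)-\cC(F\cap C,\Omega)=a(\sqrt2-1-\beta)<0.$$
So the inequality (and a fortiori its strict form, on which your $(\Leftarrow)$ direction also relies to force $E_\lambda\subseteq\cyl{R_0}{H}$) fails exactly in the regime $\|\beta\|_\infty$ close to $1$ that you correctly flag as delicate. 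The moral, which is why the paper goes through Appendix \ref{sec:error_est}, is that cutting at a \emph{fixed} radius can increase the capillary energy; one must either select a ``good radius'' where the created trace is small (as in Lemma \ref{lem: minmizers in cylindr} and Lemma \ref{lem:key_lemma1}), or, as in the paper's proof of this lemma, bypass truncation altogether by using the a priori containment of $\cB$-minimizers in a cylinder provided by Theorem \ref{teo: unconstrained minimizer1} and \eqref{dfdfd}.
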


\begin{proof}
By \eqref{representing_ATW} we have
\begin{equation}\label{eq:layer_cake_ATW}
\cA(E,E_0,\lambda) = \cB(\chi_E,v_{E_0}^\lambda,\cyl{R_0}{H}) - 
\lambda\int_{E_0}\sdist_{E_0}\,dx\qquad \forall E\in BV(\cyl{R_0}{H},\{0,1\}).
\end{equation}

Now if $E_\lambda$ minimizes $\cA(\cdot,E_0,\lambda),$ we have
$E_\lambda\subseteq \cyl{R_0}{H}$ (Theorem \ref{teo: unconstrained minimizer})
and thus, for any $u\in BV(\Omega,[0,1])$ with $u=0$ a.e. 
in $\Omega\setminus \cyl{R_0}{H}$ 
from \eqref{eq:coarea_for_F}-\eqref{eq:layer_cake_ATW}
we deduce
\begin{align*}
\cB(u,v_{E_0}^\lambda,\cyl{R_0}{H}) = & 
\int_0^1\cB(\chi_{\{u>t\}},v_{E_0}^\lambda,\cyl{R_0}{H})\,dt =
\int_0^1 \cA(\{u>t\},E_0,\lambda)\,dt + \lambda\int_{E_0}\sdist_{E_0}\,dx\\
\ge & \int_0^1 \cA(E_\lambda,E_0,\lambda)\,dt + \lambda\int_{E_0}\sdist_{E_0}\,dx
= \cB(\chi_{E_\lambda},v_{E_0}^\lambda,\cyl{R_0}{H}).
\end{align*} 
By \eqref{dfdfd} $\chi_{E_\lambda}$ is a minimizer of 
$\cB(\cdot,v_{E_0}^\lambda,\cyl{R_0}{H}).$

Conversely, assume that $\chi_{E_\lambda}$ is a minimizer
of $\cB(\cdot,v_{E_0}^\lambda,\cyl{R_0}{H}),$  then
by \eqref{eq:layer_cake_ATW}
$E_\lambda\subseteq \cyl{R_0}{H}$ is a minimizer of $\cA(\cdot,E_0,\lambda)$
in $BV(\cyl{R_0}{H},\{0,1\}).$
Hence, by Remark \ref{rem:unconst_min}
 $E_\lambda$ is a minimizer of $\cA(\cdot,E_0,\lambda).$
\end{proof}

\begin{proposition}[\bf Strong comparison for 
minimizers of $\cB$]\label{prop:strong_max_pr}
Assume that $v_1,v_2\in L_\loc^\infty(\Omega),$ $v_1 > v_2$ a.e. in $\Omega$
and $v_2\ge0$ a.e. in $\Omega\setminus C.$ 
Suppose also that  $\beta_1\le \beta_2$ satisfy
 \eqref{hyp:main1}. Let $u_1,u_2\in BV(\Omega,[0,1])$ be  minimizers of
$\cBo(\cdot,v_1,C)$ and $\cBt(\cdot,v_2,C)$ respectively. Then $u_1\le u_2$
a.e. in $\Omega.$
\end{proposition}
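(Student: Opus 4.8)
The plan is to exploit the convexity of the functionals $\cBo$ and $\cBt$ and the strict ordering $v_1 > v_2$ via the classical lattice (truncation) argument. First I would set $u:=u_1\vee u_2$ and $w:=u_1\wedge u_2$; note $u,w\in BV(\Omega,[0,1])$ and $u+w=u_1+u_2$. By the submodularity of the total variation (the BV analogue of \eqref{famfor}), $\int_\Omega|Du|+\int_\Omega|Dw|\le\int_\Omega|Du_1|+\int_\Omega|Du_2|$, and since the boundary-trace term $\int_{\p\Omega}\beta u\,d\cH^n$ is additive under $\vee,\wedge$ in the sense that the traces satisfy $(u_1\vee u_2)^{\pm}+(u_1\wedge u_2)^{\pm}=u_1^{\pm}+u_2^{\pm}$ $\cH^n$-a.e. on $\p\Omega$, while $\beta_1\le\beta_2$, one obtains the key inequality
\begin{equation*}
\tC_{\beta_2}(u,\Omega)+\tC_{\beta_1}(w,\Omega)\le \tC_{\beta_1}(u_1,\Omega)+\tC_{\beta_2}(u_2,\Omega)
+\int_{\p\Omega}(\beta_2-\beta_1)(u_1\vee u_2 - u_1)\,d\cH^n,
\end{equation*}
where the last term is handled by observing $u_1\vee u_2 - u_1=(u_2-u_1)^+$ and that on $\{u_2>u_1\}$ we are comparing against $\beta_2$ in the functional where it belongs; a careful bookkeeping shows the sign works in our favour, so effectively $\tC_{\beta_2}(u,\Omega)+\tC_{\beta_1}(w,\Omega)\le \tC_{\beta_1}(u_1,\Omega)+\tC_{\beta_2}(u_2,\Omega)$.

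Next I would handle the zero-order terms. Since $u v_2 + w v_1 \le u_1 v_1 + u_2 v_2$ fails pointwise in general, I would instead write the difference: on the set $\{u_2>u_1\}$ one has $u=u_2$, $w=u_1$, and $uv_2+wv_1-u_1v_1-u_2v_2=(u_2-u_1)(v_2-v_1)<0$ a.e. on $\{u_2>u_1\}\cap\{u_1\ne u_2\}$, strictly, because $v_1>v_2$ a.e.; off that set the integrand vanishes. Hence
\begin{equation*}
\int_\Omega u v_2\,dx+\int_\Omega w v_1\,dx = \int_\Omega u_1 v_1\,dx+\int_\Omega u_2 v_2\,dx - \int_{\{u_2>u_1\}}(u_1-u_2)(v_1-v_2)\,dx,
\end{equation*}
with the correction integral nonnegative, and strictly positive unless $|\{u_2>u_1\}|=0$. (Here I would first argue, using $v_2\ge0$ a.e. in $\Omega\setminus C$ and \eqref{uni.bnom}, that $u_1,u_2$ vanish a.e. outside a fixed cylinder, so all integrals over $\Omega$ are finite and the manipulations are legitimate.)

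Adding the two displays gives $\cBt(u,v_2,C)+\cBo(w,v_1,C)\le \cBo(u_1,v_1,C)+\cBt(u_2,v_2,C)-\delta$, where $\delta:=\int_{\{u_2>u_1\}}(u_1-u_2)(v_1-v_2)\,dx\ge0$. On the other hand, $w=u_1\wedge u_2\in BV(\Omega,[0,1])$ is admissible for $\cBo(\cdot,v_1,C)$ and $u=u_1\vee u_2$ is admissible for $\cBt(\cdot,v_2,C)$, so by minimality of $u_1$ and $u_2$ the left side is $\ge \cBo(u_1,v_1,C)+\cBt(u_2,v_2,C)$. Therefore $\delta\le 0$, forcing $\delta=0$, hence $|\{u_2>u_1\}\cap\{u_1\ne u_2\}|=0$, i.e. $u_1\le u_2$ a.e. in $\Omega$. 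The main obstacle I anticipate is the careful treatment of the boundary trace terms with two different coefficients $\beta_1\le\beta_2$: one must verify that after splitting into $\{u_1>u_2\}$, $\{u_1<u_2\}$ and $\{u_1=u_2\}$, the $\beta$-terms regroup so that $u\vee$-part is always paired with $\beta_2$ and the $\wedge$-part with $\beta_1$ with the correct sign, using $\beta_1\le\beta_2$ on the set where the swap occurs — this is where the hypothesis $\beta_1\le\beta_2$ is genuinely used, and it is the delicate point of the argument.
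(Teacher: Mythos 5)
Your strategy---adding the minimality inequalities for $u_1\wedge u_2$ and $u_1\vee u_2$, using submodularity of the total variation, and reading off the residual trace and volume terms---is precisely the paper's; the entire proof there reduces to the display
\begin{equation*}
\int_{\p \Omega\cap\{u_1>u_2\}} (\beta_2-\beta_1)(u_1-u_2)\,d\cH^n \le
\int_{\{u_1>u_2\}} (v_2-v_1)(u_1-u_2)\,dx,
\end{equation*}
from which the conclusion is immediate since $\beta_1\le\beta_2$ and $v_1>v_2$.

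However, your volume-term computation is carried out on the wrong set, and as written the argument derives the reverse inclusion. On $\{u_2>u_1\}$ one has $u=u_2$ and $w=u_1$, so $uv_2+wv_1-u_1v_1-u_2v_2=u_2v_2+u_1v_1-u_1v_1-u_2v_2=0$, not $(u_2-u_1)(v_2-v_1)$; the nonzero contribution comes from $\{u_1>u_2\}$, where $u=u_1$, $w=u_2$ give $(u_1-u_2)(v_2-v_1)$. The correct identity is
\begin{equation*}
\int_\Omega uv_2\,dx+\int_\Omega wv_1\,dx=\int_\Omega u_1v_1\,dx+\int_\Omega u_2v_2\,dx-\int_{\{u_1>u_2\}}(u_1-u_2)(v_1-v_2)\,dx,
\end{equation*}
with $\delta:=\int_{\{u_1>u_2\}}(u_1-u_2)(v_1-v_2)\,dx\ge0$; forcing $\delta=0$ gives $|\{u_1>u_2\}|=0$, i.e.\ $u_1\le u_2$ a.e. With your set $\{u_2>u_1\}$, $\delta=0$ would instead yield $|\{u_2>u_1\}|=0$, meaning $u_2\le u_1$ a.e.---the opposite of what you then assert, so the final ``i.e.\ $u_1\le u_2$ a.e.'' is a non sequitur. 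The trace residual has a parallel issue: using $u+w=u_1+u_2$ it equals $\int_{\p\Omega}(\beta_1-\beta_2)(u_1-u_2)^+\,d\cH^n\le 0$, which is manifestly droppable; your display writes it as $\int_{\p\Omega}(\beta_2-\beta_1)(u_2-u_1)^+\,d\cH^n\ge 0$, which cannot be dropped if taken at face value. Replacing $\{u_2>u_1\}$ by $\{u_1>u_2\}$ throughout repairs the argument and recovers the paper's one-liner.
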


\begin{proof}
Adding the inequalities $\cBo(u_1,v_1,C)\le \cBo(u_1\wedge u_2,v_1,C)$ and
$\cBt(u_2,v_2,C)\le \cBt(u_1\vee u_2,v_2,C)$ and using
\begin{equation*} 
\int_{\Omega} |D(u_1\wedge u_2)| +\int_\Omega |D(u_1\vee u_2)| \le
\int_\Omega |Du_1| +\int_\Omega |Du_2|,
\end{equation*}
we establish
$$
\int_{\p \Omega\cap\{u_1>u_2\}} (\beta_2-\beta_1)(u_1-u_2)\,d\cH^n \le
\int_{\{u_1>u_2\}} (v_2-v_1)(u_1-u_2)\,dx.
$$
Since $v_1>v_2$   and $\beta_1\le \beta_2,$
this inequality holds if and only if
$|\{u_1>u_2\}| =0,$ i.e. $u_1\le u_2$ a.e. in $\Omega.$
\end{proof}

\begin{proposition}[\bf Comparison for minimizers of $\cB$]\label{prop:weak_comp_pr}
Assume that $v_1,v_2\in L_\loc^\infty(\Omega),$ $v_1\ge v_2$ a.e. in $\Omega$
and $v_2\ge0$ a.e. in $\Omega\setminus C.$ Suppose also that
$\beta_1\le \beta_2$ satisfy \eqref{hyp:main1}. Then:
\begin{itemize}
\item[a)]  there exists a minimizer ${u_1}_*$ of
$\cBo(\cdot,v_1,C)$ such that ${u_1}_*\le u_2$ for  any minimizer $u_2$ of
$\cBt(\cdot,v_2,C);$
\item[b)]  there exists a minimizer $u_2^*$ of
$\cBt(\cdot,v_2,C)$ such that $u_1\le u_2^*$ for any minimizer $u_1$ of
$\cBo(\cdot,v_1,C).$
\end{itemize}

\end{proposition}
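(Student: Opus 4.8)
The plan is to obtain the extremal minimizers ${u_1}_*$ and $u_2^*$ by a limiting procedure from the strict case already handled in Proposition \ref{prop:strong_max_pr}. Since that proposition requires the strict inequality $v_1 > v_2$, I would first perturb $v_1$: for $\epsilon > 0$ set $v_1^\epsilon := v_1 + \epsilon$, so that $v_1^\epsilon > v_2$ a.e.\ in $\Omega$ and $v_1^\epsilon \ge 0$ a.e.\ in $\Omega\setminus C$ still holds (in fact $v_1^\epsilon \ge v_1 \ge v_2 \ge 0$ there). Let $u_1^\epsilon$ be any minimizer of $\cBo(\cdot, v_1^\epsilon, C)$ and $u_2$ any minimizer of $\cBt(\cdot, v_2, C)$. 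By Proposition \ref{prop:strong_max_pr} applied to the pair $(v_1^\epsilon, v_2)$ we get $u_1^\epsilon \le u_2$ a.e.\ in $\Omega$ for \emph{every} such pair of minimizers.

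Next I would pass to the limit $\epsilon \to 0^+$. By \eqref{uni.bnom} and Remark \ref{rem:level_sets_of_minimizers}, every minimizer $u_1^\epsilon$ vanishes a.e.\ outside a fixed cylinder: indeed $\cR_1(C, v_1^\epsilon) \le \cR_1(C, v_1 + 1) =: \cR$ for all $\epsilon\in(0,1)$, since $\cR_1(C,\cdot)$ is monotone in $\|v\|_{L^\infty(C)}$. Together with the coercivity bound \eqref{eq:bound_J-0}, namely $\kappa\int_{\overline\Omega}|Du_1^\epsilon| \le \cBo(u_1^\epsilon, v_1^\epsilon, C) \le \cBo(0, v_1^\epsilon, C) = 0$ — wait, this would force $u_1^\epsilon \equiv 0$; more carefully one compares with a fixed competitor such as $\chi_C$ to get $\cBo(u_1^\epsilon,v_1^\epsilon,C) \le \cBo(\chi_C, v_1^\epsilon, C) \le \cC_{\beta_1}(C,\Omega) + (\|v_1\|_{L^\infty(C)}+1)|C|$, a bound uniform in $\epsilon$, whence $\{u_1^\epsilon\}$ is bounded in $BV(\cyl{\cR}{h})$. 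By $BV$-compactness there is a sequence $\epsilon_j \to 0^+$ with $u_1^{\epsilon_j} \to {u_1}_*$ in $L^1$ and a.e. The limit ${u_1}_*$ lies in $BV(\Omega,[0,1])$, still vanishes outside $\cyl{\cR}{h}$, and inherits ${u_1}_* \le u_2$ a.e. It remains to check ${u_1}_*$ is a minimizer of $\cBo(\cdot,v_1,C)$: lower semicontinuity of $\tC(\cdot,\Omega)$ (Lemma \ref{lem: lsc_of_F0}) gives $\tC({u_1}_*,\Omega)\le\liminf_j \tC(u_1^{\epsilon_j},\Omega)$, while $\int_\Omega u_1^{\epsilon_j} v_1^{\epsilon_j}\,dx \to \int_\Omega {u_1}_* v_1\,dx$ by dominated convergence on $\cyl{\cR}{h}$ (the $+\epsilon_j$ contributes $\epsilon_j \int_{\cyl{\cR}{h}} u_1^{\epsilon_j}\,dx \to 0$); comparing with the fact that $\min \cBo(\cdot,v_1^{\epsilon_j},C) \to \min\cBo(\cdot,v_1,C)$ (upper bound from plugging a fixed minimizer of $\cBo(\cdot,v_1,C)$ truncated to vanish outside the cylinder, using \eqref{dfdfd}; lower bound from the same lsc argument) yields minimality of ${u_1}_*$. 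This proves part a).

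For part b), I would argue symmetrically by perturbing $v_2$ downward: set $v_2^\epsilon := v_2 - \epsilon$. The only subtlety is that $v_2^\epsilon$ need not be nonnegative on $\Omega\setminus C$, so Propositions \ref{prop:strong_max_pr} and the constant $\cR_1$ are not directly applicable. I expect this to be the main obstacle. The fix is to note that, by the structure of the functionals, minimizers are insensitive to the values of $v$ on $\Omega\setminus C$ in the relevant region: more precisely one replaces $v_2^\epsilon$ by $\tilde v_2^\epsilon := (v_2-\epsilon)\chi_C + v_2 \chi_{\Omega\setminus C}$, which still satisfies $v_1 \ge \tilde v_2^\epsilon$ (since $\epsilon>0$) — actually one needs $v_1 > \tilde v_2^\epsilon$ only on $C$ for the strict comparison, and on $\Omega\setminus C$ both equal their original values where $v_1 \ge v_2$ holds; then $\tilde v_2^\epsilon \ge 0$ a.e.\ in $\Omega\setminus C$, so Proposition \ref{prop:strong_max_pr} applies to $(v_1, \tilde v_2^\epsilon)$ giving $u_1 \le u_2^\epsilon$ for all minimizers, and one passes to the limit exactly as above, using that $v_2^\epsilon \to v_2$ uniformly. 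The extremal minimizer $u_2^* := \lim_j u_2^{\epsilon_j}$ then dominates every minimizer $u_1$ of $\cBo(\cdot,v_1,C)$, completing the proof.
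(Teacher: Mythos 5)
Your part~a) is essentially the paper's argument: perturb $v_1\mapsto v_1+\epsilon$, invoke Proposition~\ref{prop:strong_max_pr}, extract an $L^1$-limit ${u_1}_*$ using a uniform $BV$ bound and the uniform cylinder bound from \eqref{uni.bnom}/Remark~\ref{rem:level_sets_of_minimizers}, and pass to the limit in the minimality inequality via lower semicontinuity of $\tC(\cdot,\Omega)$. Your self-correction about the uniform $BV$ bound is sound (the paper gets it by comparing $u_1^\epsilon$ with $0$ and then peeling off the finite integral $\int u_1^\epsilon(v_1+\epsilon)$ over the bounded cylinder; comparing with $\chi_C$ instead is equally valid), and your observation that $\cR_1(C,v_1+\epsilon)\le\cR_1(C,v_1+1)$ for $\epsilon\in(0,1)$ is a useful precision.

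Your part~b), however, has a genuine gap. You correctly flag that $v_2-\epsilon$ may fail to be nonnegative on $\Omega\setminus C$, but the proposed repair $\tilde v_2^\epsilon := (v_2-\epsilon)\chi_C + v_2\chi_{\Omega\setminus C}$ does not put you back in the hypotheses of Proposition~\ref{prop:strong_max_pr}: on $\Omega\setminus C$ you only have $v_1\ge\tilde v_2^\epsilon$ (with possible equality wherever $v_1=v_2$), whereas that proposition requires $v_1>\tilde v_2^\epsilon$ a.e.\ in all of $\Omega$. Your claim that ``one needs $v_1>\tilde v_2^\epsilon$ only on $C$'' is not justified: the comparison argument delivers $(v_1-\tilde v_2^\epsilon)(u_1-u_2^\epsilon)=0$ a.e.\ on $\{u_1>u_2^\epsilon\}$, which yields $|\{u_1>u_2^\epsilon\}|=0$ only if $v_1>\tilde v_2^\epsilon$ on the set where $u_1$ can be positive. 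By Remark~\ref{rem:level_sets_of_minimizers} and \eqref{eq:universal constant2}, minimizers of $\cBo(\cdot,v_1,C)$ are supported in the cylinder $\cyl{\cR_1(C,v_1)}{h}$, which \emph{strictly} contains $C$ (since $\cR_1(C,v_1)>r$), so $\{u_1>u_2^\epsilon\}$ may well lie in $\cyl{\cR_1(C,v_1)}{h}\setminus C$, where your perturbation does nothing. The fix is to perturb on a cylinder $C_*\supset\cyl{\cR_1(C,v_1)}{h}$ rather than on $C$: set $\tilde v_2^\epsilon:=v_2-\epsilon\chi_{C_*}$, which is nonnegative on $\Omega\setminus C_*$, so $\cBt(\cdot,\tilde v_2^\epsilon,C_*)$ has bounded minimizers; then the comparison inequality forces $\{u_1>u_2^\epsilon\}\subset C_*$ (because $u_1=0$ outside $C_*$), where $v_1>\tilde v_2^\epsilon$ holds strictly, giving $u_1\le u_2^\epsilon$; the limit $\epsilon\to0^+$ then proceeds exactly as in part~a).
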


\begin{proof}

a) Take $\epsilon\in (0,1).$ Since $v_1+\epsilon >v_2$ a.e. in $\Omega,$
by Proposition \ref{prop:strong_max_pr} any minimizer   $u_1^\epsilon,
u_2\in BV(\Omega,[0,1])$
of $\cBo(\cdot,v_1+\epsilon,C)$ and $\cBt(\cdot,v_2,C)$ respectively,
satisfies $u_1^\epsilon \le u_2.$ Let $\cR_1:=\max\{\cR_1(C,v_1),\cR_1(C,v_2)\}.$
By minimality, $\cBo(u_1^\epsilon,v_1+\epsilon,C) \le
\cBo(0,v_1+\epsilon,C)=0,$ and since by Remark \ref{rem:level_sets_of_minimizers} 
$u_1^\epsilon=0$ a.e. in $\Omega\setminus \cyl{\cR_1}{h},$ 
recalling \eqref{eq:bound_J-0} we get
$$
\kappa \int_\Omega|Du_1^\epsilon| \le 
(\|v_1\|_{L^\infty(\cyl{\cR_1}{h})}+1)|\cyl{\cR_1}{h}|<+\infty.
$$
By compactness, there exists ${u_1}_*\in BV(\Omega,[0,1])$ such that,
up to a (not relabelled) subsequence,
$u_1^\epsilon \to {u_1}_*$ in $L^1(\Omega)$ and a.e. in $\Omega$ as 
$\epsilon\to0^+.$ Then  any minimizer
$u_2$ of $\cBt(\cdot,v_2,C)$ satisfies ${u_1}_*\le u_2$ a.e. in $\Omega.$

It remains to show that ${u_1}_*$ is a minimizer of $\cBo(\cdot,v_1,C).$
By \eqref{dfdfd} we may consider only those $u\in BV(\Omega,[0,1])$ with 
$u=0$ a.e. in $\Omega\setminus \cyl{\cR_1}{h}$
as a competitor. In this case, the continuity
of $u\mapsto \int_{\cyl{\cR_1}{h}} uv\,dx,$ the minimality of $u_1^\epsilon$
 and the lower semicontinuity of $\tC(\cdot,\Omega)$ imply
\begin{align*}
\cBo(u,v_1,C) = &\lim\limits_{\epsilon\to0^+} \cBo(u,v_1+\epsilon,C) \ge
\liminf\limits_{\epsilon\to0^+} \cBo(u_1^\epsilon,v_1+\epsilon,C)\\
\ge & \liminf\limits_{\epsilon\to0^+} \mathtt{C}_{\beta_1}(u_1^\epsilon, \Omega) +
\lim\limits_{\epsilon\to0^+}
\int_{\cyl{\cR_1}{h}}u_1^\epsilon (v_1+\epsilon)\,dx\\
\ge& \mathtt{C}_{\beta_1}({u_1}_*,\Omega) +\int_{\cyl{\cR_1}{h}} 
{u_1}_*v_1\,dx = \cBo({u_1}_*,v_1,C).
\end{align*}

b) can be proven in a similar manner.
\end{proof}

\begin{proof}[Proof of Theorem \ref{teo:E_0_and_F_0}]
Let $R:=\max\{R(E_0),R(F_0)\},$ where $R(E_0)$ and $R(F_0)$
are defined as in \eqref{eq:universal constant}. Then by Theorem
\ref{teo: unconstrained minimizer}
any minimizer $E_\lambda$  (resp. $ F_\lambda$)
of $\cAo(\cdot,E_0,\lambda)$ (resp. $\cAt(\cdot,F_0,\lambda)$) is contained
in the cylinder $C:=\hat B_{R}\times(0,H),$
where
$$
H=1+\max\big\{\max\limits_{ (x',x_{n+1})\in \overline{E_0}} x_{n+1},\,
\max\limits_{ (x',x_{n+1})\in \overline{F_0}} x_{n+1}\big\}.
$$
Set $v_1:=v_1(\lambda,E_0)= \lambda \sdist_{E_0}$ and
$v_2:=v_2(\lambda,F_0)=\lambda \sdist_{F_0}.$ 
Since $E_0\subseteq F_0\subset\Omega,$ we have  $\sdist_{E_0}\ge\sdist_{F_0}.$
Moreover, by \eqref{hyp:main1} there exists a cylinder 
$C:=\cyl{D}{H}$ such that $v_2\ge0$ in $\Omega\setminus C.$

a) Since $v_1\ge v_2$ and $\beta_1\le \beta_2,$ by Proposition
\ref{prop:weak_comp_pr} b) there exists a minimizer $u_2^*:=u_2^*(\lambda,F_0)$
of $\cBt(\cdot,v_2,C)$
such that any  minimizer $u_1$ of $\cBo(\cdot,v_1,C)$ satisfies
\begin{equation}\label{compilloa}
u_1\le u_2^*.
\end{equation}
By Remark \ref{rem:level_sets_of_minimizers} there exists $t\in(0,1)$
such that $\chi_{\{u_2^*>t\}}$ is a minimizer of $\cBt(\cdot,v_2,C).$ Then, recalling the expression of
$v_2,$ by Lemma
\ref{lem:rel_min_of ATW_and_F}  $F_\lambda^*: = \{u_2^*>t\}$ is a minimizer of
$\cAt(\cdot,F_0,\lambda).$
Moreover, if $E_\lambda$ is a minimizer of $\cAo(\cdot,E_0,\lambda),$ then by Lemma
\ref{lem:rel_min_of ATW_and_F} $\chi_{E_\lambda}$ is a minimizer of $\cBo(\cdot,v_1,C),$ and
by \eqref{compilloa} $\chi_{E_\lambda}\le u_2^*.$ In particular,
$$
E_\lambda = \{\chi_{E_\lambda}>t\}\subseteq \{u_2^*>t\} =: F_\lambda^*.
$$

b) is analogous to a) using    Proposition
\ref{prop:weak_comp_pr} a).

The last assertion  follows with the same arguments from Lemma \ref{lem:rel_min_of ATW_and_F} and
Proposition \ref{prop:strong_max_pr}, since \eqref{eq:strict_inclusion}  implies that
$\sdist_{E_0}>\sdist_{F_0}.$
\end{proof}

One useful case  is when $E_0$ is a constrained
 minimizer of $\cC(\cdot,\Omega)$ in 
$ \cE(E_0)\! : $ 
in this case $E_0$ acts as a barrier for minimizers of $\cA(\cdot,E_0,\lambda).$

\begin{proposition}\label{prop: El_in_E0}
Assume that $E_0,\beta_1,\beta_2$ satisfy \eqref{hyp:main1}.
Let $\beta_1\le \beta_2,$
$E_0$ be a constrained minimizer
of $\mathcal{C}_{\beta_2}(\cdot,\Omega)$ in $\cE(E_0)$ and
$E_\lambda\in BV(\Omega,\{0,1\})$
 be a minimizer of $\mathcal{A}_{\beta_1}(\cdot,E_0,\lambda).$ Then
 $E_\lambda\subseteq \overline{E_0}.$
\end{proposition}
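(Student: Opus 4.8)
The plan is to test the minimality of $E_\lambda$ against the competitor $E_\lambda\cap E_0$ and the constrained minimality of $E_0$ against $E_\lambda\cup E_0$ (which lies in $\cE(E_0)$), then to combine the two resulting inequalities using the strong subadditivity \eqref{famfor} of the perimeter together with $\beta_1\le\beta_2$ and $\dist_{E_0}\ge0$, thereby forcing the ``extra mass'' $E_\lambda\setminus E_0$ to be negligible. If $E_\lambda=\emptyset$ there is nothing to prove, so assume $E_\lambda\neq\emptyset$; by Theorem \ref{teo: unconstrained minimizer} it is bounded, hence all the integrals below are finite.

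From $\cAo(E_\lambda,E_0,\lambda)\le\cAo(E_\lambda\cap E_0,E_0,\lambda)$, using $(E_\lambda\cap E_0)\Delta E_0=E_0\setminus E_\lambda$, the disjoint decomposition $E_\lambda\Delta E_0=(E_\lambda\setminus E_0)\cup(E_0\setminus E_\lambda)$, and the $\cH^n$-a.e.\ identity $\chi_{E_\lambda}-\chi_{E_\lambda\cap E_0}=\chi_{E_\lambda\setminus E_0}$ for the traces on $\p\Omega$, I would obtain
\begin{equation}\label{eq:plan_first}
P(E_\lambda,\Omega)-P(E_\lambda\cap E_0,\Omega)+\lambda\int_{E_\lambda\setminus E_0}\dist_{E_0}\,dx\le\int_{\p\Omega}\beta_1\,\chi_{E_\lambda\setminus E_0}\,d\cH^n.
\end{equation}
Since $E_\lambda\cup E_0\in\cE(E_0)$, the fact that $E_0$ minimizes $\mathcal{C}_{\beta_2}(\cdot,\Omega)$ in $\cE(E_0)$ gives $\mathcal{C}_{\beta_2}(E_0,\Omega)\le\mathcal{C}_{\beta_2}(E_\lambda\cup E_0,\Omega)$, and using $\chi_{E_\lambda\cup E_0}-\chi_{E_0}=\chi_{E_\lambda\setminus E_0}$ $\cH^n$-a.e.\ on $\p\Omega$,
\begin{equation}\label{eq:plan_second}
\int_{\p\Omega}\beta_2\,\chi_{E_\lambda\setminus E_0}\,d\cH^n\le P(E_\lambda\cup E_0,\Omega)-P(E_0,\Omega).
\end{equation}

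Since $\beta_1\le\beta_2$ and $\chi_{E_\lambda\setminus E_0}\ge0$ (so that $\int_{\p\Omega}\beta_1\chi_{E_\lambda\setminus E_0}\,d\cH^n\le\int_{\p\Omega}\beta_2\chi_{E_\lambda\setminus E_0}\,d\cH^n$, both finite by Lemma \ref{lem:betacond}), chaining \eqref{eq:plan_first} and \eqref{eq:plan_second} yields
\begin{equation*}
P(E_\lambda,\Omega)+P(E_0,\Omega)-P(E_\lambda\cap E_0,\Omega)-P(E_\lambda\cup E_0,\Omega)+\lambda\int_{E_\lambda\setminus E_0}\dist_{E_0}\,dx\le0.
\end{equation*}
By the strong subadditivity \eqref{famfor} the perimeter part of the left-hand side is $\ge0$, and $\dist_{E_0}\ge0$; hence both must vanish, in particular $\int_{E_\lambda\setminus E_0}\dist_{E_0}\,dx=0$. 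Now, $\Omega\cap\p E_0=\Omega\cap\overline{\p^*E_0}\subseteq\overline{E_0}$, so $\dist_{E_0}(x)=\distance(x,\Omega\cap\p E_0)>0$ for every $x$ in the open set $\Omega\setminus\overline{E_0}$; since $E_\lambda\setminus\overline{E_0}\subseteq E_\lambda\setminus E_0$, this forces $|E_\lambda\setminus\overline{E_0}|=0$, whence (as $E_\lambda$ is taken to be its set of density-one points) $E_\lambda\cap(\Omega\setminus\overline{E_0})=\emptyset$, i.e.\ $E_\lambda\subseteq\overline{E_0}$.

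All the computations are elementary; the only delicate point is the bookkeeping of the boundary traces on $\p\Omega$ — namely that the traces of $E_\lambda\cap E_0$ and of $E_\lambda\cup E_0$ split $\cH^n$-a.e.\ as $\chi_{E_\lambda}\chi_{E_0}$ and $\chi_{E_\lambda}+\chi_{E_0}-\chi_{E_\lambda}\chi_{E_0}$ respectively — which follows from the standard behaviour of $BV$ traces under intersection and union. Everything else is just the combination of the two minimality inequalities with \eqref{famfor}.
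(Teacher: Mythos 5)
Your proof is correct and follows essentially the same route as the paper: compare $E_\lambda$ with $E_\lambda\cap E_0$, compare $E_0$ with $E_\lambda\cup E_0$ using its constrained minimality, add, invoke strong subadditivity \eqref{famfor} and $\beta_1\le\beta_2$, and conclude from the vanishing of $\int_{E_\lambda\setminus E_0}\dist_{E_0}\,dx$. The only cosmetic difference is that the paper writes the volume term with $\sdist_{E_0}$ rather than $\dist_{E_0}$, but on $E_\lambda\setminus E_0$ the two coincide, so the arguments are identical.
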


\begin{proof}[\bf Proof]
Comparing $E_\lambda$ with $E_0\cap E_\lambda$ we get
$$P(E_\lambda,\Omega) +\lambda \int_{E_\lambda\setminus E_0} \sdist_{E_0}\,dx
\le P(E_\lambda\cap E_0,\Omega) + \int_{\p \Omega} \beta_1\,
\chi_{E_\lambda\setminus E_0} \,d\cH^n.$$
From the constrained minimality of $E_0$ we
have $\mathcal{C}_{\beta_2}(E_0,\Omega)\le
\mathcal{C}_{\beta_2}(E_0\cup E_\lambda,\Omega),$ i.e.
$$P(E_0,\Omega)\le P(E_0\cup E_\lambda,\Omega)-\int_{\p \Omega} \beta_2\,
\chi_{E_\lambda\setminus E_0}\,d\cH^n.$$
Adding these inequalities we obtain
\begin{align*}
P(E_\lambda,\Omega)+P(E_0,\Omega) +
\lambda \int_{E_\lambda\setminus E_0}
\sdist_{E_0}\,dx
\le &P(E_\lambda\cup E_0,\Omega)  +
P(E_\lambda\cap E_0,\Omega)\\
&+ \int_{\p \Omega} (\beta_1 - \beta_2)
\chi_{E_\lambda\setminus E_0}\,d\cH^n.
\end{align*}
Then the condition  $\beta_1\le \beta_2$ and \eqref{famfor} yield that
$$\lambda \int_{E_\lambda\setminus E_0}
\sdist_{E_0}\,dx\le0.$$
Since $\sdist_{E_0}>0$ outside $\overline{E_0},$ the last inequality is possible only
if $|E_\lambda\setminus \overline{E_0}|=0,$ i.e. $E_\lambda\subseteq \overline{E_0}.$
\end{proof}

%Theorem \ref{teo:E_0_and_F_0}  and 
Proposition \ref{prop: El_in_E0}
gives the following  monotonicity principle.

\begin{proposition}[\bf Monotonicity]\label{prop: El_monotone}
Assume that $E_0,\beta$  satisfy \eqref{hyp:main1}, $E_0$
is a constrained minimizer of $\cC(\cdot,\Omega)$ in
$\cE(E_0)$ such that $|\overline{E_0}\setminus E_0|=0$ and
$E_\alpha\in BV(\Omega,\{0,1\})$
is a minimizer of $\cA(\cdot,E_0,\alpha)$ for   $\alpha\ge1.$
Then $E_\lambda\subseteq E_\mu$ for any $1\le\lambda<\mu.$
Moreover, every $E_\alpha,$ $\alpha\ge1$ is also a constrained minimizer of
$\cC(\cdot,\Omega)$ in $\cE(E_\alpha).$
\end{proposition}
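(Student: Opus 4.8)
The plan is to establish the two assertions separately, after a common preliminary observation. Applying Proposition \ref{prop: El_in_E0} with $\beta_1=\beta_2=\beta$ — legitimate since $E_0$ is a constrained minimizer of $\cC(\cdot,\Omega)$ in $\cE(E_0)$ and each $E_\alpha$ minimizes $\cA(\cdot,E_0,\alpha)$ — gives $E_\alpha\subseteq\overline{E_0}$ for every $\alpha\ge1$, hence $|E_\alpha\setminus E_0|=0$ because $|\overline{E_0}\setminus E_0|=0$. Since $E_\alpha$ is open (openness follows from $E_\alpha$ being a $\Lambda$-minimizer of the perimeter in balls $B\subset\Omega$, which holds under \eqref{hyp:main1} once $E_\alpha$ is bounded by Theorem \ref{teo: unconstrained minimizer}; cf. the proof of Theorem \ref{teo: regularity}) and $\p E_0=\overline{\p^*E_0}$, the relation $|E_\alpha\setminus E_0|=0$ upgrades to $E_\alpha\subseteq E_0$ with $E_\alpha\cap\p E_0=\emptyset$, so that $\dist_{E_0}>0$, equivalently $\sdist_{E_0}<0$, everywhere on $E_\alpha$. (When $\Omega\cap\p E_0=\emptyset$ — which in fact never occurs for a nonempty bounded $E_0$, as $\Omega$ is connected — one has $\dist_{E_0}\equiv+\infty$, $E_\alpha=E_0$ for all $\alpha$, and both claims are immediate; so I assume $\Omega\cap\p E_0\ne\emptyset$.)

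For the ``moreover'' part, fix $\alpha\ge1$ and $F\in\cE(E_\alpha)$. As $E_\alpha\subseteq F\cap E_0$, testing the minimality of $E_\alpha$ against the competitor $F\cap E_0$ in $\cA(\cdot,E_0,\alpha)$ and using $E_\alpha\subseteq F$ (so that $E_0\setminus F\subseteq E_0\setminus E_\alpha$, which makes the two distance penalties compare in the favorable direction) yields $\cC(E_\alpha,\Omega)\le\cC(F\cap E_0,\Omega)$. Since $E_0\cup F\in\cE(E_0)$, the constrained minimality of $E_0$ together with the strong subadditivity of $\cC(\cdot,\Omega)$ — which follows from \eqref{famfor} and the identity $\chi_{E\cap F}+\chi_{E\cup F}=\chi_E+\chi_F$, the boundary traces being finite by Lemma \ref{lem:betacond} — gives $\cC(F\cap E_0,\Omega)\le\cC(E_0,\Omega)+\cC(F,\Omega)-\cC(E_0\cup F,\Omega)\le\cC(F,\Omega)$. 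Combining, $\cC(E_\alpha,\Omega)\le\cC(F,\Omega)$ for all $F\in\cE(E_\alpha)$, which is the claim.

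For the monotonicity, fix $1\le\lambda<\mu$. By Theorem \ref{teo: unconstrained minimizer} the sets $E_\lambda$, $E_\mu$, $E_\lambda\cap E_\mu$, $E_\lambda\cup E_\mu$ are all bounded, so by \eqref{representing_ATW} each $E$ among them satisfies $\cA(E,E_0,\alpha)=\cC(E,\Omega)+\alpha\int_E\sdist_{E_0}\,dx-\alpha\int_{E_0}\sdist_{E_0}\,dx$. I would test the minimality of $E_\lambda$ against $E_\lambda\cap E_\mu$ and of $E_\mu$ against $E_\lambda\cup E_\mu$, add the two inequalities, cancel the (finite) capillary contributions by strong subadditivity of $\cC(\cdot,\Omega)$, and use $\chi_{E_\lambda}+\chi_{E_\mu}=\chi_{E_\lambda\cap E_\mu}+\chi_{E_\lambda\cup E_\mu}$ to reduce all the volume terms; after rearranging, what remains is $(\lambda-\mu)\int_{E_\lambda\setminus E_\mu}\sdist_{E_0}\,dx\le0$, hence $\int_{E_\lambda\setminus E_\mu}\sdist_{E_0}\,dx\ge0$ since $\lambda<\mu$. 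Because $E_\lambda\setminus E_\mu\subseteq E_\lambda$ and $\sdist_{E_0}<0$ on $E_\lambda$ by the preliminary step, this forces $|E_\lambda\setminus E_\mu|=0$, i.e. $E_\lambda\subseteq E_\mu$.

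The delicate point — and the only place where $|\overline{E_0}\setminus E_0|=0$ and the regularity of minimizers are really needed — is the last step of the monotonicity argument: upgrading $\int_{E_\lambda\setminus E_\mu}\sdist_{E_0}\,dx\ge0$ to a set inclusion. Since $\sdist_{E_0}$ is only known to be $\le0$ on $E_0$ a priori, this requires the \emph{strict} negativity of $\sdist_{E_0}$ on $E_\lambda$, which is exactly what openness of $E_\lambda$ plus $E_\lambda\subseteq\overline{E_0}$ and $|\overline{E_0}\setminus E_0|=0$ provide. Everything else is the submodularity/lattice bookkeeping already used in the proofs of Proposition \ref{prop:strong_max_pr} and Proposition \ref{prop: El_in_E0}.
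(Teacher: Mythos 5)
Your proof is correct and follows essentially the same route as the paper's: comparison of $E_\lambda$ with $E_\lambda\cap E_\mu$ and of $E_\mu$ with $E_\lambda\cup E_\mu$, addition, strong subadditivity \eqref{famfor}, and reduction to $(\lambda-\mu)\int_{E_\lambda\setminus E_\mu}\sdist_{E_0}\,dx\le 0$; for the \emph{moreover} part, comparison against $F\cap E_0$ followed by constrained minimality of $E_0$ and submodularity of $\cC(\cdot,\Omega)$. The extra care you take in the preliminary step — upgrading $\sdist_{E_0}\le 0$ on $E_\alpha$ to strict negativity via interior openness of the minimizer and $\p E_0=\overline{\p^* E_0}$ — fills in a detail that the paper passes over in silence when it leaps from \eqref{a2} to $|E_\lambda\setminus E_\mu|=0$.
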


\begin{proof}
Comparison between $E_\lambda$ and $E_\lambda\cap E_\mu$ gives
$$P(E_\lambda,\Omega) +\lambda \int_{E_\lambda\setminus E_\mu} \sdist_{E_0}\,dx
\le P(E_\lambda\cap E_\mu,\Omega) + \int_{\p \Omega} \beta\,
\chi_{E_\lambda\setminus E_\mu} \,d\cH^n.$$
Similarly, for $E_\mu$ and $E_\lambda\cup E_\mu$ we have
$$
P(E_\mu,\Omega) \le P(E_\lambda\cup E_\mu,\Omega)
+\mu \int_{E_\lambda\setminus E_\mu} \sdist_{E_0}\,dx - \int_{\p \Omega} \beta\,
\chi_{E_\lambda\setminus E_\mu} \,d\cH^n. 
$$
Adding the above inequalities and using \eqref{famfor} we obtain
\begin{equation}\label{a2}
 (\lambda-\mu) \int_{E_\lambda\setminus E_\mu}
\sdist_{E_0}\,dx \le 0.
\end{equation}
By hypothesis $|\overline{E_0}\setminus E_0|=0,$
according to Proposition \ref{prop: El_in_E0},
$E_\lambda, E_\mu\subseteq E_0,$ Thus $\sdist_{E_0}\le0$ in
$E_\lambda\setminus E_\mu.$ But since $\lambda<\mu,$ \eqref{a2} is possible
only if $|E_\lambda\setminus E_\mu|=0,$ i.e. $E_\lambda\subseteq E_\mu.$

To prove the final assertion  take any set 
$E\in \cE(E_\alpha).$ Then using $\cA(E_\alpha,E_0,\alpha)\le 
\cA(E_\alpha\cap E_0,E_0,\alpha),$ 
$\alpha\int_{(E_0\cap E)\setminus E_\alpha} \dist_{E_0}dx\ge 0,$ 
and $E_\alpha \subseteq E_0\cap E,$ we get 
$$
\cC(E_\alpha,\Omega) \le \cC(E_\alpha,\Omega) + \alpha\int_{(E_0\cap E)\setminus
E_\alpha} \dist_{E_0}dx \le 
\cC(E\cap E_0,\Omega).
$$
Moreover, since $\cC(E_0,\Omega)\le \cC(E\cup E_0,\Omega),$  from \eqref{famfor} 
we obtain 
$$
\cC(E_\alpha,\Omega) +\cC(E_0,\Omega) \le \cC(E_0\cap E,\Omega) + 
\cC(E_0\cup E,\Omega)\le \cC(E,\Omega) +\cC(E_0,\Omega),
$$
i.e. $\cC(E_\alpha,\Omega)\le \cC(E,\Omega).$
\end{proof}

\begin{proposition}[\bf Comparison between minimizers
of $\cC$ and $\cA$]\label{prop:boundedness_of_minimizers}
Suppose that $E_0$ and $\beta$ satisfy \eqref{hyp:main1}.
\begin{itemize}
\item[a)] Let $E^+\in BV(\Omega,\{0,1\})$  be a constrained minimizer
of $\cC(\cdot,\Omega)$ in $\cE(E_0).$
Then every minimizer $E_\lambda$  of $\cA(\cdot,E_0,\lambda)$ satisfies
$E_\lambda\subseteq  \overline{E^+}.$
\item[b)] Let $E^+\in BV(\Omega,\{0,1\})$ be a bounded constrained minimizer
of $\cC(\cdot,\Omega)$ in $\cE(E^+).$
Then for every $E_0\subseteq E^+$ and
for every minimizer $E_\lambda$
of $\cA(\cdot,E_0,\lambda)$ one has $E_\lambda\subseteq \overline{E^+}.$
Moreover, $E^+$ can be chosen such that $|\overline{E^+}\setminus E^+|=0.$
\end{itemize}
\end{proposition}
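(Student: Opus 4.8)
The plan is to obtain both parts from one two-competitor comparison, carried out exactly as in the proofs of Propositions \ref{prop: El_in_E0} and \ref{prop: El_monotone}, but now with the barrier $E^+$ playing the role that $E_0$ had there. For part a) I would first note that, since $E_0\subseteq E^+$, Proposition \ref{prop:minimizer_of_F_0} guarantees that a constrained minimizer $E^+$ of $\cC(\cdot,\Omega)$ in $\cE(E_0)$ is bounded (indeed $E^+\subseteq\cyl{R_0}{H}$) and is also a constrained minimizer of $\cC(\cdot,\Omega)$ in $\cE(E^+)$. Thus a) is exactly the instance of b) in which the seed $E_0$ lies inside the given barrier, and it suffices to prove the following: if $E_0\subseteq E^+$, $E^+$ is a bounded constrained minimizer of $\cC(\cdot,\Omega)$ in $\cE(E^+)$, and $E_\lambda$ minimizes $\cA(\cdot,E_0,\lambda)$, then $E_\lambda\subseteq\overline{E^+}$.

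To prove this I would compare $E_\lambda$ with $E_\lambda\cap E^+\in BV(\Omega,\{0,1\})$ and, separately, $E^+$ with $E^+\cup E_\lambda\in\cE(E^+)$. Using $E_0\subseteq E^+$ one has $E_0\setminus(E_\lambda\cap E^+)=E_0\setminus E_\lambda$ and $(E_\lambda\cap E^+)\setminus E_0=(E_\lambda\setminus E_0)\cap E^+$, so that the penalization terms of $\cA$ for $E_\lambda$ and for $E_\lambda\cap E^+$ differ by exactly $\lambda\int_{E_\lambda\setminus E^+}\dist_{E_0}\,dx$; minimality of $E_\lambda$ then gives
\[
\cC(E_\lambda,\Omega)+\lambda\int_{E_\lambda\setminus E^+}\dist_{E_0}\,dx\le\cC(E_\lambda\cap E^+,\Omega),
\]
while the constrained minimality of $E^+$ gives $\cC(E^+,\Omega)\le\cC(E^+\cup E_\lambda,\Omega)$. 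Adding these and writing out $\cC=P(\cdot,\Omega)-\int_{\p\Omega}\beta\chi_{(\cdot)}\,d\cH^n$, the boundary integrals recombine and cancel (since $\chi_{E_\lambda\cap E^+}+\chi_{E_\lambda\cup E^+}=\chi_{E_\lambda}+\chi_{E^+}$ $\cH^n$-a.e. on $\p\Omega$), and the strong subadditivity \eqref{famfor} absorbs the perimeter terms, leaving $\lambda\int_{E_\lambda\setminus E^+}\dist_{E_0}\,dx\le0$. Since $\dist_{E_0}\ge0$ and $\overline{\Omega\cap\p E_0}\subseteq\overline{E_0}\subseteq\overline{E^+}$ forces $\dist_{E_0}>0$ on $E_\lambda\setminus\overline{E^+}$, we conclude $|E_\lambda\setminus\overline{E^+}|=0$, i.e. $E_\lambda\subseteq\overline{E^+}$.

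For the last assertion of b) I would exhibit a specific admissible barrier: take $E^+$ to be a constrained minimizer of $\cC(\cdot,\Omega)$ in $\cE(G)$, where $G$ is any bounded set with Lipschitz boundary. By Proposition \ref{prop:minimizer_of_F_0} such $E^+$ exists, is bounded, and is also a constrained minimizer of $\cC(\cdot,\Omega)$ in $\cE(E^+)$; since $G$ satisfies two-sided volume density bounds at every boundary point, Proposition \ref{prop:dens_est_for_cap} applies and yields uniform volume density estimates for $E^+$. These estimates prevent $\p E^+$ from containing points of Lebesgue density $0$ or $1$ for $E^+$, so by the Lebesgue density theorem $|\p E^+|=0$; as $\overline{E^+}\setminus E^+\subseteq\p E^+$ (recall $E^+$ is identified with its set of density-one points), we get $|\overline{E^+}\setminus E^+|=0$.

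The comparison steps are routine and parallel earlier proofs, so the only delicate point is this last assertion: invoking Proposition \ref{prop:dens_est_for_cap} requires the two-sided bound $\|\beta\|_\infty\le1-2\kappa$ of \eqref{hyp:main} (rather than only \eqref{hyp:main1}), and one must choose the barrier $E^+$ as the $\cC$-minimizer over $\cE(G)$ of a sufficiently regular seed $G$ so that the density estimates are available. All other ingredients reduce to \eqref{famfor}, Proposition \ref{prop:minimizer_of_F_0}, and the strict positivity of $\dist_{E_0}$ away from $\overline{E_0}$.
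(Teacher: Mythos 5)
Your proof is correct but takes a genuinely different route from the paper's. For a) and the first assertion of b) you run a direct two-competitor comparison ($E_\lambda$ against $E_\lambda\cap E^+$ for $\cA(\cdot,E_0,\lambda)$, and $E^+$ against $E^+\cup E_\lambda$ for the constrained $\cC$-minimality), reduce the difference of penalization terms to $\lambda\int_{E_\lambda\setminus E^+}\dist_{E_0}\,dx$ using $E_0\subseteq E^+$, cancel the boundary traces via $\chi_{A\cap B}+\chi_{A\cup B}=\chi_A+\chi_B$ on $\p\Omega$, absorb the perimeters by the strong subadditivity \eqref{famfor}, and conclude $|E_\lambda\setminus\overline{E^+}|=0$ since $\dist_{E_0}>0$ off $\overline{E^+}$. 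This is exactly the calculation behind Proposition \ref{prop: El_in_E0}, generalized to a barrier that strictly contains the seed. The paper instead assembles the conclusion from two already-proved pieces: Proposition \ref{prop: El_in_E0} applied to the maximal minimizer $E_\lambda^+$ of $\cA(\cdot,E^+,\lambda)$ gives $E_\lambda^+\subseteq\overline{E^+}$, while Theorem \ref{teo:E_0_and_F_0}~a) together with Remark \ref{rem:exis_max_min} gives $E_\lambda\subseteq E_\lambda^+$ for every minimizer $E_\lambda$ of $\cA(\cdot,E_0,\lambda)$. Your route is more self-contained and bypasses the level-set comparison machinery of Section \ref{sec:comparison_principles}; the paper's is shorter modulo what it cites. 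Your reduction of a) to b) via Proposition \ref{prop:minimizer_of_F_0} is identical to the paper's opening step. For the final assertion, your argument (constrained $\cC$-minimizer over $\cE(G)$ with $G\supseteq E_0$ bounded Lipschitz, density estimates from Proposition \ref{prop:dens_est_for_cap}, then $|\p E^+|=0$ by the Lebesgue density theorem) coincides in substance with the paper's, which instead concludes $|\p E^+|=0$ from $\cH^n(\p E^+)=P(E^+)<+\infty$. Your observation that this last step genuinely requires the two-sided bound $\|\beta\|_\infty\le1-2\kappa$ of \eqref{hyp:main} rather than the one-sided \eqref{hyp:main1} stated in the proposition is sharp and correct: the paper's proof silently invokes Proposition \ref{prop:dens_est_for_cap}, whose hypotheses are \eqref{hyp:main}, so your remark flags a real slip in the stated hypotheses (the comparison part of the statement is fine under \eqref{hyp:main1} alone; only the ``can be chosen'' clause needs \eqref{hyp:main}).
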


\begin{proof}
a) By  Proposition \ref{prop:minimizer_of_F_0}
$E^+$ is a constrained minimizer of $\cC(\cdot,\Omega) $ in $\cE(E^+).$
Let $E_\lambda^+$ be the maximal minimizer of $\cA(\cdot,E^+,\lambda)$
(Definition \ref{def:exis_max_min}).
By Proposition \ref{prop: El_in_E0} we have $E_\lambda^+\subseteq \overline{E^+}.$
Take any minimizer $E_\lambda$  of $\cA(\cdot,E_0,\lambda).$
Since $E_0\subseteq E^+,$  by Theorem \ref{teo:E_0_and_F_0} a) we have
$$
E_\lambda\subseteq E_\lambda^+\subseteq \overline{E^+}.
$$

b) The proof of the first part is exactly the same as the proof of a).
To prove   the second part,
we take any $E_0'\in BV(\Omega,\{0,1\})$ satisfying the hypotheses
of Proposition \ref{prop:dens_est_for_cap} and
containing  $E_0.$ By Theorem \ref{prop:minimizer_of_F_0} there exists a
constrained  minimizer
$E^+$ of $\cC(\cdot,\Omega)$ in $\cE(E_0').$ In particular, $E^+$ is bounded,
and by Proposition \ref{prop:dens_est_for_cap} $\cH^n(\p E^+)=P(E^+) <+\infty.$
Since $\overline{E^+}\setminus E^+ \subseteq \p E^+,$ we have
$|\overline{E^+}\setminus E^+|=0.$
\end{proof}

%%%%%%%%%%%%%%%%%%%%%%%%%%%%%%%%%%%%%%%%%%%%%%%%%%%%%%%%%%%%%%%%%%
\section{Existence of a generalized minimizing movement}\label{sec:existence_of_GMM}
%%%%%%%%%%%%%%%%%%%%%%%%%%%%%%%%%%%%%%%%%%%%%%%%%%%%%%%%%%%%%%%%%%

Consider the functional $\widehat\cA:BV(\Omega,\{0,1\})
\times BV(\Omega,\{0,1\}) \times [1,+\infty)\times  \Z\to [-\infty,+\infty]$
given by
$$
\widehat\cA(F,G,\lambda,k): =
\begin{cases}
\cA(F,G,\lambda)& \text{if}\,\,k>0,\\
|F\Delta G|& \text{if}\,\, k\le0.
\end{cases}
$$
%where
%$[x]$ denotes the integer part of $x\in\R.$

For any   $k\in \N$ we build the family of sets $E_\lambda(k)$
iteratively as follows: $E_\lambda(0):=E_0$ and $E_\lambda(k),$ $k\ge1,$ is a
minimizer of
$\widehat \cA(\cdot,E_\lambda(k-1),\lambda,k)$ in $BV(\Omega,\{0,1\});$
notice that existence of minimizers follows from Theorem
\ref{teo: unconstrained minimizer}.

From now on, we omit the dependence on $k$ of $\widehat\cA,$ and
we use the notation $\widehat\cA(F,G,\lambda).$

\begin{theorem}[\bf Existence]\label{teo:existence_of_GMM}
Let $E_0$ and $\beta $ satisfy   \eqref{hyp:main}.
Then  $GMM(E_0)$ is nonempty, i.e.
there exist a map $t\in [0,+\infty)\mapsto E(t)\in BV(\Omega,\{0,1\})$ and
a diverging  sequence  $\{\lambda_j\}\subset [1,+\infty)$
such that
\begin{equation}\label{L1_conv_GMM}
\lim\limits_{j\to+\infty} |E_{\lambda_j}([\lambda_jt])\Delta E(t)| = 0,
\qquad t\in [0,+\infty).
\end{equation}
Moreover, every GMM $t\in[0,+\infty)\mapsto E(t)$ starting from $E_0$
is contained in  a  bounded set depending only on $E_0$ and $\beta,$
and belongs to $C_\loc^{1/2}((0,+\infty),L^1(\Omega)),$
in the sense that
\begin{equation}\label{Holder_cont}
|E(t)\Delta E(t')| \le \theta(n,\kappa)P(E_0)  |t-t'|^{1/2}\qquad
\text{for all $t,t'>0,$ $|t-t'|<1,$}
\end{equation}
where
$
\theta(n,\kappa) =  \frac{C_{n,\kappa}}{\kappa}+1
$
and $C_{n,\kappa}$ is defined in \eqref{3452}. 
If in addition $|\overline{E_0}\setminus E|=0,$
then  \eqref{Holder_cont} holds for any $t,t'\ge0$ with $|t-t'|<1.$
Finally,
\begin{equation}\label{eq:wcms}
\nu_{E_{\lambda_j}([\lambda_jt])}\cH^n \res \p^*E_{\lambda_j} 
([\lambda_jt]) \overset{w^*}{\rightharpoonup } \nu_{E(t)}\cH^n\res \p^*E(t) \quad
{\text{for all $t\ge0$  as $\lambda_j\to+\infty.$}}
\end{equation}
\end{theorem}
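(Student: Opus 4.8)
The plan is to follow the classical Almgren--Taylor--Wang / Luckhaus--Sturzenhecker scheme, now with the ingredients of the earlier sections substituted in. First I would fix $\lambda\ge 1$ and consider the iteratively defined sets $E_\lambda(k)$, $k\in\N$, writing $E_\lambda(t):=E_\lambda([\lambda t])$ for $t\ge 0$. The first task is a uniform $L^1$ H\"older estimate in time. By Proposition \ref{prop:uniform_estimate} applied with $E_0$ replaced by $E_\lambda(k-1)$, choosing $\ell=\lambda^{-1/2}$ (legitimate since $\lambda^{-1/2}<\gamma(n,\kappa)\lambda^{-1/2}$ fails — so one actually takes $\ell$ a fixed fraction of $\gamma(n,\kappa)\lambda^{-1/2}$; this is the "specific choice of $\ell$" announced after that proposition), one obtains
\begin{equation*}
|E_\lambda(k)\Delta E_\lambda(k-1)|\le C\,P(E_\lambda(k-1))\,\lambda^{-1/2}+\lambda^{1/2}\int_{E_\lambda(k)\Delta E_\lambda(k-1)}\dist_{E_\lambda(k-1)}\,dx.
\end{equation*}
The perimeters $P(E_\lambda(k))$ are uniformly bounded: from minimality $\cA(E_\lambda(k),E_\lambda(k-1),\lambda)\le\cA(E_\lambda(k-1),E_\lambda(k-1),\lambda)=\cC(E_\lambda(k-1),\Omega)\le P(E_\lambda(k-1))$, so by the coercivity \eqref{eq:bound_F-0} one gets $\kappa P(E_\lambda(k))\le P(E_\lambda(k-1))\le\dots$; actually a cleaner monotone bound is $\cC(E_\lambda(k),\Omega)\le\cC(E_\lambda(k-1),\Omega)\le P(E_0)$, hence $P(E_\lambda(k))\le P(E_0)/\kappa$ for all $k$. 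Moreover, summing the "dissipation" terms $\lambda\sum_k\int_{E_\lambda(k)\Delta E_\lambda(k-1)}\dist_{E_\lambda(k-1)}dx$ telescopes against $\cC(E_\lambda(k-1),\Omega)-\cC(E_\lambda(k),\Omega)$ and is bounded by $P(E_0)$. Combining these, a discrete-to-continuous summation over the steps between times $t$ and $t'$ yields, by Cauchy--Schwarz on the dissipation part, $|E_\lambda(t)\Delta E_\lambda(t')|\le\theta(n,\kappa)P(E_0)|t-t'|^{1/2}$ for $|t-t'|<1$ uniformly in $\lambda$; near $t=0$ one invokes Lemma \ref{lem:behav_big_lambda}(a) (under $|\overline{E_0}\setminus E_0|=0$) to extend the estimate down to $t'=0$.

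Next I would pass to the limit. The uniform bound $E_\lambda(k)\subseteq\cyl{R_0}{H}$ from Theorem \ref{teo: unconstrained minimizer} (with $R_0$ depending only on $n,\kappa,E_0$) together with the uniform perimeter bound $P(E_\lambda(k))\le P(E_0)/\kappa$ gives, for each fixed $t$, precompactness in $L^1(\Omega)$ of $\{E_\lambda(t)\}$ as $\lambda\to+\infty$. Using a countable dense set of times and a diagonal argument, I extract a diverging sequence $\{\lambda_j\}$ so that $E_{\lambda_j}(t)\to E(t)$ in $L^1(\Omega)$ for every rational $t$; the uniform $\frac12$-H\"older bound then promotes this to convergence for every $t\ge0$ and shows $E(\cdot)\in C^{1/2}_{\loc}((0,+\infty),L^1(\Omega))$ with the constant $\theta(n,\kappa)$, and $E(t)\subseteq\cyl{R_0}{H}$ for all $t$ — this is the "contained in a bounded set depending only on $E_0$ and $\beta$" assertion. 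Lower semicontinuity of $P(\cdot,\Omega)$ gives $P(E(t))\le P(E_0)/\kappa$ as well.

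Finally, the measure convergence \eqref{eq:wcms}. For fixed $t$, the Radon measures $\mu_j:=\nu_{E_{\lambda_j}([\lambda_jt])}\,\cH^n\res\p^*E_{\lambda_j}([\lambda_jt])$ are $\R^{n+1}$-valued, supported in $\overline{\cyl{R_0}{H}}$, with total masses $|\mu_j|=P(E_{\lambda_j}([\lambda_jt]))\le P(E_0)/\kappa$; hence up to a further subsequence $\mu_j\overset{w^*}{\rightharpoonup}\mu$. Since $E_{\lambda_j}([\lambda_jt])\to E(t)$ in $L^1$, the distributional identity $\int D\chi_{E_{\lambda_j}([\lambda_jt])}\cdot g=\int_{\p^*E_{\lambda_j}([\lambda_jt])}g\cdot\nu\,d\cH^n$ passes to the limit and forces $\mu=\nu_{E(t)}\,\cH^n\res\p^*E(t)$; uniqueness of this limit makes the whole sequence converge, giving \eqref{eq:wcms}.

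\textbf{Main obstacle.} The delicate point is making the H\"older estimate \eqref{Holder_cont} \emph{uniform in $\lambda$} with a constant independent of $E_0$ beyond the factor $P(E_0)$: this hinges on the $\lambda^{-1/2}$ (not $\lambda^{-1}$) scaling in Proposition \ref{prop:uniform_estimate}, which in turn requires the $E_0$-independent density estimates of Theorem \ref{teo:lower_density_est} to hold for \emph{every} iterate $E_\lambda(k-1)$ on balls of radius up to $O(\lambda^{-1/2})$ — so one must verify inductively that each $E_\lambda(k)$ satisfies the volume density hypothesis needed to apply Proposition \ref{prop:uniform_estimate} at the next step (it does, by Theorem \ref{teo:lower_density_est} applied to the minimizer $E_\lambda(k)$ of $\cA(\cdot,E_\lambda(k-1),\lambda)$). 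Carefully bookkeeping the optimal choice of $\ell$ and the telescoping of the dissipation, so that the Cauchy--Schwarz step produces exactly $|t-t'|^{1/2}$ and not a worse power, is the technical heart of the argument.
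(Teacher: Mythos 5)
Your plan follows the correct scaffolding (iterates $E_\lambda(k)$, monotonicity of $\cC$, uniform perimeter bound, Proposition~\ref{prop:uniform_estimate} with density estimates from Theorem~\ref{teo:lower_density_est}, diagonal compactness, divergence formula for \eqref{eq:wcms}), but two of the load-bearing steps are not correct as written.

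\textbf{Choice of $\ell$.} You propose taking $\ell$ a fixed fraction of $\gamma(n,\kappa)\lambda^{-1/2}$. This cannot produce a $\lambda$-uniform H\"older bound: after summing \eqref{eq:uniform_estimate} over the $N-1\approx\lambda|t-t'|$ intermediate steps, the perimeter term becomes of order
$C_{n,\kappa}\,\frac{P(E_0)}{\kappa}\,(N-1)\,\ell \sim \lambda^{1/2}\,|t-t'|\,P(E_0)$,
which blows up as $\lambda\to+\infty$ for fixed $t\ne t'$. The $|t-t'|^{1/2}$ exponent only emerges by balancing the two summed terms; the paper uses the $t$-dependent choice $\ell=\frac{1}{\lambda|t-t'|^{1/2}}$ (admissible for $\lambda$ large enough that this is $<\gamma(n,\kappa)\lambda^{-1/2}$), which turns both the perimeter sum and the telescoped dissipation sum into $O(P(E_0)|t-t'|^{1/2})$ plus an $O(\lambda^{-1})$ error. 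A Cauchy--Schwarz route, which you gesture at, would require a $k$-dependent optimized $\ell_k$ (roughly $\ell_k\sim\sqrt{D_k/P(E_\lambda(k-1))}$ with $D_k$ the dissipation in step $k$) followed by $\sum_k\sqrt{D_k}\le\sqrt{N}\sqrt{\sum_k D_k}$; that is genuinely different from a fixed fraction of $\gamma\lambda^{-1/2}$ and needs the $E_0$-independence of the density estimates so that each step's $\ell_k$ is admissible. As written, your proposal would not give \eqref{Holder_cont}.

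\textbf{Uniform containment.} You assert $E_\lambda(k)\subseteq\cyl{R_0}{H}$ with $R_0,H$ depending only on $n,\kappa,E_0$, and use this for compactness. But Theorem~\ref{teo: unconstrained minimizer} gives $R_0(n,\kappa,\cdot)$ and $H(\cdot)$ in terms of the \emph{initial set of that step}: applying it to $\cA(\cdot,E_\lambda(k-1),\lambda)$ yields $E_\lambda(k)\subseteq\cyl{R_0(n,\kappa,E_\lambda(k-1))}{H(E_\lambda(k-1))}$, and both the radius (through the cylinder radius $D$ entering \eqref{eq:universal constant}) and the height $H$ can grow with $k$, so the bound is not uniform in $k$ (hence not in $\lambda$ for fixed $t$, since $k=[\lambda t]\to\infty$). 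The paper closes this gap via the comparison principles of Section~\ref{sec:comparison_principles}: by Proposition~\ref{prop:boundedness_of_minimizers}~b) there is a bounded constrained minimizer $E^+\supseteq E_0$ of $\cC(\cdot,\Omega)$ in $\cE(E^+)$ with $|\overline{E^+}\setminus E^+|=0$, and by induction $E_\lambda(k)\subseteq E^+$ for all $k$ and all $\lambda$. This $E^+$ is the bounded set in the statement. Without this (or some substitute), your precompactness step and the boundedness assertion have no support.

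The remaining parts of your outline (monotonicity of $\cC(E_\lambda(k),\Omega)$, uniform $P(E_\lambda(k))\le P(E_0)/\kappa$ from coercivity, the footnoted observation that one sums from $k_0=[\lambda t_1]$ with $t_1>0$ because $E_0$ itself may not satisfy the density hypotheses, the use of Lemma~\ref{lem:behav_big_lambda}~a) to reach $t=0$ when $|\overline{E_0}\setminus E_0|=0$, and the derivation of \eqref{eq:wcms} from $L^1$-convergence and the divergence formula) agree with the paper's proof.
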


\begin{proof}[\bf Proof]
Given $k\ge0$  set $\dist_k(\cdot): = \distance(\cdot,\Omega
\cap \p E_{\lambda}(k)).$ Then for $k\ge1$ the minimality of $E_\lambda(k)$
entails
$$\cA(E_\lambda(k),E_\lambda(k-1),\lambda)\le
\cA(E_\lambda(k-1),E_\lambda(k-1),\lambda),$$ i.e.
\begin{equation}\label{monoton_capil}
\cC(E_\lambda(k),\Omega) +\lambda \int_{E_\lambda(k)\Delta E_\lambda(k-1)}
\dist_{k-1}dx \le \cC(E_\lambda(k-1),\Omega).
\end{equation}
In particular, the sequence $k\in\N\cup\{0\}\mapsto \cC(E_\lambda(k),\Omega)$ is
nonincreasing and
\begin{equation}\label{gfgfgf}
\cC(E_\lambda(k),\Omega)
\le \cC(E_\lambda(0),\Omega)=\cC(E_0,\Omega)\le P(E_0).
\end{equation}
Let $t>0$ and set $k=[\lambda t].$ Then
\eqref{eq:bound_F-0} yields
\begin{equation}\label{1234}
\kappa P(E_\lambda([\lambda t])) \le \cC(E_\lambda([\lambda t]),\Omega) \le P(E_0).
\end{equation}

Take $t_1,t_2>0,$ $t_1<t_2$ and let $\lambda\ge1$ be large enough
that for some $k_0,N\in\N,$ $N\ge3$
$$k_0= [\lambda  t_1],\quad k_0+N-1=[\lambda t_2],$$
i.e.
$$\frac{k_0}{\lambda } \le t_1 < \frac{k_0+1}{\lambda } < \ldots <
\frac{k_0+N-1}{\lambda } \le t_2 <\frac{k_0+N}{\lambda }.$$
 Then
\begin{equation}\label{eq:diff_t_1_t_2}
\frac{N-2}{\lambda }= \frac{k_0+N-1 - (k_0+1)}{\lambda } \le t_2-t_1.
\end{equation}

Since all $E_{\lambda}(s),$ $s\ge1$ satisfy uniform density estimates
\eqref{eq:volume_est}-\eqref{eq:perimeter_est} (Theorem \ref{teo:lower_density_est}),
by Proposition \ref{prop:uniform_estimate} we have\footnote{Notice that at this point we
use $t_1>0;$ since a priori we do not know whether $E_0$ satisfies
the density estimates,  we cannot start summing from $s=0=k_0.$}
\begin{equation}\label{ikkita_sum}
\begin{aligned}
|E_{\lambda}([\lambda t_2])\Delta E_{\lambda }([\lambda t_1])|& =
|E_{\lambda }(k_0+N-1) \Delta E_{\lambda }(k_0)|
\le \sum\limits_{s=k_0}^{k_0+N-2} |E_{\lambda}(s) \Delta E_{\lambda}(s+1)|\\
\le & C_{n,\kappa}\ell \sum\limits_{s=k_0}^{k_0+N-2} P(E_{\lambda}(s)) +
\frac{1}{\ell} \sum\limits_{s=k_0}^{k_0+N-2} \int_{E_{\lambda}(s+1)
\Delta E_{\lambda}(s)}
\dist_{E_{\lambda}(s)}\,dx
\end{aligned}
\end{equation}
for any $\ell \in (0,\frac{\gamma(n,\kappa)}{\lambda^{1/2}}).$
The first sum can be estimated using  \eqref{1234}:
\begin{equation}\label{birinchi_sum}
\sum\limits_{s=k_0}^{k_0+N-2} P(E_{\lambda}(s)) \le
\frac{P(E_0)}{\kappa}\,(N-1).
\end{equation}
Moreover, for any $s\in\N,$ by \eqref{monoton_capil}
$$
\int_{E_{\lambda_j}(s+1) \Delta E_{\lambda}(s)}
\dist_{E_{\lambda}(s)}\,dx \le \frac{1}{\lambda}\Big(\cC(E_{\lambda}(s),\Omega)
 - \cC(E_{\lambda}(s+1),\Omega)\Big),
$$
and thus
\begin{align*}
\sum\limits_{s=k_0}^{k_0+N-2} \int_{E_{\lambda}(s+1)
\Delta E_{\lambda}(s)}
\dist_{E_{\lambda}(s)}\,dx \le &   \frac{1}{\lambda}
\sum\limits_{s=k_0}^{k_0+N-2} \Big(\cC(E_{\lambda}(s),\Omega)
 - \cC(E_{\lambda}(s+1),\Omega)\Big)\\
= & \frac{1}{\lambda}  \Big(\cC(E_{\lambda}(k_0),\Omega)
 - \cC(E_{\lambda}(k_0+N-1),\Omega)\Big).
\end{align*}
Using  \eqref{gfgfgf} and the nonnegativity of $\cC(\cdot,\Omega)$ we get
\begin{equation}\label{ikkinchi_sum}
\sum\limits_{s=k_0}^{k_0+N-2} \int_{E_{\lambda}(s+1)
\Delta E_{\lambda}(s)}
\dist_{E_{\lambda}(s)}\,dx \le \frac{P(E_0)}{ \lambda}.
\end{equation}
Thus, from \eqref{ikkita_sum}, \eqref{birinchi_sum} and \eqref{ikkinchi_sum}
\begin{equation}\label{ghfgb}
|E_{\lambda}([\lambda t_1])\Delta E_{\lambda}([\lambda t_2]) |  \le
\frac{C_{n,\kappa}P(E_0)}{\kappa}\,(N-1)\ell + \frac{P(E_0)}{ \lambda \ell}.
\end{equation}

Now take $\lambda$ so large that
$$
t_2-t_1> \frac{1}{\gamma(n,\kappa)^2\,\lambda},
$$
so that Proposition \ref{prop:uniform_estimate} holds for
$\ell = \frac{1}{\lambda|t_2-t_1|^{1/2}}.$
From \eqref{ghfgb} and \eqref{eq:diff_t_1_t_2} we obtain
\begin{equation}\label{xyzta}
\begin{aligned}
\big|E_{\lambda}([\lambda t_1])\Delta E_{\lambda }([\lambda t_2]) \big|  \le&
\frac{C_{n,\kappa}P(E_0)}{\kappa}\,\frac{N-2}{\lambda |t_2-t_1|^{1/2}} +
\frac{1}{\lambda }\,\frac{C_{n,\kappa}P(E_0)}{\kappa|t_2-t_1|^{1/2}}+
 P(E_0) \, |t_2-t_1|^{1/2}\\
\le & \theta(n,\kappa)P(E_0)\,|t_2-t_1|^{1/2}+
\frac{1}{\lambda}\,\frac{C_{n,\kappa}P(E_0)}{\kappa|t_2-t_1|^{1/2}}.
\end{aligned}
\end{equation}

By Proposition \ref{prop:boundedness_of_minimizers} b) there exists
a constrained minimizer $E^+\supseteq E_0$ of $\cC(\cdot,\Omega)$ in $\cE(E^+)$ such that
$|\overline{E^+}\setminus E^+|=0$ and $E_\lambda(1)\subseteq E^+.$
By induction,
we can show that  $E_\lambda(k)\subseteq  E^+$ for all $k\ge1.$
Consider now an arbitrary diverging sequence $\{\lambda_j\}.$
Compactness  and a diagonal process yield the
existence of a subsequence (still denoted by $\{\lambda_j\}$) such that
$E_{\lambda_j}([\lambda_jt])$ converges in $L^1(\Omega)$
to a set $E(t)$ for any rational $t\ge 0$ as $j\to+\infty.$

If $t_1,t_2\in\Q\cap(0,+\infty),$ with $0<|t_1-t_2|<1,$ letting $\lambda_j\to+\infty$
in \eqref{xyzta} we get
\begin{equation}\label{gelder_condit}
|E(t_1)\Delta E(t_2)| \le \theta(n,\kappa)P(E_0) |t_2-t_1|^{1/2}.
\end{equation}
By completeness of $L^1(\Omega)$ we can uniquely extend
$\{E(t):\,t\in\Q\cap (0,+\infty)\}$ to a family
$\{E(t):\,t\in (0,+\infty)\}$ preserving the H\"older continuity \eqref{gelder_condit}
in $(0,+\infty).$
Now we show \eqref{L1_conv_GMM}.
If $t=0,$ $E_0=E_{\lambda_j}(0)\to E(0)$ in $L^1(\Omega)$ as $j\to+\infty.$
If $t>0,$ take any  $\epsilon\in (0,1)$ and $t_\epsilon\in\Q\cap(0,+\infty)$ such that
$|t-t_\epsilon|<\epsilon.$ By the choice of $\{\lambda_j\},$
\eqref{L1_conv_GMM} holds for $t_\epsilon$  and thus,
using \eqref{xyzta}-\eqref{gelder_condit}
we get
\begin{align*}
\limsup\limits_{j\to+\infty} |E_{\lambda_j}([\lambda_jt])\Delta E(t)| \le &
\limsup\limits_{j\to+\infty} |E_{\lambda_j}([\lambda_jt])\Delta E_{\lambda_j}([\lambda_jt_\epsilon])|\\
& + \limsup\limits_{j\to+\infty} |E_{\lambda_j}([\lambda_jt_\epsilon])\Delta E(t_\epsilon)| +
|E(t_\epsilon)\Delta E(t)|\\
\le & 2 \theta(n,\kappa)P(E_0) |t-t_\epsilon|^{1/2}< 2 \theta(n,\kappa)P(E_0)\sqrt\epsilon.
\end{align*}
Therefore, letting $\epsilon\to0^+$ we get \eqref{L1_conv_GMM}.

When $|\overline{E_0}\setminus E_0|=0,$ for any $t\in(0,1),$ choosing $\lambda$
sufficiently large, from \eqref{xyzta} we obtain
\begin{equation}\label{trtr}
\begin{aligned}
|E_\lambda([\lambda t])\Delta E(0)| \le &
|E_\lambda([\lambda t])\Delta E_\lambda(1)| +  |E_\lambda(1)\Delta E_0|\\
\le &\theta(n,\kappa)P(E_0)\Big|t-\frac1\lambda\Big|^{1/2} +\frac{1}{\lambda}
\frac{C_{n,\kappa}P(E_0)}{\kappa|t-\frac{1}{\lambda}|^{1/2}}+
|E_\lambda(1)\Delta E_0|.
\end{aligned}
\end{equation}
By Lemma \ref{lem:behav_big_lambda} a) the last term on the right hand side converges to $0$
as $\lambda\to+\infty.$ Hence letting $\lambda\to+\infty$ in \eqref{trtr}
we get the $(1/2)$-H\"older continuity of $t\mapsto E(t)$ in $[0,+\infty).$

Now let us prove \eqref{eq:wcms}. We need to show that 
for any $t\in [0,+\infty)$ 
\begin{equation*}%\label{eq:wcms}
\lim\limits_{j\to+\infty} \int_{\p^*E_{\lambda_j}([\lambda_jt])} 
\phi\cdot \nu_{E_{\lambda_j}([\lambda_jt])}\,d\cH^n=
\int_{\p^*E(t)} \phi\cdot\nu_{E(t)}\,d\cH^n \quad\forall \phi\in 
C_c(\R^{n+1},\R^{n+1}). 
\end{equation*}
If $\phi\in C_c^1(\R^{n+1},\R^{n+1}),$
by the generalized divergence formula \eqref{integ_by_parts}
and by \eqref{L1_conv_GMM} we have 
\begin{equation}\label{int_parts}
\begin{aligned}
\lim\limits_{j\to+\infty} \int_{\p^*E_{\lambda_j}([\lambda_jt])} 
\phi\cdot \nu_{E_{\lambda_j}([\lambda_jt])}\,d\cH^n&= 
\lim\limits_{j\to+\infty} \int_{E_{\lambda_j}([\lambda_jt])} 
\div \phi\,d\cH^n \\
&=  \int_{E(t)} \div \phi\,d\cH^n=
\int_{\p^*E(t)} \phi\cdot\nu_{E(t)}\,d\cH^n. 
\end{aligned} 
\end{equation}
In  general, we approximate $\phi\in C_c(\R^{n+1},\R^{n+1})$ uniformly 
with $\phi_k\in C_c^1(\R^{n+1},\R^{n+1}),$ $k\ge1$ and use the previous result.

Finally, if $\{E(t)\}_{t\ge0} \in GMM( E_0),$
then by construction and  Proposition
\ref{prop:boundedness_of_minimizers} b) one has $E_{\lambda_j}([\lambda_j t])\subseteq E^+,$
where $E^+:=E^+(E_0,\beta)$ is a bounded minimizer of $\cC(\cdot,\Omega)$ in $\cE(E^+);$
therefore $E(t)\subseteq E^+$ for all $t\ge0.$
\end{proof}

\begin{definition}[\bf Maximal and minimal GMM]\label{def:max_min_GMM}
Let $E_0,\beta$  satisfy \eqref{hyp:main}, and
$\{\lambda_j\}$ be a diverging sequence such that
$$
E^*(t):=\lim\limits_{j\to+\infty} E_{\lambda_j}([\lambda_jt])^*\qquad\forall t\ge0
$$
exist in $L^1(\Omega),$ where $E_{\lambda_j}([\lambda_jt])^*$
is the maximal   minimizer
of  $\cA(\cdot,E_{\lambda_j}([\lambda_jt]-1)^*,\lambda_j)$
with $(E_0)^*:=E_0$ (Definition \ref{def:exis_max_min}).
We call $E^*(t)$  the {\it maximal}   GMM
associated to the sequence $\{\lambda_j\}.$
Analogously,
$$
E_*(t):=\lim\limits_{j\to+\infty} E_{\lambda_j}([\lambda_jt])_*\qquad\forall t\ge0,
$$
obtained using the  minimal minimizers $E_{\lambda_j}([\lambda_jt])_*$
of $\widehat\cA(\cdot,E_{\lambda_j}([\lambda_jt]-1)_*,\lambda_j)$  with
$(E_0)_*:=E_0,$ is called the minimal GMM associated to the sequence $\{\lambda_j\}.$
\end{definition}

Observe that if   $t\mapsto E(t)$ is any GMM obtained by the sequence
$\{\lambda_j\},$ then according to the proof of Theorem \ref{teo:existence_of_GMM}
(possibly passing to nonrelabelled subsequences) there exist
the maximal  GMM $t\mapsto E^*(t)$ and the minimal
GMM $t\mapsto E_*(t)$ associated to  $\{\lambda_j\}.$
Now by Remark \ref{rem:exis_max_min} one has
$E_*(t)\subseteq E(t)\subseteq E^*(t)$ for all $t\ge0.$

\begin{theorem}[\bf Comparison principle for maximal and minimal GMM]\label{teo:comp_princ_for_GMM}
Let $E_0,F_0,\beta_1,\beta_2$ satisfy   \eqref{hyp:main} with $E_0\subseteq F_0$
and $\beta_1\le\beta_2.$ If $E_*(t)$ and $F_*(t)$ are minimal GMMs associated to a
sequence $\{\lambda_j\},$ then $E_*(t)\subseteq F_*(t)$ for all $t\ge0.$
Analogously, if $E^*(t)$ and $F^*(t)$ are maximal GMMs associated to $\{\lambda_j'\},$  then
$E^*(t)\subseteq F^*(t)$ for all $t\ge0.$
\end{theorem}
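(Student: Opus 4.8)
The plan is to reduce the statement to a comparison for the discrete incremental scheme and then argue by induction on the time step, feeding in Theorem \ref{teo:E_0_and_F_0} together with the existence of minimal/maximal minimizers from Remark \ref{rem:exis_max_min}. Fix the common diverging sequence $\{\lambda_j\}$ and, following Definition \ref{def:max_min_GMM}, let $E_{\lambda_j}(k)_*$ and $F_{\lambda_j}(k)_*$ denote the minimal minimizers entering the construction of $E_*$ and $F_*$, defined iteratively with $E_{\lambda_j}(0)_*=E_0$ and $F_{\lambda_j}(0)_*=F_0$; for $k\ge1$ the truncated branch of $\widehat\cA$ is immaterial, so that $E_{\lambda_j}(k)_*$ is precisely the minimal minimizer of $\cAo(\cdot,E_{\lambda_j}(k-1)_*,\lambda_j)$, and likewise $F_{\lambda_j}(k)_*$ for $\cAt(\cdot,F_{\lambda_j}(k-1)_*,\lambda_j)$. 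The goal is to prove $E_{\lambda_j}(k)_*\subseteq F_{\lambda_j}(k)_*$ for every $k\ge0$ and every $j$; granting this, one fixes $t\ge0$, sets $k=[\lambda_jt]$ and lets $j\to+\infty$: since by hypothesis $E_{\lambda_j}([\lambda_jt])_*\to E_*(t)$ and $F_{\lambda_j}([\lambda_jt])_*\to F_*(t)$ in $L^1(\Omega)$, and inclusion of sets of density one is preserved under $L^1(\Omega)$-convergence of characteristic functions (pass to an a.e.-convergent subsequence), one obtains $E_*(t)\subseteq F_*(t)$. The argument for the maximal GMMs will be the mirror image.

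For the induction the base case $k=0$ is exactly the hypothesis $E_0\subseteq F_0$. For the inductive step, assume $E_{\lambda_j}(k-1)_*\subseteq F_{\lambda_j}(k-1)_*$. First I would dispose of the degenerate cases: if $F_{\lambda_j}(k-1)_*=\emptyset$ then also $E_{\lambda_j}(k-1)_*=\emptyset$, so by the remark following Theorem \ref{teo: unconstrained minimizer} both successors coincide with the unique minimizer $\emptyset$; and if only $E_{\lambda_j}(k-1)_*=\emptyset$, then $E_{\lambda_j}(k)_*=\emptyset$ and the inclusion is trivial. Otherwise both predecessors are nonempty and, being either $E_0,F_0$ or minimizers of some $\cA(\cdot,\cdot,\lambda_j)$, bounded by Theorem \ref{teo: unconstrained minimizer}; hence $E_{\lambda_j}(k-1)_*$, $F_{\lambda_j}(k-1)_*$, $\beta_1$, $\beta_2$ meet \eqref{hyp:main1} with $E_{\lambda_j}(k-1)_*\subseteq F_{\lambda_j}(k-1)_*$ and $\beta_1\le\beta_2$, so Theorem \ref{teo:E_0_and_F_0} b) applies and furnishes a minimizer $\widetilde E$ of $\cAo(\cdot,E_{\lambda_j}(k-1)_*,\lambda_j)$ contained in every minimizer of $\cAt(\cdot,F_{\lambda_j}(k-1)_*,\lambda_j)$, in particular $\widetilde E\subseteq F_{\lambda_j}(k)_*$. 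Since $E_{\lambda_j}(k)_*$ is the minimal minimizer of $\cAo(\cdot,E_{\lambda_j}(k-1)_*,\lambda_j)$, Remark \ref{rem:exis_max_min} gives $E_{\lambda_j}(k)_*\subseteq\widetilde E\subseteq F_{\lambda_j}(k)_*$, which closes the step.

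The maximal GMMs are handled symmetrically: starting from $E_{\lambda_j}(k-1)^*\subseteq F_{\lambda_j}(k-1)^*$ (with the empty-set cases treated as above), Theorem \ref{teo:E_0_and_F_0} a) produces a minimizer $\widetilde F$ of $\cAt(\cdot,F_{\lambda_j}(k-1)^*,\lambda_j)$ containing every minimizer of $\cAo(\cdot,E_{\lambda_j}(k-1)^*,\lambda_j)$, and since $F_{\lambda_j}(k)^*$ is the maximal such minimizer one gets $E_{\lambda_j}(k)^*\subseteq\widetilde F\subseteq F_{\lambda_j}(k)^*$; passing to the limit as before yields $E^*(t)\subseteq F^*(t)$. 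Since essentially everything is borrowed from Theorem \ref{teo:E_0_and_F_0}, I do not expect a genuine obstacle here; the points that deserve care are the bookkeeping of the iterative construction of Definition \ref{def:max_min_GMM} (the minimal, resp. maximal, minimizer must be selected at \emph{every} step for monotonicity to propagate), the degenerate cases in which an evolving set vanishes, and the verification that \eqref{hyp:main1} holds at each step, which is guaranteed by the uniform boundedness of minimizers in Theorem \ref{teo: unconstrained minimizer}.
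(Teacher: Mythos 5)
Your proof is correct and follows essentially the same route as the paper: induction on the discrete step, using Theorem \ref{teo:E_0_and_F_0} together with the extremality of the minimal (resp.\ maximal) minimizers from Remark \ref{rem:exis_max_min} to propagate the inclusion, and then passing to the $L^1(\Omega)$ limit. The paper compresses this into a one-sentence appeal to ``the definition of $E_\lambda(k)^*$\ldots and Theorem \ref{teo:E_0_and_F_0}''; you merely unwind the induction explicitly and treat the empty-set degeneracies, which is careful bookkeeping rather than a different argument.
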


\begin{proof}
Since $E_0\subseteq F_0,$ and $\beta_1\le \beta_2,$ by definition of
${E_\lambda(k)}^*$ and ${F_\lambda(k)}^*$ (resp. ${E_\lambda(k)}_*$ and ${F_\lambda(k)}_*$)
and by Theorem \ref{teo:E_0_and_F_0},
we have ${E_\lambda}_*(k) \subseteq {F_\lambda}_*(k)$ (resp. $E_{\lambda}^*(k)
\subseteq F_{\lambda}^*(k)$) which implies  $E_*(t)\subseteq F_*(t)$
(resp. $E^*(t)\subseteq F^*(t)$) for all $t\ge0.$
\end{proof}

From the proof of Theorem \ref{teo:existence_of_GMM} and
Propositions \ref{prop: El_in_E0} -\ref{prop: El_monotone} 
we get the following result (compare with \cite{BChN:05}),
that could be applied, for instance, to $E_0$ as in Example 
\ref{exa:decreas}.

\begin{theorem}\label{teo:homot_shrink}
Let $E_0$ be a constrained minimizer of $\cC(\cdot,\Omega)$ in $\cE(E_0)$
such that $|\overline{E_0}\setminus E_0|=0.$ Then every maximal (minimal) 
GMM $t\mapsto E(t)$
starting from $E_0$ satisfies $E(t)\subseteq E(t')$ provided $t>t'\ge0.$ 
\end{theorem}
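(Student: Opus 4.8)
The plan is to reduce the statement to the discrete monotonicity of the time‑stepping scheme and then pass to the limit. Throughout we use the standing hypothesis \eqref{hyp:main} (implicit in Definition \ref{def:max_min_GMM}). I treat the maximal GMM; the minimal case is identical once $E_{\lambda_j}(k)^*$ is replaced everywhere by $E_{\lambda_j}(k)_*$, since Propositions \ref{prop: El_in_E0} and \ref{prop: El_monotone}, Theorem \ref{teo: regularity} and Theorem \ref{teo:lower_density_est} apply to \emph{any} minimizer of $\cA(\cdot,\cdot,\lambda)$. So fix a diverging sequence $\{\lambda_j\}$ as in Definition \ref{def:max_min_GMM}, so that $E^*(t)=\lim_{j\to+\infty}E_{\lambda_j}([\lambda_jt])^*$ in $L^1(\Omega)$, where $E_{\lambda_j}(0)^*=E_0$ and $E_{\lambda_j}(k)^*$ is the maximal minimizer of $\cA(\cdot,E_{\lambda_j}(k-1)^*,\lambda_j)$.

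The heart of the argument is an induction on $k$, for each fixed $j$, establishing simultaneously: (i) $E_{\lambda_j}(k)^*$ is a constrained minimizer of $\cC(\cdot,\Omega)$ in $\cE(E_{\lambda_j}(k)^*)$; (ii) $|\overline{E_{\lambda_j}(k)^*}\setminus E_{\lambda_j}(k)^*|=0$; and (iii) $E_{\lambda_j}(k)^*\subseteq E_{\lambda_j}(k-1)^*$ for $k\ge 1$. For $k=0$ properties (i)--(ii) are exactly the hypotheses on $E_0$, and (iii) is vacuous. For the inductive step I would argue as follows. Since $E_{\lambda_j}(k)^*$ minimizes $\cA(\cdot,E_{\lambda_j}(k-1)^*,\lambda_j)$ and, by the inductive hypothesis, $E_{\lambda_j}(k-1)^*$ is a constrained minimizer of $\cC(\cdot,\Omega)$ in $\cE(E_{\lambda_j}(k-1)^*)$, Proposition \ref{prop: El_in_E0} (with $\beta_1=\beta_2=\beta$) gives $E_{\lambda_j}(k)^*\subseteq\overline{E_{\lambda_j}(k-1)^*}$; combined with (ii) at step $k-1$ this upgrades to (iii), $E_{\lambda_j}(k)^*\subseteq E_{\lambda_j}(k-1)^*$. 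Next, Proposition \ref{prop: El_monotone}, applied with $E_0$ replaced by $E_{\lambda_j}(k-1)^*$ — whose hypotheses are precisely (i)--(ii) at step $k-1$ — yields that $E_{\lambda_j}(k)^*$ is again a constrained minimizer of $\cC(\cdot,\Omega)$ in $\cE(E_{\lambda_j}(k)^*)$, i.e. (i) at step $k$. Finally, being a minimizer of $\cA(\cdot,\cdot,\lambda_j)$, the set $E_{\lambda_j}(k)^*$ is open (Theorem \ref{teo: regularity}) and, by \eqref{eq:ess_boundary} in Theorem \ref{teo:lower_density_est}, by De Giorgi's structure theorem and by Lemma \ref{lem:betacond}, $\cH^n(\p E_{\lambda_j}(k)^*)=\cH^n(\p^*E_{\lambda_j}(k)^*)=P(E_{\lambda_j}(k)^*)<+\infty$, hence $|\p E_{\lambda_j}(k)^*|=0$ and, by openness, (ii) at step $k$. (If some $E_{\lambda_j}(k)^*$ is empty, then $\cA(\cdot,\emptyset,\lambda_j)$ is finite only on $\emptyset$, so all later sets are empty and the inclusions (iii) are trivial; this degenerate branch causes no trouble.) The induction therefore gives that $k\mapsto E_{\lambda_j}(k)^*$ is nonincreasing with respect to inclusion, for every $j$.

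To conclude I would pass to the limit. Given $t>t'\ge0$, since $\lambda_j(t-t')\to+\infty$ one has $[\lambda_jt]\ge[\lambda_jt']$ for all $j$ large, so $E_{\lambda_j}([\lambda_jt])^*\subseteq E_{\lambda_j}([\lambda_jt'])^*$, i.e. $\chi_{E_{\lambda_j}([\lambda_jt])^*}\le\chi_{E_{\lambda_j}([\lambda_jt'])^*}$ a.e. in $\Omega$; letting $j\to+\infty$ (using $L^1$-, hence up to a subsequence a.e.-convergence) gives $\chi_{E^*(t)}\le\chi_{E^*(t')}$ a.e., that is $E^*(t)\subseteq E^*(t')$. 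The minimal GMM case is the same with $*$ replaced by ${}_*$ throughout. I expect the only genuinely delicate point to be the inductive propagation of the two structural properties (i) and (ii) along the scheme — this is exactly what keeps Propositions \ref{prop: El_in_E0}--\ref{prop: El_monotone} applicable at every iteration; once it is in place, the discrete monotonicity and the passage to the $L^1$-limit are routine.
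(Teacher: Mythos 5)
Your proof is correct and follows the same route as the paper's: an induction on $k$ using Propositions \ref{prop: El_in_E0} and \ref{prop: El_monotone} to obtain $E_\lambda(k)^*\subseteq E_\lambda(k-1)^*$, followed by passing to the $L^1$-limit via \eqref{L1_conv_GMM}. The paper states this very tersely ("applying Propositions \ref{prop: El_in_E0} and \ref{prop: El_monotone} inductively"); you have correctly supplied the details of why the two structural hypotheses — constrained $\cC$-minimality in $\cE(\cdot)$ and $|\overline{E}\setminus E|=0$ — propagate along the iteration, which is exactly what keeps those propositions applicable at each step.
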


\begin{proof}
Applying Propositions \ref{prop: El_in_E0} and \ref{prop: El_monotone} 
inductively to maximal minimizers $E_\lambda(k)^*$ of
$\widehat\cA(\cdot, E_\lambda(k-1)^*,\lambda)$
we  get $E_\lambda(k)^*\subseteq E_\lambda(k-1)^*$
for all $k\ge1$ and 
$\lambda\ge1.$ Hence, if $t>t'\ge0$ then  
$E_\lambda([\lambda t])^*\subseteq E_\lambda([\lambda t'])^*.$ 
Now the assertion of the theorem follows from \eqref{L1_conv_GMM}.
The arguments for minimal minimizers are the same.
\end{proof}

%%%%%%%%%%%%%%%%%%%%%%%%%%%%%%%%%%%%%%%%%%%%%%%%%%%%%%%%%%%%%%%%%%%%%%%%%%%%%%
\section{GMM as a distributional solution}\label{sec:weak_curvature}
%%%%%%%%%%%%%%%%%%%%%%%%%%%%%%%%%%%%%%%%%%%%%%%%%%%%%%%%%%%%%%%%%%%%%%%%%%%%%

The aim of this section is to prove that under suitable assumptions
 GMM is in fact a distributional solution
of   \eqref{mcf}-\eqref{contact_angle}.
Let us start with the following

\begin{definition}[\bf Admissible variation]
A vector field $X=(X',X_{n+1}) \in C_c^1(\overline{\Omega},\R^{n+1})$  is called
{\it admissible } if
$X\cdot e_{n+1}=0$  on $\p\Omega.$
\end{definition}

Observe that if
$X\in C_c^1(\overline{\Omega},\R^{n+1})$ is admissible,
then for any $s\in (-\epsilon,\epsilon)$ with $\epsilon>0$ sufficiently small,
the vector field  $ f_s =  \Id+ s X$
is a $C^1$-diffeomorphism that
satisfies $f_s(\Omega) = \Omega,$ $f_s(\overline{\Omega}) =
\overline{\Omega}.$

\begin{proposition}[\bf First variation of $\cA$]\label{prop:first_var}
Suppose that $E_0,$ $\beta$ satisfy assumptions \eqref{hyp:main}  and let
$E\in BV(\Omega,\{0,1\})$ be bounded   with $\Trace(E)\in BV(\R^n,\{0,1\}).$
Then
\begin{equation}\label{first_variatsion}
\begin{aligned}
\dfrac{d}{ds}\Big|_{s=0}\,\cA(f_s(E),E_0,\lambda) =&
\int_{\Omega\cap \p^*E} (\div X - \nu_E \cdot (\nabla X)\nu_E)\,d\cH^n\\
&+
\lambda \int_{\Omega\cap \p^*E} \sdist_{E_0}\,X\cdot \nu_E\,d\cH^n
- \int_{\p^* \Trace(E)} \beta\,X'\cdot \nu_{\Trace (E)}'d\cH^{n-1},
\end{aligned}
\end{equation}
where $\p^*\Trace(E)$ is the essential boundary of
$\Trace(E)$ on $\p\Omega$ and $\nu_{\Trace(E)}'$ is the outer
unit normal to $\Trace(E)\subset \R^n.$
\end{proposition}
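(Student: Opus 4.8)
The plan is to differentiate the three terms of $\cA(f_s(E),E_0,\lambda)=\cC(f_s(E),\Omega)+\lambda\int_{f_s(E)\Delta E_0}\dist_{E_0}\,dx$ separately, using the fact that $f_s=\Id+sX$ is a $C^1$-diffeomorphism of $\overline\Omega$ onto itself for $|s|$ small, so all the set operations are legitimate and $f_s(E)$ remains bounded with boundary trace of finite perimeter. First I would treat the perimeter term $P(f_s(E),\Omega)$: this is the classical first variation of area formula, and since $E$ is a set of finite perimeter with $X\in C_c^1(\overline\Omega,\R^{n+1})$ admissible (so $X$ is tangent to $\p\Omega$ and the diffeomorphism preserves $\Omega$), one gets $\frac{d}{ds}\big|_{s=0}P(f_s(E),\Omega)=\int_{\Omega\cap\p^*E}(\div X-\nu_E\cdot(\nabla X)\nu_E)\,d\cH^n$; this is the tangential-divergence/first-variation computation via the area formula $P(f_s(E),\Omega)=\int_{\Omega\cap\p^*E}J^{\p^*E}f_s\,d\cH^n$ and expanding the Jacobian factor to first order in $s$.

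Next I would handle the volume-type term $\lambda\int_{f_s(E)\Delta E_0}\dist_{E_0}\,dx$. Writing this (via the identity recorded after the definition of $\sdist_{E_0}$, valid since $E$ and $f_s(E)$ are bounded so the integrals are finite) as $\lambda\int_{f_s(E)}\sdist_{E_0}\,dx-\lambda\int_{E_0}\sdist_{E_0}\,dx$, only the first summand depends on $s$. By the change of variables $x=f_s(y)$ this equals $\lambda\int_E\sdist_{E_0}(f_s(y))\,|\det\nabla f_s(y)|\,dy$; differentiating at $s=0$, using $\det\nabla f_s=1+s\,\div X+o(s)$ and $\frac{d}{ds}\big|_{s=0}\sdist_{E_0}(f_s(y))=\nabla\sdist_{E_0}(y)\cdot X(y)$ for a.e.\ $y$ (the signed distance is Lipschitz, hence differentiable a.e., and the dominated convergence theorem applies since everything is supported in a fixed bounded set), I get $\lambda\int_E\big(\div(\sdist_{E_0}X)\big)\,dy=\lambda\int_E\div(\sdist_{E_0}X)\,dy$. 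Applying the Gauss--Green theorem for sets of finite perimeter, and noting the admissibility $X\cdot e_{n+1}=0$ on $\p\Omega$ kills the boundary contribution on $\p\Omega$ (so there is no extra $\p\Omega$-term from this piece), this becomes $\lambda\int_{\Omega\cap\p^*E}\sdist_{E_0}\,X\cdot\nu_E\,d\cH^n$, the second term in \eqref{first_variatsion}.

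For the capillary term $-\int_{\p\Omega}\beta\,\chi_{f_s(E)}\,d\cH^n$, I would pass to the trace: since $\chi_{f_s(E)}$ restricted to $\p\Omega$ is the characteristic function of $f_s|_{\p\Omega}(\Trace(E))$ and $f_s|_{\p\Omega}=\Id+sX'$ is a diffeomorphism of $\R^n=\p\Omega$ (using $X_{n+1}=0$ on $\p\Omega$), this integral equals $-\int_{f_s'(\Trace E)}\beta\,d\cH^n=-\int_{\Trace E}\beta(f_s'(\hat x))\,|\det\nabla f_s'(\hat x)|\,d\cH^n(\hat x)$, where $f_s'=\Id+sX'$ on $\R^n$. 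Here I need $\beta$ to be regular enough to differentiate; I would invoke the regularity assumption on $\beta$ under which the proposition is stated (as in Theorem \ref{teo: regularity}, $\beta\in\Lip(\p\Omega)$, so $\beta$ is differentiable a.e.\ and the chain rule holds a.e.), and then $\frac{d}{ds}\big|_{s=0}$ of the integrand is $\nabla'\beta\cdot X'+\beta\,\div' X'=\div'(\beta X')$ in the sense of distributions on $\R^n$; applying Gauss--Green on the set of finite perimeter $\Trace(E)\subset\R^n$ gives $-\int_{\p^*\Trace(E)}\beta\,X'\cdot\nu_{\Trace(E)}'\,d\cH^{n-1}$, which is exactly the third term. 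Summing the three contributions yields \eqref{first_variatsion}.

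The main obstacle is the capillary (trace) term: one must be careful that $\Trace(f_s(E))=f_s'(\Trace(E))$ as sets of finite perimeter in $\R^n$ (this uses the hypothesis $\Trace(E)\in BV(\R^n,\{0,1\})$ together with $X\cdot e_{n+1}=0$ on $\p\Omega$, which makes $f_s$ map $\p\Omega$ to $\p\Omega$ diffeomorphically and commute with taking traces), and that $\beta$ has enough regularity for the chain rule and the Gauss--Green formula on $\p^*\Trace(E)$ to apply — this is where the Lipschitz regularity of $\beta$ is used. The perimeter and volume terms are standard first-variation computations and present no real difficulty beyond bookkeeping with sets of finite perimeter.
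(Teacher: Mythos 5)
Your proposal is correct and follows essentially the same route as the paper: the paper simply cites \cite[Theorems 17.5 and 17.8]{Mag12} for the three terms, while you spell out the underlying computations (area formula and Jacobian expansion for the perimeter term, change of variables plus Gauss--Green for the bulk and trace terms, with admissibility $X\cdot e_{n+1}=0$ killing the $\p\Omega$-contribution in the bulk term). Your observation that $\beta$ needs to be at least Lipschitz for the trace computation is an accurate reading of what is implicitly required, both in your explicit argument and in the cited Theorem 17.8.
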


\begin{proof}
From \cite[Theorem 17.5]{Mag12}
$$
\dfrac{d}{ds} \Big|_{s=0} P(f_s(E),\Omega) =
\int_{\Omega\cap \p^* E} (\div X - \nu_E \cdot (\nabla X)\nu_E)\,d\cH^n.
$$
Moreover, \cite[Theorem 17.8]{Mag12} and the admissibility of $X$  imply that
$$
\dfrac{d}{ds} \Big|_{s=0} \int_{f_s(E)}\sdist_{E_0}\,dx =
\int_{\p^* E} \sdist_{E_0}\, X\cdot \nu_E\,d\cH^n=
\int_{\Omega\cap \p^* E} \sdist_{E_0}\, X\cdot \nu_E\,d\cH^n.
$$
Finally, since $\Trace(E)$ is a set of finite perimeter in $\p\Omega\equiv \R^n,$
again using  \cite[Theorem 17.8]{Mag12} we get
$$
\frac{d}{ds}\Big|_{s=0}\int_{\p\Omega} \beta \,\chi_{f_s(E)}d\cH^n =
\int_{\p^*\Trace(E)} \beta\, X'\cdot \nu_{\Trace(E)}'\,d\cH^{n-1}.
$$
\end{proof}

\begin{remark}\label{rem:w_curvature}
Under assumptions \eqref{hyp:main} and $\beta\in \Lip(\p\Omega),$
if $E_\lambda$ is a minimizer  of $\cA(\cdot,E_0,\lambda),$
and if  $\Omega\cap\p E_\lambda$ is a $C^2$-manifold
with $\cH^{n-1}$- rectifiable boundary, then the mean curvature
$H_{E_\lambda}$ of $\Omega \cap \p E_\lambda$
is equal to $-\lambda \sdist_{E_0}.$
Indeed, using the tangential divergence formula for manifolds with boundary
we have
$$
\int_{\Omega\cap \p E_\lambda} (\div X - \nu_{E_\lambda} \cdot
(\nabla X)\nu_{E_\lambda})\,d\cH^n =
\int_{\Omega\cap \p E_\lambda} H_{E_\lambda}\,X\cdot \nu_{E_\lambda}\,d\cH^n +
\int_{\p^*\Trace(E_\lambda)} X'\cdot {\conormal^\lambda}'\,d\cH^{n-1},
$$
where $\conormal^{\lambda}=({\conormal^{\lambda}}',
\conormal^\lambda_{n+1})$
is the outer unit conormal to $\overline{\Omega\cap \p E_\lambda}$
at $\overline{\Omega\cap \p {E_\lambda}} \cap\p\Omega.$
By minimality of $E_\lambda,$ we have
$\frac{d}{ds}\cA(f_s(E_\lambda),E_0,\lambda) =0,$
i.e.
$$
\int_{\Omega\cap \p E_\lambda} (H_{E_\lambda} +
\lambda \sdist_{E_0})\,X\cdot \nu_{E_\lambda}\,d\cH^n
+
\int_{\p^*\Trace(E_\lambda)} X'\cdot ({\conormal^\lambda}' - \beta
\nu_{\Trace({E_\lambda})}') \,d\cH^{n-1}=0.
$$
This implies $H_{E_\lambda}=- \lambda \sdist_{E_0}$ and
${\conormal^\lambda}' = \beta
\nu_{\Trace(E_\lambda)}'.$ Notice that from the latter in particular,
we get
$$
\beta = \conormal^\lambda \cdot (\nu_{\Trace(E_\lambda)}',0)=
\nu_{E_\lambda}\cdot e_{n+1},
$$
accordingly for instance with Theorem  \ref{teo: regularity}.
\end{remark}

Remark \ref{rem:w_curvature} motivates the following definition
\cite{BGM:10,Mag12}.

\begin{definition}[\bf Distributional mean curvature]\label{def:dist_curv}
Let $E \in BV(\Omega,\{0,1\}).$
The function $H_E\in L^1(\Omega\cap \p^*E;
\cH^n\res(\Omega\cap \p^*E))$  is called
{\it distributional mean curvature of $\Omega\cap\p^*E$} if for every
$X\in C_c^1(\Omega,\R^{n+1})$
the generalized tangential divergence formula holds:
\begin{equation}\label{dist_curvat}
\int_{ \Omega\cap\p^*E} \left( \div X - \nu_E\cdot (\nabla X)\nu_E\right)\,d\cH^n
=\int_{\Omega\cap \p^*E } H_E\,X\cdot \nu_E\,d\cH^n.
\end{equation}
\end{definition}

\bigskip

Given  $x\in \R^{n+1}$ and $t>0$  set
$$
v_\lambda(t,x): =
\begin{cases}
-\lambda \sdist_{E_\lambda([\lambda t]-1)}(x)&
\text{if $t\ge \frac1\lambda,$}\\
0&\text{if $t\in[0,\frac1\lambda).$}
\end{cases}
$$

\begin{remark}\label{rem:traces}
By Theorem \ref{teo: regularity}, $\Trace(E_\lambda([\lambda t]))\in BV(\R^n,\{0,1\}).$
\end{remark}

The next result relates GMM with distributional
solutions of \eqref{mcf}-\eqref{contact_angle}. 

\begin{theorem}[\bf GMM is a distributional solution]\label{teo:distr_sol}
Let $E_0,\beta$ satisfy \eqref{hyp:main}, $|\overline{E_0}\setminus E_0|=0,$
$\{E(t)\}_{t\ge0}$ be a GMM starting from $E_0$ obtained along the
diverging  sequence
$\{\lambda_j\}$. Suppose that
\begin{equation}\label{uniform_per_est}
\cH^n\res(\Omega\cap \p^*E_{\lambda_j}([\lambda_jt]))
\overset{w^*}{\rightharpoonup} \cH^n\res(\Omega\cap\p^* E(t))
\quad \text{ as $j\to+\infty$ for a.e. $t\ge0.$}
\end{equation}
Then there exist
a function  $v:[0,+\infty)\times\Omega\to \R$ with
\begin{equation}\label{l2_bound_velic}
\int_0^{+\infty} \int_{\Omega\cap \p^*E(t)} (v)^2\,d\cH^n\,dt \le
\alpha(n,\kappa)\,P(E_0),
\end{equation}
and a  (not relabelled)
subsequence such that
\begin{equation}\label{weak_conv_velocities}
\lim\limits_{j\to+\infty}  \int_0^{+\infty} \int_{\Omega\cap \p^*E_{\lambda_j}([\lambda_jt])}
\phi v_{\lambda_j}\,d\cH^ndt
=\int_0^{+\infty} \int_{\Omega\cap \p^*E(t)} \phi v \,d\cH^ndt,
\end{equation}
\begin{equation}\label{weak_conv_some}
\lim\limits_{j\to+\infty} \int_0^{+\infty}\int_{\Omega\cap \p^*E_{\lambda_j}([\lambda_jt])}
v_{\lambda_j}\,\nu_{E_{\lambda_j}([\lambda_jt])}\cdot \Psi\, d\cH^ndt
=\int_0^{+\infty}\int_{\Omega\cap \p^*E(t)} v\, \nu_{E(t)}\cdot \Psi  \,d\cH^n  dt
\end{equation}
for any $\phi\in C_c(\Omega),$ $\Psi\in C_c([0,+\infty)\times\Omega,\R^{n+1}),$
where $\alpha(n,\kappa):=
\frac{75[(n+1)\omega_{n+1}+\omega_n]\mathfrak{b}(n)}{(\kappa/2)^{n+1}\omega_{n+1}}.$
Moreover,
$\{E(t)\}_{t\ge0}$ solves \eqref{mcf}-\eqref{contact_angle} with initial
datum $E_0$ in the following sense:
\begin{itemize}
\item[(i)]  for a.e. $t\ge0$ the set
$\Omega\cap \p^*E(t)$ has  distributional
mean curvature $H_{E(t)}=v$  and if $1\le n\le 6,$ for every $\phi \in
C_c^1([0,+\infty)\times \Omega):$
\begin{equation}\label{normal_velocity}
\int_0^{+\infty} \int_{E(t)} \p_t \phi\,dxdt +\int_{E(0)} \phi(0,x)\,dx
=\int_0^{+\infty} \int_{ \Omega\cap\p^*E(t) } \phi H_{E(t)}\,d\cH^ndt;
\end{equation}
\item[(ii)] if $\beta\in \Lip(\p\Omega)$ and there exists $h\in L_\loc^1([0,+\infty))$
such that 
\begin{equation}\label{un.bund.pe}
P(\Trace(E_{\lambda_j}([\lambda_jt])))\le h(t) \qquad 
\text{for all $j\ge1$ and a.e. $t\ge0,$} 
\end{equation}
then $\Trace(E(t))\in BV(\R^n,\{0,1\})$ for a.e. $t>0$ and
\begin{equation}\label{contactanglejon}
\begin{aligned}
\int_{\Omega\cap \p^*E(t)} &
\Big(\div X - \nu_{E(t)} \cdot (\nabla X)
\nu_{E(t)}\Big)\,d\cH^n\\
& = \int_{\Omega\cap \p^*E(t)}
H_{E(t)}\,X\cdot \nu_{E(t)}\,d\cH^n
+ \int_{\p^* \Trace(E(t))}
\beta\,X'\cdot \nu_{\Trace (E(t))}'\,d\cH^{n-1}
\end{aligned}
\end{equation}
for every admissible $X \in C_c^1(\overline{\Omega},\R^{n+1}).$
\end{itemize}
\end{theorem}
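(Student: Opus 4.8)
The plan is to pass to the limit in the three basic identities that hold at the discrete level: the $L^2$-type energy bound coming from the dissipation, the first-variation identity of Proposition \ref{prop:first_var}, and the incremental volume identity that encodes $V=H$. First I would set $v_{\lambda}(t,\cdot)=-\lambda\,\sdist_{E_{\lambda}([\lambda t]-1)}$ as above and collect the a priori bound. Starting from the monotonicity inequality \eqref{monoton_capil}, which telescopes to $\sum_{k}\lambda\int_{E_\lambda(k)\Delta E_\lambda(k-1)}\dist_{k-1}\,dx\le \cC(E_0,\Omega)\le P(E_0)$, and combining it with the uniform density estimates of Theorem \ref{teo:lower_density_est} (which let one compare $\int_{E_\lambda(k)\Delta E_\lambda(k-1)}\dist_{k-1}\,dx$ with $\lambda^{-1}\int_{\p^*E_\lambda(k)}|v_\lambda|^2\,d\cH^n$ up to the constants in \eqref{eq:perimeter_est}), one obtains $\int_0^{+\infty}\int_{\Omega\cap\p^*E_{\lambda_j}([\lambda_jt])}v_{\lambda_j}^2\,d\cH^n\,dt\le\alpha(n,\kappa)P(E_0)$. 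This is exactly the Luckhaus--Sturzenhecker estimate; it produces a measure $\mu=v\,\cH^n\res(\Omega\cap\p^*E(t))\otimes dt$ as a weak-$*$ limit on the space-time boundary, and by \eqref{eq:wcms} together with the hypothesis \eqref{uniform_per_est} (which upgrades the normal-vector convergence to convergence of the surface measures with no loss of mass, for a.e.\ $t$), the limit is carried by $\Omega\cap\p^*E(t)$ and admits the density $v$ with $\int v^2\le\alpha(n,\kappa)P(E_0)$, giving \eqref{l2_bound_velic}, \eqref{weak_conv_velocities} and \eqref{weak_conv_some}.

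Next I would prove (i). For the curvature identity, apply the first variation (Proposition \ref{prop:first_var}, or rather Remark \ref{rem:w_curvature}) to each minimizer $E_{\lambda_j}([\lambda_j t])$ and a fixed $X\in C_c^1(\Omega,\R^{n+1})$: minimality gives $\int_{\Omega\cap\p^*E_{\lambda_j}}(\div X-\nu\cdot(\nabla X)\nu)\,d\cH^n=\int_{\Omega\cap\p^*E_{\lambda_j}}v_{\lambda_j}\,X\cdot\nu\,d\cH^n$. Integrating in $t$, the left side converges by \eqref{uniform_per_est} and the right side by \eqref{weak_conv_some}, so for a.e.\ $t$ the set $\Omega\cap\p^*E(t)$ has distributional mean curvature $H_{E(t)}=v$ in the sense of Definition \ref{def:dist_curv}; here the constraint $1\le n\le 6$ enters precisely so that the singular set of each minimizer is $\cH^n$-negligible (Theorem \ref{teo: regularity}), hence no spurious contribution appears and $v\in L^1$ against test functions. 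For \eqref{normal_velocity}, I would use the standard discrete-in-time comparison: writing $t_j^k=k/\lambda_j$, one compares $E_{\lambda_j}(k)$ with $E_{\lambda_j}(k)\triangle\{\phi(\cdot)\gtrless c\}$-type competitors, or more simply uses that $\p_t\chi_{E_\lambda}$ in a weak sense is given by the normal velocity $v_\lambda$ on $\p^*E_\lambda$; summation by parts of $\int_{E_{\lambda_j}(k)}\phi(t_j^k,x)\,dx-\int_{E_{\lambda_j}(k-1)}\phi(t_j^{k-1},x)\,dx$ produces $\int\p_t\phi$ plus the flux term $\int_{E_\lambda(k)\triangle E_\lambda(k-1)}(\ldots)$, and the flux term converges to $\int H_{E(t)}\phi\,d\cH^n\,dt$ by the dissipation bound and \eqref{weak_conv_velocities}, while the boundary term at $k=0$ gives $\int_{E(0)}\phi(0,\cdot)$.

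Finally, for (ii), I would run the same argument but with an admissible $X\in C_c^1(\overline{\Omega},\R^{n+1})$, now invoking the full first variation \eqref{first_variatsion}, which includes the contact term $\int_{\p^*\Trace(E_{\lambda_j}([\lambda_j t]))}\beta\,X'\cdot\nu'_{\Trace}\,d\cH^{n-1}$. The extra hypothesis \eqref{un.bund.pe} gives a uniform $L^1_{\loc}$-in-time bound on $P(\Trace(E_{\lambda_j}([\lambda_j t])))$, so the traces $\Trace(E_{\lambda_j}([\lambda_j t]))$ are precompact in $L^1(\R^n)$ for a.e.\ $t$; their limit is $\Trace(E(t))$ because the trace operator $BV(\Omega)\to L^1(\p\Omega)$ is continuous along $L^1(\Omega)$-convergence with equibounded perimeters (this uses Lemma \ref{lem:betacond} and the density estimates near $\p\Omega$ of Theorem \ref{teo: regularity}(a)). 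Passing $\beta\in\Lip$ through the limit, the boundary integral converges to $\int_{\p^*\Trace(E(t))}\beta\,X'\cdot\nu'_{\Trace(E(t))}\,d\cH^{n-1}$, and combining with the interior limits from part (i) yields \eqref{contactanglejon} for a.e.\ $t$. The main obstacle is the no-loss-of-mass issue: convergence of the reduced boundaries in measure (hypothesis \eqref{uniform_per_est}) is exactly what is needed to identify the limiting velocity as the distributional curvature of the limit and to rule out hidden interface; everything else is bookkeeping with the telescoping dissipation estimate and the uniform density bounds already established.
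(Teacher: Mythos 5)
Your outline captures the overall structure correctly — collect the dissipation bound, extract a limiting velocity $v$ via measure-theoretic compactness, pass to the limit in the first-variation identity, and handle the contact term in (ii) via trace compactness — and parts 1, 2, and (ii) are essentially the paper's argument (the paper produces $v$ via \cite[Theorem 4.4.2]{Hutch:86} rather than a bare-hands weak-$*$ argument, but that is a cosmetic difference). However there is a genuine gap in your treatment of \eqref{normal_velocity}, and you misidentify where the dimension restriction comes from.

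After summation by parts, the term you call the ``flux term'' is
\[
\lambda_j\int_{\Omega}\bigl(\chi_{E_{\lambda_j}([\lambda_jt])}-\chi_{E_{\lambda_j}([\lambda_jt]-1)}\bigr)\phi\,dx,
\]
a \emph{volume} quantity, whereas $\int_{\Omega\cap\p^*E(t)}H_{E(t)}\phi\,d\cH^n$ is a \emph{surface} quantity, and neither the dissipation bound nor \eqref{weak_conv_velocities} alone bridges that gap. You assert that ``$\p_t\chi_{E_\lambda}$ is given by the normal velocity on $\p^*E_\lambda$,'' but at the discrete level that is precisely what must be proved, and it is not automatic: one needs the error estimate of Proposition~\ref{prop:error_estimate}, showing that
\[
\lambda_j\Bigl(\int_{\Omega}(\chi_{E_{\lambda_j}([\lambda_jt])}-\chi_{E_{\lambda_j}([\lambda_jt]-1)})\phi\,dx
-\int_{\Omega\cap\p E_{\lambda_j}([\lambda_jt])}\sdist_{E_{\lambda_j}([\lambda_jt]-1)}\phi\,d\cH^n\Bigr)\longrightarrow 0.
\]
Proving this requires the covering argument separating low- and high-curvature regions, and in the low-curvature regions a blow-up to local perimeter minimizers, which are half-spaces only when $n+1\le 7$. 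That is where $1\le n\le 6$ enters. Your claim that the restriction is there ``so that the singular set is $\cH^n$-negligible'' is wrong: $\cH^s((\p E_\lambda\setminus\p^*E_\lambda)\cap\Omega)=0$ for $s>n-7$ already gives $\cH^n$-negligibility in every dimension, and the distributional-curvature identity in (i) indeed holds for all $n$. It is only the normal-velocity identity \eqref{normal_velocity} that needs $n\le 6$, and only through the error estimate. Without acknowledging and proving this error estimate, the key step of (i) is missing.

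A smaller point: your sketch of the $L^2$ bound invokes Theorem~\ref{teo:lower_density_est}, but those density estimates are valid only at scales $O(\lambda^{-1/2})$ centered on $\p E_\lambda$; the actual proof (Lemma~\ref{lem:l2bound_velocity}) instead uses Remark~\ref{rem:uncons_dens_est} to get density estimates at scales comparable to $|v_\lambda|/\lambda$ on dyadic level sets of $v_\lambda$, combined with a Besicovitch covering. The conclusion is the same but the mechanism you cite would not quite work as stated.
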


%Despite having \eqref{eq:wcms},
The need for assumption \eqref{uniform_per_est} is not surprising; see
\cite{LS:95,MSS:2016} for conditional results obtained in other contexts
in a similar spirit.
We postpone the proof after several auxiliary results.

\begin{proposition}\label{prop:dist_m_c}
Assume that $E_0$ and $\beta$ satisfy \eqref{hyp:main}. Then
for any $\lambda\ge1$ and a.e. $t\ge 1/\lambda$ the function $v_\lambda(t,\cdot)$
is the distributional  mean curvature of $E_\lambda([\lambda t]).$
\end{proposition}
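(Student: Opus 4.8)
The plan is to reduce this statement to the first variation formula of Proposition \ref{prop:first_var} together with the regularity result of Theorem \ref{teo: regularity}. Fix $\lambda\ge1$ and $t\ge 1/\lambda$, and write $E_\lambda:=E_\lambda([\lambda t])$ and $G:=E_\lambda([\lambda t]-1)$ for the previous set, so that $E_\lambda$ is a minimizer of $\cA(\cdot,G,\lambda)$ and $v_\lambda(t,\cdot)=-\lambda\,\sdist_G$ on $\Omega$. By Theorem \ref{teo: unconstrained minimizer}, $E_\lambda$ is bounded, and by Theorem \ref{teo: regularity} it is open, $\Omega\cap\p^*E_\lambda$ is a $C^{2,\alpha}$ manifold with $\cH^s\big((\p E_\lambda\setminus\p^*E_\lambda)\cap\Omega\big)=0$ for all $s>n-7$; in particular $\cH^n\big((\p E_\lambda\setminus\p^*E_\lambda)\cap\Omega\big)=0$ by \eqref{eq:ess_boundary}, so the measure $\cH^n\res(\Omega\cap\p^*E_\lambda)$ is the relevant one. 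Since $\sdist_G$ is Lipschitz (with constant $1$) and $E_\lambda$ is bounded, $v_\lambda(t,\cdot)=-\lambda\sdist_G$ is bounded on a neighbourhood of $\overline{E_\lambda}$, hence $v_\lambda(t,\cdot)\in L^1(\Omega\cap\p^*E_\lambda;\cH^n\res(\Omega\cap\p^*E_\lambda))$, which gives the required integrability in Definition \ref{def:dist_curv}.

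Next I would test minimality against inner variations. Let $X\in C_c^1(\Omega,\R^{n+1})$; since $\supp X\subset\subset\Omega$, the flow $f_s=\Id+sX$ fixes a neighbourhood of $\p\Omega$, so $f_s(E_\lambda)\in BV(\Omega,\{0,1\})$ is bounded with $\Trace(f_s(E_\lambda))=\Trace(E_\lambda)$ for $|s|$ small, and moreover the boundary term $\int_{\p^*\Trace(E_\lambda)}\beta\,X'\cdot\nu'_{\Trace(E_\lambda)}\,d\cH^{n-1}$ in \eqref{first_variatsion} vanishes because $X'\equiv 0$ near $\p\Omega$. Applying Proposition \ref{prop:first_var} (its hypotheses hold: $E_\lambda$ is bounded and $\Trace(E_\lambda)\in BV(\R^n,\{0,1\})$ by Theorem \ref{teo: regularity}) and using $\frac{d}{ds}\big|_{s=0}\cA(f_s(E_\lambda),G,\lambda)=0$, I obtain
\begin{equation*}
\int_{\Omega\cap\p^*E_\lambda}\big(\div X-\nu_{E_\lambda}\cdot(\nabla X)\nu_{E_\lambda}\big)\,d\cH^n
= -\lambda\int_{\Omega\cap\p^*E_\lambda}\sdist_G\,X\cdot\nu_{E_\lambda}\,d\cH^n
=\int_{\Omega\cap\p^*E_\lambda} v_\lambda(t,\cdot)\,X\cdot\nu_{E_\lambda}\,d\cH^n.
\end{equation*}
Since this holds for every $X\in C_c^1(\Omega,\R^{n+1})$, this is exactly \eqref{dist_curvat}, so $v_\lambda(t,\cdot)$ is the distributional mean curvature of $\Omega\cap\p^*E_\lambda$.

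The only mild subtlety — and the step I would be most careful with — is the justification that $\frac{d}{ds}\big|_{s=0}\cA(f_s(E_\lambda),G,\lambda)=0$. This follows from minimality because $s\mapsto\cA(f_s(E_\lambda),G,\lambda)$ is differentiable at $s=0$ (Proposition \ref{prop:first_var} gives the derivative, and the three terms there are finite since $E_\lambda$ is bounded and $\Trace(E_\lambda)$ has finite perimeter) and attains a minimum at $s=0$ among the admissible competitors $f_s(E_\lambda)$; hence its derivative vanishes. One should note that the statement is only claimed for a.e. $t\ge 1/\lambda$: this is because $E_\lambda([\lambda t])$ is defined via the discrete scheme and for the finitely-many exceptional values of $t$ where $[\lambda t]$ jumps the set $E_\lambda([\lambda t])$ is a fixed minimizer, but for a full-measure set of $t$ the identification is exactly as above; in fact the argument works for every $t\ge 1/\lambda$ at which $E_\lambda([\lambda t])$ is defined, so "a.e." is only a matter of the measurability bookkeeping needed later in Theorem \ref{teo:distr_sol}. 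I would remark that no new ideas beyond Proposition \ref{prop:first_var} and Theorem \ref{teo: regularity} are needed here.
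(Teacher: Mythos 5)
Your proof is correct and follows essentially the same route as the paper: apply the first variation formula of Proposition \ref{prop:first_var} for $X\in C_c^1(\Omega,\R^{n+1})$ (so the boundary trace term vanishes), use minimality to kill the derivative at $s=0$, and then verify the $L^1$-integrability required by Definition \ref{def:dist_curv}. The one place where you deviate is in that last integrability step: the paper invokes Lemma \ref{lem:l2bound_velocity} (the $L^2$-in-time bound, which together with finiteness of $\cH^n\res(\Omega\cap\p^*E_\lambda)$ gives $L^1$ for a.e.\ $t$), whereas you observe directly that $\sdist_G$ is $1$-Lipschitz and $E_\lambda$ is contained in a fixed bounded cylinder, so $v_\lambda(t,\cdot)$ is actually bounded on $\Omega\cap\p^*E_\lambda$, hence $L^1$ on the finite measure. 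Your argument is more elementary, gives a stronger $L^\infty$ conclusion, and works for every $t\ge 1/\lambda$ rather than a.e.\ $t$; the paper's route has the advantage of being the estimate it needs anyway in Theorem \ref{teo:distr_sol}.
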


\begin{proof}
Set $E:=E_{\lambda}([\lambda t]).$ Remark \ref{rem:traces}
and \eqref{first_variatsion} imply that
$$
\int_{ \Omega\cap\p^*E} \left( \div X - \nu_E\cdot (\nabla X)\nu_E\right)\,d\cH^n
=\int_{\Omega\cap \p^*E } v_\lambda \,X\cdot \nu_E\,d\cH^n.
$$
Hence, it suffices to prove $v_\lambda(t,\cdot)\in
L^1(\Omega\cap \p^*E ; \cH^n\res {\Omega\cap \p^*E})$
for a.e. $t\in[1/\lambda,+\infty)$
and since $P(E(t),\Omega)<+\infty,$ this follows from Lemma \ref{lem:l2bound_velocity} below.
\end{proof}

\begin{remark}
From Definition \ref{def:dist_curv}, Proposition \ref{prop:dist_m_c} and Lemma
\ref{lem:l2bound_velocity} it follows that
$$v_\lambda(t,x) = H_{E_{\lambda}([\lambda t])}(t,x)\quad
\text{for a.e. $t\ge1/\lambda$ and $\cH^n$-a.e. $x\in \Omega\cap\p E_{\lambda}([\lambda t]).$}$$
This is a discretized version of equation \eqref{mcf}.
\end{remark}

\begin{lemma}[\bf Uniform $L^2$-bound of the approximate velocities]\label{lem:l2bound_velocity}
Under assumptions \eqref{hyp:main} the inequality
$$
\int_0^{+\infty}\int_{\Omega\cap \p E_{\lambda}([\lambda t])}
(v_\lambda)^2\,d\cH^ndt \le \alpha(n,\kappa)P(E_0)
$$
holds.
\end{lemma}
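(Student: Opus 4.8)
The plan is to bound, for each fixed $\lambda$ and each discrete step $k$, the quantity $\int_{\Omega\cap\p E_\lambda(k)}(\lambda\sdist_{E_\lambda(k-1)})^2\,d\cH^n$ in terms of the capillary energy drop $\cC(E_\lambda(k-1),\Omega)-\cC(E_\lambda(k),\Omega)$, and then to sum over $k$ using the telescoping from \eqref{monoton_capil}. First I would fix $k\ge1$, write $E:=E_\lambda(k)$, $F:=E_\lambda(k-1)$, and partition $\Omega\cap\p^*E$ into the level sets $S_m:=\{x\in\Omega\cap\p^*E:\ 2^{m-1}R_\lambda^{-1}<\dist_F(x)\le 2^m R_\lambda^{-1}\}$ for $m\ge0$, where I set $R_\lambda:=\lambda^{1/2}$ for brevity; by Proposition~\ref{prop:uniform_L_infty_est} we have $\sqrt\lambda\,\dist_F\le R(n,\kappa)$ on $E\Delta F$, so only finitely many $m$ (those with $2^m\le R(n,\kappa)$) contribute. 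On $S_m$ the integrand $(\lambda\sdist_F)^2$ is comparable to $\lambda\,2^{2m}R_\lambda^{-2}=2^{2m}$, so I need a bound on $\cH^n(S_m)$.

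The key estimate is a bound $\cH^n(S_m)\le \mathrm{const}\cdot \lambda\,2^m R_\lambda^{-1}\cdot|\,\{x:2^{m-1}R_\lambda^{-1}<\dist_F(x)\le 2^{m+?}R_\lambda^{-1}\}\cap(E\Delta F)\,|^{?}$, obtained by a Besicovitch covering argument exactly in the spirit of Proposition~\ref{prop:uniform_estimate}: cover $S_m$ by balls $B_{\ell_m}(x_i)$ with $\ell_m\sim 2^m R_\lambda^{-1}$ centered on $\p E$, with bounded overlap $\mathfrak b(n)$; on each such ball the perimeter density estimate \eqref{eq:perimeter_est} (valid since $\ell_m\le \gamma(n,\kappa)\lambda^{-1/2}$ up to a harmless constant, because $2^m\lesssim R(n,\kappa)$) gives $P(E,B_{\ell_m}(x_i))\gtrsim \ell_m^n\gtrsim \cH^n(S_m\cap B_{\ell_m}(x_i))$ — wait, more carefully, I would instead directly compare $\cH^n(S_m)$ with the volume of the region where $\dist_F$ is of size $2^m R_\lambda^{-1}$, using that this region contains a tubular neighborhood of $S_m$ of width $\sim 2^{m-1}R_\lambda^{-1}$, hence has measure $\gtrsim 2^m R_\lambda^{-1}\cH^n(S_m)$. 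Combining, $2^{2m}\cH^n(S_m)\lesssim 2^{2m}\cdot \lambda\,2^{-m}R_\lambda\cdot|T_m|=2^m \lambda^{3/2}|T_m|$ where $T_m$ is the corresponding dyadic shell of $E\Delta F$; summing in $m$ and using $2^m\lesssim R(n,\kappa)$, $\sum_m|T_m|\le|E\Delta F|$ together with $\int_{E\Delta F}\dist_F\,dx\ge \mathrm{const}\cdot R_\lambda^{-1}\sum_m 2^{m-1}|T_m|$, one arrives at
$$
\int_{\Omega\cap\p E}(\lambda\sdist_F)^2\,d\cH^n \le C(n,\kappa)\,\lambda^2\int_{E\Delta F}\dist_F\,dx \cdot (\text{a bounded factor}),
$$
and then \eqref{monoton_capil}, i.e. $\lambda\int_{E\Delta F}\dist_F\,dx\le \cC(F,\Omega)-\cC(E,\Omega)$, converts the right-hand side into a constant times $\lambda\big(\cC(E_\lambda(k-1),\Omega)-\cC(E_\lambda(k),\Omega)\big)$.

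Finally I would integrate in $t$: on the interval $t\in[k/\lambda,(k+1)/\lambda)$, which has length $1/\lambda$, the function $v_\lambda(t,\cdot)$ equals $-\lambda\sdist_{E_\lambda(k-1)}$ on $\Omega\cap\p E_\lambda(k)$, so
$$
\int_0^{+\infty}\!\!\int_{\Omega\cap\p E_\lambda([\lambda t])}(v_\lambda)^2\,d\cH^n dt
=\sum_{k\ge1}\frac1\lambda\int_{\Omega\cap\p E_\lambda(k)}\!\!(\lambda\sdist_{E_\lambda(k-1)})^2 d\cH^n
\le C(n,\kappa)\sum_{k\ge1}\big(\cC(E_\lambda(k-1),\Omega)-\cC(E_\lambda(k),\Omega)\big),
$$
and this telescopes to $C(n,\kappa)\,\cC(E_0,\Omega)\le C(n,\kappa)\,P(E_0)$ by \eqref{gfgfgf}. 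Tracking the constants through the dyadic sum and the Besicovitch overlap gives precisely $\alpha(n,\kappa)=\frac{75[(n+1)\omega_{n+1}+\omega_n]\mathfrak b(n)}{(\kappa/2)^{n+1}\omega_{n+1}}$.

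The main obstacle is the dyadic covering estimate bounding $\cH^n(S_m)$ in terms of the volume of the corresponding shell of $E\Delta F$: one must be careful that the radii $\ell_m\sim 2^m R_\lambda^{-1}$ stay within the admissible range $(0,\gamma(n,\kappa)\lambda^{-1/2})$ for the density estimates of Theorem~\ref{teo:lower_density_est}, which is exactly where the uniform $L^\infty$-bound $\sqrt\lambda\,\dist_F\le R(n,\kappa)$ of Proposition~\ref{prop:uniform_L_infty_est} — and hence the $\lambda^{-1/2}$ (not $\lambda^{-1}$) scaling — is indispensable; with only the weaker $O(1/\lambda)$-range of Proposition~\ref{prop:lower_density_est_weak} the argument would fail, and getting the numerical constant $75$ right requires bookkeeping the geometric series $\sum 2^{-m}$ against the tube-volume comparison.
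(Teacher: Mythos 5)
Your plan follows the same overall strategy as the paper: dyadically decompose $\Omega\cap\p E_\lambda(k)$ according to the size of $\dist_{E_\lambda(k-1)}$, cover each piece by Besicovitch balls of comparable radius, use density estimates to convert the $\cH^n$-measure of each piece into the volume of the corresponding shell of $E_\lambda(k)\Delta E_\lambda(k-1)$, and then telescope via \eqref{monoton_capil}. That is exactly what the paper does, but two steps in your write-up do not hold as stated.

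First, the ``tubular neighborhood'' argument you substitute after your ``wait'' is not valid: for an $n$-rectifiable set $S$ the inequality $|\{y:\,\distance(y,S)<\epsilon\}|\gtrsim\epsilon\,\cH^n(S)$ is false in general (the tube can self-overlap badly). Making that step rigorous is precisely what the covering + density estimates are for, and the relevant density estimates are \emph{not} those of Theorem~\ref{teo:lower_density_est}, which you invoke. Those hold only for radii $r<\gamma(n,\kappa)\lambda^{-1/2}$, whereas your balls have radius $\ell_m\sim 2^m\lambda^{-1/2}$ with $2^m$ running up to $R(n,\kappa)$, and $R(n,\kappa)/\gamma(n,\kappa)$ is not an $O(1)$ factor. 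The correct tool is Remark~\ref{rem:uncons_dens_est}: since $\dist_{E_\lambda(k-1)}(x)>2^{m-1}\lambda^{-1/2}$, a ball of radius $\sim 2^{m-2}\lambda^{-1/2}$ centered at $x\in\p E_\lambda(k)$ misses $E_\lambda(k-1)$ entirely, and Remark~\ref{rem:uncons_dens_est} gives both the upper perimeter density $P(E_\lambda(k),B_r)\lesssim r^n$ (hence $\cH^n(S_m\cap B_r)\lesssim r^n$) and the lower volume density $|B_r\cap(E_\lambda(k)\Delta E_\lambda(k-1))|\gtrsim(\kappa/2)^{n+1}\omega_{n+1}r^{n+1}$, with no restriction of the form $r<\gamma\lambda^{-1/2}$. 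Combining these two via Besicovitch gives the shell estimate you want; the tube heuristic alone does not.

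Second, the index range $m\ge 0$ is too small: your $S_m$ only cover $\{\dist_{E_\lambda(k-1)}>2^{-1}\lambda^{-1/2}\}$, whereas the region $\{0<\dist_{E_\lambda(k-1)}\le 2^{-1}\lambda^{-1/2}\}\cap\p E_\lambda(k)$ can carry, after summing in $k$, a contribution of the same order and cannot be discarded. In the paper the dyadic index $\ell$ runs over all of $\Z$ below a cutoff $\sim\log_2(R(n,\kappa)\lambda^{1/2})$ (with ball radius $2^{\ell-1}/\lambda$ shrinking together with $\dist_{E_\lambda(k-1)}$, which keeps the ball disjoint from $E_\lambda(k-1)$). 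Your decomposition must likewise take $m\in\Z$ with an upper bound only. (There is also a bookkeeping slip: on $S_m$ the integrand is $\sim 2^{2m}\lambda$, not $2^{2m}$; this is compensated by a spurious $\lambda$ in your ``$\lambda\,2^{-m}R_\lambda$'', so the final scaling comes out right by accident.) With Remark~\ref{rem:uncons_dens_est} in place of the tube argument and $m\in\Z$, your plan becomes the paper's proof.
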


\begin{proof}
The proof is analogous to the proof of \cite[Lemma 3.6]{MSS:2016}.
Given $\epsilon>0$ and $E\in BV(\Omega,\{0,1\})$ let
$$
(\p E)_\epsilon^+:=
\{x\in \R^{n+1}:\,\, \distance(x,\Omega\cap \p E)\le \epsilon\}.
$$

 For $t\in [\frac1\lambda,+\infty)$ and $\ell\in\Z$ such that
 $\ell \le 1+[\log_2(R(n,\kappa) \lambda^{1/2})],$
 where $R(n,\kappa)$ is given by \eqref{const:uniform_density_est},
define
 $$
 K(\ell) = \Big\{x\in \Big(\p E_\lambda([\lambda t]-1)\Big)_{R(n,\kappa)
 \lambda^{-1/2}}^+:\,\,2^{\ell}<|v_\lambda(x,t)|\le 2^{\ell+1}\Big\}.
 $$

 By Proposition \ref{prop:uniform_L_infty_est}
 $
 E_\lambda([\lambda t])\Delta E_\lambda([\lambda t]-1)\subseteq \cup_\ell K(\ell).
 $
 Take $x\in  K(\ell)\cap\Omega\cap  \p E_\lambda([\lambda t]).$
 Then   $B_{\frac{2^{\ell-1}}{\lambda}}(x)\cap E_{\lambda}([\lambda t]-1)=\emptyset$
and hence, by Remark \ref{rem:uncons_dens_est}  the following density estimates hold:
 \begin{equation}\label{dens_estimate}
 \begin{aligned}
 |E_\lambda([\lambda t])\cap  B_{\frac{2^{\ell-1}}{\lambda}}(x)|\ge
\left(\frac{\kappa}{2}\right)^{n+1}
 \omega_{n+1}\left(\frac{2^{\ell-1}}{\lambda}\right)^{n+1},\\
 \cH^n(B_{\frac{2^{\ell-1}}{\lambda}}(x)\cap\Omega\cap \p E_\lambda([\lambda t]))
 \le \big[(n+1)\omega_{n+1}+\omega_n\Big] \left(\frac{2^{\ell-1}}{\lambda}\right)^n.
 \end{aligned}
 \end{equation}
Using $2^{\ell-1} \le |v_\lambda(y,t)|\le 5\cdot2^{\ell-1}$ for any
$y\in B_{\frac{2^{\ell-1}}{\lambda}}(x),$
from \eqref{dens_estimate} we deduce
\begin{align*}
\int_{B_{\frac{2^{\ell-1}}{\lambda}}(x)\cap\Omega \cap
\p E_\lambda([\lambda t])} (v_\lambda)^2 \,d\cH^n  \le&
25[(n+1)\omega_{n+1}+\omega_n]
(2^{\ell-1})^2\left(\frac{2^{\ell-1}}{\lambda}\right)^n\\
\le &\frac{25 [(n+1)\omega_{n+1}+\omega_n]}{(\kappa/2)^{n+1}\omega_{n+1}}\,
\lambda\int_{B_{\frac{2^{\ell-1}}{\lambda}}(x)\cap (E_\lambda([\lambda t])\Delta
E_\lambda([\lambda t]-1))}|v_\lambda|\,dx.
\end{align*}
 %%%%
 %%%%
 %%%%
Application of the Besicovitch covering theorem  to the collection of
balls $\{B_{\frac{2^{\ell-1}}{\lambda}}(x):\,x\in K(\ell)\cap
\p  E_\lambda([\lambda t])\}$ gives
$$
\int_{K(\ell) \cap\Omega \cap \p E_\lambda([\lambda t])} (v_\lambda)^2 \,d\cH^n \le
\frac{25[(n+1)\omega_{n+1}+\omega_n]\mathfrak{b}(n)}{(\kappa/2)^{n+1}\omega_{n+1}}\,
\lambda \int_{\{2^{\ell-1}\le |v_\lambda| \le 2^{\ell+2}\} \cap
(E_\lambda([\lambda t])\Delta E_\lambda([\lambda t]-1))}|v_\lambda|\,dx.
$$
Now summing up these inequalities over $\ell\in \Z$ with
$\ell\le 1+[\log_2(R(n,\kappa)\lambda^{1/2})],$
and using the properties of $K(\ell)$ and the definition of $\alpha(n,\kappa)$ we get
$$
\int_{\Omega\cap \p E_\lambda([\lambda t])} (v_\lambda)^2 \,d\cH^n \le
\alpha(n,\kappa)\,\lambda\int_{E_\lambda([\lambda t])\Delta
E_\lambda([\lambda t]-1)}|v_\lambda|\,dx.
$$
Observe that by \eqref{monoton_capil} for any $t\ge 1/\lambda$ one has
$$\int_{E_\lambda([\lambda t])\Delta E_\lambda([\lambda t]-1)}|v_\lambda|\,dx
\le\cC(E_\lambda([\lambda t]-1),\Omega) -  \cC(E_\lambda([\lambda t]),\Omega).$$
Thus
$$
\int_{\Omega\cap \p E_\lambda([\lambda t])} (v_\lambda)^2 \,d\cH^n \le
\alpha(n,\kappa)\,\lambda \Big(\cC(E_\lambda([\lambda t]-1),\Omega) -
\cC(E_\lambda([\lambda t]),\Omega)\Big).
$$
Fixing $T>0$ and integrating this inequality in $t\in [0,T]$ we get
\begin{align*}
\int_0^T \int_{\Omega\cap \p E_\lambda([\lambda t])} (v_\lambda)^2 \,d\cH^ndt \le&
\alpha(n,\kappa)\,
\sum\limits_{k=1}^{[T\lambda]+1}\Big(\cC(E_\lambda(k-1),\Omega) -
\cC(E_\lambda(k),\Omega)\Big)\\
\le  &\alpha(n,\kappa)\, \cC(E_0,\Omega) \le
\alpha(n,\kappa)\,P(E_0),
\end{align*}
where  we used \eqref{eq:bound_F-0}.
Now letting $T\to+\infty$ completes the proof.
\end{proof}

\begin{proposition}\label{prop:strange_est}
Let $E_0,\beta$ satisfy \eqref{hyp:main}, $\lambda\ge1$ and $E^+$ be as in Proposition 
\ref{prop:boundedness_of_minimizers}.
Then 
\begin{equation}\label{L_1boundq}
\lambda \int_{1/\lambda}^T |E_\lambda([\lambda t])\Delta E_\lambda([\lambda t]-1)|\, dt\le 
|E^+| + \frac{P(E_0)}{\gamma(n,\kappa)} + 
\frac{2^{n+1}\omega_{n+1}\gamma(n,\kappa)\mathfrak{b}(n)}{\kappa c(n,\kappa)}\,P(E_0)\,T 
\end{equation}
for any  $T>\frac1\lambda.$ Here $\mathfrak{b}(n),\gamma(n,\kappa),c(n,\kappa)$ are defined in
 Section \ref{sec:density_estimates}.
\end{proposition}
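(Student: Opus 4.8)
The plan is to split the time interval $[1/\lambda,T]$ into "small" and "large" steps according to whether the $L^1$-displacement of the discrete evolution in one step is below or above a threshold $\sigma$. The large steps can only occur a controlled number of times because at each of them the capillary functional $\cC(E_\lambda(\cdot),\Omega)$ drops by a definite amount, via the volume density estimate \eqref{eq:volume_est} together with the perimeter density estimate \eqref{eq:perimeter_est} from Theorem \ref{teo:lower_density_est}, which guarantees that a large symmetric difference forces a large perimeter change; since the total variation of $k\mapsto \cC(E_\lambda(k),\Omega)$ is bounded by $\cC(E_0,\Omega)\le P(E_0)$ (monotonicity \eqref{monoton_capil} and nonnegativity of $\cC$), the number of large steps is at most $O(P(E_0)/\sigma)$. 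For the small steps one uses instead the $L^1$-estimate \eqref{eq:uniform_estimate} of Proposition \ref{prop:uniform_estimate}, valid for $E_\lambda([\lambda t]-1)$ since all $E_\lambda(k)$, $k\ge1$, satisfy the uniform density estimates; choosing $\ell=\gamma(n,\kappa)\lambda^{-1/2}$ (the largest admissible value) one gets, for a single small step from $E_\lambda(k-1)$ to $E_\lambda(k)$,
$$
|E_\lambda(k)\Delta E_\lambda(k-1)| \le C_{n,\kappa} P(E_0)\,\frac{\gamma(n,\kappa)}{\lambda^{1/2}} + \frac{\lambda^{1/2}}{\gamma(n,\kappa)}\int_{E_\lambda(k)\Delta E_\lambda(k-1)}\dist_{E_\lambda(k-1)}\,dx.
$$

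Summing the small-step contribution over $k$ from $1$ to roughly $[T\lambda]$ and using the telescoping bound \eqref{ikkinchi_sum}-type estimate $\sum_k \int_{E_\lambda(k)\Delta E_\lambda(k-1)}\dist_{E_\lambda(k-1)}\,dx \le \frac1\lambda\,P(E_0)$ (which follows from \eqref{monoton_capil} and $\cC\ge0$), the second term totals at most $\frac{P(E_0)}{\gamma(n,\kappa)}\lambda^{-1/2}$, so after multiplying by $\lambda$ it is $O(P(E_0)\,\lambda^{1/2})$ — wait, that is the wrong order. The correct accounting is more delicate: one does not want to lose a factor $\lambda^{1/2}$. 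Instead I would bound the small-step sum directly in the other direction: each small displacement is trivially at most $\sigma$, but we also have from the density estimate \eqref{eq:volume_est} that if $x\in\p E_\lambda(k)\setminus \overline{E_\lambda(k-1)}$ then $|E_\lambda(k)\cap B_r(x)|\ge c\,r^{n+1}$, forcing $\lambda\sigma$ not to be too large relative to the perimeter. The cleanest route, matching the shape of \eqref{L_1boundq}, is: use \eqref{eq:uniform_estimate} with $\ell=\gamma\lambda^{-1/2}$, sum over $k=1,\dots,[T\lambda]+1$, and multiply by $\lambda$; the first term gives $\lambda\cdot([T\lambda]+1)\cdot C_{n,\kappa}P(E_0)\gamma\lambda^{-1/2}$, which after simplification using $C_{n,\kappa}$ and $c(n,\kappa)$ from \eqref{coef_density}-\eqref{3452} is $O(P(E_0)\,T)$ (the $\lambda$'s combine as $\lambda\cdot\lambda\cdot\lambda^{-1/2}=\lambda^{3/2}$ times $\lambda^{-1}$ from $[T\lambda]^{-1}$... ), and the second term gives $\lambda\cdot\gamma^{-1}\lambda^{1/2}\cdot\lambda^{-1}P(E_0)=\gamma^{-1}\lambda^{1/2}P(E_0)$, still the wrong order.

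Given the difficulty above, the genuinely right approach — and the one I would commit to — is to mimic \cite[Lemma 3.6]{MSS:2016} / Lemma \ref{lem:l2bound_velocity} but one power lower: apply Besicovitch to balls of radius comparable to $\dist_{E_\lambda([\lambda t]-1)}$ centered on $\p E_\lambda([\lambda t])$. Precisely, for $x\in E_\lambda([\lambda t])\setminus E_\lambda([\lambda t]-1)$ with $d=\dist_{E_\lambda([\lambda t]-1)}(x)$, the ball $B_{d/2}(x)$ misses $E_\lambda([\lambda t]-1)$, so by Remark \ref{rem:uncons_dens_est}(a) it contains a definite fraction of $E_\lambda([\lambda t])$ and $\p E_\lambda([\lambda t])\cap B_{d/2}(x)$ has $\cH^n$-measure $\le ((n+1)\omega_{n+1}+\omega_n)(d/2)^n$; hence by the lower perimeter density estimate the volume of $E_\lambda([\lambda t])\cap B_{d/2}(x)$ is comparable to $d^{n+1}$, i.e. comparable to $d\cdot\cH^n(\p E_\lambda([\lambda t])\cap B_{d/2}(x))$. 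Since $|v_\lambda|=\lambda d$ on $B_{d/2}(x)$ up to a factor $5$, a Besicovitch covering of $\p E_\lambda([\lambda t])\cap(E_\lambda([\lambda t])\Delta E_\lambda([\lambda t]-1))$ by such balls yields
$$
\lambda\,|E_\lambda([\lambda t])\Delta E_\lambda([\lambda t]-1)| \le \frac{2^{n+1}\omega_{n+1}\gamma(n,\kappa)\mathfrak{b}(n)}{\kappa\,c(n,\kappa)}\, P(E_\lambda([\lambda t]),\Omega) + (\text{a term from the region near }E_\lambda([\lambda t]-1)),
$$
where the bound $d\le R(n,\kappa)\lambda^{-1/2}\le$ (something)$\cdot\gamma(n,\kappa)\lambda^{-1/2}$ from Proposition \ref{prop:uniform_L_infty_est} is what produces the $\gamma(n,\kappa)$ in the constant. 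Using $P(E_\lambda([\lambda t]),\Omega)\le P(E_0)/\kappa$ from \eqref{1234} and integrating in $t\in[1/\lambda,T]$ produces the $T$-term. The leftover "region near $E_\lambda([\lambda t]-1)$", i.e. points of the symmetric difference within distance $\sim d$ of $\p E_\lambda([\lambda t]-1)$, is handled by the same Besicovitch/density argument applied with $E_0$ replaced by the previous step's set, and its total contribution summed over the $[T\lambda]$ steps telescopes against $\cC(E_\lambda(\cdot),\Omega)$, contributing the $\frac{P(E_0)}{\gamma(n,\kappa)}$ term; the single extra $|E^+|$ on the right absorbs the first step $k=1$, where $E_0$ need not satisfy density estimates, bounded crudely by $|E_\lambda(1)|\le|E^+|$ via Proposition \ref{prop:boundedness_of_minimizers}.

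\textbf{Main obstacle.} The delicate point is precisely the bookkeeping of the "near $\p E_\lambda([\lambda t]-1)$" part of the symmetric difference: there the distance function is small, so the density-estimate trick gives a weak ball-radius and one must instead pay for it by a telescoping capillary-energy argument, and one has to be careful that the radius used in the Besicovitch covering (of order $\dist_{E_\lambda([\lambda t]-1)}$) stays below the admissible threshold $\gamma(n,\kappa)\lambda^{-1/2}$ of Theorem \ref{teo:lower_density_est}, which is exactly where Proposition \ref{prop:uniform_L_infty_est} and the definition of $R(n,\kappa)$, $\gamma(n,\kappa)$ enter and force the stated form of the constant.
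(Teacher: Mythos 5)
Your proposal collects the right ingredients --- a near/far split of $E_\lambda(k)\Delta E_\lambda(k-1)$ with respect to $\p E_\lambda(k-1)$, a telescoping bound against $\cC(E_\lambda(\cdot),\Omega)$ coming from \eqref{monoton_capil}, a Besicovitch covering combined with the perimeter density estimates, and the crude bound $|E^+|$ for the exceptional step $k=1$ --- but the two pieces are paired with the wrong tools, and the scaling choice that makes the argument close is missing. The threshold must be taken at the scale $\ell=\gamma(n,\kappa)/\lambda$, of order $\lambda^{-1}$ and not $\lambda^{-1/2}$: this is exactly the choice for which $\lambda\cdot\ell=\gamma(n,\kappa)$ is an absolute constant, so that the near-region bound per step is $O(P(E_0)/\lambda)$ and the sum over the $\sim\lambda T$ steps gives the $T$-linear term. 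With this $\ell$, the \emph{far} piece $A_k=\{\dist_{E_\lambda(k-1)}>\ell\}$ is handled by Chebyshev and then telescoped through \eqref{monoton_capil},
$$
|A_k|\le \frac{1}{\ell}\int_{E_\lambda(k)\Delta E_\lambda(k-1)}\dist_{E_\lambda(k-1)}\,dx
\le \frac{1}{\gamma(n,\kappa)}\Bigl(\cC(E_\lambda(k-1),\Omega)-\cC(E_\lambda(k),\Omega)\Bigr),
$$
yielding $\sum_k|A_k|\le P(E_0)/\gamma(n,\kappa)$; the \emph{near} piece $B_k=\{\dist_{E_\lambda(k-1)}\le\ell\}$ is covered by balls $B_{2\ell}(x_j)$ with $x_j\in\p E_\lambda(k-1)$, a Besicovitch subfamily is extracted, and the lower perimeter density estimate \eqref{eq:perimeter_est} is applied to $E_\lambda(k-1)$ at radius $\ell$ (admissible because $\ell=\gamma(n,\kappa)/\lambda\le\gamma(n,\kappa)\lambda^{-1/2}$), giving $|B_k|\le \frac{2^{n+1}\omega_{n+1}\mathfrak{b}(n)}{c(n,\kappa)}\,\ell\,P(E_\lambda(k-1),\Omega)$ and, via \eqref{1234} and $N\le\lambda T$, the $T$-term.

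Your proposal pairs them the other way around (Besicovitch/density for the far part, telescoping for the near part), and this does not close. Chebyshev against a small threshold is vacuous on the near region, so telescoping gives nothing there. On the far region, the adaptive-radius covering with $r\sim\dist_{E_\lambda(k-1)}(x)$ has two independent defects. First, $\dist_{E_\lambda(k-1)}$ can reach $R(n,\kappa)\lambda^{-1/2}$, and $R(n,\kappa)>\gamma(n,\kappa)$ (in fact $R/\gamma$ is large and $\kappa$-dependent), so these radii leave the validity range $r<\gamma(n,\kappa)\lambda^{-1/2}$ of Theorem \ref{teo:lower_density_est}; the inequality $R(n,\kappa)\lambda^{-1/2}\le C\gamma(n,\kappa)\lambda^{-1/2}$ you appeal to is false for any absolute $C$. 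Second, and more fatally, the multiplier $\lambda\cdot r$ is then of order $\lambda^{1/2}$ rather than $O(1)$, so after multiplying by the $\lambda$ in \eqref{L_1boundq} the resulting bound diverges as $\lambda\to\infty$, exactly as in your earlier discarded attempt with $\ell=\gamma\lambda^{-1/2}$. Your hesitation there was well founded; the fix is the smaller fixed scale $\gamma(n,\kappa)/\lambda$ together with the opposite pairing of tools to regions.
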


\begin{proof}
Let $[\lambda T]=N.$ Clearly, 
$$
\lambda \int_{1/\lambda}^T |E_\lambda([\lambda t])\Delta E_\lambda([\lambda t]-1)|\, dt
=\sum\limits_{k=1}^N |E_\lambda(k)\Delta E_\lambda(k-1)|.
$$
We recall that $E_\lambda(k)\subset E^+$ for all $\lambda\ge1$ and $k\ge0,$ 
by Proposition \ref{prop:boundedness_of_minimizers}. 

If $k=1,$ then
\begin{equation}\label{k_equal_0}
|E_\lambda(1)\Delta E_\lambda(0)| \le |E^+|. 
\end{equation}
Now if $k\ge2,$ we write $E_\lambda(k)\Delta E_\lambda(k-1)$ as a union of 
$A_k$ and $B_k,$ where 
$$
A_k=\Big\{x\in E_\lambda(k)\Delta E_\lambda(k-1):\,\, \dist_{E_{\lambda}(k-1)}(x) >\ell\Big\},
$$
$$
B_k=\Big\{x\in E_\lambda(k)\Delta E_\lambda(k-1):\,\, \dist_{E_{\lambda}(k-1)}(x) \le \ell\Big\}.
$$
where  $\ell:=\frac{\gamma(n,\kappa)}{\lambda}.$
By Chebyshev inequality $|A_k|$ can be estimated using \eqref{monoton_capil} as 
\begin{equation*}
\begin{aligned}
|A_k| \le & \frac{\lambda}{\gamma(n,\kappa)} \int_{E_\lambda(k)\Delta E_\lambda(k-1)} \dist_{E_{\lambda}(k-1)}\,dx
\le  \frac{1}{\gamma(n,\kappa)}\Big(\cC(E_\lambda(k),\Omega) - \cC(E_\lambda(k),\Omega)\Big).
\end{aligned} 
\end{equation*}
Hence, by \eqref{1234}
\begin{equation*}%\label{curv_large_k_katta_2}
\sum\limits_{k=2}^N |A_k| \le   \frac{1}{\gamma(n,\kappa)} \sum\limits_{k=2}^N
\Big(\cC(E_\lambda(k),\Omega) - \cC(E_\lambda(k),\Omega)\Big)\le \frac{P(E_0)}{\gamma(n,\kappa)}.
\end{equation*}

Moreover, by definition  $B_k$ can be covered by the family of balls  
$\{B_{2\ell}(x),\,x\in\p E_\lambda(k-1)\}.$ 
Thus, by Besicovitch covering theorem we can find at most countably many  
balls $\{B_\ell(x_j),\,x_j\in\p E_\lambda(k-1)\}$
covering $\Omega\cap \p E_\lambda(k-1).$
Hence, the lower density estimate \eqref{eq:perimeter_est} 
for $E_\lambda(k-1)$ used with $\ell$  implies
\begin{align*}
|B_{2\ell}(x_j)\cap B_k| \le & 
(2^{n+1}\omega_{n+1} \ell) \ell^n \le \frac{2^{n+1}\omega_{n+1}}{c(n,\kappa)} \,\ell\, P(E_\lambda(k-1),B_\ell(x_j)),
\end{align*}
from which it follows that 
\begin{equation*}
\begin{aligned}
\sum\limits_{k=2}^N|B_k| \le & \sum\limits_{k=2}^N \sum\limits_{j\ge 1} 
|B_{2\ell}(x_j)\cap B_k|
\le \frac{2^{n+1}\omega_{n+1}}{c(n,\kappa)} \,\ell\,\sum\limits_{k=2}^N\sum\limits_{j\ge 1} 
P(E_\lambda(k-1),B_\ell(x_j))\\
\le& \frac{2^{n+1}\mathfrak{b}(n)\omega_{n+1}}{c(n,\kappa)} \,\ell\,\sum\limits_{k=2}^N P(E_\lambda(k-1),\Omega).
\end{aligned}
\end{equation*}
Therefore, using \eqref{1234} and $N\le\lambda T,$ we get 
\begin{equation}\label{curv_small_k_katta_2}
\sum\limits_{k=2}^N|B_k| \le \frac{2^{n+1}\mathfrak{b}(n)
\omega_{n+1}\gamma(n,\kappa)}{\kappa c(n,\kappa)} 
\,P(E_0)\,T. 
\end{equation}
Finally, \eqref{L_1boundq} follows from \eqref{k_equal_0}-\eqref{curv_small_k_katta_2}.
\end{proof}

The following error estimate is similar to error estimates shown in
%can be demonstrated along the same lines of
\cite{LS:95,MSS:2016}.%, therefore the proof is omitted.%\footnote{Notice that 
%test functions are compactly supported in $\Omega$ and hence contact sets 
%does not invalidate the arguments of \cite{LS:95,MSS:2016}.}.

\begin{proposition}[\bf Error estimate]\label{prop:error_estimate}
Let $1\le n\le 6.$ Under assumption \eqref{hyp:main1},
for every $\phi\in C_c^1([0,+\infty)\times \Omega)$ the following 
error-estimate holds:
\begin{equation}\label{buldiye}
\lim\limits_{j\to+\infty} \int_{1/\lambda_j}^{+\infty} \lambda_j
\Bigg(\int_{\Omega} (\chi_{E_{\lambda_j}([\lambda_jt])} -
\chi_{ E_{\lambda_j}([\lambda_jt]-1)})\phi\,dx
-\int_{\Omega\cap\p E_{\lambda_j}([\lambda_jt])}
\sdist_{E_{\lambda_j}([\lambda_jt]-1)}\, \phi \,d\cH^n \Bigg)dt \to0.
\end{equation}

\end{proposition}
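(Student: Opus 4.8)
The plan is to adapt the error estimate of \cite{LS:95,MSS:2016} to the present capillary setting and to reduce \eqref{buldiye} to a summable per-step estimate. Fix the diverging sequence $\{\lambda_j\}$, abbreviate $E_k:=E_{\lambda_j}(k)$, $d_k:=\sdist_{E_{k-1}}$, and let $[0,T]\times\Omega$ contain $\supp\phi$. Writing the time integral in \eqref{buldiye} as a finite sum over $k=1,\dots,N$, $N\le\lambda_jT+1$, of $\lambda_j\int_{k/\lambda_j}^{(k+1)/\lambda_j}\big(\int_\Omega(\chi_{E_k}-\chi_{E_{k-1}})\phi(t,\cdot)\,dx-\int_{\Omega\cap\p E_k}d_k\,\phi(t,\cdot)\,d\cH^n\big)dt$, and replacing $\phi(t,\cdot)$ by $\psi_k:=\phi(k/\lambda_j,\cdot)$ on each subinterval, introduces an error controlled by $\tfrac{\|\p_t\phi\|_\infty}{\lambda_j}\big(|E_k\Delta E_{k-1}|+\int_{\Omega\cap\p E_k}|d_k|\,d\cH^n\big)$; summing over $k$, bounding $\sum_k|E_k\Delta E_{k-1}|$ by Proposition \ref{prop:strange_est} and $\sum_k\int_{\Omega\cap\p E_k}|d_k|\,d\cH^n$ by Cauchy--Schwarz together with \eqref{eq:bound_F-0} and Lemma \ref{lem:l2bound_velocity} (both sums uniformly bounded in $j$), this total error is $O(\lambda_j^{-1})$. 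Hence it remains to show $\sum_{k=1}^{N}g_k(\psi_k)\to0$, where $g_k(\psi):=\int_\Omega(\chi_{E_k}-\chi_{E_{k-1}})\psi\,dx-\int_{\Omega\cap\p E_k}d_k\,\psi\,d\cH^n$ and $\|\psi_k\|_{C^1}\le\|\phi\|_{C^1}$.

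For the per-step term I fix $k$ and set $d:=d_k$, $E:=E_k$, $F:=E_{k-1}$. By Proposition \ref{prop:uniform_L_infty_est} the whole of $E\Delta F$, hence the support of $d$ along $\p E$, lies in $\{|d|\le\delta\}$ with $\delta:=R(n,\kappa)\lambda_j^{-1/2}$; and for $1\le n\le6$ Theorem \ref{teo: regularity} makes $\Omega\cap\p E=\Omega\cap\p^*E$ a $C^{2,\alpha}$ hypersurface, so the surface integrals below are classical. I would apply the coarea formula to the $1$-Lipschitz function $d$ (turning both $\int_\Omega(\chi_E-\chi_F)\psi\,dx$ and $\int_{\Omega\cap\p^*E}d\,\psi\,d\cH^n$ into integrals over $s\in(0,\delta)$ of the slice quantities $\int_{E^{(1)}\cap\{d=\pm s\}}\psi$ and $\int_{\p^*E\cap\{\pm d>s\}}\psi$ respectively), and then, for a.e.\ $s$, apply the Gauss--Green formula on $E\cap\{d>s\}$ — whose reduced boundary splits into $\Omega\cap\p^*E\cap\{d>s\}$ with conormal $\nu_E$ and $E^{(1)}\cap\{d=s\}$ with conormal $-\nabla d$ — to a mollification of $\psi\,\nabla d$. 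This yields a decomposition $g_k(\psi)=(\text{gradient term})+\mathcal G_k(\psi)$, where the gradient term is bounded by $\|\nabla\psi\|_\infty\int_{E\Delta F}|d|\,dx$ and hence sums to $\le\|\phi\|_{C^1}\lambda_j^{-1}\big(\cC(E_0,\Omega)-\cC(E_N,\Omega)\big)\le\|\phi\|_{C^1}P(E_0)\lambda_j^{-1}\to0$ by \eqref{monoton_capil}, while $\mathcal G_k(\psi)$ is the genuinely geometric discrepancy between the swept volume and its first-order surface approximation (a ``tilt'' contribution $\int\psi(\nabla d\cdot\nu_E-1)\,d^{\pm}$ together with a Laplacian contribution $\int\psi\,d\,\Delta d$).

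The crux — and the expected main obstacle — is the bound on $\mathcal G_k$. It is a second-order quantity: on a smooth hypersurface the gap between $|E\setminus F|-|F\setminus E|$ and $\int_{\p E}d\,d\cH^n$ is heuristically of order $\int_{\p E}d^2|H_E|\,d\cH^n$, and here $H_E=-\lambda_j\,\sdist_F$ on $\Omega\cap\p E$ by Remark \ref{rem:w_curvature}, with $|\sdist_F|\le\delta$ on $\p E$. Following \cite{LS:95,MSS:2016}, the plan is to establish the per-step estimate
$$|\mathcal G_k(\psi)|\le C(n,\kappa)\,\|\psi\|_\infty\,\lambda_j\!\int_{\Omega\cap\p E_k}|\sdist_{E_{k-1}}|^3\,d\cH^n,$$
controlling the level-set slice areas through minimality comparisons of $E_k$ with the competitors obtained by cutting $E_k$ along the level sets $\{\sdist_{E_{k-1}}=\pm s\}$ — the capillary contributions on $\p\Omega$ being absorbed via Lemma \ref{lem:betacond} and $\|\beta\|_\infty\le1-2\kappa$, and the whole argument run through the coarea formula, which is precisely what avoids any uncontrolled second-fundamental-form term. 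Granting this, $|\sdist_{E_{k-1}}|\le\delta=R(n,\kappa)\lambda_j^{-1/2}$ on $\Omega\cap\p E_k$ gives $\lambda_j\int_{\p E_k}|\sdist_{E_{k-1}}|^3\,d\cH^n\le R(n,\kappa)\lambda_j^{1/2}\int_{\Omega\cap\p E_k}(\sdist_{E_{k-1}})^2\,d\cH^n$; since $v_{\lambda_j}=-\lambda_j\sdist_{E_{k-1}}$ on $\Omega\cap\p E_k$, Lemma \ref{lem:l2bound_velocity} yields $\sum_{k=1}^N\int_{\Omega\cap\p E_k}(\sdist_{E_{k-1}})^2\,d\cH^n=\lambda_j^{-2}\sum_{k=1}^N\int_{\Omega\cap\p E_k}(v_{\lambda_j})^2\,d\cH^n\le\lambda_j^{-1}\alpha(n,\kappa)P(E_0)$, whence $\sum_{k=1}^N|\mathcal G_k(\psi_k)|\le C(n,\kappa)\|\phi\|_\infty R(n,\kappa)\alpha(n,\kappa)P(E_0)\,\lambda_j^{-1/2}\to0$. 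Together with the bounds on the gradient term and the time-discretization error, this proves \eqref{buldiye}; the substantial work is thus the per-step estimate on $\mathcal G_k$ and the careful bookkeeping of the $\p\Omega$-terms, both in the spirit of \cite{LS:95}.
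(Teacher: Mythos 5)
Your outline correctly handles the bookkeeping — the time-discretisation error, the summation of the gradient term via \eqref{monoton_capil}, and the final passage from the per-step bound to \eqref{buldiye} via Proposition \ref{prop:strange_est} and Lemma \ref{lem:l2bound_velocity} are all sound and close in spirit to what the paper does. But the core of the argument is the per-step estimate
$$|\mathcal G_k(\psi)|\le C(n,\kappa)\,\|\psi\|_\infty\,\lambda_j\!\int_{\Omega\cap\p E_k}|\sdist_{E_{k-1}}|^3\,d\cH^n,$$
and this is asserted, not proved (``the plan is to establish\dots'', ``Granting this\dots''). I do not believe it can be obtained by the route you sketch. Your heuristic — the volume/flux discrepancy is of order $\int_{\p E}d^2|H_E|$ — replaces the full second fundamental form of $\p E_{k-1}$ by the mean curvature $H_{E_k}=-\lambda_j\sdist_{E_{k-1}}$. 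But the Jacobian of the normal exponential map off $\p E_{k-1}$, which governs the error in the first-order identification $\chi_{E_k}-\chi_{E_{k-1}}\approx -\sdist_{E_{k-1}}\,\cH^n\res\p E_{k-1}$, is controlled by the principal curvatures of $\p E_{k-1}$, not by their trace. Minimality of $E_k$ gives you the Euler--Lagrange equation (hence the \emph{mean} curvature of $\p E_k$), but it does not give you a pointwise bound on $|A_{\p E_{k-1}}|$, and the density estimates only control second-order geometry at scale $\lambda_j^{-1/2}$ qualitatively, not quantitatively at the cubic rate your bound requires. A telling symptom is that your argument never invokes the hypothesis $1\le n\le 6$, which appears explicitly in the statement; if the cubic estimate held by a coarea/Gauss--Green computation alone, the proposition would be dimension-free, which it is not claimed to be.

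The paper's proof uses a genuinely different mechanism. After isolating $\Delta_1(j)$ (controlled by Lemma \ref{lem:behav_big_lambda}), it splits, for $k\ge2$, the region $\Omega^\epsilon$ into a ``low curvature'' part — points $x\in\p E_{\lambda_j}(k)$ around which $\dist_{E_{\lambda_j}(k-1)}\le\lambda_j^{-\sigma_2}$ on a ball of radius $R\lambda_j^{-1/2}$ — and its complement. In the low-curvature part, a blow-up at scale $\lambda_j^{-\sigma_1}$ (with $1/2<\sigma_1<\sigma_2$) turns both $E_{\lambda_j}(k)$ and $E_{\lambda_j}(k-1)$ into $(\Lambda,r_0)$-minimizers with vanishing $\Lambda$, whose limits are local perimeter minimizers. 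It is precisely here that $n\le 6$ enters: by the Bernstein-type classification of minimal cones, the limits are half-spaces, with the \emph{same} normal because the rescaled distance tends to zero. Then \cite[Theorem 26.6]{Mag12} gives uniform convergence of the normals with an (unspecified) modulus $w(1/\lambda_j)\to0$, and \cite[Corollary 4.2.2]{MSS:2016} yields the flatness-based discrepancy estimate \eqref{gishmatqul}: the error is $O(w(1/\lambda_j))\cdot|E_{\lambda_j}(k)\Delta E_{\lambda_j}(k-1)|$, linear in the symmetric difference with a \emph{qualitatively} small factor, not a cubic-in-$\sdist$ bound. The high-curvature part is handled separately: there $\dist_{E_{\lambda_j}(k-1)}\ge\lambda_j^{-\sigma_2}/2$ on a subball, so the density estimates turn the dissipation $\lambda_j\int\dist$ into a lower bound on the local volume, allowing both the volume and the perimeter contributions to be absorbed by the summable dissipation with a $\lambda_j^{-1/4}$ gain. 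Summing with Proposition \ref{prop:strange_est} gives the $o(1)$ total, but with no explicit rate. So the structure is: compactness/Allard-type flatness with a qualitative modulus $w$ on the good set, plus a dissipation-absorption estimate on the bad set — not a single quantitative per-step Taylor expansion. Your write-up is missing this entire dichotomy, and the estimate you would need to make your single-region argument close is exactly what the dichotomy was designed to avoid.
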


\begin{proof}
Let us assume that $\supp \phi\strictlyincluded [0,T)\times\Omega^\epsilon,$
$\Omega^\epsilon:=\R^n\times(\epsilon,+\infty)\subset\Omega$
for some $\epsilon,T>0.$
Let us take $j$ so large that 
\begin{equation}\label{j_large}
4R\lambda_j^{-1/2}<\epsilon, 
\end{equation}
where $R:=R(n,\kappa)$ is defined in 
\eqref{eq:universal constant}.

Given an integer $k\ge1$ set 
\begin{align*}
\Delta_k(j):= & \int_{k/\lambda_j}^{(k+1)/\lambda_j} \lambda_j
\Bigg(\int_{\Omega}(\chi_{E_{\lambda_j}(k)} -
\chi_{ E_{\lambda_j}(k-1})\,\phi \,dx 
-\int_{\Omega\cap \p E_{\lambda_j}(k)}
\sdist_{E_{\lambda_j}(k-1)}\,\phi \, d\cH^n \Bigg)dt.
\end{align*}
We need to estimate
\begin{align*}
\int_{1/\lambda_j}^{T} \lambda_j
\Bigg(\int_{\Omega^\epsilon}(\chi_{E_{\lambda_j}([\lambda_jt])} -
\chi_{ E_{\lambda_j}([\lambda_jt]-1)})\,\phi \,dx 
-\int_{\Omega^\epsilon\cap\p E_{\lambda_j}([\lambda_jt])}
\sdist_{E_{\lambda_j}([\lambda_jt]-1)}\,\phi \, d\cH^n \Bigg)dt
=\sum\limits_{k=1}^N \Delta_k(j), 
\end{align*}
where $N=[\lambda_j T].$

First consider $\Delta_1(j).$
By virtue of Proposition \ref{prop:uniform_L_infty_est} and \eqref{1234}
we get
\begin{align*}
|\Delta_1(j)|= & \Bigg|\int_{1/\lambda_j}^{2/\lambda_j} \lambda_j
\Bigg(\int_{\Omega^\epsilon}(\chi_{E_{\lambda_j}(1)} -
\chi_{ E_{\lambda_j}(0)})\,\phi \,dx 
-\int_{\Omega^\epsilon\cap\p E_{\lambda_j}(1)}
\sdist_{E_{\lambda_j}(0)}\,\phi \, d\cH^n \Bigg)dt\Bigg| \\
\le &\|\phi\|_\infty \Bigg(|E_{\lambda_j}(1)\Delta E_0|+ 
\frac{R(n,\kappa)}{\sqrt{\lambda_j}}\, P(E_{\lambda_j}(1),\Omega)\Bigg)\\
\le & \|\phi\|_\infty \Bigg(|E_{\lambda_j}(1)\Delta E_0|+ 
\frac{R(n,\kappa)}{\kappa \sqrt{\lambda_j}}\, P(E_0)\Bigg).
\end{align*}
By Lemma \ref{lem:behav_big_lambda}, $\Delta_1(j)\to 0$ as $j\to+\infty.$

Now we estimate  $\Delta_k(j),$  $k\ge2.$ We use a 
covering argument with balls
centered at the boundaries of $E_{\lambda_j}(k)$
and $E_{\lambda_j}(k-1).$

First we deal with the regions of low curvature.
We claim that if  $1/2<\sigma_1<\sigma_2<1,$ and 
$x\in \Omega^\epsilon\cap\p E_{\lambda_j}(k)$ is such that  
\begin{equation}\label{dadajon}
\dist_{E_{\lambda_j}(k-1)}(y)\le \lambda_j^{-\sigma_2},\qquad \forall y\in 
B_{R\lambda_j^{-1/2}}(x)\cap (E_{\lambda_j}(k)\Delta E_{\lambda_j}(k-1)), 
\end{equation}
then there exists $\nu:=\nu(x) \in \S^n$ and a continuous increasing 
function $w\in C([0,\infty))$ with $w(0)=0$
such that
\begin{equation}\label{graph_prop}
\begin{gathered}
|\nu_{E_{\lambda_j}(k)}(y) - \nu|\le w(1/\lambda_j) 
\qquad \forall  y\in B_{\lambda_j^{-\sigma_1}}(x)\cap \p E_{\lambda_j}(k),\\
|\nu_{E_{\lambda_j}(k-1)}(y) - \nu|\le w(1/\lambda_j) \qquad 
\forall  y\in B_{\lambda_j^{-\sigma_1}}(x)\cap \p E_{\lambda_j}(k-1).
\end{gathered} 
\end{equation}
Indeed, fix $r\in (0,  \lambda_j^{\sigma_1-1/2}).$
By choice \eqref{j_large} of $j,$ the ball $B_{r\lambda_j^{-\sigma_1}}(x)$ does 
not intersect $\p\Omega,$ and hence
$$
P(E_{\lambda_j}(k),B_{r\lambda_j^{-\sigma_1}}(x)) \le 
P(F,B_{r\lambda_j^{-\sigma_1}}(x)) 
+\lambda_j \int_{F\Delta E_{\lambda_j}(k)} \dist_{E_{\lambda_j}(k-1)}dy 
$$
for every $F\in BV(\Omega,\{0,1\})$ with 
$F\Delta E_{\lambda_j}(k)\strictlyincluded B_{r\lambda_j^{-\sigma_1}}(x).$
Since $\dist_{E_{\lambda_j}(k-1)}(\cdot)$ is $1$-Lipschitz,
by virtue of Proposition \ref{prop:uniform_L_infty_est},
$$
 \dist_{E_{\lambda_j}(k-1)}(y) \le 
 \dist_{E_{\lambda_j}(k-1)}(x)+ |x-y|\le 
  R\lambda_j^{-1/2} +r\lambda_j^{-\sigma_1} 
  \le \frac{R+1}{\lambda_j^{1/2}},\qquad
 y\in  F\Delta E_{\lambda_j}(k),
$$
whence 
$$
\lambda_j \int_{F\Delta E_{\lambda_j}(k)} \dist_{E_{\lambda_j}(k-1)}dy \le 
(R+1)\lambda_j^{1/2} |F\Delta E_{\lambda_j}(k)| 
$$
and
\begin{equation}\label{eq:minimaljon}
P(E_{\lambda_j}(k),B_{r\lambda_j^{-\sigma_1}}(x)) \le 
P(F,B_{r\lambda_j^{-\sigma_1}}(x)) 
+ (R+1)\lambda_j^{-1/2} |F\Delta E_{\lambda_j}(k)|. 
\end{equation}
As $k\ge2,$ 
$$
P(E_{\lambda_j}(k-1),B_{r\lambda_j^{-\sigma_1}}(x)) \le 
P(F,B_{r\lambda_j^{-\sigma_1}}(x)) 
+ (R+1)\lambda_j^{-1/2} |F\Delta E_{\lambda_j}(k-1)|  
$$
whenever $F\in BV(\Omega,\{0,1\})$ satisfies 
$F\Delta E_{\lambda_j}(k-1)\strictlyincluded B_{r\lambda_j^{-\sigma_1}}(x).$
Set 
$$
E_{\lambda_j}^{\sigma_1}(k):= \frac{E_{\lambda_j}(k)-x}{\lambda_j^{\sigma_1}},
\qquad 
E_{\lambda_j}^{\sigma_1}(k-1):=\frac{E_{\lambda_j}(k-1)-x}{\lambda_j^{\sigma_1}}.
$$
By virtue of \eqref{eq:minimaljon} these sets satisfy
$$
P(E_{\lambda_j}^{\sigma_1}(s), B_r) \le P(F, B_r) + 
\lambda_j^{\frac12-\sigma_1} (R+1)
|E_{\lambda_j}^{\sigma_1}(s)\Delta F|
$$
for any $r\in (0,\lambda_j^{\sigma_1 - 1/2})$ and 
$F\Delta E_{\lambda_j}^{\sigma_1}(s)\strictlyincluded 
B_r(0),$ $s=k,k-1.$
Hence, $E_{\lambda_j}^{\sigma_1}(s),$ $s=k,k-1$ is 
an $((R+1)\lambda_j^{1/2-\sigma_1}, \lambda_j^{\sigma_1 - 1/2})$-minimizer 
of the perimeter (see \cite[ Section 23]{Mag12}). Since 
$\sigma_1>1/2,$ $\lambda_j^{\frac12-\sigma_1} (R+1)\to0$ 
as $j\to+\infty$ and therefore, by the compactness 
\cite[Proposition 23.13]{Mag12}, 
up to a (not relabelled) subsequence, 
$$
E_{\lambda_j}^{\sigma_1}(s) \to E_s^{\sigma_1}\quad 
\text{in $L_\loc^1(\R^{n+1})$ as $j\to+\infty,$ $s=k,k-1.$}
$$
where $E_s^{\sigma_1},$ $s=k,k-1,$ is a local minimizer  of the perimeter.
Since  $n\le6,$ by virtue of  \cite[Theorem 17.3]{Gius84} 
$E_k^{\sigma_1}$ and $E_{k-1}^{\sigma_1}$   are
half-spaces. Moreover, by  hypothesis
$$
\dist_{E_{\lambda_j}^{\sigma_1}(k-1)} (z)\le 
\lambda_j^{\sigma_1-\sigma_2}\quad \forall 
z\in B_{\lambda_j^{\sigma_1-1/2}}(0)\cap 
\big(E_{\lambda_j}^{\sigma_1}(k)\Delta E_{\lambda_j}^{\sigma_1}(k-1)\big),
$$
and, therefore, $E_k^{\sigma_1}=E_{k-1}^{\sigma_1},$ i.e. 
there exists $\nu\in \S^{n}$ such that 
$$
E_k^{\sigma_1}=E_{k-1}^{\sigma_1} = \{z\in \R^{n+1}:\,\, z\cdot \nu <0\}.
$$
By \cite[Theorem 26.6]{Mag12} $\nu_{E_{\lambda_j}^{\sigma_1}}(s)\to \nu$ 
uniformly in $B_1(0)$
implying \eqref{graph_prop}, which concludes the proof of the claim.

Recall that 
if  $\sigma_1,\sigma_2$ are as above and 
$x\in \Omega^\epsilon\cap\p E_{\lambda_j}(k)$ satisfies \eqref{dadajon},
then by virtue of \cite[Corollary 4.2.2]{MSS:2016}  
there exists $C(n)>0$ such that
\begin{equation}\label{gishmatqul}
\begin{aligned}
\Bigg|\int_{\cyl{\rho}{\rho}(x,\nu)}
(\chi_{E_{\lambda_j}(k)}-\chi_{E_{\lambda_j}(k-1)})dx - &
\int_{\cyl{\rho}{\rho}(x,\nu) \cap \p E_{\lambda_j}(k)} 
\sdist_{E_{\lambda_j}(k-1)}\,d\cH^n\Bigg|\\
\le & C(n)w(1/\lambda_j)\, \int_{\cyl{\rho}{\rho}(x,\nu)}
|\chi_{E_{\lambda_j}(k)}-\chi_{E_{\lambda_j}(k-1)}|dx,
\end{aligned} 
\end{equation}
where $\rho:=\lambda_j^{-\sigma_1}/2,$ and 
$$
\cyl{\rho}{\rho}(x,\nu): = \big\{y\in \R^{n+1}:\,\, |(y-x)\cdot\nu|<\rho,\,
\sqrt{|y-x|^2-|(y-x)\cdot\nu|^2}<\rho\big\}.
$$

Now we estimate $\Delta_k(j),$ $k\ge2.$ 
Set $\sigma_2:= \frac{2n+5}{4(n+2)},$ $\sigma_1\in (1/2,\sigma_2)$
and
$$
\mathcal{O}:=\Big\{\cyl{\rho}{\rho}(x,\nu(x)):\,\,\text{$x$ satisfies 
\eqref{dadajon}}\Big\}.
$$
Then by virtue of \eqref{gishmatqul},
\begin{equation}\label{disjoint_partisso}
\begin{aligned}
\Bigg|\int_{\bigcup\limits_{\mathcal O} \cyl{\rho}{\rho} } &
(\chi_{ E_{\lambda_j}(k) } - 
\chi_{ E_{\lambda_j}(k-1) }) \phi\,dy -
\int_{\bigcup\limits_{\mathcal O} \cyl{\rho}{\rho} \cap  
\p E_{\lambda_j}(k)} 
\sdist_{ E_{\lambda_j}(k-1) }\phi\, d\cH^n\Bigg| \\
 \le& C(n)w(1/\lambda_j) \Big(|E_{\lambda_j}(k)\Delta E_{\lambda_j}(k)| 
 + \lambda_j^{-\sigma_2}P(E_{\lambda_j}(k))\Big).  
\end{aligned} 
\end{equation}

It remains to estimate the error in $\Omega^\epsilon\setminus 
\bigcup\limits_{\mathcal O} \cyl{\rho}{\rho}.$
By virtue of Proposition \ref{prop:uniform_L_infty_est}
$\Omega^\epsilon\setminus 
\bigcup\limits_{\mathcal O} \cyl{\rho}{\rho} \cap (E_{\lambda_j}(k)\Delta 
E_{\lambda_j}(k-1))$ can be covered 
by the family of balls
$$
\mathtt{B}:=\{B_r(x):\,x\in \Omega^\epsilon\cap 
\p E_{\lambda_j}(k-1)\},\qquad r:=R\lambda_j^{-1/2}. 
$$ 
By the standard Besicovitch
theorem we can extract a finite collection 
$\{B_r(x_i)\}\subset \mathtt{B}$
such that each point of 
$\Omega^\epsilon\cap(E_{\lambda_j}(k)\Delta E_{\lambda_j}(k-1))$
is covered with at most $\mathfrak{b}(n)$ elements 
of $\{B_r(x_i)\}.$ 
First we handle the error in each $B_r(x).$
By the definition, there exists $y\in O_\rho(x_i)\cap 
(E_{\lambda_j}(k) \Delta E_{\lambda_j}(k-1))$ 
such that $\dist_{E_{\lambda_j}(k-1)}(y)\ge \lambda_j^{-\sigma_2}.$ 
Then clearly, 
$$
\dist_{E_{\lambda_j}(k-1)}(z)\ge \lambda_j^{-\sigma_2}/2,
\quad \forall z\in B_{\lambda_j^{-\sigma_2}/2}(y)\subseteq
 E_{\lambda_j}(k) \Delta E_{\lambda_j}(k-1).
$$
Applying density estimates in Remark \ref{rem:uncons_dens_est} to 
$B_{\lambda_j^{-\sigma_2}/2}(y)$ we get
$$
\int_{B_{\lambda_j^{-\sigma_2}/2}(y)\cap (E_{\lambda_j}(k)\Delta 
E_{\lambda_j}(k-1))} 
\dist_{E_{\lambda_j}(k-1)}\,dz \ge 
\left(\frac{\kappa}{2}\right)^{n+1} \,\omega_{n+1}
\left(\frac{1}{2\lambda_j^{\sigma_2}}\right)^{n+2}.
$$
Therefore, by choice of $\sigma_2,$
\begin{align*}
\int_{B_r(x_i)} |\chi_{E_{\lambda_j}(k)} - \chi_{E_{\lambda_j}(k-1)}|\,dx 
 \le & \omega_{n+1} \left(R\lambda_j^{-1/2}\right)^{n+1}\\
 \le & \left(\frac{4R}{\kappa}\right)^{n+1} 2\lambda_j^{3/4} 
 \int_{B_{\lambda_j^{-\sigma_2}/2}(y)\cap (E_{\lambda_j}(k)\Delta E_{\lambda_j}(k-1))}
 \dist_{E_{\lambda_j}(k-1)}\,dz\\
 \le &\left(\frac{4R}{\kappa}\right)^{n+1} 2\lambda_j^{3/4} 
 \int_{ B_{r} (x_i)\cap (E_{\lambda_j}(k)\Delta E_{\lambda_j}(k-1))}
 \dist_{E_{\lambda_j}(k-1)}\,dz.
\end{align*}
Moreover, comparing $E_{\lambda_j}(k)$ to 
$E_{\lambda_j}(k)\setminus B_r(x_i)$ (and using $B_r(x_i)\strictlyincluded \Omega$)
we find
$$
P(E_{\lambda_j}(k), B_r(x_i)) \le C(n, \diam(E^+)) (R\lambda_j^{-1/2} )^n,
$$
whence, by Proposition \ref{prop:uniform_L_infty_est}),
\begin{align*}
&\int_{B_r(x_i)\cap \p E_{\lambda_j}(k)} 
\dist_{E_{\lambda_j}(k-1)} \,d\cH^n \le 
 C(n,\diam(E^+)) (R\lambda_j^{-1/2} )^{n+1} \\
& \le \frac{C(n,\diam(E^+))}{\omega_{n+1}}
 \left(\frac{4R}{\kappa}\right)^{n+1} 2\lambda_j^{3/4}   
 \int_{ B_r(x_i)\cap (E_{\lambda_j}(k)\Delta E_{\lambda_j}(k-1))}
 \dist_{E_{\lambda_j}(k-1)}\,dz.
\end{align*}
Thus
\begin{equation*}
\begin{aligned}
\Bigg|\int_{ B_r(x_i) } &(\chi_{ E_{\lambda_j}(k) } - 
\chi_{ E_{\lambda_j}(k-1) })\phi\,dy -
\int_{ B_r(x_i)\cap \p E_{\lambda_j}(k)} 
\sdist_{ E_{\lambda_j}(k-1) }\phi\, d\cH^n\Bigg| \\
\le &\|\phi\|_\infty C(n,\kappa,\diam(E^+)) \lambda_j^{3/4} 
\int_{B_r(x_i)\cap (E_{\lambda_j}(k)\Delta E_{\lambda_j}(k-1))}
 \dist_{E_{\lambda_j}(k-1)}\,dz,
\end{aligned}
\end{equation*}
and therefore,
\begin{equation*} 
\begin{aligned}
\Bigg|\int_{\Omega^\epsilon\setminus \bigcup\limits_{\mathcal O} \cyl{\rho}{\rho} } &
(\chi_{ E_{\lambda_j}(k) } - 
\chi_{ E_{\lambda_j}(k-1) }) \phi\,dy -
\int_{\Omega^\epsilon\setminus\bigcup\limits_{\mathcal O} \cyl{\rho}{\rho} \cap  
\p E_{\lambda_j}(k)} 
\sdist_{ E_{\lambda_j}(k-1) }\eta_l\phi\, d\cH^n\Bigg| \\
 \le& \mathfrak{b}(n)\|\phi\|_\infty C(n,\kappa,\diam(E^+)) \lambda_j^{3/4} 
\int_{E_{\lambda_j}(k)\Delta E_{\lambda_j}(k-1)}
 \dist_{E_{\lambda_j}(k-1)}\,dz.
\end{aligned}
\end{equation*}
By \eqref{monoton_capil} 
$$
\lambda_j^{3/4} 
\int_{E_{\lambda_j}(k)\Delta E_{\lambda_j}(k-1)}
 \dist_{E_{\lambda_j}(k-1)}\,dz\le \lambda^{-1/4}
 \Big(\cC(E_{\lambda_j}(1),\Omega) - \cC(E_{\lambda_j}(N),\Omega)\Big),
$$
and thus we have also
\begin{equation}\label{qolgan_joyi}
\begin{aligned}
\Bigg|\int_{\Omega^\epsilon\setminus \bigcup\limits_{\mathcal O} \cyl{\rho}{\rho} } &
(\chi_{ E_{\lambda_j}(k) } - 
\chi_{ E_{\lambda_j}(k-1) }) \phi\,dy -
\int_{\Omega^\epsilon\setminus\bigcup\limits_{\mathcal O} \cyl{\rho}{\rho} \cap  
\p E_{\lambda_j}(k)} 
\sdist_{ E_{\lambda_j}(k-1) }\eta_l\phi\, d\cH^n\Bigg| \\
 \le& \mathfrak{b}(n)\|\phi\|_\infty C(n,\kappa,\diam(E^+)) \lambda_j^{-1/4} 
\Big(\cC(E_{\lambda_j}(1),\Omega) - \cC(E_{\lambda_j}(N),\Omega)\Big).
\end{aligned}
\end{equation}

Combining \eqref{disjoint_partisso} and \eqref{qolgan_joyi}, we obtain
\begin{equation*}
\begin{aligned}
|\Delta_k(j)| \le  &  
C(n)w(1/\lambda_j) \Big(|E_{\lambda_j}(k)\Delta E_{\lambda_j}(k)| 
 + \lambda_j^{-\sigma_2}P(E_{\lambda_j}(k))\Big)\\
 &+
\mathfrak{b}(n)\|\phi\|_\infty C(n,\kappa,\diam(E^+)) \lambda_j^{-1/4} 
\Big(\cC(E_{\lambda_j}(1),\Omega) - \cC(E_{\lambda_j}(N),\Omega)\Big).
\end{aligned}
\end{equation*}
Therefore, and  we get 
\begin{align*}
\sum\limits_{k=2}^N  |R_k(j)| \le & 
\mathfrak{b}(n) \Big(C(n)w(1/\lambda_j) \|\phi\|_\infty  + 
\|\nabla \phi\|_\infty \lambda_j^{-\sigma_1}\Big)
\sum\limits_{k=2}^N  |E_{\lambda_j}(k)\Delta E_{\lambda_j}(k-1)| \\
&+ \mathfrak{b}(n)\|\nabla \phi\|_\infty \lambda_j^{-\sigma_1-\sigma_2}\, 
\frac{(N-1) P(E_0)}{\kappa}\\
&+ \mathfrak{b}(n)\|\phi\|_\infty C(n,\kappa,\diam(E^+)) \lambda_j^{-1/4} 
.
\end{align*}
Now applying Proposition \ref{prop:strange_est}, \eqref{1234},
\eqref{monoton_capil}  and the 
relation $N\le T\lambda_j$  we obtain
\begin{align*}
\sum\limits_{k=2}^N  |\Delta_k(j)| \le &  
C(n,\kappa,P(E_0), T)\,\|\phi\|_\infty \,w(1/\lambda_j)   
+ C(n)\|\phi\|_\infty \lambda_j^{-\sigma_1}(N-1)P(E_0)\\
 &+ \mathfrak{b}(n)\|\nabla \phi\|_\infty 
 \lambda_j^{1-\sigma_1-\sigma_2}\, \frac{P(E_0)}{\kappa}\,T 
 + \mathfrak{b}(n)\|\phi\|_\infty 
 C(n,\kappa,\diam(E^+)) \lambda_j^{-1/4} P(E_0), 
\end{align*}
This estimate, the assumption $\sigma_1+\sigma_2>1,$ 
and the fact that $R_1(j)\to0$ as $j\to+\infty$ imply 
 \eqref{buldiye}.
\end{proof}

\begin{proof}[Proof of Theorem \ref{teo:distr_sol}]
Lemma \ref{lem:l2bound_velocity},
\eqref{uniform_per_est} and \cite[Theorem 4.4.2]{Hutch:86}  imply that
there exist a (not relabelled) subsequence and a function
$v:[0,+\infty)\times \Omega\to \R$ satisfying
\eqref{l2_bound_velic}-\eqref{weak_conv_some}.
In particular, from \eqref{l2_bound_velic} it follows that
$H_{E(t)}:=v(t,\cdot)\big|_{\Omega\cap\p^*E(t)}\in L^2(\Omega\cap\p^*E(t),
\cH^n\res (\Omega\cap\p^*E(t)))$ for a.e. $t>0.$
Let us prove that $H_{E(t)}$ is the distributional mean curvature of $E(t)$
for a.e. $t\ge0.$
Fixing $t\ge0,$ by the divergence formula \eqref{integ_by_parts}
for any $\phi\in C_c^1(\R^{n+1},\R^{n+1})$ one has
\begin{equation*} 
\int_{E_{\lambda_j}([\lambda_jt])} \div \phi dx -
\int_{\Omega\cap \p^*E_{\lambda_j}([\lambda_jt])}
\phi\cdot\nu_{E_{\lambda_j}([\lambda_jt])}\,d\cH^n =
\int_{\p\Omega\cap \p^* E_{\lambda_j}([\lambda_jt])}\phi_{n+1}d\cH^n.
\end{equation*}
Hence, from \eqref{L1_conv_GMM} and \eqref{eq:wcms}
we get
\begin{equation}\label{DG_div_formula_limit}
\int_{E(t)} \div \phi dx - \int_{\Omega\cap \p^*E(t)}
\phi\cdot\nu_{E(t)}\,d\cH^n =
\lim\limits_{j\to+\infty} \int_{\Trace(E_{\lambda_j}([\lambda_jt]))}\phi_{n+1}d\cH^n.
\end{equation}
The left-hand-side of \eqref{DG_div_formula_limit} is
$
\int_{\Trace(E(t))}\phi_{n+1}d\cH^n,
$
therefore, 
\begin{equation}\label{con_weak_m}
\cH^n\res\Trace(E_{\lambda_j}([\lambda_jt]))\overset{w^*}{\rightharpoonup}
\cH^n\res\Trace(E(t)) \qquad \text{as $j\to+\infty.$}
\end{equation}
Combining this with \eqref{uniform_per_est} we get
\begin{equation*}
\cH^n\res \p^*E_{\lambda_j}([\lambda_jt])
\overset{w^*}{\rightharpoonup} \cH^n\res \p^* E(t)
\quad \text{ as $j\to+\infty$ for a.e. $t\ge0.$} 
\end{equation*}

Take   $\eta\in C_c^1([0,+\infty))$ and  an admissible
$X\in C_c^1(\overline \Omega,\R^{n+1}).$
By \eqref{uniform_per_est} and \cite[formula (4.2)]{MSS:2016}  for a.e. $t\ge0$  and
for every $F\in C_c(\R^{n+1} \times \R^{n+1})$ one has
\begin{equation}\label{varifold_conv}
\lim\limits_{j\to+\infty} \int_{\Omega\cap \p^*E_{\lambda_j}([\lambda_jt])}
F(x,\nu_{E_{\lambda_j}([\lambda_jt])}(x))\,d\cH^n =
\int_{\Omega\cap \p^*E(t)}F(x,\nu_{E(t)}(x))\,d\cH^n.
\end{equation}
In particular, taking  $F\in C_c(\overline{\Omega} \times \R^{n+1})$ such that
$F(x,\xi)= \div X(x) - \xi\cdot  \nabla X(x) \xi$ in
$\Omega\times\{|\xi|\le2\},$
by the dominated convergence theorem,
\eqref{dist_curvat} and \eqref{weak_conv_some},
for $\Psi(t,x)=\eta(t)X(x)$ we establish
\begin{equation*} 
\begin{aligned}
\int_0^{+\infty} &\eta(t) \int_{\Omega\cap \p^*E(t)}  F(x,\nu_{E(t)}(x))d\cH^ndt
=
\lim\limits_{j\to+\infty} \int_0^{+\infty} \int_{\Omega\cap \p^*E_{\lambda_j}([\lambda_jt])} \eta(t)
F(x,\nu_{E_{\lambda_j}([\lambda_jt])})d\cH^ndt\\
=&  \lim\limits_{j\to+\infty} \int_0^{+\infty}
\int_{\Omega\cap \p^*E_{\lambda_j}([\lambda_jt])}
v_{\lambda_j}\nu_{E_{\lambda_j}([\lambda_jt])}\cdot
\Psi(t,x)d\cH^ndt\\
= & \int_0^{+\infty} \int_{\Omega\cap \p^*E(t) }
v \nu_{E(t)}\cdot \Psi(t,x)d\cH^ndt
= \int_0^{+\infty}\eta(t) \int_{\Omega\cap \p^*E(t) }
H_{E(t)} \nu_{E(t)}\cdot X\,d\cH^ndt.
\end{aligned}
\end{equation*}
Since $\eta\in C_c^1([0,+\infty))$ is arbitrary, for a.e. $t\ge0$
we get
$$
\int_{\Omega\cap \p^*E(t)}  ( \div X - \nu_{E(t)}\cdot (\nabla X)\nu_{E(t)} )d\cH^n=
\int_{\Omega\cap \p^*E(t) }
H_{E(t)} \nu_{E(t)}\cdot X\,d\cH^n,
$$
hence $H_{E(t)}$ is the generalized mean curvature of $\Omega\cap \p^*E(t).$

Let us show \eqref{normal_velocity}. Take
$\phi\in C_c^1([0,+\infty)\times\Omega).$ By a change of variables we have
\begin{align*}
\int_{1/\lambda_j}^{+\infty} \Big[\int_{ E_{\lambda_j}([\lambda_jt])}  \phi dx - &
\int_{E_{\lambda_j}([\lambda_jt]-1)} \phi dx \Big]dt \\
=& \int_{1/\lambda_j}^{+\infty} \int_{E_{\lambda_j}([\lambda_jt])} (\phi(t,x) - \phi(t+1/\lambda_j,x))dxdt
-\frac{1}{\lambda_j}\int_{E(0)} \phi(x,0)\,dx.
\end{align*}
Since $E(0)=E_0$, from \eqref{gelder_condit} we get
$$
\lim\limits_{j\to+\infty} \int_{1/\lambda_j}^{+\infty} \lambda_j
\Big[\int_{ E_{\lambda_j}([\lambda_jt])}  \phi dx -
\int_{E_{\lambda_j}([\lambda_jt]-1)} \phi dx \Big]dt =
-\int_0^{+\infty} \int_{E(t)} \frac{\p \phi}{\p t}\,(t,x)\,dxdt - \int_{E_0} \phi(x,0)dx.
$$
Therefore,  \eqref{buldiye}, \eqref{weak_conv_velocities} and the definition of $H_{E(t)}$
imply
\begin{align*}
\int_0^{+\infty} \int_{E(t)} \p_t \phi\,dxdt + \int_{E_0} \phi(x,0)dx
=& \lim\limits_{j\to+\infty} \int_{0}^{+\infty}
\int_{\Omega\cap\p E_{\lambda_j}([\lambda_jt])}v_{\lambda_j}  \phi \,d\cH^n dt\\
= &\int_{0}^{+\infty} \int_{\Omega\cap\p^* E(t)} H_{E(t)}  \phi \,d\cH^n dt.
\end{align*}

(ii)  Take an admissible $X\in C_c^1(\overline{\Omega},\R^{n+1})$ and $\eta\in
C_c^1([0,+\infty)).$ From   \eqref{first_variatsion}
\begin{equation}\label{birinchi-var}
\begin{gathered}
\int_0^{+\infty} \eta(t) \int_{\Omega\cap \p^*E_{\lambda_j}([\lambda_jt])}
\Big(\div X - \nu_{E_{\lambda_j}([\lambda_jt])} \cdot (\nabla X)
\nu_{E_{\lambda_j}([\lambda_jt])}\Big)\,d\cH^ndt\\
- \int_0^{+\infty} \eta(t)\int_{\Omega\cap \p^*E_{\lambda_j}([\lambda_jt])}
v_{\lambda_j}\,X\cdot
\nu_{E_{\lambda_j}([\lambda_jt])}\,d\cH^ndt\\
=   \int_0^{+\infty}\eta(t)\int_{\p^* \Trace(E_{\lambda_j}([\lambda_jt]))}
\beta\,X'\cdot \nu_{\Trace (E_{\lambda_j}([\lambda_jt]))}'\,d\cH^{n-1}.
\end{gathered}
\end{equation}
Let $\{\lambda_{j_l}\}_{l\ge1}$ be any subsequence of $\{\lambda_j\}.$
By the uniform bound \eqref{un.bund.pe} on the perimeters  and by compactness
there exists a further subsequence $\{\lambda_{j_{l_k}}\}_{k\ge1}$
of $\{\lambda_{j_l}\}_{l\ge1}$ and a set $\hat F\in BV(\R^n,\{0,1\})$ such that
$\Trace(E_{j_{l_k}}([j_{l_k}t])) \to \hat F$ in $L^1(\R^n)$
and\footnote{Arguing, for example, as in \eqref{int_parts}.}
$$
\nu_{\Trace (E_{\lambda_{j_{l_k}}}([\lambda_{j_{l_k}}t]) )}'\,
\cH^{n-1}\res \p^*\Trace (E_{\lambda_{j_{l_k}}}([\lambda_{j_{l_k}}t]))
\overset{w^*}{\rightharpoonup}
\nu_{\hat F}'\,\cH^{n-1}\res \p^* \hat F\quad\text{as $k\to+\infty$}
$$
for a.e. $t\ge0.$ By \eqref{con_weak_m} for every $\phi\in C_c(\R^n)$ we have 
$$
\int_{ \Trace(E(t)) } \phi\,d\cH^n =\lim\limits_{k\to+\infty}
\int_{ \Trace(E_{\lambda_{j_{l_k}}}([\lambda_{j_{l_k}}t])) } \phi\,d\cH^n=
\int_{ \hat F } \phi\,d\cH^n.
$$
Whence, $\hat F = \Trace(E(t)).$   Therefore 
$$
\nu_{\Trace (E_{\lambda_j}([\lambda_jt]) )}'\,
\cH^{n-1}\res \p^*\Trace (E_{\lambda_j}([\lambda_jt]))
\overset{w^*}{\rightharpoonup}
\nu_{\Trace (E(t))}'\,\cH^{n-1}\res \p^* \Trace E(t)\quad\text{as $j\to+\infty.$}
$$
Now taking  limit in \eqref{birinchi-var}, using  
\eqref{varifold_conv},\eqref{weak_conv_some}  and applying
the dominated convergence theorem on the right-hand-side
we get \eqref{contactanglejon}.
\end{proof}

%% The Appendices part is started with the command \appendix;
%% appendix sections are then done as normal sections
%% \appendix

%% \section{}
%% \label{}

\appendix

%%%%%%%%%%%%%%%%%%%%%%%%%%%%%%%%%%%%%%%%%%%%%%%%%%%%%%%%%%%%%%
\section{Existence of minimizers for some functionals}\label{sec:error_est}
%%%%%%%%%%%%%%%%%%%%%%%%%%%%%%%%%%%%%%%%%%%%%%%%%%%%%%%%%%%%%%

\renewcommand{\appendixname}{}

In this section we prove an existence result  for  minimum problems
of type
\begin{equation}\label{eq.min.problem}
\inf\limits_{E\in BV(\Omega,\{0,1\})} \cG(E),\qquad \cG(E):=\cC(E,\Omega)+\cV(E), 
\end{equation}
where  $\cV:BV(\Omega,\{0,1\})\to(-\infty,+\infty].$
Since $\cC(\cdot,\Omega)$ is finite in $BV(\Omega,\{0,1\}),$ the functional 
$\cG $ is well-defined in $BV(\Omega,\{0,1\}).$ We study 
\eqref{eq.min.problem} under the following hypotheses on $\cV:$

\begin{hypothesis}\label{hyp:2}
%The following conditions hold:
\begin{itemize}
\item[(a)] $\cV $ is bounded from below in 
$BV(\Omega,\{0,1\})$ and there  exists a cylinder
$\cyl{r}{\height}\subset\Omega,$ $\height>1$ 
such that $\cV(\cyl{r}{\height})<+\infty;$ 
\item[(b)] $\cV(E)\ge \cV(E\cap   \cyl{\rho}{l} )$ 
for any $E\in BV(\Omega,\{0,1\}),$ $\rho\in (r,+\infty],$ 
and $l\in (\height-1, \height+1);$
\item[(c)] $\cV(E)\ge \cV(E\setminus (\cyl{\rho_1}{\height}\setminus
\overline{ \cyl{\rho_2}{\height} }))$ 
for any $E\in BV(\Omega,\{0,1\})$ and $r<\rho_2<\rho_1<+\infty;$
\item[(d)] $\cV $ is $L^1(\Omega)$-lower semicontinuous in $BV(\Omega,\{0,1\}).$
\end{itemize} 
\end{hypothesis}

\begin{example}\label{exam:1}
Besides \eqref{example_for_V} the following functionals 
$\cV:BV(\Omega,\{0,1\})\to(-\infty,+\infty]$ 
satisfy Hypothesis \ref{hyp:2}:
\begin{itemize}
\item[1)] given   $f\in L_\loc^1(\Omega)$ with
$f\ge0$ a.e. in $\Omega\setminus \cyl{r}{l}$  
for some $r,l>0,$  
$$\cV(E)=\int_Efdx.$$
In particular, we may take 
$f=\lambda \sdist_{E_0}$ with $\emptyset\ne E_0\in BV(\Omega,\{0,1\})$ and 
$E_0\subset \cyl{r}{h}$ so that by \eqref{representing_ATW}
$\cG$ coincides with $\cA(\cdot,E_0,\lambda) + \int_{E_0}\sdist_{E_0}dx.$

\item[2)] Given a bounded set $E_0\in BV(\Omega,\{0,1\}),$  
$\cV(E)=|E\Delta E_0|^p,$ $p>0.$
\end{itemize}
\end{example}

Given $\cV$ satisfying Hypothesis \ref{hyp:2} set
$$
\mathfrak{a}:= \kappa^{-1} \left(\sup\limits_{R>r}
\inf\limits_{E\in BV(\cyl{R}{\height},\{0,1\})} \cG(E) - \inf\cV\right).
$$ 
Clearly, $\kappa \mathfrak{a} \le \cG(\cyl{r}{\height})-\inf\cV,$
hence $\inf \cG<+\infty.$

In view of the previous observation, once we prove 
the next theorem, the proof of Theorem 
\ref{teo: unconstrained minimizer} follows.

\begin{theorem}[\textbf{Existence of minimizers and uniform bound}]
\label{teo: unconstrained minimizer1}
Suppose that Hypothesis  \ref{hyp:2} holds. Suppose
also $\beta\in L^\infty(\dOm)$  
and there exists $\kappa \in (0,\frac{1}{2}]$ 
such that $-1\le \beta \le 1-2\kappa$ $\cH^n$-a.e 
on $\p\Omega.$  Then the minimum
problem
\begin{equation*}%\label{min_problem1}
\inf_{E \in BV(\Omega,\{0,1\})} \cG(E)
\end{equation*}
has a solution. Moreover, any minimizer is contained in $\cyl{\cR_0}{\height},$
where\footnote{One could refine the expression of $\cR_0$ 
using the isoperimetric
inequality \cite{DG:58-1}, but we do not need this here.}
\begin{equation}\label{eq:universal constant1}
\cR_0:= r+1+\max\Big\{8^{n^2+n+1}\,
\mathfrak{a}^{\frac{n+1}{n}},
\,4\mu(\kappa,n)\Big\} 
\end{equation}
and $\mu(\kappa,n)$ is defined in Section 
\ref{subsec:exist_min_ATW}.
\end{theorem}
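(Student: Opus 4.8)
The plan is to solve the minimization first in the bounded truncated cylinders $\cyl{R}{\height}$, $R>r$, then to show that \emph{all} the resulting minimizers lie in the fixed cylinder $\cyl{\cR_0}{\height}$ with $\cR_0$ as in \eqref{eq:universal constant1}, and finally to pass to the limit $R\to+\infty$.

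\textbf{Truncated problems.} Fix $R>r$ and consider $\inf\{\cG(E):E\in BV(\cyl{R}{\height},\{0,1\})\}$. By Hypothesis \ref{hyp:2}(a) this infimum is $\le\cG(\cyl{r}{\height})<+\infty$; by the coercivity $\kappa P(E)\le\cC(E,\Omega)$ of Proposition \ref{prop:coercivity_of_the_capillarity_functional} and the lower bound on $\cV$ the functional is bounded below; and it is $L^1(\Omega)$-lower semicontinuous by Lemma \ref{lem: lsc_of_F0} (for $\cC(\cdot,\Omega)$) together with Hypothesis \ref{hyp:2}(d) (for $\cV$). A minimizing sequence is then bounded in $BV$, hence, by compactness in the bounded Lipschitz set $\cyl{R}{\height}$, it has an $L^1$-convergent subsequence whose limit $E_R$ is a minimizer. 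Since $\cyl{r}{\height}$ is an admissible competitor in every $\cyl{R}{\height}$ with $R>r$, the map $R\mapsto\inf_{BV(\cyl{R}{\height})}\cG$ is non-increasing, and using $\cV(E_R)\ge\inf\cV$ and the definition of $\mathfrak a$ one obtains the bounds, uniform in $R$,
\begin{equation*}
\cC(E_R,\Omega)\le\kappa\mathfrak a,\qquad P(E_R)\le\mathfrak a,\qquad |E_R|\le\Big(\tfrac{\mathfrak a}{(n+1)\omega_{n+1}^{1/(n+1)}}\Big)^{\frac{n+1}{n}},
\end{equation*}
the last two by Proposition \ref{prop:coercivity_of_the_capillarity_functional} and the isoperimetric inequality.

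\textbf{Uniform confinement (the crux).} Writing $A_\rho:=(\Rn\setminus\overline{\hat B_\rho})\times(0,+\infty)$, note that $\pi^{-1}(\pi(A_\rho))=A_\rho$ up to an $\cH^n$-null set, so Lemma \ref{lem:Lower_bound_of_F0} applies with $A=A_\rho$ and with any annular union $A_{\rho_2}\setminus\overline{A_{\rho_1}}$. Since $E_R\subseteq\cyl{R}{\height}$ has height $\le\height$, intersecting $E_R$ with $\cyl{\rho}{\height}$ coincides (up to an $\cH^n$-null set) with $E_R\cap\pi^{-1}(\hat B_\rho)$, and Hypothesis \ref{hyp:2}(b) gives $\cV(E_R\cap\pi^{-1}(\hat B_\rho))\le\cV(E_R)$; Hypothesis \ref{hyp:2}(c) similarly controls $\cV$ on competitors obtained by deleting an annular column $\{\rho_2<|\hat x|<\rho_1\}$. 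Testing the minimality of $E_R$ against these competitors, expanding $P(E_R,\Omega)$ along the lateral cylinders $\p\hat B_\rho\times(0,+\infty)$ for a.e.\ $\rho$ (those meeting $\p^*E_R$ in an $\cH^n$-null set), and invoking the localized coercivity of Lemma \ref{lem:Lower_bound_of_F0} and the relative isoperimetric inequality for the outer/annular pieces of $E_R$, one derives a De Giorgi--type recursion for the excess volumes $V(\rho):=|E_R\cap A_\rho|$; feeding the bound $V(r)\le|E_R|$ from the previous step into the iteration forces $V(\rho)=0$ for every $\rho\ge\cR_0$. Hence $E_R\subseteq\cyl{\cR_0}{\height}$ for all $R>\cR_0$, with $\cR_0$ independent of $R$. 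This is the step I expect to be the main obstacle: one must choose the comparison competitors so that the three trace terms on the cut cylinders are handled correctly, combine Lemma \ref{lem:Lower_bound_of_F0} with the isoperimetric inequality into a genuinely self-improving inequality, and track the constants so as to land on the (deliberately non-optimal) radius in \eqref{eq:universal constant1}.

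\textbf{Passage to the limit and arbitrary minimizers.} All $E_R$ with $R>\cR_0$ lie in the fixed bounded cylinder and satisfy $P(E_R)\le\mathfrak a$, so along a subsequence $E_R\to E^*$ in $L^1(\Omega)$ with $E^*\subseteq\cyl{\cR_0}{\height}$. Lower semicontinuity (as above) and the monotonicity of $R\mapsto\inf_{BV(\cyl{R}{\height})}\cG$ give $\cG(E^*)=\inf_{R>r}\inf_{BV(\cyl{R}{\height})}\cG$; a truncation argument — cutting an arbitrary $E\in BV(\Omega,\{0,1\})$ by $\cyl{R}{l}$ with $l\in(\height-1,\height)$, using Hypothesis \ref{hyp:2}(b) and the fact that the lateral cut contribution $\cH^n(E\cap\{|\hat x|=R\})$ tends to $0$ along a suitable sequence $R\to+\infty$, since $\int_r^{+\infty}\cH^n(E\cap\{|\hat x|=\rho\})\,d\rho<+\infty$ — identifies this value with $\inf_{BV(\Omega,\{0,1\})}\cG$, so $E^*$ is a global minimizer. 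Finally, if $E$ is any minimizer of $\cG$ over $BV(\Omega,\{0,1\})$, then $\cC(E,\Omega)=\cG(E)-\cV(E)\le\inf_{BV(\Omega)}\cG-\inf\cV\le\kappa\mathfrak a$ (since $\inf_{BV(\Omega)}\cG\le\inf_{BV(\cyl{R}{\height})}\cG$ for all $R$), so the a priori bounds of the first step hold for $E$; comparing $E$ with its horizontal truncations $E\cap\{x_{n+1}<l\}$ — using Hypothesis \ref{hyp:2}(b) with $\rho=+\infty$ and that truncating from above a horizontal plane does not increase $\cC(\cdot,\Omega)$ — forces $E$ to have height $\le\height$, after which the confinement argument of the previous step applies verbatim and yields $E\subseteq\cyl{\cR_0}{\height}$.
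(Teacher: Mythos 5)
Your three\hbox{-}step architecture — minimize in truncated cylinders, establish a confinement radius independent of $R$, then pass to the limit and finally treat arbitrary minimizers — is the paper's architecture, and your first and last steps are sound: the monotonicity of $R\mapsto\inf_{BV(\cyl{R}{\height})}\cG$, the coarea argument showing that for a.e.\ cut radius the lateral slice has small area (this is Step~2 of Lemma~\ref{lem: minmizers in cylindr}), the compactness inside a fixed cylinder, and the horizontal truncation using \cite[page 216]{LAln} plus Hypothesis~\ref{hyp:2}(b) all line up with the paper.

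The gap is exactly the step you flag as the ``crux,'' and your sketch of it is not the paper's mechanism and does not, as written, yield the conclusion you claim. You propose a ``De Giorgi\hbox{-}type recursion for the excess volumes $V(\rho)=|E_R\cap A_\rho|$'' and claim that feeding in $V(r)\le|E_R|$ ``forces $V(\rho)=0$ for every $\rho\ge\cR_0$.'' A standard De Giorgi iteration needs a small initial datum, and here $V(r)$ is bounded only by $|E_R|\sim\mathfrak a^{(n+1)/n}$, which is not small; a multiplicative self\hbox{-}improvement on $V$ does not by itself reach zero. What \emph{does} close is the continuous differential inequality $-V'(\rho)\ge c(n,\kappa)\,V(\rho)^{n/(n+1)}$ (obtained by comparing $E_R$ with $E_R\cap\cyl{\rho}{\height}$, using Hypothesis~\ref{hyp:2}(b), Lemma~\ref{lem:Lower_bound_of_F0} and the isoperimetric inequality — this is what the paper derives in \eqref{eq:dif_eq1}--\eqref{eq:final_diff_eq1}), but integrating it gives a cutoff radius of order $r+1+C(n,\kappa)\,|E_R|^{1/(n+1)}\sim r+1+C\mathfrak a^{1/n}$, which is a different expression than \eqref{eq:universal constant1}, and the paper uses that inequality only to establish \emph{qualitative} boundedness of an arbitrary global minimizer, not to land on $\cR_0$.

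To land on the stated $\cR_0$ the paper instead proves Lemma~\ref{lem:key_lemma1}, which iterates on the \emph{slice measures}, not the volumes. The competitor there is $E^R\setminus(\cyl{r_3}{\height}\setminus\overline{\cyl{r_1}{\height}})$ — an annular deletion, invoking Hypothesis~\ref{hyp:2}(c), not the truncation in (b) — and the key estimate is that for any three radii $r_1<r_2<r_3$ in $[r+1,\cR_0]$ with null slices of $\p^*E^R$, the smaller of the two annular volumes $v_1,v_2$ is bounded by $\mu\,m^{(n+1)/n}$ where $m$ is the largest of the three slice areas $\cH^n(E^R\cap\p\cyl{r_i}{\height})$. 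A nested\hbox{-}interval scheme then drives $m_k$ to zero super\hbox{-}exponentially, using only the a~priori volume bound and the size of the interval $[r+1,\cR_0]$, and produces a limit radius $t_R\in[r+1,\cR_0]$ with $\cH^n(E^R\cap\p\cyl{t_R}{\height})=0$. That vanishing slice is what makes the perimeter split cleanly (\eqref{muhim_tenglik1}), so that \eqref{eq:min_cyl_some1}--\eqref{oooooo1} give $\cG(E^R)\ge\cG(E^R\cap\cyl{t_R}{\height})+\kappa|E^R\setminus\overline{\cyl{t_R}{\height}}|^{n/(n+1)}$ and minimality forces the excess to vanish. Your sketch names the right ingredients (localized coercivity, relative isoperimetry, both Hypotheses (b) and (c)) but leaves the actual self\hbox{-}improving mechanism open; you need to commit either to the slice\hbox{-}measure iteration of Lemma~\ref{lem:key_lemma1}, or to the differential\hbox{-}inequality route with a different (in fact smaller) confinement radius, and in the latter case explain why the resulting radius is still dominated by \eqref{eq:universal constant1} in all regimes of $\mathfrak a$ and $\kappa$.
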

 
\begin{remark}\label{rem:muhim_coef}
In case of Example \ref{exam:1} 1) with  $f=\lambda\sdist_{E_0}$ for some 
$\cyl{r}{\height} \supseteq E_0,$ 
$$
\kappa \mathfrak{a} \le \kappa \sup\limits_{R>r}
\inf\limits_{E\in BV(\cyl{R}{\height},\{0,1\})}
\cA(E,E_0,\lambda) \le\kappa \cA(E_0,E_0,\lambda)=\kappa 
\cC(E_0,\Omega)\le \kappa P(E_0).
$$
Hence,  $\cR_0\le R_0,$  where $R_0$ is defined in 
\eqref{eq:universal constant}. The same is true if $\cV$ 
is as in \eqref{example_for_V}.
\end{remark}

The assumption on $\beta$ and  the $L^1(\Omega)$-lower 
semicontinuity of $\cC(\cdot,\Omega)$
(Lemma \ref{lem: lsc_of_F0}) imply the $L^1(\Omega)$-lower  
semicontinuity of $\cG.$ 
Moreover,  the  coercivity \eqref{eq:bound_F-0} of 
$\cC(\cdot,\Omega),$  Hypothesis  \ref{hyp:2} (a)
and \eqref{eq:bound_F-0_zero}
 imply the coercivity of $\cG:$
\begin{equation}\label{new_coer}
\cG(E)\ge \kappa P(E) + \inf \cV \qquad \forall E\in BV(\Omega,\{0,1\}).  
\end{equation}

The main problem in the proof of existence of
minimizers of $\cG $ is the lack of compactness due to the 
unboundedness of $\Omega.$ 
However, for every $R>0$ inequality \eqref{new_coer}, 
the compactness theorem in
$BV(\cyl{R}{\height},\{0,1\})$ (see for instance \cite[Theorems 3.23 and 3.39]{AFP:00})
and the lower semicontinuity of $\cG $
imply that  there exists a solution $E^R\in BV(\cyl{R}{\height},\{0,1\})$ of
$$
\inf\limits_{E\in BV(\cyl{R}{\height},\{0,1\})} \cG(E).
$$

To prove Theorem \ref{teo: unconstrained minimizer1}
we mainly follow  \cite[Section 4]{LCAM07}, where the existence of
volume preserving minimizers of $\cC(\cdot,\Omega)$ has been shown.
We need two preliminary lemmas.
As in \cite[Section 3]{LCAM07}
first we show that one can choose a minimizing sequence
consisting of bounded sets.

\begin{lemma}[\bf Truncations with horizontal hyperplanes and
vertical cylinders ]\label{lem: minmizers in cylindr}
Suppose that Hypothesis  \ref{hyp:2} holds. Then
\begin{equation}\label{eq:min_R1}
\inf\limits_{E\in BV(\Omega,\{0,1\})} \cG(E) =
\inf\limits_{R>0} \inf\limits_{E\in BV(\cyl{R}{\height},\{0,1\})} \cG(E).
\end{equation}
\end{lemma}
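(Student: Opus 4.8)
The plan is to prove \eqref{eq:min_R1} by a double inequality. The inequality ``$\le$'' is immediate: the infimum over $BV(\Omega,\{0,1\})$ is taken over a larger class than $BV(\cyl{R}{\height},\{0,1\})$ for any $R$, so $\inf_{BV(\Omega,\{0,1\})}\cG \le \inf_{BV(\cyl{R}{\height},\{0,1\})}\cG$ for every $R>0$, and taking the infimum over $R$ on the right preserves this. Hence the content is the opposite inequality: given any $E\in BV(\Omega,\{0,1\})$ with $\cG(E)<+\infty$, I must produce, for each $\varepsilon>0$, some $R>0$ and some $E'\in BV(\cyl{R}{\height},\{0,1\})$ with $\cG(E')\le \cG(E)+\varepsilon$. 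Since $\inf\cG<+\infty$ by the observation preceding the lemma, it suffices to work with an $E$ for which $\cC(E,\Omega)+\cV(E)$ is finite, and in particular (by \eqref{new_coer}) $P(E)<+\infty$ and $|E|<+\infty$.

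The truncation is carried out in two stages, exactly as the name of the lemma suggests. \textbf{Horizontal truncation.} First I cut $E$ with a horizontal slab $\Omega_l=\R^n\times(0,l)$ for a suitable height $l\in(\height-1,\height+1)$. For a.e.\ such $l$ the slice $\{x_{n+1}=l\}$ is ``good'', i.e.\ $\cH^{n}(\{x_{n+1}=l\}\cap\p^*E)$ contributes only $\cH^n(\{x_{n+1}=l\}\cap E)$ to the perimeter, and by the coarea/slicing formula $\int_{\height-1}^{\height+1}\cH^n(\{x_{n+1}=l\}\cap E)\,dl\le |E|<+\infty$, so we may choose $l$ with $\cH^n(\{x_{n+1}=l\}\cap E)$ as small as we like — in particular $\le \varepsilon$. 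Replacing $E$ by $E\cap\overline{\Omega_l}$ does not increase the boundary term $-\int_{\p\Omega}\beta\chi_E\,d\cH^n$ (the trace on $\p\Omega$ is unchanged since $\Trace(E)=\Trace(E\cap\overline{\Omega_l})$), changes $P(\cdot,\Omega)$ by at most $+\cH^n(\{x_{n+1}=l\}\cap E)\le\varepsilon$ coming from the new horizontal piece of boundary while removing $P(E,\Omega\setminus\overline{\Omega_l})\ge0$, and by Hypothesis~\ref{hyp:2}(b) does not increase $\cV$. Hence $\cG(E\cap\overline{\Omega_l})\le\cG(E)+\varepsilon$, and $E\cap\overline{\Omega_l}\subset\Omega_l$.

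\textbf{Vertical truncation.} Now set $\tilde E:=E\cap\overline{\Omega_l}$, which lies in the slab of height $l$ but may still be unbounded in the horizontal directions. I intersect with cylinders $\cyl{\rho}{\height}$ of increasing radius $\rho$. Using the coarea formula on the radial variable $\hat x\mapsto|\hat x|$ together with $P(\tilde E,\Omega)<+\infty$ and $|\tilde E|<+\infty$, for a.e.\ $\rho$ the lateral cut $\{|\hat x|=\rho\}\times(0,l)$ is ``good'' and $\liminf_{\rho\to+\infty}\cH^n(\{|\hat x|=\rho\}\times(0,l)\cap \tilde E)=0$ (otherwise $|\tilde E|=+\infty$); also $\int_{\p\Omega\setminus\hat B_\rho}|\beta||\chi_{\tilde E}|\,d\cH^n\to0$ since $\chi_{\tilde E}\in L^1(\p\Omega)$ by Lemma~\ref{lem:betacond}. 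Choosing $\rho=R$ large with the lateral slice contribution $\le\varepsilon$, the set $E':=\tilde E\cap\cyl{R}{\height}$ satisfies: $P(E',\Omega)\le P(\tilde E,\Omega)+\varepsilon$ (new lateral boundary small, old exterior part dropped), $-\int_{\p\Omega}\beta\chi_{E'}\,d\cH^n\le -\int_{\p\Omega}\beta\chi_{\tilde E}\,d\cH^n+\varepsilon$, and by Hypothesis~\ref{hyp:2}(b) (applied with $\rho=R$, $l=\height$) $\cV(E')\le\cV(\tilde E)$. Thus $\cG(E')\le\cG(\tilde E)+2\varepsilon\le\cG(E)+3\varepsilon$, with $E'\in BV(\cyl{R}{\height},\{0,1\})$. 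Letting $\varepsilon\to0^+$ and taking the infimum over $E$ yields $\inf_{BV(\Omega,\{0,1\})}\cG\ge\inf_{R>0}\inf_{BV(\cyl{R}{\height},\{0,1\})}\cG$, completing the proof.

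The main obstacle is purely bookkeeping: one must be careful that each truncation decreases (or does not increase by more than a controlled error) \emph{all three} pieces of $\cG$ simultaneously — the interior perimeter, the capillary boundary term, and $\cV$ — and the only non-elementary inputs are that $\chi_E\in L^1(\p\Omega)$ with $\|\chi_E\|_{L^1(\p\Omega)}\le P(E,\Omega)$ (Lemma~\ref{lem:betacond}), which controls the boundary term under the horizontal cut and guarantees the tail of the boundary integral vanishes under the vertical cut, together with the monotonicity properties Hypothesis~\ref{hyp:2}(b)–(c) of $\cV$. The slicing/coarea selection of good heights $l$ and radii $\rho$ with small slice area is standard. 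Note Hypothesis~\ref{hyp:2}(c) is in fact what is needed in the subsequent Lemma of the appendix (the annular truncation) rather than here; for \eqref{eq:min_R1} itself only (b) is used.
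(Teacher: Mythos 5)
Your overall two‑stage plan (first cut with a horizontal slab, then with a vertical cylinder) is the same as the paper's, and the vertical step is sound: the coarea/liminf argument for finding radii with arbitrarily small lateral slice, the good‑radius selection, the $L^1(\p\Omega)$ tail bound for the boundary term (via Lemma~\ref{lem:betacond}), and the use of Hypothesis~\ref{hyp:2}(b) all work, and your closing remark that (c) is only needed later is also correct. (The paper instead controls the boundary term in the exterior annulus via Lemma~\ref{lem:Lower_bound_of_F0} applied with $A=\Omega_\height\setminus\overline{\cyl{R}{\height}}$, avoiding the $\|\beta\|_\infty$ constant, but your route is equally valid.)

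The horizontal step, however, has a genuine gap. You claim that since $\int_{\height-1}^{\height+1}\cH^n(\{x_{n+1}=l\}\cap E)\,dl \le |E| < +\infty$, you ``may choose $l$ with $\cH^n(\{x_{n+1}=l\}\cap E)$ as small as we like''. This is false: that integral equals $|E\cap(\R^n\times(\height-1,\height+1))|$, which is a fixed finite number, so over the \emph{bounded} interval $(\height-1,\height+1)$ you can only guarantee a slice with area at most that number divided by $2$ — not arbitrarily small. (You cannot push $l\to+\infty$ either, because Hypothesis~\ref{hyp:2}(b) only permits $l\in(\height-1,\height+1)$.) Fortunately the $\varepsilon$ is not needed here at all: for a good slice $l$, one has $P(E\cap\overline{\Omega_l},\Omega)-P(E,\Omega)=\cH^n(\{x_{n+1}=l\}\cap E^{(1)})-P(E,\Omega\setminus\overline{\Omega_l})$, and applying Lemma~\ref{lem:betacond} with $\beta\equiv1$ in the half‑space $\R^n\times(l,+\infty)$ to the set $E\setminus\overline{\Omega_l}$ gives $\cH^n(\{x_{n+1}=l\}\cap E^{(1)})\le P(E,\Omega\setminus\overline{\Omega_l})$, so the difference is $\le0$. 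Equivalently, one can invoke the comparison theorem for intersections with closed convex sets quoted in \cite[page 216]{LAln} (this is exactly what the paper does in its Step~1, establishing the strict inequality $\cG(E)>\cG(E\cap\overline{\Omega_{\height-\frac12}})$). Replacing your small‑slice selection with this argument removes the gap and also simplifies the bookkeeping, since the horizontal cut then costs nothing.
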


\begin{proof}

We need two intermediate steps. The first step
concerns truncations with horizontal hyperplanes.

{\bf Step 1.} We have
%%%%%%%%%%%%%%%%%%%%%%%%%%%%%%%%%%%%%%%%%%%%%%%%%%%%%%%%%%%%%%%%%%
\begin{equation}\label{eq231}
\inf\limits_{E\in BV(\Omega,\{0,1\})} \cG(E) =
 \inf\limits_{E\in BV(\Omega_{\height},\{0,1\})} \cG(E).
\end{equation}
%%%%%%%%%%%%%%%%%%%%%%%%%%%%%%%%%%%%%%%%%%%%%%%%%%%%%%%%%%%%%%%%%%

Indeed, it suffices to show that if $E\setminus \Omega_{\height-\frac14} \ne\emptyset,$ then
\begin{equation*} 
\cG(E) \ge \cG\big(E\cap\overline{\Omega_{\height-\frac12}}\big).
\end{equation*}
Clearly, $E$ and $E\cap \overline{\Omega_{\height-\frac12}}$ have the same trace on $\p\Omega$
and thus
$$
\int_{\p\Omega} [1+\beta]\, \chi_E \,d\cH^n =
\int_{\p\Omega} [1+\beta]\, \chi_{ E\cap\overline{\Omega_{\height-\frac12}} } \,d\cH^n.
$$
From the comparison theorem of \cite[page 216]{LAln}
%\footnote{For
%any $E \in BV(\R^{n+1},\{0,1\})$ and any
%closed convex set $C\subseteq \R^{n+1}$
%the inequality $P(E \cap C) \leq P(E)$ holds;
%equality occurs if and only if $\vert E \setminus C
%\vert=0$.} 
we have
$$
P(E) > P\big(E\cap\overline{\Omega_{\height-\frac12}} \big).
$$
By Hypothesis \ref{hyp:2} (b) we have also
$$
\cV(E)\ge \cV(E\cap \overline{ \Omega_{\height-\frac12}}),
$$
therefore from the definition of $\cG$ we get even the strict inequality 
\begin{align}\label{strict_ineq1}
\cG(E) > \cG(E\cap\overline{\Omega_{\height-\frac12}}).
\end{align}

The second step is more delicate and concerns truncations with
the lateral boundary of vertical cylinders.

{\bf Step 2.} For any $\epsilon\in (0,1)$
there exists $R_\epsilon>r$ and $E_\epsilon\in 
BV(\cyl{R_\epsilon}{\height},\{0,1\})$ such that
$$
\cG(E_\epsilon) \le \inf\limits_{E\in BV(\Omega_{\height},\{0,1\})}
 \cG(E) +\epsilon.
$$

Indeed, according to Step 1 and Hypothesis  \ref{hyp:2} (a), given $\epsilon>0$
there exists $F_\epsilon\in BV(\Omega_{\height},\{0,1\})$ with $F_\epsilon\subset
\overline{\Omega_{\height-\frac14}}$ such that
$$
\cG(F_\epsilon) < \inf\limits_{E\in BV(\Omega,\{0,1\})} \cG(E)
 +\frac{\epsilon}{2} <+\infty.$$
Since  $|F_\epsilon|<+\infty,$  for sufficiently large $R>r$ one has
$$
|F_\epsilon\cap (\cyl{R+1}{\height}\setminus \cyl{R}{\height})| = \int_{R}^{R+1}
\cH^n(F_\epsilon\cap \p \cyl{\rho}{\height})\,d\rho < \dfrac{\epsilon}{2}.
$$
Hence  there exists $R_\epsilon\in (R,R+1)$ such that
$$\cH^n\big(F_\epsilon\cap    \p \cyl{R_\epsilon}{\height}\big)\le\frac{\epsilon}{2},\qquad
\cH^n\big(\Omega\cap \p^*F_\epsilon\cap \p \cyl{R_\epsilon}{\height}\big)=0.$$

%\begin{figure}[h]
%\includegraphics[scale=0.5]{13.jpg}
%\caption{The choice of $E_\epsilon:$ the perimeter of $F_\epsilon$ does not
%charge those portions of $\p^*F_\epsilon$ on the lateral part of
%$\p\cyl{R_\epsilon}{\height}.$}
%\end{figure}

Now, let $E_\epsilon:= F_\epsilon\cap \cyl{R_\epsilon}{\height}.$ Since
$\cH^n\big(\Omega\cap \p^*F_\epsilon\cap \p \cyl{R_\epsilon}{\height}\big)=0,$
we have
\begin{equation}\label{per_kesish}
\begin{aligned}
P(E_\epsilon,\Omega) = & P(E_\epsilon,\Omega_\height)
=P(F_\epsilon,\Omega_\height) + \cH^n\big(F_\epsilon\cap  \p \cyl{R_\epsilon}{\height}\big) -
P\big(F_\epsilon,\Omega_\height\setminus \overline{\cyl{R_\epsilon}{\height}}\big)\\
=& P(F_\epsilon,\Omega) + \cH^n\big(F_\epsilon \cap \p \cyl{R_\epsilon}{\height}\big) -
P\big(F_\epsilon,\Omega_\height\setminus \overline{\cyl{R_\epsilon}{\height}}\big).
\end{aligned} 
\end{equation}
By Hypothesis \ref{hyp:2} (a), $\cV(F_\epsilon)\ge\cV(E_\epsilon),$ thus
employing \eqref{per_kesish} we get
\begin{align*}
\cG(F_\epsilon) \ge & \cG(E_\epsilon) -
\cH^n(F_\epsilon\cap \p \cyl{R_\epsilon}{\height}) + 
 P(F_\epsilon, \Omega_\height\setminus \overline{\cyl{R_\epsilon}{\height}}) -
\int_{\p\Omega} \beta\,\chi_{F_\epsilon\setminus \cyl{R_\epsilon}{\height}}\,d\cH^n.
\end{align*}

By Lemma \ref{lem:Lower_bound_of_F0} applied with  $E=F_\epsilon$ and
$A= \Omega_\height\setminus \overline{\cyl{R_\epsilon}{\height}},$ we have
$$
P(F_\epsilon, \Omega_\height\setminus \overline{\cyl{R_\epsilon}{\height}}) -
\int_{\p\Omega} \beta\,\chi_{F_\epsilon\setminus \cyl{R_\epsilon}{\height}}\,d\cH^n\ge0.
$$
Consequently, from the choice of $F_\epsilon$ and $R_\epsilon$ we get
\begin{align*}
\cG(E_\epsilon) \le  & \cG(F_\epsilon) +
\cH^n(F_\epsilon \cap \p \cyl{R_\epsilon}{\height}) <
\inf\limits_{E\in BV(\Omega,\{0,1\})} \cG(E) +\epsilon.
\end{align*}
This concludes the proof of Step 2.

Now, observe that
$$
\inf\limits_{E\in BV(\Omega,\{0,1\})} \cG(E) \le
\inf\limits_{R>0} \inf\limits_{E\in BV(\cyl{R}{\height},\{0,1\})} \cG(E).
$$
On the other hand, since the mapping
$$
R\in(0,+\infty)\mapsto \inf\limits_{E\in BV(\cyl{R}{\height},\{0,1\})} \cG(E)
$$
is nonincreasing, Step 2 implies
$$
\inf\limits_{E\in BV(\Omega,\{0,1\})} \cG(E) \ge
\inf\limits_{R>0} \inf\limits_{E\in BV(\cyl{R}{\height},\{0,1\})} \cG(E),
$$
therefore \eqref{eq:min_R1} follows.
\end{proof}

As in \cite[Lemma 3]{LCAM07} the following lemma holds.

\begin{lemma}[\bf Good choice of a radius]\label{lem:key_lemma1}
Suppose that $\beta$ satisfies \eqref{hyp:main1} and 
Hypothesis \ref{hyp:2} holds. Let
$E^R$ be a minimizer of
$\cG $ in  $BV(\cyl{R}{\height},\{0,1\}).$ Then
for any $R>\cR_0$ there exists $t_R\in [r+1,\cR_0]$ such that
\begin{equation*} 
\cH^n(E^R\cap  \p \cyl{t_R}{\height}) = 0.
\end{equation*}
Hence
\begin{gather}\label{muhim_tenglik1}
P(E^R,\Omega) = P\big(E^R\setminus \overline{\cyl{t_R}{\height}},\Omega\big)+
P\big(E^R\cap \cyl{t_R}{\height},\Omega\big).
\end{gather}
\end{lemma}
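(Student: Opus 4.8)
The plan is to exploit the finiteness of $|E^R|$ together with a slicing (coarea) argument in the radial direction, exactly in the spirit of \cite[Lemma 3]{LCAM07}. First I would record that by Lemma \ref{lem: minmizers in cylindr} (Step 1 of its proof) we may as well assume $E^R \subset \overline{\Omega_{\height-1/4}}$, so $|E^R| < +\infty$. For $\rho \in (r, +\infty)$ set $g(\rho) := \cH^n\big(E^R \cap \p \cyl{\rho}{\height}\big)$; since $\p \cyl{\rho}{\height} \cap \Omega$ is (essentially) the lateral cylinder $\p \hat B_\rho \times (0,\height)$, the coarea formula in the $|\hat x|$-variable gives
$$
\int_{r+1}^{\cR_0} g(\rho)\,d\rho \le |E^R \cap (\cyl{\cR_0}{\height} \setminus \overline{\cyl{r+1}{\height}})| \le |E^R| < +\infty.
$$
Hence $g \in L^1(r+1,\cR_0)$, so the set of $\rho$ for which $g(\rho) = 0$ has full measure in $(r+1,\cR_0)$; in particular there is $t_R \in [r+1,\cR_0]$ with $\cH^n(E^R \cap \p \cyl{t_R}{\height}) = 0$. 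One may additionally require, again by discarding a null set of radii, that $\cH^n\big(\Omega \cap \p^* E^R \cap \p \cyl{t_R}{\height}\big) = 0$; this is the genuinely useful condition, and it is compatible with the first one.

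Once such a $t_R$ is fixed, \eqref{muhim_tenglik1} is just the decomposition of the perimeter measure across the Lipschitz hypersurface $\Omega \cap \p \cyl{t_R}{\height}$: from the formula recalled in Section \ref{sec:preliminaries},
$$
P(E^R,\Omega) = P\big(E^R, \cyl{t_R}{\height}\big) + P\big(E^R, \Omega \setminus \overline{\cyl{t_R}{\height}}\big) + \int_{\Omega \cap \p \cyl{t_R}{\height}} |\chi_{E^R}^+ - \chi_{E^R}^-|\,d\cH^n.
$$
Because $\cH^n\big(\Omega \cap \p^* E^R \cap \p \cyl{t_R}{\height}\big) = 0$, the trace jump term vanishes, and since $E^R \subset \cyl{R}{\height}$ one has $P(E^R, \cyl{t_R}{\height}) = P(E^R \cap \cyl{t_R}{\height}, \Omega)$ and $P(E^R, \Omega \setminus \overline{\cyl{t_R}{\height}}) = P(E^R \setminus \overline{\cyl{t_R}{\height}}, \Omega)$ by the same slicing identity applied to the truncated sets. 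This yields \eqref{muhim_tenglik1}.

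The only subtle point — and the reason one wants $t_R$ to land in the \emph{fixed} interval $[r+1,\cR_0]$ rather than merely in $(r,R)$ — is that later (in the proof of Theorem \ref{teo: unconstrained minimizer1}) one compares $E^R$ with its truncations $E^R \cap \cyl{t_R}{\height}$ and $E^R \setminus \overline{\cyl{t_R}{\height}}$ and needs the radius to be controlled uniformly in $R$. This is guaranteed here because $\int_{r+1}^{\cR_0} g(\rho)\,d\rho \le |E^R|$ is finite for \emph{every} $R$, so a good radius exists in $[r+1,\cR_0]$ regardless of how large $R$ is; the main obstacle is really bookkeeping, namely keeping track of which null sets of radii are being excluded (the one from $g(\rho)=0$ and the one from the reduced-boundary trace condition) and checking that Hypothesis \ref{hyp:2} is not needed for this particular statement beyond what guarantees $|E^R|<+\infty$ via Lemma \ref{lem: minmizers in cylindr}. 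I do not anticipate any serious difficulty beyond that.
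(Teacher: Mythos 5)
The central step of your argument is false. From
\[
\int_{r+1}^{\cR_0} g(\rho)\,d\rho \le |E^R| < +\infty, \qquad g(\rho) := \cH^n\big(E^R \cap \p \cyl{\rho}{\height}\big),
\]
you conclude that ``the set of $\rho$ for which $g(\rho)=0$ has full measure in $(r+1,\cR_0)$.'' But integrability of $g$ does not imply $g=0$ a.e.\ — an $L^1$ function can be strictly positive everywhere (think of a constant, or of a bump-shaped set like a curved shell whose lateral slice at every radius in some interval has positive $\cH^n$-measure). The only thing the integral bound gives, by pigeonhole, is that there exist radii where $g(\rho)$ is \emph{small}; it gives no radius at which $g(\rho)$ is \emph{exactly} zero. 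And exact vanishing is precisely what \eqref{muhim_tenglik1} requires: one checks directly that \eqref{muhim_tenglik1} holds if and only if $\cH^n(E^R\cap\p\cyl{t_R}{\height})=0$, because the truncation formula reads $P(E^R\cap\cyl{t_R}{\height},\Omega)+P(E^R\setminus\overline{\cyl{t_R}{\height}},\Omega) = P(E^R,\Omega) + 2\cH^n(E^R\cap\p\cyl{t_R}{\height})$, and the extra term is the $\cH^n$-measure of the density-one points on the cut surface, i.e.\ exactly $\cH^n(E^R\cap\p\cyl{t_R}{\height})$. So the condition is not generic, and the $L^1$ bound cannot produce it.

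Related to this, you also conflate two genuinely different slicing conditions. The condition $\cH^n(\Omega\cap\p^*E^R\cap\p\cyl{\rho}{\height})=0$ \emph{is} generic (only countably many $\rho$ can fail it, since the slices of $\p^*E^R$ at distinct radii are disjoint and $\cH^n(\p^*E^R)<+\infty$); you can always choose a radius with this property, and this is what is used repeatedly inside the paper's proof to justify the intermediate perimeter splittings. But it is the \emph{weaker} of the two conditions — it says a.e.\ point of the lateral cylinder has density $0$ or $1$, not that the density-$1$ part is negligible — and by itself it does not kill the extra term $\cH^n(E^R\cap\p\cyl{t_R}{\height})$ that appears when you truncate. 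It is the condition $\cH^n(E^R\cap\p\cyl{t_R}{\height})=0$ that is the statement of the lemma, and the one that does the work in Theorem~\ref{teo: unconstrained minimizer1}.

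What the paper actually does — and what is missing from your plan — is a dichotomy/self-improving iteration that uses the \emph{minimality} of $E^R$ and the size of the constant $\cR_0$ in an essential way. Comparing $E^R$ with $E^R\setminus(\cyl{r_3}{\height}\setminus\overline{\cyl{r_2}{\height}})$ (a competitor built by removing an annular slab), together with the coercivity estimate from Lemma~\ref{lem:Lower_bound_of_F0}, gives the key inequality
\[
\min\{v_1,v_2\}\le \mu(\kappa,n)\,m^{\frac{n+1}{n}},
\]
where $v_1,v_2$ are volumes of $E^R$ in two adjacent annuli and $m$ is the largest of the three bounding slice measures. Iterating this on a nested sequence of dyadically shrinking subintervals of $[r+1,\cR_0]$ — with the specific lower bound on $\cR_0-r-1$ guaranteeing that the initial ratio $8^{n(n+1)}v_0/h_0$ is at most $1/2$ and that $\mu/h_0\le1$ — produces slice measures $m_k\le (1/2)^{((n+1)/n)^k}\to0$ doubly exponentially, and the good radius $t_R$ is the limit of the corresponding radii $r_{k,i_k}$. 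Your proposal uses neither the minimality of $E^R$ (beyond invoking Step~1 of Lemma~\ref{lem: minmizers in cylindr}) nor the definition of $\cR_0$, and these are precisely the ingredients that make the lemma true. The ``bookkeeping'' you flag as the only difficulty is not the difficulty; the difficulty is that the existence of an exact-zero slice is a nontrivial consequence of minimality, not of a coarea/measure-theoretic argument alone.
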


\begin{proof}
The idea of the proof is to cut the $E^R$ with vertical cylinders,
similarly  to \cite[Lemma 5]{LCAM07} where
cuts with horizontal hyperplanes are performed.

For  $R>\cR_0$ by the isoperimetric-type inequality \cite[Theorem VI]{DG:54-1}, 
\eqref{new_coer},
the minimality of $E^R$  and by the definition
of $\mathfrak{a}$ we have
\begin{align*}
|E^R|^{\frac{n}{n+1}} \le & P(E^R)\le
\frac{\cG(E^R)- \inf \cV}{\kappa} =\frac{1}{\kappa}
\left(\inf\limits_{E\in BV(\cyl{R}{\height},\{0,1\})} \cG(E)+\inf\cV\right) 
\le \mathfrak{a}.
\end{align*}
Thus, for any $0<a<b$ one has
\begin{equation}\label{uniform_est1}
|E^R\cap (\cyl{b}{\height}\setminus \cyl{a}{\height})|\le
\mathfrak{a}^{\frac{n+1}{n}}.
\end{equation}

Take $r+1<r_1<r_2<r_3<\cR_0$  such that
$$
\cH^n(\Omega\cap \p^* E^R \cap \p \cyl{r_i}{\height})=0,\quad i=1,2,3,
$$
and set
$$v_1=|E^R\cap (\cyl{r_2}{\height}\setminus \cyl{r_1}{\height} )|,\qquad
v_2=|E^R\cap (\cyl{r_3}{\height}\setminus \cyl{r_2}{\height} )|,$$
$$m=\max\limits_{i=1,2,3} \cH^n(E^R \cap \p \cyl{r_i}{\height}).$$

{\bf Step 1.} We claim that
\begin{equation}\label{small v1 v21}
\min\{v_1,v_2\} \le \mu  m^{\frac{n+1}{n}},
\end{equation}
where $\mu:=\mu(\kappa,n)>0.$

It suffices to prove that
$$
v_1^{\frac{n}{n+1}}+v_2^{\frac{n}{n+1}}\le \,
 2\mu^{\frac{n}{n+1}} m.
$$
We have
\begin{align*}
v_1^{\frac{n}{n+1}}\le &   P\big(E^R\cap(\cyl{r_2}{\height}\setminus \overline{ \cyl{r_1}{\height}} )\big)
\le P(E^R,\cyl{r_2}{\height}\setminus \overline{\cyl{r_1}{\height}}) +
\cH^n(E^R\cap  \p \cyl{r_1}{\height})\\
& + \cH^n(E^R\cap  \p \cyl{r_2}{\height})+
\int_{\p \Omega} \chi_{E^R\cap (\cyl{r_2}{\height}\setminus \overline{\cyl{r_1}{\height}} )}\,d\cH^n\\
\le &  P(E^R,\cyl{r_2}{\height}\setminus \overline{\cyl{r_1}{\height}}) +
\int_{\p \Omega} \chi_{E^R\cap (\cyl{r_2}{\height}\setminus \overline{\cyl{r_1}{\height}} )} \,d\cH^n+2m.
\end{align*}
Similarly,
\begin{align*}
v_2^{\frac{n}{n+1}}
\le&  P(E^R,\cyl{r_3}{\height}\setminus \overline{\cyl{r_2}{\height}}) +
\int_{\p \Omega} \chi_{E^R\cap (\cyl{r_3}{\height}\setminus \overline{\cyl{r_2}{\height}} )} \,d\cH^n+2m.
\end{align*}
Hence
\begin{equation}\label{comp v1 v21}
v_1^{\frac{n}{n+1}}+v_2^{\frac{n}{n+1}}\le
 P(E^R,\cyl{r_3}{\height}\setminus \overline{\cyl{r_1}{\height}}) +
\int_{\p \Omega} \chi_{E^R\cap (\cyl{r_3}{\height}\setminus
\overline{\cyl{r_1}{\height}} )} \,d\cH^n +4m.
\end{equation}
Comparing $E^R\setminus (\cyl{r_3}{\height}\setminus 
\overline{\cyl{r_1}{\height}}))$ 
with $E^R,$ we get  
   $\cG(E^R)\le\cG(E^R\setminus (\cyl{r_3}{\height}\setminus 
\overline{\cyl{r_1}{\height}})),$ therefore 
from Hypothesis \ref{hyp:2} (c) we obtain
\begin{equation}\label{zgsd1}
\begin{aligned}
P(E^R) \le &
P\big(E^R\setminus (\cyl{r_3}{\height}\setminus
\overline{\cyl{r_1}{\height}})\big)
+\int\limits_{\p \Omega} [1+\beta]\,
\chi_{E^R\cap (\cyl{r_3}{\height}\setminus
\overline{\cyl{r_1}{\height}} )} \,d\cH^n.
\end{aligned}
\end{equation}
Inserting in \eqref{zgsd1} the identity
\begin{align*}
P(E^R\setminus (\cyl{r_3}{\height}\setminus \overline{\cyl{r_1}{\height}}))
=&P(E^R) +  \cH^n(E^R\cap   \p \cyl{r_1}{\height}) +
\cH^n(E^R\cap   \p \cyl{r_3}{\height})\\
&-P(E^R, \cyl{r_3}{\height}\setminus \overline{\cyl{r_1}{\height}}) -
\int_{\p \Omega} \chi_{E^R\cap (\cyl{r_3}{\height}\setminus \overline{\cyl{r_1}{\height}})} \,d\cH^n,
\end{align*}
we get
\begin{equation}\label{dwqs1}
P(E^R,\cyl{r_3}{\height}\setminus \overline{\cyl{r_1}{\height}}) -
\int_{\p \Omega} \beta\, \chi_{E^R\cap (\cyl{r_3}{\height}\setminus
\overline{\cyl{r_1}{\height}})} \,d\cH^n
\le 2m.
\end{equation}
By Lemma \ref{lem:Lower_bound_of_F0} applied with
 $A=\cyl{r_3}{\height}\setminus\overline{\cyl{r_1}{\height}}$
and $E=E^R,$ the left-hand-side of \eqref{dwqs1}
is not less than
\begin{gather*}
\kappa P(E^R,\cyl{r_3}{\height}\setminus\overline{\cyl{r_1}{\height}})+\kappa
\int_{\p \Omega} \chi_{E^R\cap (\cyl{r_3}{\height}\setminus \overline{\cyl{r_1}{\height}})} \,d\cH^n,
\end{gather*}
hence
$$P\big(E^R,\cyl{r_3}{\height}\setminus \overline{\cyl{r_1}{\height}}\big)+
\int_{\p \Omega} \chi_{E^R\cap (\cyl{r_3}{\height}\setminus\overline{\cyl{r_1}{\height}})} \,d\cH^n
 \le \frac{2m}{\kappa}.$$
Then from \eqref{comp v1 v21} it follows that
$$
v_1^{\frac{n}{n+1}}+v_2^{\frac{n}{n+1}}\le \,
\left(\frac{2m}{\kappa} +4m\right)= 2\mu^{\frac{n}{n+1}} m.
$$
This finishes the proof of Step 1.

Before going to Step 2 we need some preliminaries.
Choose any $R\ge \cR_0.$
Let $a_0=r+1,$ $b_0=\cR_0.$ Given $r+1\le a_k\le b_k\le \cR_0,$ $k\in\N,$ define
\begin{equation*} 
v_k=|E^R\cap (\cyl{b_k}{\height}\setminus {\cyl{a_k}{\height}})|.
\end{equation*}
By \eqref{strict_ineq1} $E^R\setminus \Omega_{\height-\frac14} = \emptyset,$
hence
\begin{equation*} 
|E^R\cap (\cyl{b}{\height}\setminus \cyl{a}{\height})| =
\int\limits_a^b \cH^n(E^R\cap  \p \cyl{\rho}{\height})\,d\rho,\quad 0\le a<b.
\end{equation*}
Therefore,  for $h_k=\frac{b_k-a_k}{4}$ it is possible to find $r_{k,1}\in(a_k,a_k+h_k),$
$r_{k,2}\in (\frac{a_k+b_k}{2}-\frac{h_k}{2},\frac{a_k+b_k}{2}+\frac{h_k}{2})$ and
$r_{k,3}\in(b_k-h_k,b_k)$ such that
\begin{equation}\label{hhjhjh1}
\cH^n(E^R\cap \p \cyl{r_{k,i}}{\height})\le \frac{v_k}{h_k},\quad
\cH^n(\Omega\cap \p^*E^R\cap \p \cyl{r_{k,i}}{\height})=0
\quad\text{for $i=1,2,3$.}
\end{equation}
We choose
$$(a_{k+1},b_{k+1}) =
\begin{cases}
(r_{k,1},r_{k,2}) & ~{\rm if}~
|E^R\cap (\cyl{r_{k,1}}{\height}
\setminus {\cyl{r_{k,2}}{\height}})|
 \le |E^R\cap (\cyl{r_{k,2}}{\height} \setminus {\cyl{r_{k,3}}{\height}})|,\\
(r_{k,2},r_{k,3}) & ~{\rm if}~
|E^R\cap (\cyl{r_{k,1}}{\height}
\setminus {\cyl{r_{k,2}}{\height}})|
> |E^R\cap (\cyl{r_{k,2}}{\height} \setminus {\cyl{r_{k,3}}{\height}})|.
\end{cases}
$$

Let $$m_k = \max\limits_{i=1,2,3} \cH^n(E^R\cap  \p \cyl{r_{k,i}}{\height}).$$

{\bf Step 2.}  Using the definition of $\cR_0$ we show that
\begin{equation}\label{krutoy_hisob1}
m_k\le \left(\dfrac12\right)^{(\frac{n+1}{n})^k}.
\end{equation}
Indeed, according to \eqref{small v1 v21}, \eqref{hhjhjh1} and the definition of
$(a_k,b_k)$ one has
$$v_{k+1}\le \mu  m_k^{\frac{n+1}{n}},\qquad\quad  m_k \le \frac{v_k}{h_k}.$$
By construction, $b_{k+1}-a_{k+1}\ge \frac{b_k-a_k}{8},$ i.e. $h_{k+1}\ge \frac{h_k}{8}.$
By induction one can check that
\begin{equation}\label{eq.ind1}
m_k\le \left(8^{\sum\limits_{j=1}^k j\alpha^j }
\left(\frac{\mu}{h_0}\right)^{\sum\limits_{j=1}^k \alpha^j}\,\frac{v_0}{h_0}
\right)^{1/\alpha^k},
\end{equation}
where  $\alpha:=\frac{n}{n+1}.$
Note that
$$\sum\limits_{j=1}^k j\alpha^j \le \alpha \sum\limits_{j\ge 1}j\alpha^{j-1} =
\frac{\alpha}{(1-\alpha)^2} = n(n+1).$$
Since $h_0=\frac{\cR_0-r-1}{4}$ and
$v_0\le \mathfrak{a}^{\frac{n+1}{n}}$ by  \eqref{uniform_est1},
the choice of $\cR_0$ in \eqref{eq:universal constant1} implies
$8^{n(n+1)}\,v_0/h_0  \le 1/2.$
Moreover $\left(\frac{\mu}{h_0}\right)^{\sum\limits_{j=1}^k \alpha^j}\le1,$
since $\frac{\mu}{h_0} = \frac{4\mu}{R_0-r-1}\le1.$
Now \eqref{krutoy_hisob1} follows from these estimates and \eqref{eq.ind1}.

{\bf Step 3.}
Let $i_k\in\{1,2,3\}$ be such that $m_k = \cH^n(E^R\cap   \p \cyl{r_{k,i_k}}{\height}).$
Since  $a_k\le r_{k,i_k}\le b_k,$ $\{a_k\}$ is nondecreasing and $\{b_k\}$ is nonincreasing,
there exists $t_R\in [r+1,\cR_0]$ such that $r_{k,i_k}\to t_R$ (possibly up to a
subsequence). Then, by Step 2,
$$\cH^n(E^R\cap  \p \cyl{t_R}{\height}) = \lim\limits_{k\to+\infty} m_k=0,$$
which concludes the proof of the lemma. 
\end{proof}

\begin{proof}[\bf Proof of Theorem \ref{teo: unconstrained minimizer1}]
Let us prove the existence of a minimizer of $\cG .$
For $R>\cR_0$ let $t_R\in [r+1,\cR_0] $ be as in Lemma \ref{lem:key_lemma1}.
Then from \eqref{muhim_tenglik1} and $\cV(E^R)\ge \cV(E^R\cap \cyl{t_R}{\height})$ we get
\begin{equation}\label{eq:min_cyl_some1}
\cG(E^R) \ge \cG(E^R\cap \cyl{t_R}{\height}) +
P\big(E^R\setminus \overline{\cyl{t_R}{\height}},\Omega\big) -
\int_{\p\Omega}\beta\chi_{E^R\setminus \overline{\cyl{t_R}{\height}}}d\cH^n.
\end{equation}
By \eqref{eq:bound_F-0} and the isoperimetric-type inequality 
\begin{equation}\label{oooooo1}
 P\big(E^R\setminus \overline{\cyl{t_R}{\height}},\Omega\big) - \int_{\p \Omega}
\beta\, \chi_{E^R\setminus \overline{\cyl{t_R}{\height}}} \,d\cH^n\ge\kappa
P\big(E^R\setminus \overline{\cyl{t_R}{\height}}\big)\ge \kappa
\big|E^R\setminus \overline{\cyl{t_R}{\height}}\big|^{\frac{n}{n+1}}.
\end{equation}
Thus from \eqref{eq:min_cyl_some1}
$$\cG(E^R)\ge \cG(E^R\cap \cyl{t_R}{\height}).$$
Hence, $F^R:=E^R\cap \cyl{t_R}{\height} \subseteq 
\cyl{\cR_0}{\height}$ satisfies
$$
\min\limits_{E\in BV(\cyl{R}{\height},\{0,1\})} 
\cG(E) = \cG(F^R).
$$
From \eqref{eq:bound_F-0} and the minimality of $F^R$ we get
$$
\kappa P(F^R) \le \cC(F^R,\Omega) \le \cG(F^R)-\inf\cV\le\kappa \mathfrak{a},
$$
and thus, by compactness  there exists $E \in 
BV(\cyl{\cR_0}{\height},\{0,1\})$
such that (up to a subsequence)
$F^R\to E$ in $L^1(\Omega)$ as $R\to+\infty.$ From the $L^1(\Omega)$-lower semicontinuity
of $\cG $ and  from \eqref{eq:min_R1} we conclude that
$E$ is a minimizer of $\cG.$

Now we prove that any minimizer  $E$ of $\cG$
satisfies $E\subseteq \cyl{\cR_0}{\height}.$ Arguing as in the proof of
\eqref{strict_ineq1}  one can show that $E\subseteq
\overline{\Omega_{\height-\frac14}}.$

{\bf Claim.} There exists $R>r+1$ (possibly depending on $\cV$ and $r$) such that
$E\subseteq \cyl{R}{\height}.$

For any $\rho>1$ such that
$\cH^n(\Omega\cap\p^* E\cap \p \cyl{\rho}{\height})=0,$ by the minimality of $E$ we have
$\cG(E) \le \cG(E\cap \cyl{\rho}{\height}),$ i.e.
\begin{align}\label{eq:dif_eq1}
P(E,\Omega_\height\setminus\overline{\cyl{\rho}{\height}}) - \int_{\p\Omega}
\beta\chi_{E\setminus \cyl{\rho}{\height}}\,d\cH^n \le \cH^n(E\cap \p \cyl{\rho}{\height}).
\end{align}
By Lemma \ref{lem:Lower_bound_of_F0}
\begin{equation}\label{qwqqqq}
P(E,\Omega_\height\setminus\overline{\cyl{\rho}{\height}}) - \int_{\p\Omega}
\beta\chi_{E\setminus \cyl{\rho}{\height}}\,d\cH^n\ge\kappa \Big(
P(E,\Omega_\height\setminus\overline{\cyl{\rho}{\height}})  +  \int_{\p\Omega}
\chi_{E\setminus \cyl{\rho}{\height}}\,d\cH^n\Big). 
\end{equation}

Moreover, by the isoperimetric-type inequality,
$$
 |E\setminus \cyl{\rho}{\height}|^{\frac{n}{n+1}} \le
 P(E,\Omega_\height\setminus\overline{\cyl{\rho}{\height}}) +
 \cH^n(E\cap   \p \cyl{\rho}{\height}) + \int_{\p\Omega}
\chi_{E\setminus \cyl{\rho}{\height}E}\,d\cH^n.
$$
therefore, \eqref{eq:dif_eq1} and \eqref{qwqqqq} imply
\begin{equation}\label{eq:final_diff_eq1}
 |E\setminus \cyl{\rho}{\height}|^{\frac{n}{n+1}} \le \frac{\kappa+1}{\kappa} \,
 \cH^n(E\cap  \p \cyl{\rho}{\height} ).
\end{equation}
Set $m(\rho)=|E\setminus \cyl{\rho}{\height}|.$ Clearly,
$m:(1,+\infty)\to [0,|E|].$  Moreover,
$m$ is absolutely continuous, nonincreasing,
$\lim\limits_{\rho\to+\infty} m(\rho) = 0$
and  $\cH^n(E \cap   \p \cyl{\rho}{\height}) = -m'(\rho)$ for a.e. $\rho>r+1.$
By \eqref{eq:final_diff_eq1}
$-m'(\rho)\ge \frac{\kappa+1}{\kappa}\,(n+1) m(\rho)^{\frac{n}{n+1}}.$
If $E$ is unbounded, then $m(\rho)>0$
for any $\rho>r+1,$ and thus, for any $\rho_1,\rho_2>r+1,$ $\rho_1<\rho_2$
we have
$$
m(\rho_1)^{\frac{1}{n+1}} - m(\rho_2)^{\frac{1}{n+1}} \ge \frac{\kappa+1}{\kappa}\,(\rho_2-\rho_1).
$$
Now letting $\rho_2\to+\infty$ we obtain $m(\rho_1)=+\infty,$ a contradiction.
Consequently, there exists $R>r+1$ such that $m(R)=0,$ i.e. $E\subseteq \cyl{R}{\height}.$

From the claim it follows that $E$ is a minimizer of
$\cG$ also  in $BV(\cyl{R}{\height},\{0,1\}).$ By Lemma
\ref{lem:key_lemma1} we can find
$t_R\in [r+1,\cR_0]$ such that $\cH^n(E\cap  \p \cyl{t_R}{\height})=0.$
Then using $\cV(E)\ge \cV(E\cap \cyl{t_R}{\height}),$ the relations
\eqref{eq:min_cyl_some1} - \eqref{oooooo1}
applied with $E$ in place of $E^R$ imply
\begin{equation*} 
\cG(E) \ge \cG(E\cap \cyl{t_R}{\height})
+\kappa\big|E\setminus \overline{\cyl{t_R}{\height}}\big|^{\frac{n}{n+1}}.
\end{equation*}
Therefore,  the minimality of $E$ yields
$\big|E\setminus \overline{\cyl{t_R}{\height}}\big|=0,$
i.e. $E\subseteq \cyl{t_R}{\height}.$ Since
$t_R\le \cR_0,$ the conclusion follows.
\end{proof}

\renewcommand{\appendixname}{Appendix }

%%%%%%%%%%%%%%%%%%%%%%%%%%%%%%%%%%%%%%%%%%%%%%%%%%%%%%%%%%%%%%%%%%%%%%
\section{Local well-posedness}\label{sec:short_time_existence}
%%%%%%%%%%%%%%%%%%%%%%%%%%%%%%%%%%%%%%%%%%%%%%%%%%%%%%%%%%%%%%%%%%%%%%

\renewcommand{\appendixname}{}

In this appendix we sketch the proof of short time existence and 
uniqueness
of  smooth hypersurfaces
moving with normal velocity equal to their
mean curvature in $\Omega$  and
meeting the boundary $\p\Omega$ at a prescribed (not necessarily constant)
angle. 
The following theorem is a generalization of \cite[Theorem 1]{KKR:95}, where
short time existence and uniqueness have been 
proven for constant $\beta.$

\begin{theorem}[\textbf{Short time existence and 
uniqueness}]\label{teo:short_time_existence}
Let $\beta\in C^{1+\alpha}(\p\Omega),$
$\|\beta\|_\infty\le 1-2\kappa,$ $\kappa\in (0,\frac12]$ and
$E_0\subset\Omega$ be a bounded open set such that
$\Gamma_0=\overline{\Omega\cap \p E_0}$ is a bounded 
$C^{3+\alpha}$-hypersurface, $\alpha\in (0,1).$
Assume that $\cU \subset \R^n$ is a bounded open 
set with $C^{3+\alpha}$-boundary,
$p^0\in C^{3+\alpha}(\overline{\cU}, \R^{n+1})$ is a parametrization 
of $\Gamma_0$  such that $p_{n+1}^0>0$ in $\cU,$  $p^0_{n+1} = 0$ on $\p\cU,$
and
\begin{equation}\label{eq:neumann_boundary_c}
-e_{n+1} + \beta(p^0)\nu_0 = Dp^0[n^0]
\quad \text{on $\p\cU,$}
\end{equation}
where
$n^0 =  (n_1^0,\ldots,n_n^0)$ is the outward unit normal to
$\p\cU,$  $\nu_0 = \nu(p^0)$ is the outward unit normal of $\Gamma_0$ at 
$p^0$ and $Dp^0[n^0] = \sum\limits_{j=1}^n n_j^0p_{\sigma_j}^0.$ 
Then there exists $T_0=T_0(\|\beta\|_{C^{1+\alpha}},
\|p^0\|_{C^{3+\alpha}})>0,$ 
a unique family of bounded open sets 
$\{E(t)\subset \Omega:\,\, t\in [0,T_0]\}$
with a parametrization $p\in C^{1+\alpha/2,2+\alpha}([0,T_0]
\times\overline{\cU},\R^{n+1})$
of $\Gamma(t)=\overline{\Omega\cap \p E(t)}$ solving the parabolic system
\begin{equation}\label{eq:par_system}
p_t =\mathrm{trace}((Dp\cdot(Dp)^T)^{-1}D^2p)\,\,\text{ in $(0,T_0)\times \cU,$}
\end{equation}
where $(Dp\cdot(Dp)^T)_{ij} = p_{\sigma_i}\cdot 
p_{\sigma_j}$ and $(D^2p)_{ij} = p_{\sigma_i\sigma_j}$,
coupled with the initial condition $p(0,\cdot) = p^0,$
the boundary conditions
\begin{equation}\label{eq:Nboundary_cond_to_par_system}
\begin{cases}
p_{n+1}(t,\cdot) = 0 & \text{on $\p\cU$ for any $t\in [0,T_0],$ }\\
e_{n+1}\cdot \nu(p(t,\cdot)) = \beta(p(t,\cdot))
&  \text{on $\p\cU$ for any $t\in [0,T_0],$ } 
\end{cases}
\end{equation}
and  the orthogonality conditions
\begin{equation}\label{eq:boundary_cond_to_par_system}
\begin{aligned}
&Dp^0[n^0]\cdot {\tau_0}_i = 0\,\,
\text{on $[0,T_0]\times\p\cU$ for every $i=1,\ldots,n-1,$}
\end{aligned}
\end{equation}
where $\nu(p(t,\cdot))$ is the outward unit normal 
to $\Gamma(t)$ at $p(t,\cdot)$ and
${\tau_0}_1,\ldots,{\tau_0}_{n-1}\in\R^n\times\{0\}$ is a basis for 
the tangent space of $\Gamma_0\cap\p\Omega$ at $p^0.$
\end{theorem}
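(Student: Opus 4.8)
The plan is to reduce the geometric evolution problem to a quasilinear parabolic system for a parametrization and then invoke the standard linear parabolic theory (Solonnikov-type estimates) together with a contraction-mapping argument, exactly along the lines of \cite{KKR:95}, the only new feature being that $\beta$ is now a nonconstant $C^{1+\alpha}$-function on $\p\Omega$. First I would fix the reference domain $\cU\subset\R^n$ and rewrite the evolution $V=H_{E(t)}$ in the so-called DeTurck / harmonic-map-heat-flow gauge: instead of moving only in the normal direction, we let the parametrization $p(t,\cdot)$ evolve by $p_t=\mathrm{trace}\big((Dp\,(Dp)^T)^{-1}D^2 p\big)$, i.e. by the (time-dependent) Laplace--Beltrami operator of the induced metric $g_{ij}=p_{\sigma_i}\cdot p_{\sigma_j}$ applied to the position vector. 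This is the right hand side of \eqref{eq:par_system}; its normal component is precisely $H_{E(t)}\nu$, while the tangential component is an artificial tangential velocity that makes the system (strictly) parabolic in $p$ without constraints. One then checks that a solution of \eqref{eq:par_system}--\eqref{eq:boundary_cond_to_par_system} produces, after composing with the flow of its tangential part, a genuine solution of \eqref{mcf}--\eqref{contact_angle}, and conversely; this reparametrization step is routine once the orthogonality condition \eqref{eq:boundary_cond_to_par_system} is imposed so that boundary points stay on $\p\Omega$.

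The heart of the argument is the linear theory for the boundary-value problem obtained by linearizing \eqref{eq:par_system}--\eqref{eq:Nboundary_cond_to_par_system} about $p^0$. The interior operator linearizes to the heat operator $\p_t-\Delta_{g_0}$ acting componentwise, which is uniformly parabolic because $g_0$ is the metric of the $C^{3+\alpha}$-hypersurface $\Gamma_0$. The boundary conditions are of mixed order: the first condition in \eqref{eq:Nboundary_cond_to_par_system}, $p_{n+1}=0$ on $\p\cU$, is zeroth order (Dirichlet in the last component), while the second, $e_{n+1}\cdot\nu(p)=\beta(p)$, is first order (an oblique-derivative / Neumann-type condition, since $\nu(p)$ is built from first derivatives of $p$), and the orthogonality conditions \eqref{eq:boundary_cond_to_par_system} are again zeroth order in $p_t$ hence first order in $p$. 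I would verify that this system satisfies the Lopatinskii--Shapiro (complementing) condition at every boundary point — here the hypothesis $\|\beta\|_\infty\le 1-2\kappa<1$ enters crucially, guaranteeing that $\Gamma_0$ is transversal to $\p\Omega$ and that the contact angle is bounded away from $0$ and $\pi$, so the oblique vector field appearing in the Neumann-type condition is never tangent to $\p\Omega$. Granting the complementing condition, the Solonnikov theory for parabolic systems (as in \cite{KKR:95} and the references therein) yields, for data in the appropriate anisotropic H\"older space $C^{1+\alpha/2,2+\alpha}$ with compatibility condition \eqref{eq:neumann_boundary_c} at $t=0$, a unique solution of the linearized problem on $[0,T]$ with a uniform a priori estimate whose constant does not blow up as $T\to 0^+$.

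With the linear estimate in hand, I would set up the fixed-point scheme: write $p=p^0+q$, put the quasilinear problem in the form $\mathcal L q=N(q)$ where $\mathcal L$ is the linearization about $p^0$ and $N$ collects the quadratic-and-higher remainder terms (coming both from the dependence of the metric on $Dp$ and from the nonlinearities $\nu(p)$ and $\beta(p)$ in the boundary conditions), and show that the solution map $q\mapsto \mathcal L^{-1}N(q)$ is a contraction on a small ball of $\{q\in C^{1+\alpha/2,2+\alpha}([0,T_0]\times\overline{\cU}):\ q(0)=0,\ q\text{ satisfies the linearized compatibility condition}\}$ for $T_0$ small. The smallness of $T_0$ is used to absorb the norms of $N$ and of its Lipschitz constant, which each carry a positive power of $T_0$ once $q(0)=0$; here the $C^{1+\alpha}$-regularity of $\beta$ is exactly what is needed so that $p\mapsto\beta(p)$ maps the parabolic H\"older space into itself and is Lipschitz on bounded sets, which is why $T_0$ depends on $\|\beta\|_{C^{1+\alpha}}$ and $\|p^0\|_{C^{3+\alpha}}$. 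Uniqueness follows from the same contraction estimate (or a Gronwall argument on the difference of two solutions).

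The main obstacle I expect is the verification of the Lopatinskii--Shapiro condition for the mixed-order boundary system \eqref{eq:Nboundary_cond_to_par_system}--\eqref{eq:boundary_cond_to_par_system}: one must freeze coefficients at a boundary point, pass to the half-space model, and show that the associated ODE system on the half-line has only the trivial bounded solution for the homogeneous boundary data, for all admissible frequency/time-covariable parameters. The nonconstant $\beta$ makes the oblique vector field in the Neumann-type condition point-dependent, but since it enters the principal symbol only through its value $\beta(p^0(x))$ (derivatives of $\beta$ are lower order), the computation is essentially the one in \cite[Theorem 1]{KKR:95} with $\beta$ replaced pointwise by $\beta(p^0(x))$, and the uniform bound $\|\beta\|_\infty\le 1-2\kappa$ gives the uniform ellipticity/complementing constants. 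Once this is checked, the rest is a routine, if lengthy, adaptation of \cite{KKR:95}, so in the appendix I would only indicate these points and refer to \cite{KKR:95} for the remaining details.
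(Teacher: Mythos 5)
Your proposal follows essentially the same route as the paper's proof: linearize \eqref{eq:par_system}--\eqref{eq:boundary_cond_to_par_system} about $p^0$, verify the Lopatinskii--Shapiro (complementing) condition for the resulting mixed-order boundary-value problem (using $\|\beta\|_\infty\le 1-2\kappa$ to ensure $e_{n+1}$ and $\tilde\nu(p^0)$ are linearly independent, so the boundary matrix has the right rank), invoke Solonnikov's theory \cite{So:1965} for the linear system, and close with a contraction-mapping argument on a short time interval. The only cosmetic difference is terminological (you frame \eqref{eq:par_system} via DeTurck gauge and say ``Lopatinskii--Shapiro'' where the paper, somewhat nonstandardly, says ``compatibility conditions''), but the structure and the key use of the strict contact-angle bound coincide with the paper's four-step argument.
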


\begin{remark}
Assumption \eqref{eq:neumann_boundary_c} on $p^0$ is not restrictive.
Indeed, if $q:\p\cU\to \Gamma_0\cap\p\Omega$ is a $C^{3+\alpha}$
parametrization of the contact set, we may extend it
to a sufficiently small tubular neighborhood 
$S:=\{x\in \cU:\,\,\distance(x,\p\cU)<\epsilon\}$ 
of $\p\cU$ in $\cU$  with the properties that
$q$ is a $C^{3+\alpha}$ diffeomorphism, $q(S)\subset\Gamma_0$
and
$$
q(\sigma) = q(\varsigma) + |\sigma-\varsigma| (e_{n+1}- \beta(q(\varsigma)) 
\nu_0(q(\varsigma))) +O(|\sigma-\varsigma|^2),
$$
where $\varsigma$ is the projection of $\sigma\in S$ on $\p\cU.$
Since $\sigma= \varsigma - |\sigma-\varsigma|n^0(\varsigma),$ it follows
$$\nabla q(\varsigma)\,n^0(\varsigma) =- e_{n+1} + \beta(q(\varsigma)) \nu_0(q(\varsigma)),$$
which is \eqref{eq:neumann_boundary_c}.
Now we may arbitrarily extend $q$ to a $C^{3+\alpha}$ diffeomorphism in $\overline{\cU}$
such that $q(\overline{\cU})=\Gamma_0.$
\end{remark}

\begin{remark}\label{rem:normal_to_surface}
The unit normal to $\Gamma(t)$
at the point  $p(t,\sigma_1,\ldots,\sigma_n)\in \Gamma(t)$ can be written 
with a (standard) abuse of notation
$\nu=\nu(p(t,\sigma_1,\ldots,\sigma_n)) = \frac{\tilde \nu}{|\tilde \nu|},$ where
$$
\tilde \nu:=\tilde \nu(p_\sigma)=
\det
\begin{bmatrix}
e_1 & e_2 &\ldots & e_{n} & e_{n+1}\\
&     & p_{\sigma_1} &           & \\
&     & p_{\sigma_2} &           & \\
&     & \vdots &           & \\
&     & p_{\sigma_n} &           &
\end{bmatrix}.
$$
\end{remark}

\begin{proof}[ Proof of Theorem \ref{teo:short_time_existence}]
The idea of the proof is standard: first we linearize the 
equation around the boundary
conditions, then prove the existence result for the linearized 
system and finally we use a
fixed point argument.

{\bf Step 1.} Let us  linearize system 
\eqref{eq:par_system} fixing some $t_0>0.$ 
Let $\Class(t_0)\subset C^{1+\alpha/2,2+\alpha}
([0,t_0]\times \overline{\cU},\R^{n+1})$
be  the nonempty convex set 
consisting of all functions $w\in C^{1+\alpha/2,2+\alpha}([0,t_0]\times 
\overline{\cU},\R^{n+1})$
such that
\begin{itemize}
 \item[1)] $w(0,\cdot )=p^0,$
 \item[2)] $w_{n+1}(t,\cdot) = 0$ on $\p\cU$ for any $t\in[0,t_0],$
 \item[3)] $\sum\limits_{j=1}^{n} n_j^0 w_{\sigma_j} \cdot {\tau_0}_i = 0$
on $[0,t_0]\times\p\cU$ for every $i=1,\ldots,n-1.$
\end{itemize}
%Notice that $\Class(p^0,t_0)$ is convex.
For $w\in \Class(t_0)$ set 
$f(t,w):=\mathrm{trace}\big[\big((Dw\cdot(Dw)^T)^{-1} - 
(Dp^0\cdot(Dp^0)^T)^{-1}\big)D^2w\big].$
Then \eqref{eq:par_system} is equivalent to 
$$
w_t = \mathrm{trace}\big[(Dp^0\cdot(Dp^0)^T)^{-1} D^2w\big] +f(t,w).
$$
Notice that 
$$
|f(t,w)|\le c(\|p^0\|_{C^1(\overline{\cU})})\|w\|_{C^{0,2}([0,t_0]\times 
\overline{\cU})} \|w-p_0\|_{C^{0,1}([0,t_0]\times 
\overline{\cU})},
$$
where $c(\|p^0\|_{C^1(\overline{\cU})})>0.$
Now we linearize the contact angle condition.
%\begin{equation}\label{eq:contact_angle_cond}
%e_{n+1}\cdot \nu(w(t,\cdot)) = \beta(w(t,\cdot))\qquad \text{on $\p\cU.$}
%\end{equation}
Since we have   $e_{n+1}\cdot \nu(p^0) = \beta(p^0),$ 
from Remark \ref{rem:normal_to_surface}  it follows that
\begin{equation}\label{ffff}
e_{n+1}\cdot \big(\tilde\nu(w_\sigma)-\tilde\nu(p_\sigma^0)\big) = 
\beta(w)|\tilde\nu(w_\sigma)| - \beta(p^0)|\tilde\nu(p_\sigma^0)|. 
\end{equation}
Let
$ H_1(t,w): = \tilde \nu(w_\sigma) - \tilde \nu(p_\sigma^0) - D\tilde \nu(p_\sigma^0)[w_\sigma-p_\sigma^0]$,
where
$$
D\tilde \nu =
\begin{bmatrix}
D_{p_{\sigma_1}}\tilde \nu^1 & D_{p_{\sigma_2}}\tilde \nu^1 & \ldots & D_{p_{\sigma_n}}\tilde \nu^1\\
D_{p_{\sigma_1}}\tilde \nu^2 & D_{p_{\sigma_2}}\tilde \nu^2 & \ldots & D_{p_{\sigma_n}}\tilde \nu^2\\
\vdots & \vdots & \ldots & \vdots\\
D_{p_{\sigma_1}}\tilde \nu^{n+1} & D_{p_{\sigma_2}}\tilde \nu^{n+1} & \ldots & D_{p_{\sigma_n}}\tilde \nu^{n+1}
\end{bmatrix},
\qquad
q_\sigma=
\begin{bmatrix}
q_{\sigma_1}\\
q_{\sigma_2}\\
\vdots\\
q_{\sigma_n}\\
\end{bmatrix}
=
\begin{bmatrix}
(q_1)_{\sigma_1} &\ldots & (q_{n+1})_{\sigma_1} \\
(q_1)_{\sigma_2} &\ldots & (q_{n+1})_{\sigma_2} \\
\ldots & \vdots & \ldots \\
(q_1)_{\sigma_n} &\ldots & (q_{n+1})_{\sigma_n}
\end{bmatrix}
$$
and
$$
D\tilde \nu [q_\sigma] =
\begin{bmatrix}
\sum\limits_{i=1}^n D_{p_{\sigma_i}}\tilde \nu^1 \cdot q_{\sigma_i}\\
\sum\limits_{i=1}^n D_{p_{\sigma_i}}\tilde \nu^2 \cdot q_{\sigma_i}\\
\vdots \\
\sum\limits_{i=1}^n D_{p_{\sigma_i}}\tilde \nu^{n+1} \cdot q_{\sigma_i}\\
\end{bmatrix}=
\begin{bmatrix}
\sum\limits_{i=1}^n  \sum\limits_{j=1}^{n+1} D_{(p_j)_{\sigma_i}}\tilde \nu^1 \cdot(q_j)_{\sigma_i}\\
\sum\limits_{i=1}^n  \sum\limits_{j=1}^{n+1} D_{(p_j)_{\sigma_i}}\tilde \nu^2 \cdot(q_j)_{\sigma_i}\\
\vdots \\
\sum\limits_{i=1}^n  \sum\limits_{j=1}^{n+1} D_{(p_j)_{\sigma_i}}\tilde \nu^{n+1} \cdot(q_j)_{\sigma_i}\\
\end{bmatrix}
$$
Clearly, $|H_1(t,w)| = 
O\Big(\|w-p^0\|_{C^{0,1}([0,t_0]\times 
\overline{\cU})}^2\Big).$ 
Moreover, 
$$
|\tilde\nu(w_\sigma)| = |\tilde\nu(p_\sigma^0)| + \nu(p^0) \cdot 
D\tilde\nu(p_\sigma^0)[w_\sigma - p_\sigma^0] +H_2(t,w) 
$$
with $|H_2(t,w)| = 
O\Big(\|w-p^0\|_{C^{0,1}([0,t_0]\times 
\overline{\cU})}^2\Big).$ Finally,
since $\beta\in C^{1+\alpha}(\p\Omega)$  we have
\begin{align*}
\beta(w)|\tilde\nu(w_\sigma)| - \beta(p^0)|\tilde\nu(p_\sigma^0)| = 
\beta(p^0) \nu(p^0) \cdot D\tilde\nu(p_\sigma^0)[w_\sigma - p_\sigma^0] +H_3(t,w),
\end{align*}
where $
H_3(t,w) = 
O\Big(\|w-p^0\|_{_{C^{0,1}([0,t_0]\times 
\overline{\cU})}}^2\Big). 
$
Thus, \eqref{ffff} is equivalent to 
\begin{equation*}%\label{lin.contact_angle}
(e_{n+1} - \beta(p^0) \nu(p^0) )\cdot 
D\tilde\nu(p_\sigma^0)[w_\sigma] = (e_{n+1} - \beta(p^0) \nu(p^0) )\cdot 
D\tilde\nu(p_\sigma^0)[p_\sigma^0] + H_4(t,w), 
\end{equation*}
where $H_4(t,w) =O\Big(\|w-p^0\|_{_{C^{0,1}([0,t_0]\times 
\overline{\cU})}}^2\Big).$

Thus we have the following linear parabolic system of equations
\begin{gather*}
\cL(\sigma,\p_t,\p_\sigma) w= f \,\,\text{in $(0,t_0)\times\cU$}
\end{gather*}
subject to the boundary conditions
$\cB(\varsigma,\p_\sigma)w = F(t,\varsigma)$ on $[0,t_0]\times\p\cU,$
where
$$F(t,\varsigma)=
\left[
0, \,\, (e_{n+1} - \beta(p^0) \nu(p^0) )\cdot 
D\tilde\nu(p_\sigma^0)[p_\sigma^0] + H_4(t,w), \,\,
\underbrace{0, \,\,\ldots \,\,,0}_{(n-1)-\text{times}}
\right]^T 
$$
%($(n-1)$ consecutive $0$'s).
and, under the notation $\{g_0\}^{ij} = \{p_{\sigma_i}^0\cdot 
p_{\sigma_j}^0\}^{-1},$ $\tilde \nu_0 = \tilde \nu(p_\sigma^0), $ $\beta_0 = \beta(p^0)$
the $(n+1)\times(n+1)$-matrices $\cL(\sigma,t,\xi,\zeta)$ and 
$\cB(\varsigma,\xi),$ $\xi\in \R^{n},$ $\zeta\in\C$
are defined as follows:
$$
\cL(\sigma,\zeta,\xi):= {\rm diag}\left(
\zeta - \sum\limits_{i,j=1}^n g_0^{ij} \xi_i\xi_j,
\zeta - \sum\limits_{i,j=1}^n g_0^{ij} \xi_i\xi_j,
\ldots, \zeta - \sum\limits_{i,j=1}^n g_0^{ij} \xi_i\xi_j\right),
$$
$$
\cB(\varsigma,\xi):=
\begin{bmatrix}
0 &  \ldots &  1\\
\sum\limits_{k=1}^{n+1} \sum\limits_{i=1}^n (-\delta_{k,n+1} - \beta_0
 \nu_0^k) D_{(p_1)_{\sigma_i}}
\tilde \nu_0^k \xi_i &
 \ldots &  \sum\limits_{k=1}^{n+1} \sum\limits_{i=1}^n (-\delta_{k,n+1} -
\beta_0 \nu_0^k) D_{(p_{n+1})_{\sigma_i}}
\tilde \nu_0^k \xi_i \\
{\tau_0}_1^1\sum\limits_{i=1}^n n_i^0 \xi_i &  \ldots &
 {\tau_0}_1^{n+1}\sum\limits_{i=1}^n n_i^0 \xi_i\\
\vdots & \vdots &   \vdots \\
{\tau_0}_{n-1}^1\sum\limits_{i=1}^n n_i^0 \xi_i  & \ldots &
  {\tau_0}_{n-1}^{n+1}\sum\limits_{i=1}^n n_i^0 \xi_i\\
\end{bmatrix},
$$
where the first row must be intended as $[0,\ldots,0,1].$

{\bf Step 2.}
Now we check the compatibility conditions \cite{So:1965}.
Take any $\varsigma\in \p\cU$ and let $\theta$ be in the tangent space
of $\p\cU$ at $\varsigma.$
Let $\lambda_0:= \lambda_0(\varsigma, \zeta,\theta)$ be a solution of the quadratic equation
$$
h(\lambda;\varsigma, \zeta,\theta):=\zeta + \sum\limits_{i,j=1}^n g_0^{ij}\theta_i\theta_j 
- 2\lambda \sum\limits_{i,j=1}^n g_0^{ij}\theta_i n_j^0
+ \lambda^2 \sum\limits_{i,j=1}^ng_0^{ij}n_i^0n_j^0=0
$$
in $\lambda\in\C $ with  positive imaginary part.
Notice that $\det\cL = (h(\lambda;\varsigma,\zeta,\theta))^{n+1}$ and
$$
\hat\cL = (\det\cL) \cL^{-1} = 
\diag((h(\lambda;\varsigma, \zeta,\theta))^n,\ldots,(h(\lambda;\varsigma, \zeta,\theta))^n).
$$
In order to prove the compatibility conditions we should prove  that the rows of matrix
$$\cB(\varsigma,i(\theta- \lambda n^0))\hat\cL(x,\zeta, i(\theta - \lambda n^0))$$
are linearly independent modulo
the polynomial $(\lambda-\lambda_0)^{n+1}$ whenever $\Re(\zeta)\ge0,$ $|\zeta|>0.$
According to the  definitions of $\cL$ and $\cB$ one checks \cite{KKR:95} that
the compatibility conditions are equivalent to the conditions
$$
c_1e_{n+1} + c_2 \tilde \nu(p^0) + \sum\limits_{i=1}^{n-1} c_{i+2}{\tau_0}_i =0 \,\,
 \Longleftrightarrow \,\, c_1=c_2=\ldots=c_{n+1}=0.
$$
Since  a basis of the tangent space
$\{{\tau_0}_i\}_{i=1}^{n-1}$ of $\Gamma_0\cap \p\Omega$
belongs to the horizontal subspace of $\R^{n+1}$
and $\tilde \nu(p^0)$ is normal   to $\Gamma_0\cap \p\Omega$
at $p^0$ we have $c_3=\ldots=c_{n+1}=0.$
Moreover, since $|\beta|\le 1-2\kappa,$ and $\Gamma_0$ satisfies 
the contact angle condition,
 $e_{n+1}$ and $\tilde \nu(p^0)$ are linearly independent,
i.e. $c_1=c_2=0.$

{\bf Step 3. } By \cite[Theorem 4.9]{So:1965} since 
$\p\cU \in C^{3+\alpha},$ $\beta \in C^{1+\alpha}(\p\Omega)$
and the  compatibility conditions hold,
for any $\tilde f,\tilde F\in C^{0,\alpha}([0,t_0]\times\overline{\cU}),$ 
$p^0\in C^{3+\alpha}(\overline{\cU})$
there exists a unique solution 
$w\in C^{1+\alpha/2,2+\alpha}([0,t_0]\times\overline{\cU})$ 
such that
\begin{align*}
& w_t = \tr((Dp^0\cdot(Dp^0)^t)^{-1} D^2w) + \tilde f,\\
&w(0,\cdot) = p^0,\\
& w_{n+1}(t,\cdot)= 0\qquad\text{on $\p\cU$ for any $t\in [0,t_0],$}\\
& (e_{n+1} - \beta(p^0) \nu(p^0)) \cdot D\tilde \nu(p^0)[w_\sigma] = 
(e_{n+1} - \beta(p^0) \nu(p^0)) \cdot D\tilde \nu(p^0)[p_\sigma^0] + \tilde F(t,x)
\quad \text{on $[0,t_0]\times \p\cU$},\\
& \left(\sum\limits_{j=1}^n n_j^0 w_{\sigma_j}\right)\cdot {\tau_0}_i = 0\qquad 
\text{on $[0,t_0]\times \p\cU$ and $i=1,\ldots,n-1.$}
\end{align*}
%satisfying the estimate
%$$\|w\|_{C^{0,2+\alpha}([0,t_0]\times \overline{\cU})} \le
%C\left(
%\|f\|_{C^{0,\alpha}([0,t_0]\times \overline{\cU})} +  
%\|p_0\|_{C^{1,\alpha}(\overline{\cU})} +\|F\|_{C^{0,\alpha}
%([0,t_0]\times\p \cU)}
%\right).$$

{\bf Step 4.} Finally, mimicking \cite{ES:92} we can
prove the existence of and uniqueness of solution
\eqref{eq:par_system}-\eqref{eq:boundary_cond_to_par_system}
in time interval $[0,T_0]$ for some sufficiently small $T_0>0$
depending on $\|\beta\|_{C^{1+\alpha}}$ and $\|p^0\|_{C^{3+\alpha}}.$
\end{proof}

We call $E(t)$ the smooth flow starting from $E_0.$

\begin{proposition}[\bf Comparison for strong solutions]
Let $\beta_i\in (-1,1),$ $E_0^{(i)}\subset\Omega$ be bounded sets 
such that $\overline{\Omega\cap \p E_0^{(i)} }$ are
$C^{3+\alpha}$ hypersurfaces,  and the smooth flows $E^{(i)}(t)$ 
starting from $E_0^{(i)}$ exist in $[0,T_0],$ $i=1,2.$ 
If $\beta_1\le \beta_2$ and
$\distance(\Omega\cap \p E_0^{(1)},\Omega\cap \p E_0^{(2)})>0,$ then
$\distance(\Omega\cap \p E^{(1)}(t),\Omega\cap \p E^{(2)}(t))>0$ for 
all $t\in [0,T_0].$
\end{proposition}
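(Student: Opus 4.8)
The plan is to prove the comparison result for smooth flows by a standard avoidance/maximum principle argument applied to the signed distance between the two evolving surfaces, exploiting the strict contact angle condition $|\beta_i|\le 1-2\kappa$ to control what happens near $\p\Omega$.

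\medskip

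First I would reformulate the statement in terms of sets. Since the flows are smooth on $[0,T_0]$ and $\distance(\Omega\cap\p E_0^{(1)},\Omega\cap\p E_0^{(2)})>0$, after relabelling we may assume $E^{(1)}(0)\subseteq E^{(2)}(0)$ with the boundaries a positive distance apart (if the two initial boundaries were not ordered we could slightly shrink one of them and use continuity; but in the setting at hand the hypothesis is meant with the natural ordering, so I will take $\overline{E_0^{(1)}}\subset E_0^{(2)}$). Set
\[
\delta(t):=\distance\bigl(\Omega\cap\p E^{(1)}(t),\,\Omega\cap\p E^{(2)}(t)\bigr),
\]
which is continuous in $t$ by smoothness of $p$, and $\delta(0)>0$. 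I want to show $\delta(t)>0$ on $[0,T_0]$; equivalently, that the two boundaries never touch. Suppose for contradiction that $t_*:=\inf\{t:\delta(t)=0\}<T_0$; then $\delta(t_*)=0$ and $\delta(t)>0$ for $t<t_*$. Let $x_*\in\overline{\Omega\cap\p E^{(1)}(t_*)}\cap\overline{\Omega\cap\p E^{(2)}(t_*)}$ be a first contact point, realized as a limit of points achieving the infimum of the distance for $t\uparrow t_*$.

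\medskip

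Second, I would split into two cases according to whether $x_*\in\Omega$ (interior contact) or $x_*\in\p\Omega$ (boundary contact). In the interior case the argument is the classical one for mean curvature flow: at a first interior touching point the two hypersurfaces are tangent, $E^{(1)}(t_*)$ lies locally on one side of $E^{(2)}(t_*)$, so the mean curvature of $\p E^{(1)}$ at $x_*$ is $\le$ that of $\p E^{(2)}$ (with the appropriate sign convention coming from $E^{(1)}\subseteq E^{(2)}$), while the ordering of the normal velocities $V=H$ together with smoothness and Hopf's lemma applied to the evolution equation for the signed distance forces strict separation for $t$ slightly beyond — the standard contradiction. Concretely one works with $u(t,x):=\sdist_{E^{(2)}(t)}(x)-\sdist_{E^{(1)}(t)}(x)$ in a neighborhood of $(t_*,x_*)$, notes it satisfies a uniformly parabolic inequality by \eqref{mcf}, is $\ge 0$ for $t<t_*$, and vanishes at $(t_*,x_*)$, contradicting the strong maximum principle.

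\medskip

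The main obstacle — and the case that really uses $\beta\in(-1,1)$ — is a boundary contact $x_*\in\p\Omega$. Here the touching happens on $\p E^{(i)}(t_*)\cap\p\Omega$, and the surfaces meet $\p\Omega$ transversally with $\nu_{E^{(i)}}\cdot e_{n+1}=\beta_i$, $|\beta_i|\le 1-2\kappa<1$; this transversality is exactly what prevents a degenerate tangential contact along $\p\Omega$. I would reflect the configuration across $\p\Omega$ (the half-space $\Omega$ makes this clean): the even reflection of each $\Gamma^{(i)}(t)$ through $\p\Omega$ is, near $x_*$, a $C^2$ hypersurface precisely because $\beta_i$ is $C^{1+\alpha}$ and the contact angle stays bounded away from $0$ and $\pi$, and the reflected surface still moves by (a mild perturbation of) mean curvature. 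The ordering $\beta_1\le\beta_2$ together with $E^{(1)}\subseteq E^{(2)}$ guarantees that after reflection $E^{(1)}$ still lies inside $E^{(2)}$ locally, so the first-touching-point analysis reduces to the interior case applied to the reflected flows, and the strong maximum principle / Hopf lemma again yields a contradiction. Thus $\delta(t)>0$ for all $t\in[0,T_0]$, which is the assertion. The delicate points to be careful about are: (i) verifying that the reflected evolution is genuinely parabolic with $C^\alpha$ coefficients so that the maximum principle applies — this is where $\beta\in C^{1+\alpha}$ and $|\beta|\le 1-2\kappa$ enter; and (ii) handling the possibility that the minimizing pair of points approaches the contact set $\p\Omega$ tangentially along the surfaces, which is ruled out by the uniform transversality. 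I expect step (i), making the reflection argument rigorous, to be the real work; everything else is the textbook avoidance principle.
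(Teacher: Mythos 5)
The paper gives no details here (it simply cites \cite{Bell:2013}), so there is no proof to compare against line by line; but your proposal contains a genuine gap in the boundary-contact case. The claim that the even reflection of $\Gamma^{(i)}(t)$ across $\p\Omega$ is a $C^2$ hypersurface near $x_*$ ``because $\beta_i\in C^{1+\alpha}$ and the contact angle is bounded away from $0$ and $\pi$'' is false: if the surface meets $\p\Omega$ at angle $\theta_i=\arccos\beta_i\ne\pi/2$, the doubled surface has a corner of interior angle $2\theta_i$ along $\p\Omega$, so it is not even $C^1$ there, and it certainly does not move by (a mild perturbation of) mean curvature. The regularity of $\beta$ is irrelevant to this; only $\beta=0$ makes the even reflection smooth. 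Consequently the reduction of the boundary case to the interior case does not go through as written, and this was precisely the case you flagged as ``the real work.''

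The correct treatment of a first touching point $x_*\in\p\Omega$ is more delicate and does not rest on a mirror reflection. First note that if the boundaries $\Gamma^{(1)}(t_*)$ and $\Gamma^{(2)}(t_*)$ touch at $x_*\in\p\Omega$ with $E^{(1)}(t_*)\subseteq E^{(2)}(t_*)$, then writing both surfaces locally as graphs over the common tangent of the contact curves forces $\nu_{E^{(1)}}(x_*)=\nu_{E^{(2)}}(x_*)$, hence $\beta_1(x_*)=\beta_2(x_*)$ — i.e.\ the ordering $\beta_1\le\beta_2$ becomes equality at the touching point, exactly as the classical comparison-principle heuristics suggest. One should then parametrize both surfaces near $(t_*,x_*)$ as graphs $u_1\ge u_2$ over a fixed reference domain with boundary on $\p\Omega$, observe that the difference $w=u_1-u_2$ solves a linear uniformly parabolic equation in the interior (from the graph form of \eqref{eq:par_system}) together with an oblique-derivative boundary condition of the form $a\cdot\nabla w + b\,w \ge 0$ coming from linearizing $\nu\cdot e_{n+1}=\beta$ and using $\beta_1\le\beta_2$, and then invoke the parabolic strong maximum principle together with the Hopf boundary-point lemma for oblique-derivative problems to conclude $w\equiv 0$, contradicting $\delta(t)>0$ for $t<t_*$. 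This is where the bound $|\beta_i|\le 1-2\kappa$ really enters: it guarantees the obliqueness (non-tangential direction) of the boundary operator, which is what the Hopf lemma requires. Your interior case is fine, your overall strategy (first-touching-point contradiction, split into interior and boundary cases) is the right one, and your initial reduction to $E_0^{(1)}\subset E_0^{(2)}$ is the natural reading of the statement; but the boundary step should be replaced by the Hopf argument for oblique boundary conditions rather than a reflection.
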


\begin{proof}
The proof is an adaptation of   the
classical one (see for instance \cite{Bell:2013}).
\end{proof}

\section*{Acknowledgements}

The first author would like to express his gratitude to the
International Centre for Theoretical Physics (ICTP) in Trieste
for its hospitality and facilities.  The second author is very grateful to the
International Centre for Theoretical Physics (ICTP) and the
Scuola Internazionale Superiore di Studi Avanzati (SISSA) in Trieste,
where this research was made.

\end{document}